\newcommand{\der}{\delta}
\newcommand{\doo}{\delta_{1}}
\newcommand{\dt}{\delta_{2}}
\newcommand{\di}{\diamond}
\newcommand{\dom}{\text{Dom}}
\newcommand{\hatotimes}{{\hat {\otimes}}}
\newcommand{\hone}{\hat{1}}
\newcommand{\htwo}{\hat{2}}
\newcommand{\id}{{\rm Id}}
\newcommand{\ott}{[0,1]}
\newcommand{\ou}{[0,1]}
\newcommand{\cb}{{\mathcal B}}
\newcommand{\cac}{{\mathcal C}}
\newcommand{\cf}{{\mathcal F}}
\newcommand{\ch}{{\mathcal H}}
\newcommand{\cm}{{\mathcal M}}
\newcommand{\cn}{{\mathcal N}}
\newcommand{\cp}{{\mathcal P}}
\newcommand{\cs}{{\mathcal S}}
\newcommand{\cz}{{\mathcal Z}}
\newcommand{\al}{\alpha}
\newcommand{\ga}{\gamma}
\newcommand{\ep}{\varepsilon}
\newcommand{\ka}{\kappa}
\newcommand{\si}{\sigma}
\newcommand{\vp}{\varphi}
\newcommand{\laa}{\Lambda}
\newcommand{\D}{{\mathbb D}}
\newcommand{\E}{{\mathbb E}}
\newcommand{\PP}{{\mathbb P}}
\newcommand{\R}{{\mathbb R}}
\newcommand{\X}{{\mathbb X}}
\newcommand{\bx}{{\bf x}}
\newcommand{\1}{{\bf 1}}
\newcommand{\lla}{\left\langle}
\newcommand{\rra}{\right\rangle}
\newcommand{\lcl}{\left\{}
\newcommand{\rcl}{\right\}}
\newcommand{\lp}{\left(}
\newcommand{\rp}{\right)}
\newcommand{\lc}{\left[}
\newcommand{\rc}{\right]}
\newcommand{\lln}{\left|}
\newcommand{\rrn}{\right|}
\newcommand{\bean}{\begin{eqnarray*}}
\newcommand{\eean}{\end{eqnarray*}}
\newcommand{\ben}{\begin{enumerate}}
\newcommand{\een}{\end{enumerate}}
\newcommand{\beq}{\begin{equation}}
\newcommand{\eeq}{\end{equation}}
\newtheorem{theorem}{Theorem}[section]
\newtheorem{corollary}[theorem]{Corollary}
\newtheorem{definition}[theorem]{Definition}
\newtheorem{hypothesis}[theorem]{Hypothesis}
\newtheorem{lemma}[theorem]{Lemma}
\newtheorem{notation}[theorem]{Notation}
\newtheorem{proposition}[theorem]{Proposition}
\theoremstyle{remark}
\newtheorem{remark}[theorem]{Remark}
\theoremstyle{remark}
\begin{document}

\title[Skorohod and Stratonovich in the plane]{Skorohod and Stratonovich integration in the plane}

\author{Khalil Chouk \and Samy Tindel}

\address{Samy Tindel, Institut {\'E}lie Cartan and INRIA Nancy-Grand Est, Universit\'e de Lorraine, B.P. 239,
54506 Vand{\oe}uvre-l{\`e}s-Nancy, France.}
\email{samy.tindel@univ-lorraine.fr}

\address{Khalil Chouk, Ceremade,
Universit\'e de Paris-Dauphine,
75116 Paris, France}
\email{chouk@ceremade.dauphine.fr}

\thanks{This article was finalized while S. Tindel was visiting the University of Kansas. He wishes to express his gratitude to this institution for its warm hospitality.}

\keywords{Young integrals; Rough path; Stochastic integral; Malliavin calculus}
\date{\today}

\begin{abstract}
This article gives an account on various aspects of stochastic calculus in the plane. Specifically, our aim is 3-fold: (i) Derive a pathwise change of variable formula for a path $x:[0,1]^{2}\to\R$ satisfying some H\"older regularity conditions with a H\"older exponent greater than $1/3$. (ii) Get some Skorohod change of variable formulas for a large class of Gaussian processes defined on $[0,1]^{2}$. (iii) Compare the bidimensional integrals obtained with those two methods, computing explicit correction terms whenever possible. As a byproduct, we also give explicit forms of corrections in the respective change of variable formulas. 
\end{abstract}

\maketitle

\tableofcontents

\section{Introduction}
Stochastic calculus for processes indexed by the plane (or higher order objects) is notoriously a cumbersome topic. In order to get an idea of this fact, let us start from the simplest situation of a smooth function $x$ indexed by $[0,1]^{2}$ and a regular function $\varphi\in C^{2}(\R)$. Then some elementary computations show that 
\begin{equation}\label{eq:simple-change-vb}
[\der \vp(x)]_{s_1s_2;t_1t_2}
=\int_{[s_1,s_2]\times[t_1,t_2]} \vp^{(1)}(x_{u;v}) \, d_{uv}x_{u;v}
+ \int_{[s_1,s_2]\times[t_1,t_2]} \vp^{(2)}(x_{u;v}) \, d_{u}x_{u;v} d_{v}x_{u;v},
\end{equation}
for all $0\le s_1<s_2\le 1$ and $0\le t_1<t_2\le 1$, where we have set $[\der y]_{s_1s_2;t_1t_2}$ for the planar increment of $y$ in the rectangle $[s_1,s_2]\times[t_1,t_2]$, namely
\begin{equation}\label{eq:def-planar-increments-intro}
[\der y]_{s_1s_2;t_1t_2} 
\equiv 
y_{s_2;t_2} - y_{s_1;t_2} - y_{s_2;t_1} + y_{s_1;t_1}.
\end{equation}
This simple formula already exhibits the extra term $\int \vp^{(2)}(x_{u;v}) \, d_{u}x \, d_{v}x$ with respect to integration in $\R$, and the mixed differential term $d_{u}x \, d_{v}x$ is one of the main source of complications when one tries to extend \eqref{eq:simple-change-vb} to more complex situations. 

\smallskip

Moving to stochastic calculus in the plane, generalizations of \eqref{eq:simple-change-vb} to a random process $x$ obviously starts with change of variables formulas involving the Brownian sheet or martingales indexed by the plane. Relevant references include \cite{CW,Nu84,WZ}, and some common features of the formulas produced in these articles are the following:
\begin{itemize}
\item 
Higher order derivatives of $f$ showing up.
\item
Mixed differentials involving partial derivatives of $x$ and quadratic variation type elements.
\item
Huge number of terms in the formula due to boundary effects.
\end{itemize}
This non compact form of stochastic calculus in the plane has certainly been an obstacle to its development, and we shall go back to this problem later on.

\smallskip

Some recent advances in generalized stochastic calculus have also paved the way to change of variables formulas in the plane beyond the martingale case. One has to distinguish two type of contributions in this direction:

\smallskip

\noindent
\emph{(a)} Skorohod type formulas for the fractional Brownian sheet (abbreviated as fBs in the sequel) with Hurst parameters greater than $1/2$ have been obtained in \cite{TVp1} thanks to a combination of differential calculus in the plane and stochastic analysis tools inspired by ~\cite{AMN}. A subsequent generalization to Hurst parameters smaller than $1/2$ is available in \cite{TVp2}, invoking the notion of extended divergence introduced in \cite{LN}. Notice however that the extended divergence leads to a rather weak notion of integral, and might not be necessary when the Hurst parameters of the fBs are greater than $1/4$.

\smallskip

\noindent
\emph{(b)} The article \cite{Gp} focuses on pathwise methods for stochastic calculus in the plane, and builds an analog of the rough paths theory for functions indexed by the plane. In particular, generalizations of \eqref{eq:simple-change-vb}  with Stratonovich type integrals are given for functions with H\"older regularity greater than $1/3$. The construction is deterministic and general, and only requires the existence of a stack of iterated integrals of $x$ called rough path, denoted by $\X$. One can show in particular that $\X$ exists when $x$ is a fBs.

\smallskip

The current article is a contribution to these recent advances on generalized stochastic calculus in the plane. Namely, we focus on 3 different problems: (i) A complete exposition of the Stratonovich type change of variables formula obtained through rough paths techniques. (ii) Generalization of  \cite{TVp1} to a fairly general Gaussian process $x$. (iii) Comparison of Stratonovich and Skorohod formulas, analogously to the 1 dimensional situation handled in~\cite{HJT}. Before we further comment on these contributions, we now describe our main results more specifically.

\subsection{Some general notation}
\label{sec:general-notation}
Before we can turn to the description of our main results, we introduce some general notation concerning differential calculus in the plane. Let us mention first that we shall separate as much as possible the first and the second direction of integration, which will be respectively be denoted by direction 1 and direction 2. Thus the evaluation of a function $f:[0,1]^{2k}\to\R$ will be denoted by $f_{s_{1}\cdots s_{k};t_{1}\cdots t_{k}}$.  We also set $d_{12}x$ for the differential $d_{uv}x$ and $d_{1}x\,d_{2}x$ for the differential $d_{u}x \, d_{v}x$. In fact, since the differential element $d_{1}x\,d_{2}x$ is essential for our purposes, we further shorten it into $d_{\hone\htwo}x$.

\smallskip

Another notation which will be used extensively throughout the paper is the following: we set $y=\vp(x)$, and for all $j\ge 1$ we write $y^{j}$ for the function $\vp^{(j)}(x)$. With those first shorthands, equation \eqref{eq:simple-change-vb} for a smooth function $x:[0,1]^{2}\to\R$ can be written as
\begin{equation}\label{eq:strato-intro-notation-d12}
\der y = \int_{1}\int_{2} y^{1} \, d_{12}x + \int_{1}\int_{2} y^{2} \, d_{\hone\htwo}x.
\end{equation}
This kind of compact notation is of course useful when cumbersome computations come into the picture.

\smallskip

Let us anticipate a little on the notation for planar increments which will be introduced at Section \ref{sec:planar-increments}: we denote by $\cp_{k,l}$ the set of $\R$-valued functions involving $k$ variables in direction 1 and $l$ variables in direction 2, satisfying some vanishing conditions on diagonals. We mostly deal with spaces of the form $\cp_{2,2}$ and introduce some H\"older norms. Namely, if $f\in\cp_{2,2}(V)$, we set
\begin{equation*}
\|f\|_{\ga_1;\, \ga_2} =
\sup\lcl  \frac{|f_{s_1 s_2; \, t_1 t_2}|}{|s_2-s_1|^{\ga_1} |t_2-t_1|^{\ga_2}} \, ; \, s_1,s_2,t_1,t_2\in\ott\rcl ,
\end{equation*}
and we denote by $\cp_{2,2}^{\ga_1, \ga_2}(V)$ the space of increments in $\cp_{2,2}(V)$ whose $\|\cdot \|_{\ga_1;\, \ga_2}$ norm is finite. 

\subsection{Stratonovich type formula in the Young case}
\label{sec:strato-young-intro}
We assume here that $x:[0,1]^{2}\to\R$ is a path such that the rectangular increments $\der x$ of $x$ satisfy  $\der x\in\cp_{2,2}^{\ga_{1},\ga_{2}}$ with $\ga_{1},\ga_{2}>1/2$, which corresponds to the case where integration with respect to $x$ can be handled by Young techniques in the plane. Our change of variable formula in this situation relies on the definition of 2 increments $\bx^{1;2},\bx^{\hone;\htwo}\in\cp_{2,2}^{\ga_{1},\ga_{2}}$ defined as follows (see also Definition \ref{not:1st-order-increments-x} for further information):
\begin{equation*}
\bx^{1;2} = \int_{1}\int_{2} d_{12}x, 
\quad\text{and}\quad
\bx^{\hone;\htwo} = \int_{1}\int_{2} d_{1}x \, d_{2}x,
\end{equation*}
where the integrals can be understood in the Young sense.

\smallskip

With these notations in hand, the change of variables formula can be read as:

\begin{theorem}\label{thm:young-strato-intro}
Let $x:[0,1]^{2}\to\R$ be a path such that $\der x\in\cp_{2,2}^{\ga_{1},\ga_{2}}$ with $\ga_{1},\ga_{2}>1/2$. Then the increments
\begin{equation}\label{eq:def-z1-z2-intro}
z^{1} = \int_{1}\int_{2} y^{1} \, d_{12}x,
\quad\text{and}\quad
z^{2} = \int_{1}\int_{2} y^{2} \, d_{\hone\htwo}x,
\end{equation}
are well defined in the 2d-Young sense. Moreover:

\smallskip

\noindent
\emph{(i)} Both $z^{1}$ and $z^{2}$ can be decomposed as:
\begin{equation}\label{eq:dcp-z1-z2-intro}
z^{1} = y^{1} \, \bx^{1;2} + \rho^{1},
\quad\text{and}\quad
z^{2} = y^{2} \, \bx^{\hone;\htwo} + \rho^{2},
\end{equation}
where $\rho^{1},\rho^{2}$ are sums of increments with double regularity $(2\ga_1,2\ga_2)$ in at least one direction.

\smallskip

\noindent
\emph{(ii)} 
Provided $x$ is a smooth path, the increments $z^{1}$ and $z^{2}$ are defined as Riemann-Stieljes integrals.

\smallskip

\noindent
\emph{(iii)} 
If $x^{n}$ is a sequence of smooth functions such that the related increments $\bx^{n;1;2}, \bx^{n;\hone;\htwo}$ converge respectively to $\bx^{1;2}$ and $\bx^{\hone;\htwo}$ in $\cp_{2,2}^{\ga_{1},\ga_{2}}$, then $z^{1,n},z^{2,n}$ also converge respectively to $z^{1}$ and $z^{2}$.

\smallskip

\noindent
\emph{(iv)} Some Riemann sums convergences hold true: if $\pi^{1}_{n}$ and  $\pi^{2}_{n}$ are 2 partitions of $[s_{1},s_{2}]\times[t_{1},t_{2}]$ whose mesh goes to 0 as $n\to\infty$, then
\begin{equation}\label{eq:cvgce-riem-sums-z1-z2}
\lim_{n\to\infty} \sum_{\pi^{1}_{n},\pi^{2}_{n}} y^{1}_{\si_{i};\tau_{j}} \, \bx^{1;2}_{\si_{i}\si_{i+1};\tau_{j}\tau_{j+1}} 
=z^{1}_{s_{1}s_{2};t_{1}t_{2}},
\ \, \text{and}\ \,
\lim_{n\to\infty} \sum_{\pi^{1}_{n},\pi^{2}_{n}} y^{2}_{\si_{i};\tau_{j}} \, \bx^{\hone;\htwo}_{\si_{i}\si_{i+1};\tau_{j}\tau_{j+1}} 
=z^{2}_{s_{1}s_{2};t_{1}t_{2}}.
\end{equation}

\smallskip

\noindent
\emph{(v)} The change of variables formula \eqref{eq:strato-intro-notation-d12} still holds true when integrals are understood in the Young sense. 
\end{theorem}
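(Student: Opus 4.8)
The plan is to derive all five assertions from a single two-dimensional sewing argument based on the map $\laa$, carried out separately in each direction; once the core estimate behind (i) is established, the statements (ii)--(v) follow from uniqueness, the boundedness of $\laa$, and a density argument. I would treat $z^{1}$ and $z^{2}$ in parallel, the analysis being structurally identical with $\bx^{1;2}$ replaced by $\bx^{\hone;\htwo}$. Note first that $\bx^{1;2}=\der x$, the planar increment of $x$ itself, since integrating the constant $1$ against $d_{12}x$ in the Young sense returns the rectangular increment by telescoping.

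First I would set up, for $z^{1}=\int_{1}\int_{2}y^{1}\,d_{12}x$, the candidate germ
\[
M_{s_1s_2;t_1t_2} = y^{1}_{s_1;t_1}\,\bx^{1;2}_{s_1s_2;t_1t_2},
\]
and estimate its directional increments $\doo M$ and $\dt M$, obtained by inserting an intermediate point in direction $1$ (resp. direction $2$). Using a planar Leibniz-type expansion of the product $y^{1}\,\bx^{1;2}$ together with $\der x\in\cp_{2,2}^{\ga_1,\ga_2}$ and the induced regularity $\bx^{1;2}\in\cp_{2,2}^{\ga_1,\ga_2}$, the leading contribution in $\doo M$ couples the direction-$1$ increment of $y^{1}$ with $\bx^{1;2}$, so its modulus is bounded by $|s_2-s_1|^{\ga_1}\cdot|s_2-s_1|^{\ga_1}|t_2-t_1|^{\ga_2}$, i.e. it carries regularity $(2\ga_1,\ga_2)$; symmetrically $\dt M$ carries regularity $(\ga_1,2\ga_2)$. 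Since $\ga_1,\ga_2>1/2$ we have $2\ga_1,2\ga_2>1$, so each directional increment has total exponent strictly larger than $1$ in the refined direction, which is exactly the condition needed to invert $\der$ through $\laa$ in that direction. Applying $\laa$ then produces a unique increment $\rho^{1}$ of double regularity in at least one direction and sets $z^{1}=M+\rho^{1}=y^{1}\,\bx^{1;2}+\rho^{1}$; this is the $2$d-Young integral and yields the decomposition \eqref{eq:dcp-z1-z2-intro}.

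With (i) in hand, the remaining items unwind quickly. For (ii), when $x$ is smooth the Riemann--Stieltjes integral exists classically, its incremental approximation coincides with the germ $M$ up to terms annihilated by $\laa$, and uniqueness of the sewing construction forces the Young integral to equal it. For (iv), the convergence of the Riemann sums in \eqref{eq:cvgce-riem-sums-z1-z2} is precisely the statement that $M$ is sewn to $z^{1}$ (resp. $z^{2}$), hence it is built into the $\laa$-construction and holds for any partitions of vanishing mesh. For (iii), I would use that the map $(y^{1},\bx^{1;2})\mapsto z^{1}$ is continuous from the relevant product of H\"older spaces into $\cp_{2,2}^{\ga_1,\ga_2}$: the germ depends continuously on its inputs and $\laa$ is bounded, so convergence of $\bx^{n;1;2},\bx^{n;\hone;\htwo}$ (together with $x^{n}\to x$, hence $y^{1,n}\to y^{1}$ and $y^{2,n}\to y^{2}$ by continuity of $\vp^{(1)},\vp^{(2)}$) gives $z^{1,n}\to z^{1}$ and $z^{2,n}\to z^{2}$. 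Finally (v) combines (ii) and (iii): for smooth $x$ the identity $\der y=z^{1}+z^{2}$ is exactly the elementary formula \eqref{eq:simple-change-vb}; approximating a general $x$ by a smooth sequence $x^{n}$ with convergent data, the left-hand side $\der\vp(x^{n})\to\der\vp(x)=\der y$ while the right-hand side converges by (iii).

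The main obstacle, and the only place requiring genuine work, is the regularity bookkeeping behind part (i). For $z^{1}$ the germ is built from $\bx^{1;2}=\der x$, whose directional increments vanish by additivity of planar increments, so $\doo M$ and $\dt M$ reduce to product-rule cross terms that are readily seen to carry double regularity. The delicate case is $z^{2}$: the increment $\bx^{\hone;\htwo}=\int_{1}\int_{2}d_{1}x\,d_{2}x$ does \emph{not} satisfy additivity but an algebraic, Chen-type relation in which splitting a rectangle in one direction generates extra boundary and cross contributions. One must expand $\doo(y^{2}\bx^{\hone;\htwo})$ and $\dt(y^{2}\bx^{\hone;\htwo})$ exactly, separate the terms already of double regularity from these cross contributions, and verify that every remainder lands in a space with total exponent strictly larger than $1$ in the direction being refined, so that $\laa$ applies. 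Controlling these cross terms, and the resulting proliferation of boundary contributions specific to the plane, is where the bidimensional argument is substantially heavier than its one-dimensional counterpart.
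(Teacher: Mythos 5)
Your route for $z^{1}$ is exactly the paper's route: the paper defines $z^{j}=[\id-\laa_{1}\der_{1}][\id-\laa_{2}\der_{2}](y^{j}\,\bx)$, i.e.\ the two-directional sewing of precisely your germ $M=y^{j}\,\bx$, checks that $\doo M=-\doo y^{j}\,\bx$ lies in $\cp_{3,2}^{2\ga_{1},\ga_{2}}$ and $\dt M=-\dt y^{j}\,\bx$ lies in $\cp_{2,3}^{\ga_{1},2\ga_{2}}$ so that $\laa_{1},\laa_{2}$ apply, and then obtains (ii)--(v) from the coincidence with Riemann--Stieltjes integrals for smooth $x$, continuity of the sewing maps, and the Riemann-sum characterization of the planar sewing (the paper's Propositions \ref{proposition:planar Lambda} and \ref{proposition:integ-2d}). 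Up to the $z^{1}$ half of the statement, your proposal is a faithful reconstruction of the paper's proof of Theorem \ref{prop:young-2d-y-dx}.

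The genuine flaw is your closing paragraph, which also contradicts your opening claim that the two cases are structurally identical. You assert that $\bx^{\hone;\htwo}$ \emph{fails} additivity and satisfies a Chen-type relation producing cross terms, and on that basis you declare the $z^{2}$ case ``substantially heavier'' and leave it undone — so the $z^{2}$ half of items (i)--(v) is never actually proven. But the premise is false: $\bx^{\hone;\htwo}$ is by definition the \emph{already sewn} Young integral $[\id-\laa_{1}\der_{1}][\id-\laa_{2}\der_{2}](\der_{1}x\,\der_{2}x)$ (Notation \ref{not:1st-order-increments-x}), hence by Proposition \ref{proposition:integ-2d} it is an exact planar increment, $\bx^{\hone;\htwo}=\der f$ for some $f\in\cp_{1,1}$, and therefore $\doo\bx^{\hone;\htwo}=\dt\bx^{\hone;\htwo}=0$ — just as for $\bx^{1;2}=\der x$. (For smooth $x$ this is transparent: $\bx^{\hone;\htwo}_{s_1s_2;t_1t_2}=\int_{s_1}^{s_2}\int_{t_1}^{t_2}\partial_{\si}x\,\partial_{\tau}x\,d\tau\,d\si$ is additive in each direction by additivity of the domain of integration.) The Chen-type defect you have in mind, namely that $\doo(\der_{1}x\,\der_{2}x)$ equals the cross term $\der_{1}x\,\der x$ up to sign, is a property of the \emph{germ} $\der_{1}x\,\der_{2}x$; it is the price paid to construct $\bx^{\hone;\htwo}$ itself (and it drives the paper's Propositions \ref{Young-2d} and \ref{mixed-Young-2D}, and genuinely matters in the rough case of Theorem \ref{thm:rough-strato-intro}), but it plays no role once $\bx^{\hone;\htwo}$ is in hand, as it is in this theorem. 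Consequently $\doo(y^{2}\bx^{\hone;\htwo})=-\doo y^{2}\,\bx^{\hone;\htwo}$ and $\dt(y^{2}\bx^{\hone;\htwo})=-\dt y^{2}\,\bx^{\hone;\htwo}$, and the entire $z^{1}$ argument transfers verbatim to $z^{2}$ — which is why the paper dispatches it in one sentence. Your proof becomes complete once the last paragraph is replaced by the one-line observation $\doo\bx^{\hone;\htwo}=\dt\bx^{\hone;\htwo}=0$.
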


Observe that this theorem is not new and can be easily recovered from considerations contained in \cite{FV-bk,QT}. However, we express it here in terms which allow easy generalizations to Skorohod type integrals and to rough situations as well.

\subsection{Stratonovich type formula in the rough case}
\label{sec:strato-rough-intro}

Consider now a function $x$ whose  rectangular increments $\der x$ only satisfy  $\der x\in\cp_{2,2}^{\ga_{1},\ga_{2}}$ with $\ga_{1},\ga_{2}>1/3$. The definition of $z^{1},z^{2}$ as in \eqref{eq:def-z1-z2-intro} and the equivalent of formula \eqref{eq:simple-change-vb} require now a huge additional effort. In particular, the correct definition relies on the introduction of a collection of iterated integrals of $x$ (called rough path above $x$ and denoted by $\X$ by analogy with the 1-d case) that we proceed  to describe.

\smallskip

The reader will soon observe that the definition of $\X$ involves a whole zoology of objects which are somehow tedious to describe. In this article we shall index those objects by the directions of integration, trying to separate as much as possible direction 1 and direction 2 as we already did for the first order integrals $\bx^{1;2}$ and $\bx^{\hone;\htwo}$. Moreover, when one tries to define iterated integrals in the plane, the following extra facts have to be taken into account: 

\smallskip

\noindent
\emph{(i)} The differentials with respect to $x$ can be in one direction only ($d_{1}x$ or $d_{2}x$) or bidirectional. This reflects into some indices 0 when we don't integrate in a given direction, and 1 or 2 otherwise. Furthermore, as already mentioned,  our bidirectional differentials can be either of type $d_{12}x$ or $d_{1}x \, d_{2}x=d_{\hone\htwo}x$. We keep our convention of indices $1;2$ for differentials of the type $d_{12}x$ and $\hone;\htwo$ for differentials of the type $d_{\hone\htwo}x$. As an example of these conventions, we define $\bx^{1\hone;0\htwo}\in\cp_{2,2}$ in the following way for a smooth function $x$:
\begin{equation*}
\bx^{1\hone;0\htwo} = \int_{1} d_{1}x \int_{2} d_{\hone\htwo}x,
\quad\text{that is}\quad
\bx^{1\hone;0\htwo}_{s_{1}s_{2};t_{1}t_{2}} = 
\int_{s_{1}}^{s_{2}}\int_{t_{1}}^{t_{2}}
\lp \int_{s_{1}}^{\si_{2}} d_{1}x_{\si_{1};t_{1}} \rp d_{1}x_{\si_{2};\tau_{1}} d_{2}x_{\si_{2};\tau_{1}}.
\end{equation*}

\smallskip

\noindent
\emph{(ii)} We manipulate objects which are either iterated integrals or products of iterated integrals. We indicate that one starts a new integral in one specific direction and gets a product of increments by placing a new $\int$ sign, and this is translated by a $\cdot$ in the indices of $\bx$. For instance, modifying our previous example, we define $\bx^{1\cdot\hone;0\htwo}\in\cp_{3,2}$ in the following way for a smooth function $x$:
\begin{equation*}
\bx^{1\cdot\hone;0\htwo} = \int_{1} d_{1}x \int_{1}\int_{2} d_{\hone\htwo}x,
\quad\text{that is}\quad
\bx^{1\cdot\hone;0\htwo}_{s_{1}s_{2}s_{3};t_{1}t_{2}} = 
\int_{s_{1}}^{s_{2}} d_{1}x_{\si_{1};t_{1}}
\int_{s_{2}}^{s_{3}}\int_{t_{1}}^{t_{2}}
 d_{1}x_{\si_{2};\tau_{1}} d_{2}x_{\si_{2};\tau_{1}}.
\end{equation*}
Notice that those breaks in integration can occur at different steps in each direction 1 or~2. The resulting overlapping integrals will be an important source of technical troubles in the remainder of the paper.

\smallskip

With these preliminary considerations in mind, our assumptions on the function $x$ are of the following form:
\begin{hypothesis}\label{hyp:rough-path-intro}
The function $x$ is such that $\der x\in\cp_{2,2}^{\ga_{1},\ga_{2}}$ with $\ga_{1},\ga_{2}>1/3$. Moreover, the following rough path $\X$ can be constructed out of $x$:
\begin{table}[htdp]
\begin{center}
\begin{tabular}{|c|c|c||c|c|c|}
\hline
\emph{Increment} & \emph{Interpretation} & \emph{Regularity} & \emph{Increment} & \emph{Interpretation} & \emph{Regularity} \\
\hline
$\bx^{1;2}$ & $\int_{1}\int_{2} d_{12}x$ & $(\ga_{1},\ga_{2})$ & 
$\bx^{\hone;\htwo}$ & $\int_{1}\int_{2} d_{\hone\htwo}x$ & $(\ga_{1},\ga_{2})$ \\
\hline
$\bx^{11;02}$ & $\int_{1}d_{1}x\int_{2} d_{12}x$ & $(2\ga_{1},\ga_{2})$ & 
$\bx^{1\hone;0\htwo}$ & $\int_{1}d_{1}x\int_{2} d_{\hone\htwo}x$ & $(2\ga_{1},\ga_{2})$ \\
\hline
$\bx^{01;22}$ & $\int_{2}d_{2}x\int_{1} d_{12}x$ & $(\ga_{1},2\ga_{2})$ & 
$\bx^{0\hone;2\htwo}$ & $\int_{2}d_{2}x\int_{1} d_{\hone\htwo}x$ & $(\ga_{1},2\ga_{2})$ \\
\hline
$\bx^{11;22}$ & $\int_{1}\int_{2} d_{12}x d_{12}x$ & $(2\ga_{1},2\ga_{2})$ & 
$\bx^{1\hone;2\htwo}$ & $\int_{1}\int_{2} d_{12}x d_{\hone\htwo}x$ & $(2\ga_{1},2\ga_{2})$ \\
\hline
$\bx^{\hone1;\htwo2}$ & $\int_{1}\int_{2} d_{\hone\htwo}x d_{12}x$ & $(2\ga_{1},2\ga_{2})$ & 
$\bx^{\hone\hone;\htwo\htwo}$ & $\int_{1}\int_{2}  d_{\hone\htwo}x d_{\hone\htwo}x$ & $(2\ga_{1},2\ga_{2})$ \\
\hline
\end{tabular}
\end{center}
\end{table}%

In the table above, all increments belong to $\cp_{2,2}$, so that a regularity $(\al,\beta)$ means that the increment lyes into $\cp_{2,2}^{\al,\beta}$. Furthermore, the stack $\X$ is a geometric rough path, insofar as there exists a regularization $x^{n}$ of $x$ such that $\lim_{n\to\infty}\|x-x^{n}\|_{\ga_1,\ga_2}=0$ and such that all the integrals in $\X^{n}$, constructed out of $x^{n}$ in the Lebesgue-Stieljes sense, converge with respect to their natural respective norms in $\cp_{2,2}^{\ga_1,\ga_2}$, $\cp_{2,2}^{2\ga_1,\ga_2}$, $\cp_{2,2}^{\ga_1,2\ga_2}$ or $\cp_{2,2}^{2\ga_1,2\ga_2}$. Note that the natural H\"older norm of a rough path is denoted by $\cn$ in the sequel.
\end{hypothesis}

\begin{remark}\label{rmk:complete-rough-path}
As we shall see at Section \ref{sec:pathwise-strato}, Hypothesis \ref{hyp:rough-path-intro} is not completely sufficient in order to settle a satisfying integration theory with respect to $x$. In fact the rough path $\X$ should also include higher order increments like $\bx^{11\cdot1;022}$ or $\bx^{1\cdot11;22\cdot2}$ (and other extra terms). We have only stated Hypothesis \ref{hyp:rough-path-intro} here in order to keep our exposition into some reasonable bounds.
\end{remark}

Now we can state our Stratonovich integration theorem in the rough case, which mimics Theorem \ref{thm:young-strato-intro}:

\begin{theorem}\label{thm:rough-strato-intro}
Let $x:[0,1]^{2}\to\R$ be a path such that $\der x\in\cp_{2,2}^{\ga_{1},\ga_{2}}$ with $\ga_{1},\ga_{2}>1/3$ and assume the further rough path Hypothesis~\ref{hyp:rough-path-intro}. Consider a function $\vp\in C_{b}^{8}(\R)$. Then the increments $z^{1}$ and $z^{2}$ given by \eqref{eq:def-z1-z2-intro} are well defined as continuous functions of the rough path $\X$. Moreover:

\smallskip

\noindent
\emph{(i)} 
The increment $z^{1}$ can be decomposed as:
\begin{equation}\label{eq:strato-der-z1-rough}
z^{1} = 
y^1 \, \bx^{1;2} + y^{2} \, \bx^{11;02} + y^{2} \, \bx^{01;22} 
+ y^{2} \, \bx^{11;22} + y^{3} \, \bx^{\hone1;\htwo2} + \rho^{1},
\end{equation}
and the increment $z^{2}$ admits a decomposition of the form 
\begin{equation*}
z^{2} = 
y^2 \, \bx^{\hone;\htwo} + y^{3} \, \bx^{1\hone;0\htwo} + y^{3} \, \bx^{0\hone;2\htwo} 
+ y^{3} \, \bx^{1\hone;2\htwo} + y^{4} \, \bx^{\hone\hone;\htwo\htwo} + \rho^{2},
\end{equation*}
where $\rho^{1},\rho^{2}$ are sums of increments with triple regularity $(3\ga_1,3\ga_2)$ in at least one direction.

\smallskip

\noindent
\emph{(ii)} If $x^{n}$ is a sequence of smooth functions such that the related rough path $\X^{n}$ converges to $\X$, then $z^{1,n},z^{2,n}$ (defined in the Lebesgue-Stieljes sense) also converge respectively to $z^{1}$ and $z^{2}$.

\smallskip

\noindent
\emph{(iii)} The change of variables formula \eqref{eq:strato-intro-notation-d12} still holds true when integrals are understood in the rough path sense. 
\end{theorem}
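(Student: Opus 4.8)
The plan is to follow the standard rough path strategy adapted to the two-dimensional setting: produce an explicit local approximation (a compensated Riemann building block) for $z^{1}$, show that its planar coboundary is regular enough to be corrected by the sewing map $\Lambda$, and read off $\rho^{1}$ as that correction. Concretely, I would take the five terms on the right-hand side of \eqref{eq:strato-der-z1-rough} as the candidate increment $\hat z^{1}$, so that by definition $z^{1}:=\hat z^{1}+\rho^{1}$ with $\rho^{1}$ still to be constructed. The origin of these five terms is a double Taylor expansion of $y^{1}=\vp^{(1)}(x)$ inside the rectangle $[\si_{1},\si_{2}]\times[\tau_{1},\tau_{2}]$: writing the increments of $y^{1}$ in directions $1$ and $2$ in terms of the increments of $x$ and the iterated integrals in $\X$, the constant term pairs with $\bx^{1;2}$, the first-order directional corrections pair with $\bx^{11;02}$ and $\bx^{01;22}$, and the genuinely bidirectional second-order corrections (coming from the mixed differentials $d_{12}x$ and $d_{\hone\htwo}x$) pair with $\bx^{11;22}$ and $\bx^{\hone1;\htwo2}$. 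The symmetric computation for $z^{2}$ produces the second displayed decomposition, carrying one additional derivative of $\vp$ throughout because the leading differential is $d_{\hone\htwo}x$.

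The core of the argument is then to compute $\der\hat z^{1}$ and to verify that it lands in a space whose total regularity strictly exceeds $1$ in each direction. Here I would use the algebraic (Chen-type) relations satisfied by the elements of $\X$ --- which in direction $1$ and direction $2$ separately relate $\der_{1}$ and $\der_{2}$ of a higher-level increment to products of lower-level ones --- together with the regularity table of Hypothesis \ref{hyp:rough-path-intro}. Every term produced by $\der\hat z^{1}$ either cancels against the coboundary of one of the five building blocks or carries, in at least one direction, three increments of $x$, hence regularity $3\ga_{i}>1$. Once this is established, the sewing map of the theory applies and defines $\rho^{1}=-\Lambda(\der\hat z^{1})$, which inherits the announced triple regularity $(3\ga_{1},3\ga_{2})$ in at least one direction; this simultaneously shows that $z^{1}$ is well defined and, via the boundedness and continuity of $\Lambda$ together with the multiplicativity of the maps $\X\mapsto\bx^{\bullet}$, that $z^{1}$ is a continuous function of $\X$. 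Assertion \emph{(ii)} is then immediate: for smooth $x^{n}$ the building blocks reduce to genuine Lebesgue--Stieltjes integrals, and continuity in $\X$ transfers $\X^{n}\to\X$ to $z^{1,n}\to z^{1}$ and $z^{2,n}\to z^{2}$.

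For the change of variables formula \emph{(iii)} I would argue by approximation. For a smooth path $x^{n}$ the classical planar identity \eqref{eq:simple-change-vb}, rewritten in the compact form \eqref{eq:strato-intro-notation-d12}, reads $\der y^{n}=z^{1,n}+z^{2,n}$ with ordinary Riemann--Stieltjes integrals on the right. Passing to the limit using \emph{(ii)}, together with $\|x-x^{n}\|_{\ga_{1},\ga_{2}}\to0$ and $\vp\in C^{8}_{b}(\R)$ (so that $\der y^{n}\to\der y$), yields $\der y=z^{1}+z^{2}$ with the rough-path integrals on the right, which is exactly \eqref{eq:strato-intro-notation-d12} in the rough sense.

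The main obstacle will be the coboundary computation of the second paragraph. The difficulty is not conceptual but combinatorial: because a break in the integration can occur at different steps in direction $1$ and in direction $2$ (the overlapping integrals emphasized in the introduction), the expansion of $\der\hat z^{1}$ generates the higher-order and mixed-break increments of Remark \ref{rmk:complete-rough-path}, such as $\bx^{11\cdot1;022}$ and $\bx^{1\cdot11;22\cdot2}$. One must track each of these, verify that it satisfies the correct two-parameter Chen relation, and check that the surviving (non-cancelling) ones genuinely carry three $x$-increments in at least one direction. Keeping directions $1$ and $2$ bookkept separately while controlling their coupling --- in particular ensuring that no term of total regularity $\le1$ survives --- is the delicate technical heart of the proof, and is precisely why the rough path must be enlarged beyond the list in Hypothesis \ref{hyp:rough-path-intro}.
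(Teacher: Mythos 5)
Your plan for items (ii) and (iii) (smooth approximation plus continuity in $\X$) matches the paper, and your account of where the five main terms come from is morally right. But the core of your argument --- take the five-term candidate $\hat z^{1}$, check that $\der\hat z^{1}$ is regular, and set $\rho^{1}=-\Lambda(\der\hat z^{1})$ --- does not work in the plane, and this is exactly the point where the paper is forced into a different, hierarchical construction. First, a one-shot correction by $\Lambda$ cannot produce an integral: an increment $z\in\cp_{2,2}$ is an exact planar increment $\der F$ only if \emph{both} partial coboundaries vanish, $\doo z=\dt z=0$, and $\der z=0$ alone does not imply this (e.g.\ $z=\der a\, h$ with $a\in\cac_1$ and $h\in\cac_2$ non-exact has $\der z=0$ but is not of the form $\der F$). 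This is why all integrals in the paper are built as $[\id-\laa_{1}\der_{1}][\id-\laa_{2}\der_{2}](\cdot)$ (Proposition \ref{proposition:integ-2d}, formula \eqref{eq:def-z1-z2-(Id-Lambda)}), which requires controlling $\doo\hat z^{1}$, $\dt\hat z^{1}$ and $\der\hat z^{1}$ separately --- three regularity statements, not one. Note also that your plan is internally inconsistent: you concede that the surviving terms are only triple-regular ``in at least one direction'', but $\Lambda=\laa_1\laa_2$ (and likewise each $\laa_i$ in its own direction) demands regularity strictly greater than $1$ in the relevant direction(s), and its output is regular in \emph{both} directions.

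Second, the required regularity genuinely fails for the five-term candidate, so no bookkeeping can rescue the one-shot sewing. By \eqref{eq:difrul-C1-C2}, $\doo(y^{2}\,\bx^{01;22})=-\doo y^{2}\,\bx^{01;22}+y^{2}\,\doo\bx^{01;22}$, and the Chen-type relation in direction 1 exhibits $\doo\bx^{01;22}$ as a direction-2 double integral of $d_{2}\doo x\, d_{2}\doo x$: it carries only \emph{two} direction-1 increments of $x$. Hence both pieces have direction-1 regularity $2\ga_{1}\le 1$, and they cancel against nothing else in $\doo\hat z^{1}$ (the classical cancellation $-\doo y^{1}\,\der x+y^{2}\doo x\,\der x=-r^{y,1}\der x$ only takes care of the $\bx^{1;2}$/$\bx^{11;02}$ pair; the leftover terms involve different derivatives of $\vp$ and different iterated integrals). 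So $\laa_{1}$ cannot be applied to $\doo\hat z^{1}$, and symmetrically $\laa_2$ cannot be applied to $\dt\hat z^{1}$. The statement of the theorem already signals this: $\rho^{1}$ has triple regularity in at least one direction only, whereas any increment in the image of $\Lambda$ would be regular in both directions, so $\rho^1$ is not of the form $-\Lambda(\der\hat z^{1})$. The paper's proof avoids the obstruction by never sewing the final formula at once: it starts from the exact smooth-case identity \eqref{eq:dcp-delta-z-1}, identifies each boundary term $a^{11;02}$, $a^{01;22}$ through a one-directional expansion closed by $\laa_1$ or $\laa_2$ alone, expands the double term $a^{11;22}$ one level further (where the overlapping integrations force the splitting/gluing operations and the extra increments of Hypothesis \ref{hyp:iter-intg-x}, cf.\ Lemmas \ref{lem:b-111-022} and \ref{lem:b-11.1-2.22}), treats $b^{11\cdot1;22\cdot2}$ by re-invoking the change of variables formula, and only then passes to the limit. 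In short, the decomposition \eqref{eq:strato-der-z1-rough} is an output of that term-by-term construction, not a candidate one can feed into the sewing lemma.
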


Obviously, Theorem \ref{thm:rough-strato-intro} would be of little interest if we could not apply it to processes of interest. To this regard, our guiding example will be the fractional Brownian sheet (fBs in the sequel). Let us recall that this is a centered Gaussian process $x$ defined on $[0,1]^{2}$, with a covariance function $R_{s_{1}s_{2};t_{1}t_{2}}  =\E[x_{s_{1};t_{1}}x_{s_{2};t_{2}}]$ defined by 
\begin{equation}\label{eq:def-cov-fBs}
R_{s_{1}s_{2};t_{1}t_{2}}  
=\frac14 \lp |s_1|^{2\ga_{1}} + |s_2|^{2\ga_{1}} - |s_1-s_2|^{2\ga_{1}}\rp 
\lp |t_1|^{2\ga_{2}} + |t_2|^{2\ga_{2}} - |t_1-t_2|^{2\ga_{2}}\rp,
\end{equation}
where the Hurst parameters $\ga_{1},\ga_{2}$ lye into $(0,1)$.
Many possible representations are available for the fBs, among which we will appeal to the so-called harmonizable representation (see relation \eqref{eq:harmonizable-fBs} below for further details). This allows a natural approximation of $x$ by a sequence of smooth processes $x^{n}$ thanks to a cutoff in frequency, and we recall the following convergence result established in~\cite{Gp}:
\begin{proposition}
Let $x$ a fBs with Hurst parameters $\gamma_j>1/3$, for $j=1,2$. Define the regularization $x^n$  of $x$ given by a frequency cutoff  on $B(0,n)$ in the harmonizable representation of $x$. Then:

\smallskip

\noindent
\emph{(i)}
The family of iterated integral $\mathbb X^n$ defined in~\eqref{hyp:rough-path-intro} associated to $x^n$ fulfills the relation $\lim_{n,m\to\infty}\E[ \cn^{p}(\X^{m}-\X^{n}) ] = 0$
for all $p\ge 1$, where the norm $\cn$ is alluded to at Hypothesis~\ref{hyp:rough-path-intro}.
The limit object $\mathbb X$ is called rough sheet associated to $x$.

\smallskip

\noindent
\emph{(ii)}
Theorem \ref{thm:rough-strato-intro} applies to the fBs $x$.
\end{proposition}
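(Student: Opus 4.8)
The plan is to exploit the harmonizable representation of the fractional Brownian sheet, under which the frequency cutoff $x^{n}$ becomes a genuinely smooth Gaussian field and every component of the stack $\X^{n}$ is realized as a multiple Wiener--It\^o integral of fixed order. Indeed, writing $x$ as a stochastic integral against a complex Gaussian spectral measure with density proportional to $|\xi|^{-\ga_{1}-1/2}|\eta|^{-\ga_{2}-1/2}$ and imposing the cutoff on $B(0,n)$, the first order increments $\bx^{n;1;2}$ and $\bx^{n;\hone;\htwo}$ sit in the first and second chaos respectively, while the second order increments built from two, three or four differentials of $x$ (for instance $\bx^{n;11;22}$, $\bx^{n;1\hone;2\htwo}$, $\bx^{n;\hone\hone;\htwo\htwo}$) live in the second, third and fourth chaos. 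Since each component lies in a fixed finite chaos, hypercontractivity gives $\E[|F|^{p}]\le C_{p}(\E[|F|^{2}])^{p/2}$ for any such $F$, so that proving $\lim_{n,m\to\infty}\E[\cn^{p}(\X^{m}-\X^{n})]=0$ reduces, component by component, to second moment estimates on the rectangular increments of $\X^{m}-\X^{n}$.

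The heart of the argument is therefore a collection of $L^{2}$ bounds of the form
\begin{equation*}
\E\left[\, \big| [\bx^{m;\cdots}-\bx^{n;\cdots}]_{s_{1}s_{2};t_{1}t_{2}} \big|^{2} \,\right]
\le a_{n,m}\, |s_{2}-s_{1}|^{2\ga_{1}'} |t_{2}-t_{1}|^{2\ga_{2}'},
\end{equation*}
where $\ga_{j}'\in(1/3,\ga_{j})$ is chosen with a little room below the target exponent and $a_{n,m}\to 0$ as $n,m\to\infty$; for the doubly regular increments one replaces $\ga_{j}'$ by $2\ga_{j}'$ in the corresponding direction. After symmetrizing the kernels, each such second moment is an explicit integral over $\R^{2}$, $\R^{4}$, $\R^{6}$ or $\R^{8}$ (according to the chaos) of a product of spectral densities against oscillatory factors, the cutoff entering merely as an indicator on the frequency domain; once finiteness is established, $a_{n,m}\to0$ follows by dominated convergence. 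To upgrade these pointwise-in-$(s_{1},s_{2},t_{1},t_{2})$ moment bounds into bounds on the H\"older norms defining $\cn$, I would invoke a two-parameter Garsia--Rodemich--Rumsey inequality adapted to the spaces $\cp_{2,2}^{\al,\beta}$, which is precisely where the slack $\ga_{j}'<\ga_{j}$ is spent; this yields $\E[\cn^{p}(\X^{m}-\X^{n})]\lesssim a_{n,m}^{p/2}$, hence the Cauchy property, and the limit $\X$ is well defined because the relevant H\"older spaces are complete.

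For part (ii) it remains to check that the limiting stack $\X$ satisfies Hypothesis~\ref{hyp:rough-path-intro}. The required regularities of the components are exactly those carried through the $L^{2}$ estimates above, and the geometric (Lebesgue--Stieltjes) nature of $\X$ is inherited by passing to the limit: each $\X^{n}$ is built from the smooth field $x^{n}$ by ordinary iterated integration, so the algebraic relations of Chen and shuffle type among its components, together with $\lim_{n}\|x-x^{n}\|_{\ga_{1},\ga_{2}}=0$, survive the $L^{p}$ limit along the lines of \cite{Gp}. Since the fBs has rectangular increments in $\cp_{2,2}^{\ga_{1},\ga_{2}}$ with $\ga_{1},\ga_{2}>1/3$, Theorem~\ref{thm:rough-strato-intro} then applies verbatim to $x$ equipped with this $\X$.

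The main obstacle is the second moment control of the mixed and overlapping increments, namely those built from the differential $d_{\hone\htwo}x=d_{1}x\,d_{2}x$ and the higher order terms such as $\bx^{\hone\hone;\htwo\htwo}$. These produce third and fourth chaos objects whose covariance integrals carry power-law singularities of type $|\xi|^{1-2\ga_{1}}|\eta|^{1-2\ga_{2}}$, and it is exactly the finiteness of the associated Fourier integrals that forces the threshold $\ga_{j}>1/3$, the planar analogue of the level-two rough path condition. The overlapping structure of integrals breaking at different steps in the two directions makes the kernel bookkeeping and symmetrization the genuinely delicate point, whereas I expect the hypercontractivity reduction and the two-parameter Garsia argument to be comparatively routine.
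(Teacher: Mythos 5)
Be aware first that this paper contains no proof of the proposition: it is stated as a recollection of a convergence result established in \cite{Gp}, so there is no internal argument to compare yours with line by line. Your overall strategy --- realize each component of $\X^{n}$ as a finite-chaos functional via the harmonizable representation, use hypercontractivity to reduce $\E[\cn^{p}(\X^{m}-\X^{n})]$ to second-moment estimates, prove those with a small loss of exponent $\ga_{j}'<\ga_{j}$, and upgrade to H\"older norms by a two-parameter Garsia argument --- is the standard construction of Gaussian rough paths, and it is exactly the machinery this paper itself deploys for the much simpler Proposition \ref{proposition:conve-reg}, where hypercontractivity plus Lemma \ref{lemma:G-R} give $\lim_{n}\E[\|x^{n}-x\|^{p}_{\ga_{1}-\ep,\ga_{2}-\ep}]=0$. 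So the route is the right one in spirit.

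That said, as written your text is a plan rather than a proof, and the deferred steps are precisely where the content lies. (a) You never carry out the second-moment estimates for the second-order and overlapping components; you only assert that the relevant spectral integrals are finite when $\ga_{j}>1/3$ and that $a_{n,m}\to0$ by dominated convergence. Since the convergence of exactly these singular integrals is the substance of item (i) --- this is where the threshold $1/3$ enters, as you say yourself --- leaving them unestimated leaves the main claim unproved. (b) The Garsia-type inequality you invoke must apply to increments in $\cp_{2,2}$ (and in larger spaces) which are \emph{not} rectangular increments of any function: Lemma \ref{lemma:G-R} is stated for $\der y$ with $y\in\cp_{1,1}$, and extending it to objects like $\bx^{11;22}$ requires exploiting their $\doo$- and $\dt$-structure (Chen-type relations) to control off-diagonal behavior; this is a genuine technical step that your sketch does not address. (c) For item (ii), Theorem \ref{thm:rough-strato-intro} as actually proved in Section \ref{sec:pathwise-strato} needs not only the ten increments of Hypothesis \ref{hyp:rough-path-intro} but the full extended stack of Hypothesis \ref{hyp:iter-intg-x} (see Remark \ref{rmk:complete-rough-path}): some 26 further increments, including split objects such as $\bx^{1\otimes1;22}$ or $\bx^{11\otimes1;022}$ living in spaces like $\cac_{2}(\cac_{2}\otimes\cac_{2})$, whose construction and moment estimates are not covered by the list you treat. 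Until (a)--(c) are supplied, item (ii) does not follow ``verbatim'' --- which is presumably why the paper outsources the whole statement to \cite{Gp}.
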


As the reader might imagine, Theorem \ref{thm:rough-strato-intro} can also be applied to a wide range of Gaussian and non Gaussian processes. We focus here on fBs for sake of simplicity.

\subsection{Skorohod integration}
One of the main issue alluded to in this article is a comparison between Stratonovich and Skorohod type change of variable formulas when $x$ is a Gaussian process exhibiting some H\"older regularity in the plane. Towards this aim, our global strategy is to use our Theorems \ref{thm:young-strato-intro} and \ref{thm:rough-strato-intro} and compute corrections between  Stratonovich and Skorohod type integrals. 

\smallskip

We first focus on the Young case, assuming the same regularity conditions as in Section~\ref{sec:strato-young-intro}. We are then able to handle the case of a fairly general centered Gaussian process $x$ whose covariance function $R$ satisfies a factorization property  of the  form
\begin{equation}\label{eq:factorize-R-intro}
\E[x_{s_{1};t_{1}}x_{s_{2};t_{2}}]=R_{s_{1}s_{2};t_{1}t_{2}}=R^1_{{s_{1}s_{2}}}R^2_{t_{1}t_{2}},
\end{equation}
for two covariance functions $R^{1},R^{2}$ on $[0,1]$ and such that $R^{1},R^{2}\in\cac^{1\text{-var}}([0,1]^2)$ (which ensures that $x$ is $(\ga_1,\ga_2)$-H\"older continuous with $\ga_1,\ga_2>1/2$). Notice in particular that the fBs covariance function \eqref{eq:def-cov-fBs} satisfies condition \eqref{eq:factorize-R-intro}.

\smallskip

The standard growth assumptions on $f$ in order to get a Skorohod formula for $f(x)$ should also be met. They will feature prominently in the sequel, and we proceed to recall them now:

\begin{definition}\label{growuthcond}
Let $k\in\mathbb N$, we will say that a function $f\in C^k(\mathbb R)$ satisfies the growth
condition (GC) if there exist positive constants $c$ and $\lambda$
such that
\begin{equation}\label{eq-gc-1}
\lambda<\frac1{4\,\max_{s,t\in[0,1]}\lp R_{s}^{1}R_{t}^{2}\rp},
\quad\mbox{and}\quad
\max_{l=0,. . .,k}|f^{(l)}(\xi)|\le c\,e^{\lambda\,|\xi|^2}\,\ \text{for all }\xi\in\R.
\end{equation}
\end{definition}

With these notations in hand, and denoting quite informally  the Skorohod differentials by $d^\diamond$ (see Section \ref{sec:mall-calc-x} for further explanations), we can summarize our results in the following:

\begin{theorem}\label{thm:young-skoro-intro}
Assume $x$ is a centered Gaussian process on $[0,1]^{2}$ with a covariance function satisfying \eqref{eq:factorize-R-intro}. Consider a function $\vp\in C^{4}(\mathbb R)$ satisfying condition (GC). Then the increments
\begin{equation}\label{eq:def-z1-z2-wick-intro}
z^{1,\diamond} = \int_{1}\int_{2} y^{1} \, d^\diamond_{12}x,
\quad\text{and}\quad
z^{2,\diamond} = \int_{1}\int_{2} y^{2} \, d^\diamond_{\hone\htwo}x,
\end{equation}
are well defined in the Skorohod sense of Malliavin calculus. Moreover:

\smallskip

\noindent
\emph{(i)} Some Riemann convergences hold true: if $\pi^{1}_{n}$ and  $\pi^{2}_{n}$ are 2 partitions of $[s_{1},s_{2}]\times[t_{1},t_{2}]$ whose mesh goes to 0 as $n\to\infty$, then
\begin{eqnarray}
\lim_{n\to\infty} \sum_{\pi^{1}_{n},\pi^{2}_{n}} 
y^{1}_{\si_{i};\tau_{j}} \diamond \bx^{1;2}_{\si_{i}\si_{i+1};\tau_{j}\tau_{j+1}} 
&=&z^{1,\diamond}_{s_{1}s_{2};t_{1}t_{2}}
\label{eq:riemann-wick-z1}\\
\lim_{n\to\infty} \sum_{\pi^{1}_{n},\pi^{2}_{n}} 
y^{2}_{\si_{i};\tau_{j}} \diamond\der_2x_{s_i;t_jt_{j+1}}\diamond\der_1x_{s_is_{i+1};t_j} 
&=&z^{2,\diamond}_{s_{1}s_{2};t_{1}t_{2}},  \label{eq:riemann-wick-z2}
\end{eqnarray}
where $\diamond$ stands for the Wick product in the left hand side of the relations above, and where the convergence holds in both a.s and $L^{2}(\Omega)$ sense.

\smallskip

\noindent
\emph{(ii)} The change of variables formula for $y=f(x)$ becomes
\begin{multline}\label{eq:skor-formula-smooth}
\der y_{s;t}=z^{1,\diamond}_{s;t}+z^{2,\diamond}_{s;t}
+\frac12\int_{1}\int_{2} y^2_{u;v} \, d_{1}R^1_u \, d_{2}R^2_v
+\frac12\int_{1}\int_{2} y^3_{u;v}R^1_{u} \, d_{2}R^2_v \, d^{\diamond}_1x_{u;v}\\
+\frac12\int_{1}\int_{2} y^3_{u;v}R^{2}_{v} \, d_{1}R^1_u \, d^{\diamond}_2x_{u;v}
+\frac14\int_{1}\int_{2} y^4_{u;v}R^1_uR^2_v \, d_{1}R^1_u \, d_{2}R^2_v.
\end{multline}

\smallskip

\noindent
\emph{(iii)} Explicit corrections between $z^{1}$, $z^{2}$ and $z^{1,\diamond}$, $z^{2,\diamond}$ can be computed (see relations \eqref{eq:rel-sko-strato-d12x-young} and~\eqref{eq:strato-skoro-mixed-intg}).

\end{theorem}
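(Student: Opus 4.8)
The plan is to build on the pathwise Stratonovich identity $\der y=z^{1}+z^{2}$ furnished by Theorem~\ref{thm:young-strato-intro}(v), and to convert each Young integral into its Skorohod counterpart plus an explicit correction (trace) term, along the lines of the Gaussian Stratonovich-to-Skorohod strategy of~\cite{AMN}. The factorization~\eqref{eq:factorize-R-intro} endows the Cameron--Martin space of $x$ with a tensor structure $\ch=\ch^{1}\hatotimes\ch^{2}$, where $\ch^{j}$ is the reproducing kernel space of $R^{j}$; this lets us treat the two directions of integration almost independently. I would first record the two features of the Gaussian setting that make the computation tractable: the Malliavin derivative $Dx_{u;v}$ is a deterministic element of $\ch$ reproducing the covariance, $\langle Dx_{u;v},Dx_{a;b}\rangle_{\ch}=R_{ua;vb}$, and $D^{2}x=0$ because $x$ is a first-chaos element.

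Using the growth condition (GC) together with Gaussian hypercontractivity, I would check that $y^{1}=\vp^{(1)}(x)$ and $y^{2}=\vp^{(2)}(x)$ belong to the domain of the divergence operator on $\ch$, so that $z^{1,\di}$ and $z^{2,\di}$ in~\eqref{eq:def-z1-z2-wick-intro} are well defined; the constant $\la$ in~\eqref{eq-gc-1} is tuned exactly so that the relevant exponential moments $\E[e^{\la|x_{u;v}|^{2}}]$ stay finite. For the first integral I would expand each Riemann summand as $y^{1}\,\der x=y^{1}\di\der x+y^{2}\,\E[x_{u;v}\,\der x]$, the last term being the single contraction of $y^{1}$ against the rectangular increment. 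Summing against $y^{2}$ and invoking the symmetry of $R^{1}$ and $R^{2}$, the corner expectations telescope on the diagonal to $\frac12 d_{1}R^{1}$ and $\frac12 d_{2}R^{2}$, so that this contraction converges to $\frac14\int_{1}\int_{2}y^{2}\,d_{1}R^{1}\,d_{2}R^{2}$.

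The mixed integral $z^{2}$ carries the heavier combinatorics. Writing each summand as the triple product $y^{2}\cdot\doo x\cdot\dt x$ and passing to Wick products, exactly four contractions survive: (a) the cross-contraction $\E[\doo x\,\dt x]\,y^{2}$, which by the same corner computation adds a further $\frac14\int_{1}\int_{2}y^{2}\,d_{1}R^{1}\,d_{2}R^{2}$; (b) the contraction of $y^{2}$ with $\dt x$, giving $y^{3}\,\E[x\,\dt x]\,\doo x\to\frac12\int_{1}\int_{2}y^{3}R^{1}\,d_{2}R^{2}\,d^{\di}_{1}x$; (c) the symmetric contraction with $\doo x$, giving $\frac12\int_{1}\int_{2}y^{3}R^{2}\,d_{1}R^{1}\,d^{\di}_{2}x$; and (d) the simultaneous contraction of $y^{2}$ with both increments, which through $D^{2}y^{2}=y^{4}(Dx)^{\otimes2}$ produces $\frac14\int_{1}\int_{2}y^{4}R^{1}R^{2}\,d_{1}R^{1}\,d_{2}R^{2}$. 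No further terms appear, since $\doo x$ and $\dt x$ are first-chaos and can be paired only once each. The two $\frac14$-contributions (from $z^{1}$ and from (a)) combine into the full coefficient $\frac12$ in front of $\int_{1}\int_{2}y^{2}\,d_{1}R^{1}\,d_{2}R^{2}$, and collecting every term reproduces~\eqref{eq:skor-formula-smooth}, which is~(ii). Part~(i) then follows by identifying each Wick Riemann sum with a divergence $\ddi(u^{\pi})$ of a step integrand and passing to the limit using continuity of $\ddi$ together with the preceding domain estimates, the $L^{2}(\Omega)$ convergence being upgraded to almost sure convergence via hypercontractivity and Borel--Cantelli; part~(iii) is then read directly off the displays above as relations~\eqref{eq:rel-sko-strato-d12x-young} and~\eqref{eq:strato-skoro-mixed-intg}.

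I expect the main difficulty to lie in the rigorous handling of $z^{2}$: one must control a genuine double Wick product of the two directional increments, show that only the diagonal part of the cross-contraction $\E[\doo x\,\dt x]$ survives in the mesh limit while the off-diagonal part vanishes, and justify the exchange of limits underpinning the simultaneous $y^{4}$-contraction, all uniformly over the partitions $\pi^{1}_{n},\pi^{2}_{n}$. Because $\doo x$ and $\dt x$ share the base point $(s_{i};t_{j})$, their contraction kernels are not of product form away from the diagonal, so the required $L^{2}$ estimates cannot rely on naive tensorization and must instead exploit the $1$-variation regularity of $R^{1}$ and $R^{2}$; this is precisely where the planar analog of the estimates in~\cite{AMN} has to be established.
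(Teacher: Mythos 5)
Your overall strategy is the same as the paper's: start from the pathwise identity $\der y = z^{1}+z^{2}$ of Theorem \ref{thm:young-strato-intro}(v), expand the Riemann sums of each Young integral into Wick products plus contractions, identify the contractions as covariance traces via $R^{k}_{s_is_{i+1}}-R^{k}_{s_is_i}=\frac12(R^{k}_{s_{i+1}s_{i+1}}-R^{k}_{s_is_i})+\rho^{k}$ with negligible remainder $\rho^{k}$, and conclude by a limiting argument for the divergence operator. Your treatment of $z^{1}$ is exactly the paper's Proposition \ref{f-conv}.

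There is, however, a genuine problem in your bookkeeping for $z^{2}$. You write the contractions (b), (c) as ordinary products $y^{3}\,\E[x\,\dt x]\,\doo x$, yet assign them Skorohod limits, and you give the double contraction (d) a plus sign. These choices belong to two different (each internally consistent) expansions: with \emph{ordinary} products in the contractions, the Riemann sums converge to the \emph{Young} mixed integrals $\frac12\int y^{3}R^{1}\,d_{2}R^{2}\,d_{1}x$ and the $y^{4}$ term enters with a \emph{minus} sign --- this is the paper's relation \eqref{eq:strato-skoro-mixed-intg}; with \emph{Wick} products in the contractions, the limits are the Skorohod mixed integrals and the sign is plus. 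The two versions differ exactly by the Young-to-Skorohod conversion of the mixed integrals, which contributes $\frac12\int y^{4}R^{1}R^{2}\,d_{1}R^{1}\,d_{2}R^{2}$ per term; mixing the conventions as your text literally does would flip the sign of the $\frac14\int y^{4}$ term in \eqref{eq:skor-formula-smooth}. Your final coefficients are correct only because you implicitly used the Wick version throughout. Moreover, in either version a step is missing that your sketch treats as automatic: one must \emph{define} the mixed Skorohod integrals $\int y^{3}R^{1}\,d_{2}R^{2}\,d^{\di}_{1}x$ and prove that the corresponding partial-divergence Riemann sums converge to them. This is the content of the paper's Proposition \ref{proposition:contrac 3}, which is itself a closability argument.

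Two further analytic points. First, $\der^{\diamond}$ is closed but not continuous, so "continuity of $\ddi$" is not available: one must prove convergence of the integrands --- $\sum y^{1}_{s_i;t_j}\1_{\Delta_{ij}}\to y^{1}\1_{\Delta}$ in $L^{2}(\Omega;\ch)$, and for $z^{2,\di}$ convergence of the two-parameter kernel $a^{\pi_n}\to N(y^{2})$ in $L^{2}(\Omega;\ch^{\otimes2})$ (the integrand is $N(y^{2})$, not $y^{2}$ itself) --- \emph{and} convergence of the divergences in $L^{2}(\Omega)$, and only then invoke closedness (the paper's Proposition \ref{prop:cvgce-r-sums-in-H}, Lemma \ref{conver-mixed} and Lemma \ref{clos}). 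Second, your upgrade of $L^{2}$ to almost sure convergence via hypercontractivity and Borel--Cantelli fails for the statement as given: the partitions $\pi^{1}_{n},\pi^{2}_{n}$ are arbitrary with mesh tending to zero, possibly so slowly that no power of the mesh is summable, so Borel--Cantelli cannot be applied. The paper instead obtains almost sure convergence pathwise: the ordinary-product sums converge for every path by Young's sewing estimates (Theorem \ref{thm:young-strato-intro}(iv)), and the covariance-trace sums converge by deterministic sup-norm estimates, so no probabilistic upgrade is needed.
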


Finally, let us move to the Skorohod change of variable in the rough situation. For simplicity of exposition, we have restricted our analysis to the fractional Brownian sheet, mainly because our computations heavily hinges on the explicit regular approximation sequence $x^{n}$ given by the harmonizable representation of fBs (similarly to the construction of the rough path above $x$). 
The Skorohod change of variable (consistent with the formulas obtained in \cite{TVp2}) and Skorohod-Stratonovich comparison we obtain in this case are summarized as follows:
 
\begin{theorem}\label{thm:fBs-skoro-intro}
Assume $x$ is a fractional Brownian sheet on $[0,1]^{2}$, with $\ga_{j}>1/3$ for $j=1,2$. Then the increments $z^{1,\diamond},z^{2,\diamond}$ of equation \eqref{eq:def-z1-z2-intro} are well defined in the Skorohod sense of Malliavin calculus. Moreover:

\smallskip

\noindent
\emph{(i)} Both $z^{1,\diamond}$ and $z^{2,\diamond}$ can be seen as respective limits of $z^{n,1,\diamond}$ and $z^{n,2,\diamond}$, computed as in Theorem \ref{thm:young-skoro-intro} for the regularized process $x^{n}$.

\smallskip

\noindent
\emph{(ii)} For all $f\in C^{6}(\mathbb R)$, the change of variables formula \eqref{eq:skor-formula-smooth} still holds, and can be read as:
\begin{multline}\label{eq:skor-formula-fBs}
\der y_{s;t}=z^{1,\diamond}+z^{2,\diamond}
+ 2\ga_{1}\ga_{2} \int_{1}\int_{2} y^{2}_{u;v} \, u^{2\ga_{1}-1} v^{2\ga_{2}-1} \, du dv
+ \ga_{2} \int_{1}\int_{2} y^{3}_{u;v} \, u^{2\ga_{1}} v^{2\ga_{2}-1} \, d^{\diamond}_{1}x_{u;v} dv \\
+ \ga_{1} \int_{1}\int_{2} y^{3}_{u;v} \, u^{2\ga_{1}-1} v^{2\ga_{2}} \, d^{\diamond}_{2}x_{u;v} du
+\ga_{1}\ga_{2} \int_{1}\int_{2} y^{4}_{u;v} \, u^{4\ga_{1}-1} v^{4\ga_{2}-1} \, du dv.
\end{multline}

\smallskip

\noindent
\emph{(iii)} Explicit corrections between $z^{1}$, $z^{2}$ and $z^{1,\diamond}$, $z^{2,\diamond}$ can be computed (see relations \eqref{eq:correc-sko-strat-f-d12x} and \eqref{eq:correc-ito-strato-mixed-fBs}).
\end{theorem}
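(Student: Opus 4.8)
The plan is to prove the three assertions simultaneously by transferring the statement from the smooth regularized sheets $x^{n}$ to the limit, exploiting that every correction term is explicit and continuous in the approximation. Recall that the frequency cutoff on $B(0,n)$ in the harmonizable representation produces smooth Gaussian sheets $x^{n}$ whose covariance $R^{n}$ converges to the fBs covariance $R$ of \eqref{eq:def-cov-fBs}, and that by the Proposition preceding Section 1.4 the associated rough sheets $\X^{n}$ converge to $\X$ in every $L^{p}(\Omega)$. In particular, Theorem \ref{thm:rough-strato-intro}(ii) guarantees that the Stratonovich increments $z^{1,n},z^{2,n}$ converge to $z^{1},z^{2}$.

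First I would establish the change of variables at the smooth level. Since $x^{n}$ is smooth, its covariance $R^{n}$ is of bounded variation, so the smooth Gaussian computation underlying Theorem \ref{thm:young-skoro-intro} applies to $x^{n}$ and yields
\begin{equation*}
\der y^{n} = z^{n,1,\diamond} + z^{n,2,\diamond} + C^{n},
\end{equation*}
where $C^{n}$ is the explicit sum of trace corrections built out of $R^{n}$ exactly as in \eqref{eq:skor-formula-smooth}. These trace terms arise from the repeated passage between ordinary and Wick products: each conversion of a pointwise product into a $\diamond$-product against an increment of $x^{n}$ costs a contraction $\langle D\,\cdot\,,\,\cdot\rangle_{\ch}$ of a Malliavin derivative, and since $D y^{j,n}=y^{j+1,n}\,D x^{n}$ every such contraction against an increment direction produces a factor $d R^{1,n}$ or $d R^{2,n}$. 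Bookkeeping the first- and second-order contractions then reproduces the four corrective integrals of \eqref{eq:skor-formula-smooth}. This is the point where the reduced regularity $\vp\in C^{6}$ is dictated: two contractions raise the top derivative to $y^{4}$, and the mixed Skorohod integrands $y^{3}\,d^{\diamond}x$ must themselves lie in the domain of the divergence $\dom(\ddi)$.

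Next I would pass to the limit $n\to\infty$. The left-hand side $\der y^{n}$ converges to $\der y$ since $x^{n}\to x$ uniformly and $\vp\in C^{6}$. For the correction $C^{n}$, I would insert the explicit fBs data $R^{1}_{s}=s^{2\ga_{1}}$, $R^{2}_{t}=t^{2\ga_{2}}$, whence $d_{1}R^{1}_{u}=2\ga_{1}u^{2\ga_{1}-1}\,du$ and $d_{2}R^{2}_{v}=2\ga_{2}v^{2\ga_{2}-1}\,dv$; substituting into \eqref{eq:skor-formula-smooth} turns the four terms of $C^{n}$ into the explicit integrals appearing in \eqref{eq:skor-formula-fBs}. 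The two deterministic corrections (those carrying $y^{2}$ and $y^{4}$) converge by continuity once $R^{n}\to R$, while the two genuinely stochastic ones carrying $d^{\diamond}_{1}x$ and $d^{\diamond}_{2}x$ require the closedness argument below. Since both $\der y^{n}$ and the Stratonovich pair $z^{i,n}$ converge, the sum $z^{n,1,\diamond}+z^{n,2,\diamond}$ converges in $L^{2}(\Omega)$; identifying the individual limits $z^{1,\diamond},z^{2,\diamond}$ and showing they are bona fide Skorohod integrals follows from the closedness of the divergence operator, using that the integrands $y^{1,n},y^{2,n}$ converge to $y^{1},y^{2}$ in $\D^{1,2}(\ch)$. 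This yields (i); formula \eqref{eq:skor-formula-fBs} is (ii); and the corrections of (iii) are read off by comparing \eqref{eq:skor-formula-fBs} with the Stratonovich decompositions of Theorem \ref{thm:rough-strato-intro}(i).

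The hard part will be the limit of the Skorohod increments in the rough regime $1/3<\ga_{j}\le 1/2$, where the fBs covariance is only H\"older and carries the singular diagonal term $|s_{1}-s_{2}|^{2\ga_{1}}$. Controlling the trace corrections $C^{n}$ demands sharp estimates on $R^{n}-R$ near the diagonal, precisely because it is the diagonal behaviour of the covariance that produces the finite-variation corrections, and a crude bound would diverge. Equally delicate are the mixed corrective terms carrying $d^{\diamond}_{1}x$ and $d^{\diamond}_{2}x$: these are themselves Skorohod integrals of the low-regularity integrands $y^{3}\,u^{2\ga_{1}}v^{2\ga_{2}-1}$ and $y^{3}\,u^{2\ga_{1}-1}v^{2\ga_{2}}$, so one must verify membership in $\dom(\ddi)$ together with $L^{2}(\Omega)$-convergence of their regularizations, rather than merely convergence of the smooth expressions. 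I expect this analysis — establishing that $z^{n,i,\diamond}$ is Cauchy in $L^{2}(\Omega)$ with a limit lying in the domain of the divergence — to absorb the bulk of the technical work, the remaining bookkeeping of trace terms being a routine, if lengthy, application of the Wick calculus.
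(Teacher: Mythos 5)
Your overall strategy --- regularize via the harmonizable frequency cutoff, apply the Young-case Theorem \ref{thm:young-skoro-intro} to $x^{n}$, insert the fBs data $R^{1}_{s}=s^{2\ga_{1}}$, $R^{2}_{t}=t^{2\ga_{2}}$, and pass to the limit using rough-path continuity for the Stratonovich terms --- is exactly the paper's, and your bookkeeping of the trace terms and of the constants in \eqref{eq:skor-formula-fBs} is correct. But there is a genuine gap at the one step you dispatch with ``closedness of the divergence operator.'' The increments $z^{n,i,\diamond}$ are divergences $\der^{x^{n},\diamond}$ with respect to the Gaussian process $x^{n}$: each $n$ carries its own Hilbert space $\mathcal H^{x^{n}}$ and its own adjoint operator. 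Closability (Lemma \ref{clos}) applies to a \emph{fixed} operator along a sequence $u_{n}\to u$ in its domain space; it says nothing about a sequence $\der^{x^{n},\diamond}(u_{n})$ in which the operator itself varies with $n$. Convergence of the integrands $y^{n,1}\to y^{1}$ in $\D^{1,2}(\ch)$ therefore does not identify $\lim_{n}\der^{x^{n},\diamond}(y^{n,1}\1_{\Delta})$ with $\der^{x,\diamond}(y^{1}\1_{\Delta})$, and this is precisely where the rough case ceases to be a routine limiting argument.

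The paper's resolution is a transfer to a fixed Gaussian structure: by Lemma \ref{lem:derivatives-Hx-HW}, every divergence is rewritten through the white noise $\hat W$ of the harmonizable representation, $\der^{x^{n},\diamond}(u)=\der^{\hat W,\diamond}(K^{n}u)$ and $\der^{x,\diamond}(u)=\der^{\hat W,\diamond}(Ku)$, with $K^{n}$ the explicit cutoff kernel \eqref{eq:def-Kn}. Now there is a single closable operator $\der^{\hat W,\diamond}$, and the whole problem reduces to proving $K^{n}y^{n,1}\to Ky^{1}$ in $L^{2}(\Omega;L^{2}(\R^{2}))$ (together with its tensorized analogue, via $K^{\otimes 2}$, for $z^{2,\diamond}$); this is where the Sobolev-type embedding $\|u\|_{\ch}\lesssim\cn_{\al_{1},\al_{2}}(u)$ of Corollary \ref{prop:embbeding-cam}, the convergence estimates of Proposition \ref{proposition:conve-reg} and Lemma \ref{lemma:h-p-r}, and the frequency-tail bounds enter. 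A second, smaller omission: for $\ga_{j}\le 1/2$ the mixed correction terms $\int y^{3}R^{1}\,d_{2}R^{2}\,d_{1}x$ are no longer Young integrals in direction 1, so before any limit can be taken they must be \emph{defined} by a second-order sewing construction (Proposition \ref{prop:conv-trace-terms-3}, which requires the compensator $\tfrac12\,y^{2}R^{1}(\der_{1}x)^{2}\der_{2}R^{2}$), and only afterwards converted to the Skorohod form appearing in \eqref{eq:skor-formula-fBs}. Without these two devices your plan cannot be completed as stated.
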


\subsection{Further comments}
As the reader might have noticed, our paper gives a rather complete picture of pathwise Stratonovich and It\^o-Skorohod integration for processes indexed by the plane. In the case of pathwise integration, we take up the methodology introduced in~\cite{Gp}, based on a highly nontrivial extension of the rough path theory, and compute explicitly all the terms involved in our Stratonovich expansion. We wished the exposition to be as clear and self-contained as possible, which explains the length of Sections \ref{sec:planar-young} and  \ref{sec:pathwise-strato}.

\smallskip

In order to put our strategy for the It\^o case into perspective, notice that 2 types of methodologies are usually available for changes of variables in case of a Gaussian process $x$: 

\smallskip

\noindent
\emph{(a)} Define a divergence type operator $\der^{\diamond}$ for $x$ and proceed by integration by parts on expressions like $\E[\der f(x) \, G]$, where $G$ is a smooth functional of $x$. This is the strategy invoked e.g. in \cite{AMN,HJT}. 

\smallskip

\noindent
\emph{(b)} Base the calculations on the pathwise change of variables formula of type \eqref{eq:simple-change-vb}. This formula is generally related to some converging Riemann sums like in Theorem \ref{thm:young-strato-intro}, and one can compute corrections between Wick and ordinary products in relation \eqref{eq:cvgce-riem-sums-z1-z2}. This is the method implicitly adopted in \cite{TVp1} and we also resort to this second strategy here, which allows to derive our Skorohod formula and its comparison with the Stratonovich formula at the same time.

\smallskip

\noindent
Unfortunately, the Wick corrections strategy does not work for the rough case, even in the explicit situation of a fractional Brownian sheet. This mainly stems from the fact that  convenient Riemann sums related to formula \eqref{eq:simple-change-vb} are not available (so far) in the case of Theorem~\ref{thm:rough-strato-intro}. This drawback led us to change our strategy again, and proceed by regularization. Indeed, as mentioned before, one can come up with an explicit regular approximation $x^{n}$ of $x$. For this regularization, we can apply Theorem \ref{thm:young-skoro-intro} and get some It\^o-Stratonovich corrections. Invoking the fact that $\der^{\diamond}$ is a closable operator, we can then take limits in our operations as $n\to\infty$. This allows to compare the changes of variables formulas \eqref{eq:simple-change-vb} and \eqref{eq:skor-formula-fBs}, but the interpretation in terms of Riemann-Wick sums is obviously lost in this case. Notice that an approximation procedure (expressed in terms of the extended divergence operator) is also at the heart of \cite{TVp2} for irregular fBs.

\smallskip

Finally, let us say a few words about possible extensions of our work:

\smallskip

\noindent
$\bullet$ Generalizations of Skorohod's change of variable to a Gaussian process without the factorization hypothesis \eqref{eq:factorize-R-intro} on the covariance function of $x$ are certainly possible. However, at a technical level, one should be aware of the fact that the analysis of mixed terms like $\int_{1}\int_{2} y^3_{u;v}R^{2}_{v} \, d_{1}R^1_u \, d^{\di}_{2}x_{u;v}$ would require tools of Young integration in dimension 4. These techniques have been used e.g in \cite{FR}, and the elaboration we need would certainly be cumbersome. We have thus sticked to the factorized case for $R$ for sake of readability.

\smallskip

\noindent
$\bullet$ As mentioned before, our strategy for the Skorohod formula in the rough case relies heavily on a suitable regularization of $x$. Instead of treating the explicit fBs example, we could have stated some general approximation assumptions satisfied in the fBs case. Once again, we have chosen to specialize our study here for sake of clarity. The general case might be handled in a subsequent paper, and we also hope to design a strategy based on Riemann-Wick sums in the next future.

\smallskip

Here is how our article is structured: We recall some basic notation of algebraic integration in dimension 1 at Section \ref{sec:one-dim}, and extend it to integration in the plane at Section~\ref{sec:alg-intg-plane}. The Stratonovich change of variable formula is handled at Section \ref{sec:planar-young} for the Young case and at Section \ref{sec:pathwise-strato} in the rough situation. We then move to Skorohod type formulas at Sections \ref{sec:young-vs-malliavin} and~\ref{sec:skorohod-rough}, respectively for the regular and rough cases.

\section{Algebraic integration in dimension 1}
\label{sec:one-dim}

We recall here the minimal amount of notation concerning algebraic integration theory in $\R$, in order to prepare the ground for further developments in the plane. We refer to ~\cite{Gu,GT} for a more detailed introduction.

\subsection{Increments}\label{incr}

The extended pathwise integration we will deal with is based on the
notion of \emph{increments}, together with an elementary operator $\der$
acting on them. The algebraic structure they generate is described
in \cite{Gu,GT}, but here we present  directly the definitions of
interest for us, for sake of conciseness. First of all,  for a vector space $V$ and an integer $k\ge
1$ we denote by $\cac_k(V)$ the set of functions $g : [0,1]^{k} \to
V$ such that $g_{t_1 \cdots t_{k}} = 0$ whenever $t_i = t_{i+1}$ for
some $i\le k-1$. Such a function will be called a
\emph{$(k-1)$-increment}, and we   set $\cac_*(V)=\cup_{k\ge
1}\cac_k(V)$. We can now define the announced elementary operator
$\der$ on $\cac_k(V)$:
\begin{equation}
 \label{eq:coboundary}
\delta : \cac_k(V) \to \cac_{k+1}(V), \qquad
(\delta g)_{t_1 \cdots t_{k+1}} = \sum_{i=1}^{k+1} (-1)^{k-i}
g_{t_1  \cdots \hat t_i \cdots t_{k+1}} ,
\end{equation}
where $\hat t_i$ means that this particular argument is omitted.
A fundamental property of $\der$, which is easily verified,
is that
$\delta \delta = 0$, where $\delta \delta$ is considered as an operator
from $\cac_k(V)$ to $\cac_{k+2}(V)$.
We    denote $\cz\cac_k(V) = \cac_k(V) \cap \text{Ker}\delta$
and $\cb \cac_k(V) =
\cac_k(V) \cap \text{Im}\delta$.

\vspace{0.3cm}

Some simple examples of actions of $\der$,
which will be the ones we will really use throughout the paper,
are obtained by letting
$g\in\cac_1$ and $h\in\cac_2$. Then, for any $s,u,t\in\ott$, we have
\begin{equation}
\label{eq:simple_application}
 \der g_{st} = g_t - g_s,
\quad\mbox{ and }\quad
\der h_{sut} = h_{st}-h_{su}-h_{ut}.
\end{equation}
Furthermore, it is easily checked that
$\cz \cac_{k+1}(V) = \cb \cac_{k}(V)$ for any $k\ge 1$.
In particular, the following basic property holds:
\begin{lemma}\label{exd}
Let $k\ge 1$ and $h\in \cz\cac_{k+1}(V)$. Then there exists a (non unique)
$f\in\cac_{k}(V)$ such that $h=\der f$.
\end{lemma}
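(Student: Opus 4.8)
The plan is to prove the nontrivial inclusion $\cz\cac_{k+1}(V)\subseteq\cb\cac_k(V)$ by exhibiting an explicit preimage of $h$ under $\der$; the reverse inclusion $\cb\cac_k(V)\subseteq\cz\cac_{k+1}(V)$ is immediate from $\der\der=0$ and is not needed here. The construction I would use is a contracting homotopy: fix once and for all a reference point $a\in\ott$ (say $a=0$) and define $f\in\cac_k(V)$ by appending $a$ as the last argument of $h$, namely
\begin{equation*}
f_{t_1\cdots t_k} := -\,h_{t_1\cdots t_k\,a}.
\end{equation*}
Before anything else I would check that $f$ indeed belongs to $\cac_k(V)$: since $h\in\cac_{k+1}(V)$ vanishes whenever two of its consecutive arguments coincide, and the arguments $t_1,\dots,t_k$ of $f$ are exactly the first $k$ (consecutive) arguments of $h$, the required vanishing of $f$ on the diagonals $t_i=t_{i+1}$, $i\le k-1$, is inherited directly.

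The core of the argument is then to verify $\der f=h$ by a direct computation of the alternating sums, using the hypothesis $\der h=0$. First I would write out, from the definition \eqref{eq:coboundary},
\begin{equation*}
(\der f)_{t_1\cdots t_{k+1}}
= -\sum_{i=1}^{k+1}(-1)^{k-i}\,h_{t_1\cdots\hat t_i\cdots t_{k+1}\,a}.
\end{equation*}
On the other hand, evaluating the identity $\der h=0$ at the $(k+2)$-tuple $(t_1,\dots,t_{k+1},a)$ splits the defining sum of $\der h$ into the $k+1$ terms in which some $t_j$ is omitted (and $a$ is kept in last position), plus the single term $j=k+2$ in which $a$ is omitted, which equals $(-1)^{-1}h_{t_1\cdots t_{k+1}}=-h_{t_1\cdots t_{k+1}}$. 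Isolating this last term yields
\begin{equation*}
h_{t_1\cdots t_{k+1}}
= -\sum_{j=1}^{k+1}(-1)^{k-j}\,h_{t_1\cdots\hat t_j\cdots t_{k+1}\,a},
\end{equation*}
where I have used $(-1)^{(k+1)-j}=-(-1)^{k-j}$. Comparing the two displays gives $(\der f)_{t_1\cdots t_{k+1}}=h_{t_1\cdots t_{k+1}}$, which is the claim. The non-uniqueness of $f$ is then clear, since one may add to it any element of $\cz\cac_k(V)=\ker\der$ (equivalently, change the reference point $a$) without affecting $\der f$.

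I expect the only delicate point to be the sign bookkeeping: one must keep careful track of the alternating signs $(-1)^{k-i}$ in $\der f$ against the signs $(-1)^{(k+1)-j}$ coming from $\der h$ with the extra argument appended, and make sure the boundary term $j=k+2$ is precisely the one carrying $h_{t_1\cdots t_{k+1}}$ with the right sign. Everything else, namely membership of $f$ in $\cac_k(V)$ and the telescoping, is routine. A useful sanity check before writing the general case is the base case $k=1$, where $f_{t_1}=-h_{t_1a}$ and the identity reduces to $h_{t_1t_2}=h_{t_1a}-h_{t_2a}$, exactly the relation obtained by specializing $\der h_{t_1t_2a}=0$ via the second formula in \eqref{eq:simple_application}.
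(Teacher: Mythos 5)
Your proof is correct: the sign bookkeeping against the convention \eqref{eq:coboundary} works out exactly as you claim (the $j=k+2$ term of $\der h_{t_1\cdots t_{k+1}a}=0$ carries $-h_{t_1\cdots t_{k+1}}$, and the remaining terms reproduce $\der f$ with $f_{t_1\cdots t_k}=-h_{t_1\cdots t_k a}$), and membership $f\in\cac_k(V)$ is inherited as you say. The paper itself omits the proof, deferring to \cite{Gu,GT}, and the contracting-homotopy construction you give is precisely the standard argument used there, so there is nothing to add.
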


Lemma \ref{exd} can be rephrased as follows: any element
$h \in\cac_2(V)$ such that $\der h= 0$ can be written as $h = \der f$
for some (non unique) $f \in \cac_1(V)$. Thus we get a heuristic
interpretation of $\der |_{\cac_2(V)}$:  it measures how much a
given 1-increment  is far from being an  exact increment of a
function, i.e., a finite difference.

\vspace{0.3cm}

Notice that our future discussions will mainly rely on
$k$-increments with $k \le 2$, for which we will make  some
analytical assumptions. Namely,
we measure the size of these increments by H\"older norms
defined in the following way: for $f \in \cac_2(V)$ let
\begin{equation}\label{eq:def-norm-C2}
\| f\|_{\mu} =
\sup_{s,t\in\ott}\frac{|f_{st}|}{|t-s|^\mu},
\quad\mbox{and}\quad
\cac_2^\mu(V)=\lcl f \in \cac_2(V);\, \|f\|_{\mu}<\infty  \rcl.
\end{equation}
Obviously, the usual H\"older spaces $\cac_1^\mu(V)$ will be determined
       in the following way: for a continuous function $g\in\cac_1(V)$, we simply set
\begin{equation}\label{def:hnorm-c1}
\|g\|_{\mu}=\|\der g\|_{\mu},
\end{equation}
and we will say that $g\in\cac_1^\mu(V)$ iff $\|g\|_{\mu}$ is finite.
Notice that $\|\cdot\|_{\mu}$ is only a semi-norm on $\cac_1(V)$.
For $h \in \cac_3(V)$ set in the same way
\begin{eqnarray}
 \label{eq:normOCC2}
 \|h\|_{\gamma,\rho} &=& \sup_{s,u,t\in\ott}
\frac{|h_{sut}|}{|u-s|^\gamma |t-u|^\rho}\\
\|h\|_\mu &= &
\inf\left \{\sum_i \|h_i\|_{\rho_i,\mu-\rho_i} ;\, h =
\sum_i h_i,\, 0 < \rho_i < \mu \right\} ,\nonumber
\end{eqnarray}
where the last infimum is taken over all sequences $\{h_i \in \cac_3(V) \}$
such that $h
= \sum_i h_i$ and for all choices of the numbers $\rho_i \in (0,z)$.
Then  $\|\cdot\|_\mu$ is easily seen to be a norm on $\cac_3(V)$, and we set
$$
\cac_3^\mu(V):=\lcl h\in\cac_3(V);\, \|h\|_\mu<\infty \rcl.
$$
Eventually,
let $\cac_3^{1+}(V) = \cup_{\mu > 1} \cac_3^\mu(V)$,
and notice  that the same kind of norms can be considered on the
spaces $\cz \cac_3(V)$, leading to the definition of some spaces
$\cz \cac_3^\mu(V)$ and $\cz \cac_3^{1+}(V)$.

\vspace{0.3cm}

With these notations in mind
the following proposition is a basic result, which  belongs to  the core of
our approach to pathwise integration. Its proof may be found
in a simple form in \cite{GT}.
\begin{proposition}[The $\Lambda$-map]
\label{prop:Lambda}
There exists a unique linear map $\Lambda: \cz \cac^{1+}_3(V)
\to \cac_2^{1+}(V)$ such that
$$
\delta \Lambda  = \id_{\cz \cac_3^{1+}(V)}
\quad \mbox{ and } \quad \quad
\Lambda  \delta= \id_{\cac_2^{1+}(V)}.
$$
In other words, for any $h\in\cac^{1+}_3(V)$ such that $\der h=0$
there exists a unique $g=\laa(h)\in\cac_2^{1+}(V)$ such that $\der g=h$.
Furthermore, for any $\mu > 1$,
the map $\laa$ is continuous from $\cz \cac^{\mu}_3(V)$
to $\cac_2^{\mu}(V)$ and we have
\begin{equation}\label{ineqla}
\|\Lambda h\|_{\mu} \le \frac{1}{2^\mu-2} \|h\|_{\mu} ,\qquad h \in
\cz \cac^{\mu}_3(V).
\end{equation}
\end{proposition}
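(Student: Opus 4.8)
# Proof Proposal for Proposition (The $\Lambda$-map)

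The plan is to construct the map $\laa$ directly as a rapidly converging series built from the iterated action of $\der$, and then to verify the two algebraic identities and the norm bound. The starting point is the observation that for $h\in\cz\cac_3^\mu(V)$ we seek $g\in\cac_2(V)$ with $\der g=h$; the obstruction to writing $g$ explicitly is that $\der|_{\cac_2}$ is not invertible, but on the kernel of $\der|_{\cac_3}$ there is a canonical choice. First I would introduce, for a fixed partition-refinement scheme, the dyadic approximation operators and set up the telescoping identity that expresses $h_{sut}$ through successive midpoint insertions. Concretely, for $h\in\cz\cac_3^\mu$ I would define the candidate
\begin{equation*}
(\laa h)_{st} = \lim_{n\to\infty} \sum_{k=0}^{2^n-1} h_{s,\, t_k^n,\, t_{k+1}^n},
\end{equation*}
where $t_k^n = s + k\,2^{-n}(t-s)$ is the uniform dyadic subdivision of $[s,t]$.

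The key steps, in order, would be as follows. \emph{Step 1: convergence of the defining series.} Using $\der h=0$ I would show that passing from the level-$n$ sum to the level-$(n+1)$ sum changes each summand by a controlled amount: inserting the midpoint $m$ of an interval $[a,b]$ and applying $\der h_{a m b}=0$ rewrites $h_{sab}$ in terms of $h_{sam}$, $h_{smb}$ and a genuinely higher-order remainder governed by $\|h\|_\mu$ with $\mu>1$. The summability of the geometric-type series $\sum_n 2^{n(1-\mu)}$ (which converges precisely because $\mu>1$) gives both existence of the limit and the estimate, yielding the bound $\|\laa h\|_\mu \le (2^\mu-2)^{-1}\|h\|_\mu$ announced in \eqref{ineqla}. \emph{Step 2: the identity $\der\laa=\id$.} I would verify that $\der(\laa h)=h$ by computing $(\der\laa h)_{sut}$ directly from the series representation and using $\der h=0$ to collapse the telescoping sums, recovering $h_{sut}$. \emph{Step 3: the identity $\laa\der=\id$.} For $g\in\cac_2^\mu$ with $\der g\in\cz\cac_3^{1+}$, I would plug $h=\der g$ into the series: each summand $h_{s,t_k^n,t_{k+1}^n}=(\der g)_{s,t_k^n,t_{k+1}^n}=g_{t_k^n t_{k+1}^n}-g_{s t_{k+1}^n}+g_{s t_k^n}$ telescopes, and the regularity $\mu>1$ forces the residual terms to vanish in the limit, leaving exactly $g_{st}$. \emph{Step 4: uniqueness.} If $\laa'$ were another such map, then $\der(\laa-\laa')=0$ on $\cz\cac_3^{1+}$, so $(\laa-\laa')h\in\cz\cac_2^{1+}=\cb\cac_1^{1+}$; but an exact increment $\der f$ with $f\in\cac_1^{1+}$ and H\"older exponent exceeding $1$ must be zero, which pins down $\laa$ uniquely.

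The main obstacle I anticipate is \emph{Step 1}, the quantitative convergence argument. The delicate point is bookkeeping the midpoint-insertion recursion so that the error at each dyadic level is genuinely of order $2^{-n\mu}|t-s|^\mu$ rather than merely $2^{-n}$; this is exactly where the hypothesis $\mu>1$ is indispensable, since for $\mu\le 1$ the series diverges and no such map exists. Getting the constant sharp enough to produce the precise factor $1/(2^\mu-2)$ in \eqref{ineqla}, rather than some cruder bound, requires careful summation of the geometric series with ratio $2^{1-\mu}$ and attention to which term carries the leading contribution. The algebraic identities in Steps 2--4 are then essentially formal consequences of $\der\der=0$ and the telescoping structure, and I would treat them as routine once the analytic estimate is in place. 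Since the excerpt refers to \cite{GT} for the detailed proof, I would expect the author to either reproduce this dyadic construction or invoke an abstract argument establishing invertibility of $\der$ on the relevant cohomologically trivial complex.
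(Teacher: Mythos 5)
Your overall strategy is sound, and it is essentially the one behind the result the paper merely cites: note that the paper gives no proof of Proposition \ref{prop:Lambda} at all, deferring to \cite{GT}, and the argument there is exactly a sewing construction of this kind. Your Step 1 is correct and even produces the sharp constant: since $h_{s\,s\,t}=0$ the level-$0$ sum vanishes, and the midpoint-insertion identity $h_{s t_k m_k}+h_{s m_k t_{k+1}}-h_{s t_k t_{k+1}}=h_{t_k m_k t_{k+1}}$ (a consequence of $(\der h)_{s\, t_k\, m_k\, t_{k+1}}=0$) bounds each dyadic refinement by $2^{n}\|h\|_{\mu}\,(2^{-(n+1)}|t-s|)^{\mu}$, whose sum over $n\ge 0$ is exactly $(2^{\mu}-2)^{-1}\|h\|_{\mu}|t-s|^{\mu}$. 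Steps 3 and 4 are also fine, modulo a harmless sign slip: $(\der g)_{s t_k t_{k+1}}=g_{s t_{k+1}}-g_{s t_k}-g_{t_k t_{k+1}}$.

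The genuine gap is Step 2, the identity $\der\laa=\id$. You cannot get $(\der\laa h)_{sut}=h_{sut}$ by ``collapsing telescoping sums'' within a purely dyadic scheme: the three quantities $(\laa h)_{st}$, $(\laa h)_{su}$, $(\laa h)_{ut}$ are limits of sums over dyadic subdivisions of three \emph{different} intervals, and $u$ is in general not a dyadic point of $[s,t]$, so no cancellation among the three series is available. What the proof actually needs is the identity $h_{s r_i r_{i+1}}=h_{u r_i r_{i+1}}+h_{s u r_{i+1}}-h_{s u r_i}$ (again from $\der h=0$), applied to a partition of $[s,t]$ that \emph{contains} $u$; it telescopes to give $\sum_{[s,t]}=\sum_{[s,u]}+\sum_{[u,t]}+h_{sut}$, whence $\der\laa h=h$ in the limit. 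But to invoke such adapted partitions you must first know that your dyadic limit agrees with the limit along arbitrary partitions of vanishing mesh. This partition-independence is a real ingredient, not a formality; it is usually supplied by Young's point-removal argument (delete one point of an arbitrary partition at a time, each deletion costing a term $h_{r_{i-1} r_i r_{i+1}}$, choosing the point with $|r_{i+1}-r_{i-1}|\le 2|t-s|/(n-1)$, and summing the resulting series, which converges because $\mu>1$), exactly as in \cite{Gu,GT}. Until this step is added, your construction produces a candidate $\laa h$ satisfying the bound \eqref{ineqla} but with no proof that $\der\laa h=h$.
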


\smallskip

Let us mention at this point a first link between the structures
we have introduced so far and the problem of integration of irregular
functions.

\begin{corollary}
\label{cor:integration}
For any 1-increment $g\in\cac_2 (V)$ such that $\der g\in\cac_3^{1+}$,
set
$
\delta f = (\id-\Lambda \delta) g
$.
Then
$$
\delta f_{st} = \lim_{|\Pi_{st}| \to 0} \sum_{i=0}^{n-1}
g_{t_i\,t_{i+1}},
$$
where the limit is over any partition $\Pi_{st} = \{t_0=s,\dots,
t_n=t\}$ of $[s,t]$, whose mesh tends to zero. Thus, the 1-increment
$\delta f$ is the indefinite integral of the 1-increment $g$.
\end{corollary}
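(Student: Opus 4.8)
The plan is to define $f$ via $\der f = (\id - \laa\der)g$ and verify that this expression makes sense, that $\der f$ is genuinely an exact increment (i.e. the increment of a function in $\cac_1$), and finally that it is obtained as the announced limit of Riemann sums. First I would check that the right-hand side $(\id - \laa\der)g$ is well defined: since $g\in\cac_2(V)$ and $\der g\in\cac_3^{1+}$, we have $\der\der g = 0$, so $\der g\in\cz\cac_3^{1+}(V)$ and thus $\laa(\der g)$ is a legitimate element of $\cac_2^{1+}(V)$ by Proposition \ref{prop:Lambda}. Hence $(\id-\laa\der)g\in\cac_2(V)$.

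Next I would show this increment is closed under $\der$, so that Lemma \ref{exd} guarantees it is $\der f$ for some $f\in\cac_1(V)$. Applying $\der$ gives $\der(\id-\laa\der)g = \der g - \der\laa\der g$, and by the identity $\der\laa = \id_{\cz\cac_3^{1+}}$ applied to $\der g\in\cz\cac_3^{1+}$ we get $\der\laa\der g = \der g$. Therefore $\der(\id-\laa\der)g = 0$, so indeed $(\id-\laa\der)g\in\cz\cac_2(V) = \cb\cac_1(V)$, which means it equals $\der f$ for a (non unique) $f\in\cac_1(V)$. This justifies writing the left-hand side as $\der f$.

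The analytic heart of the argument is the Riemann-sum convergence. Here I would fix $s<t$ and a partition $\Pi_{st} = \{t_0=s,\dots,t_n=t\}$, and compare the sum $\sum_{i} g_{t_i t_{i+1}}$ with $\der f_{st}$. The key algebraic fact is that for the exact increment part, $\sum_i (\der f)_{t_i t_{i+1}}$ telescopes exactly to $\der f_{st} = f_t - f_s$. So the error is $\sum_i g_{t_i t_{i+1}} - \der f_{st} = \sum_i (\laa\der g)_{t_i t_{i+1}}$, and I must show this tends to $0$ as the mesh $|\Pi_{st}|\to 0$. Writing $R := \laa\der g\in\cac_2^{\mu}(V)$ for some $\mu>1$, and using that $\der R = \der g$ (again from $\der\laa = \id$), one estimates each term by inserting the midpoint structure: on a refinement, the contribution of merging two adjacent subintervals is controlled by $\|\der R\|_\mu = \|\der g\|_\mu$ via the H\"older bound $|(\der R)_{t_i t_{i+1} t_{i+2}}|\le \|\der g\|_\mu |t_{i+2}-t_i|^\mu$. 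Summing these increments over the partition and using $\mu>1$ yields a bound of order $\sum_i |t_{i+1}-t_i|^\mu \le |t-s|\,|\Pi_{st}|^{\mu-1}\to 0$.

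The main obstacle I anticipate is this last estimate: one must carefully track how the remainder $R$ behaves under refinement of the partition and extract the factor $|\Pi_{st}|^{\mu-1}$ that forces convergence. Concretely, the clean way is to observe that $\sum_i R_{t_i t_{i+1}}$ is itself nearly invariant under refining the partition — removing one point $t_j$ changes the sum by exactly $(\der R)_{t_{j-1}t_j t_{j+1}}$, whose size is bounded by $\|\der g\|_\mu |t_{j+1}-t_{j-1}|^\mu$ — so that the discrete Riemann sum forms a Cauchy net whose limit, by the $\der$-closedness established above, must coincide with $\der f_{st}$. This telescoping-plus-refinement argument, rather than any deep new idea, is where the genuine work lies; the rest is bookkeeping using the properties of $\laa$ from Proposition \ref{prop:Lambda}.
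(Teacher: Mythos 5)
Your proposal is correct and follows essentially the standard argument that the paper itself relies on (the corollary is recalled from the sewing-lemma literature rather than re-proved here): write $g = \der f + \laa\der g$, telescope the exact part over the partition, and kill the remainder using $\laa\der g\in\cac_2^{\mu}(V)$ for some $\mu>1$, which gives $\lln \sum_i (\laa\der g)_{t_it_{i+1}}\rrn \le \|\laa\der g\|_{\mu}\,|t-s|\,|\Pi_{st}|^{\mu-1}\to 0$. Note only that this direct H\"older estimate already closes the proof, so the midpoint/refinement and Cauchy-net considerations in your last two paragraphs are superfluous: they re-derive (in a more laborious way) regularity information that Proposition \ref{prop:Lambda} hands you for free once you know $\laa\der g\in\cac_2^{1+}(V)$.
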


\subsection{Products of increments}\label{sec:comp-cac}

For notational sake, let us specialize now to the case $V=\R$, and just write $\cac_{k}^{\ga}$ for $\cac_{k}^{\ga}(\R)$. The usual product of two increments considered on $(\cac_*,\delta)$ is obtained by gluing one variable in each increment (see e.g \cite{Gu,GT}):
\begin{definition}\label{def:pdt-increments-1d}
For  $g\in\cac_n$ and $h\in\cac_m$, we denote by  $gh$ the element of $\cac_{n+m-1}$ defined by
\begin{equation}\label{eq:convention-pdt}
(gh)_{t_1,\dots,t_{m+n-1}}=
g_{t_1,\dots,t_{n}} h_{t_{n},\dots,t_{m+n-1}},
\quad
t_1,\dots,t_{m+n-1}\in\ott.
\end{equation}
\end{definition}
However, another product (defined without gluing of variables) turns out to be useful for further computations in the plane. This product is called \emph{splitting} and is defined below:
\begin{definition}
For  $g\in\cac_n$ and $h\in\cac_m$, we denote by  $S(g,h)$ the element of $\cac_{n}\otimes\cac_{m}$ defined by
\begin{equation}\label{eq:convention-split}
\lc S(g,h) \rc_{t_1,\dots,t_{m+n}}=
g_{t_1,\dots,t_{n}} h_{t_{n+1},\dots,t_{m+n}},
\quad
t_1,\dots,t_{m+n}\in\ott.
\end{equation}
Notice that $S(g,h)$ can also be considered as an increment in $\cac_{n+m}$, except that it is not required to vanish when $t_{n}=t_{n+1}$.
\end{definition}
The splitting operation will have to be inverted at some point. The inverse operation is called \emph{gluing}:
\begin{definition}\label{def:gluing}
For $n,m\ge 1$, we say that $f$ is an element of $\cm_{n,m}$ if it can be written as a finite linear combination of the following type:
\begin{equation}\label{eq:typical-f-Mmn}
f_{t_{1},\ldots, t_{n+m}} = \sum_{j\in J} \al_j \, g_{t_1,\dots,t_{n}}^{j} \, h_{t_{n+1},\dots,t_{m+n}}^{j},
\quad\mbox{with}\quad 
\al_j\in\R, g^{j}\in\cac_{n}, h^{j}\in\cac_{m}.
\end{equation}
For $f$ of the form \eqref{eq:typical-f-Mmn}, we then define $G(f)\in\cac_{n+m-1}$ as:
\begin{equation*}
[G(f)]_{t_{1},\ldots, t_{n+m-1}}
= \sum_{j\in J} \al_j \, g_{t_1,\dots,t_{n}}^{j} \, h_{t_{n},\dots,t_{m+n-1}}^{j}.
\end{equation*}
\end{definition}
With these definitions in mind, let us remark that if $f\in\cm_{n,m}$ is simply of the form $f_{t_{1},\ldots, t_{n+m}} = g_{t_1,\dots,t_{n}} \, h_{t_{n+1},\dots,t_{m+n}}$ with $g\in\cac_{n}$ and  $h\in\cac_{m}$, then $G(f)=g\, h$.

\smallskip

We now recall some elementary properties concerning products of increments:
\begin{proposition}\label{prop:difrul}
The following differentiation rules hold true:
\begin{enumerate}
\item
Let $g\in\cac_1$ and $h\in\cac_1$. Then
$gh\in\cac_1$ and
\begin{equation}\label{eq:difrulu}
\der (gh) = \der g\,  h + g\, \der h.
\end{equation}
\item
Let $g\in\cac_1$ and $h\in\cac_2$. Then
$gh\in\cac_2$ and
\begin{equation}\label{eq:difrul-C1-C2}
\der (gh) = - \der g\, h + g \,\der h.
\end{equation}
\item
Let $g\in\cac_2$ and $h\in\cac_1$. Then
$gh\in\cac_2$ and
\begin{equation}
\der (gh) = \der g\, h  + g \,\der h.
\end{equation}
\end{enumerate}
\end{proposition}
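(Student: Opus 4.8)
The plan is to verify each of the three identities by a direct, elementary computation: evaluate both sides on the appropriate number of time points using the product convention \eqref{eq:convention-pdt} together with the explicit form \eqref{eq:simple_application} of the coboundary, and then check that the terms telescope. Since $\der$ raises the number of arguments by one, in rule (1) both $\der(gh)$ and $\der g\, h + g\, \der h$ are elements of $\cac_2$, so it suffices to evaluate them on a generic pair $(s,t)\in\ott^2$; in rules (2) and (3) all the quantities involved lie in $\cac_3$, and I would test them on a generic triple $(s,u,t)$.

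For rule (1), since $g,h\in\cac_1$ the product is $(gh)_t=g_t h_t$, whence $\der(gh)_{st}=g_t h_t-g_s h_s$. On the other hand the convention \eqref{eq:convention-pdt} gives $(\der g\, h)_{st}=(g_t-g_s)h_t$ and $(g\,\der h)_{st}=g_s(h_t-h_s)$, and summing these two produces exactly $g_t h_t-g_s h_s$ once the cross terms cancel. For rule (2) the product is $(gh)_{st}=g_s h_{st}$ (the first variable of $g$ survives the gluing), so that $\der(gh)_{sut}=g_s h_{st}-g_s h_{su}-g_u h_{ut}$ by \eqref{eq:simple_application}. Computing the right-hand side, $(\der g\, h)_{sut}=(g_u-g_s)h_{ut}$ while $(g\,\der h)_{sut}=g_s(h_{st}-h_{su}-h_{ut})$; here the minus sign in front of $\der g\, h$ is precisely what is needed to cancel the parasitic $g_s h_{ut}$ and $g_u h_{ut}$ contributions and recover $\der(gh)_{sut}$. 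Rule (3) is handled the same way, starting from $(gh)_{st}=g_{st}h_t$ and combining $(\der g\, h)_{sut}=(g_{st}-g_{su}-g_{ut})h_t$ with $(g\,\der h)_{sut}=g_{su}(h_t-h_u)$.

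There is no genuine obstacle here; the statement is essentially a bookkeeping lemma. The only points requiring care are tracking which variable is preserved by the gluing in \eqref{eq:convention-pdt}—this is exactly what distinguishes rules (2) and (3)—and the sign that appears in rule (2), which is a direct manifestation of the alternating signs in the definition \eqref{eq:coboundary} of $\der$ when a $1$-increment is glued to the left of a $2$-increment.

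Finally I would check the membership assertions, which are immediate from the vanishing of the higher increment on its diagonal: in rule (1) there is nothing to verify since $\cac_1$ carries no diagonal constraint, while in rules (2) and (3) one has $(gh)_{tt}=g_t h_{tt}=0$ and $(gh)_{tt}=g_{tt}h_t=0$ respectively, so that $gh$ indeed lands in $\cac_2$ as claimed.
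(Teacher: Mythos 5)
Your proof is correct and follows essentially the same route as the paper: a direct pointwise verification using the product convention \eqref{eq:convention-pdt} and the explicit form \eqref{eq:simple_application} of $\der$. The paper only writes out rule (1) and declares the others "just as simple," whereas you carry out all three computations (including the sign mechanism in rule (2) and the diagonal-vanishing checks), so your argument is, if anything, more complete than the paper's.
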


\begin{proof}
We will just prove (\ref{eq:difrulu}), the other relations being just as  simple.
If $g,h\in\cac_1 $, then
$$
\lc \der (gh) \rc_{st}
= g_th_t-g_sh_s
=g_s\lp h_t-h_s \rp +\lp  g_t-g_s\rp h_t\\
=g_s \lp \der h \rp_{st}+ \lp \der g \rp_{st} h_t,
$$
which proves our claim.

\end{proof}

\subsection{Iterated integrals as increments}

Iterated integrals of smooth functions on $\ott$ are obviously
particular cases of elements of $\cac_2$, which will be of interest
for us. A typical example of this kind of object is given as follows: consider $f^j\in\cac_1^\infty$ for $j=1,\ldots,n$ and $0\le s_1<s_2\le 1$. For $n\ge 1$, we denote by $\cs_n(s_1,s_2)$ the simplex
\begin{equation}\label{eq:def-simplex}
\cs_n(s_1,s_2)=
\lcl
(\si_1,\ldots,\si_n)\in\ott^n ; \, s_1<\si_1<\cdots<\si_n<s_2
\rcl
\end{equation}
and  we set
\begin{equation}\label{eq:def-iterated-intg}
h_{s_1s_2}^{1,\ldots,n} \equiv
\int_{\cs_n(s_1,s_2)} df_{\si_{1}}^{1} \cdots df_{\si_{n}}^{n}
= \int_{s_1}^{s_2} \int_{s_{1}}^{\si_{n-1}} \cdots \int_{s_{1}}^{\si_{2}} df_{\si_{1}}^{1}   \cdots  df_{\si_{n}}^{n}.
\end{equation}

\smallskip

We now introduce some notation for iterated integrals which is much too complicated for integration in dimension 1, but turns out to be useful for integration in the plane. Indeed, we can alternatively denote the increment $h^{1,\ldots,n}$ defined at \eqref{eq:def-iterated-intg} by
\begin{equation}\label{eq:notation-intg-multilin}
h^{1,\ldots,n} = [ \underbrace{d, \ldots, d}_{n \, {\rm times}} ] (f^1,\ldots , f^n),
\quad\mbox{or}\quad
h^{1,\ldots,n} = \int df^1\cdots  df^n,
\end{equation}
where the integration on the $n$-dimensional simplex is implicit in both cases. We shall also need a small variant of these conventions: we set
\begin{equation}\label{eq:def-partial-intg}
[\underbrace{\id, \ldots, \id}_{j \, {\rm times}} , \underbrace{d, \ldots, d}_{n-j \, {\rm times}}] (f^1,\ldots , f^n)
\equiv f^{1}\cdots f^{j} \int df^{j+1}\cdots  df^n,
\end{equation}
where all the products are understood as products of increments as in Definition \ref{def:pdt-increments-1d}.

\smallskip

With these conventions in mind, the following relations between multiple integrals and the operator $\der$ will also be useful. The reader is sent to \cite{GT} for its elementary proof.
\begin{proposition}\label{prop:dif-intg-1}
Let $f\in\cac_1^\infty$ and $g\in\cac_1^\infty$.
Then it holds that
$$
\der g = \int dg, \qquad \der\lp \int f dg\rp = 0, \qquad \der\lp
\int df dg \rp = \der f \, \der g ,
$$
and
$$
\der \lp [ d, \ldots, d ] (f^1,\ldots , f^n)\rp  =
\sum_{i=1}^{n-1}
 [ d, \ldots, d ] (f^1,\ldots , f^{j}) \,  [ d, \ldots, d ] (f^{j+1},\ldots , f^n).
$$
\end{proposition}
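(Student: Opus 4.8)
The plan is to treat the four identities in increasing order of complexity, noting that the last one contains the others as special cases: taking $n=1$ yields $\der g = \int dg$, taking $n=2$ yields $\der(\int df dg) = \der f\,\der g$, while $\der(\int f\, dg)=0$ is the degenerate instance in which no differential occurs in the outer slot. So the real content is the general relation, and I would set up the proof around it.

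First I would dispose of the two elementary cases by direct computation, to fix ideas. For $\der g = \int dg$, I would simply observe that the one-fold integral satisfies $h^{1}_{s_1 s_2} = \int_{s_1}^{s_2} df^1_{\si_1} = g_{s_2} - g_{s_1} = (\der g)_{s_1 s_2}$ by the fundamental theorem of calculus. For $\der(\int f\, dg)=0$, writing $h_{st} = \int_{s}^{t} f_u\, dg_u \in \cac_2$, I would apply the defining formula \eqref{eq:simple_application} for $\der$ on $\cac_2$ together with the additivity of the Stieltjes integral over adjacent intervals, which gives $\der h_{sut} = h_{st} - h_{su} - h_{ut} = 0$ at once.

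The heart of the matter is the last identity, and the key step I would establish is Chen's multiplicativity relation for the iterated integral: for any intermediate point $u$,
$$
h^{1,\ldots,n}_{st} = \sum_{j=0}^{n} h^{1,\ldots,j}_{su} \, h^{j+1,\ldots,n}_{ut},
$$
with the conventions $h^{\emptyset}_{su} = h^{\emptyset}_{ut} = 1$, so that the terms $j=0$ and $j=n$ reproduce $h^{1,\ldots,n}_{ut}$ and $h^{1,\ldots,n}_{su}$ respectively. To prove it I would decompose the simplex $\cs_n(s,t) = \{ s < \si_1 < \cdots < \si_n < t \}$ according to the unique index $j$ with $\si_j < u < \si_{j+1}$, the boundary contributions where some $\si_i = u$ having measure zero. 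On each such piece the ordering constraints decouple across $u$, so the integral factors as an integral over $\cs_j(s,u)$ in the variables $f^1,\ldots,f^j$ times an integral over $\cs_{n-j}(u,t)$ in the variables $f^{j+1},\ldots,f^n$, which is exactly $h^{1,\ldots,j}_{su}\, h^{j+1,\ldots,n}_{ut}$.

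Granting Chen's relation, I would conclude by isolating the $j=0$ and $j=n$ terms and applying the definition of $\der$ on $\cac_2$, obtaining $\der h^{1,\ldots,n}_{sut} = h^{1,\ldots,n}_{st} - h^{1,\ldots,n}_{su} - h^{1,\ldots,n}_{ut} = \sum_{j=1}^{n-1} h^{1,\ldots,j}_{su}\, h^{j+1,\ldots,n}_{ut}$. Finally I would recognize the right-hand side through the product convention of Definition \ref{def:pdt-increments-1d}: since $(gh)_{sut} = g_{su}\,h_{ut}$ for $g,h\in\cac_2$, the sum is precisely $\sum_{j=1}^{n-1} [d,\ldots,d](f^1,\ldots,f^{j})\,[d,\ldots,d](f^{j+1},\ldots,f^n)$ evaluated at $(s,u,t)$, as claimed; the $n=2$ case then specializes to $(\der f)_{su}(\der g)_{ut} = (\der f\,\der g)_{sut}$. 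The main obstacle is the geometric decomposition of the simplex underlying Chen's relation; once that is in place, the rest is a matter of unwinding the definitions of $\der$ and of the gluing product.
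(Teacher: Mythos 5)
Your proof is correct and takes the standard route that the paper itself delegates to \cite{GT}: Chen's relation, obtained by decomposing the simplex $\cs_n(s,t)$ at the intermediate point $u$, followed by the definition of $\der$ on $\cac_2$ and the gluing product of Definition \ref{def:pdt-increments-1d}. One caveat concerns your framing only: the $n=1$ instance of the general identity yields $\der\lp\int dg\rp=0$ (an empty sum), not $\der g=\int dg$, and $\der\lp\int f\,dg\rp=0$ involves $[\id,d](f,g)$ rather than $[d,d]$, so neither is a literal special case of the last display --- but since you prove all the elementary identities directly (FTC and additivity of the integral over adjacent intervals), the proof itself is unaffected.
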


\section{Algebraic integration in the plane}
\label{sec:alg-intg-plane}

This section is devoted to recall the elements of algebraic integration necessary to define an integral of the form $\int_{[0,1]^{2}} f(x) \, dx$ for a H\"older function $x$ in the plane with H\"older exponent greater than $1/3$. This requires a tensorization of the algebraic structures defined in the previous section, plus some extra tools that we proceed to introduce.

\subsection{Planar increments}
\label{sec:planar-increments}

We consider here increments of a variable $s$ (also called direction 1) and a variable $t$ (also called direction 2), with $(s,t)\in [0,1]^2$. For a vector space $V$, we set
\begin{equation*}
\cp_{k,l}(V)=
\lcl
f\in\cac([0,1]^k\times[0,1]^l ;\, V) ; \, f_{s_1\cdots s_k; \, t_1\cdots t_l}=0 \mbox{ whenever } s_i=s_{i+1}
\mbox{ or } t_j=t_{j+1}
\rcl.
\end{equation*}
In the particular case $V=\R$, we simply set $\cp_{k,l}(\R)\equiv \cp_{k,l}$.

\smallskip

Some partial difference operators $\doo$ and $\dt$ with respect to the first and second direction can be defined as in the previous section. Namely, for $f\in \cp_{k,l}(V)$ we set
\begin{equation*}
\delta_1 : \cp_{k,l}(V) \to \cp_{k+1,l}(V), \qquad
\doo g_{s_1 \cdots s_{k+1}; \, t_1\cdots t_l} = \sum_{i=1}^{k+1} (-1)^{k-i}
g_{s_1  \cdots \hat s_i \cdots s_{k+1}; \, t_1\cdots t_l} ,
\end{equation*}
and we define $\dt$ similarly. The planar increment $\der$ is then obtained as $\der=\doo\,\dt$. Notice that for $f\in\cp_{1,1}$ we have
\begin{equation*}
\der f_{s_1s_2 ; \, t_1 t_2}
=f_{s_2; \, t_2} - f_{s_2; \, t_1} - f_{s_1; \, t_2} + f_{s_1; \, t_1},
\end{equation*}
which is the usual rectangular increment of a function $f$ defined on $\ott^2$ and is consistent with formula \eqref{eq:def-planar-increments-intro}. Let us label the following notation for further use:
\begin{notation}
For $j=1,2$, we set $\cz_j\cp_{k,l}= \cp_{k,l}\cap \ker(\der_j)$ and $\cb_j\cp_{k,l}= \cp_{k,l}\cap {\rm Im}(\der_j)$. We also write $\cz\cp_{k,l}$ for $\cp_{k,l}\cap \ker(\der)$ and $\cb\cp_{k,l}$ for $\cp_{k,l}\cap {\rm Im}(\der)$.
\end{notation}

\smallskip

As in the 1-d case, the H\"older regularity of planar increments is an essential feature of our generalized integration theory. On $\cp_{2,2}(V)$ and $\cp_{3,3}(V)$, it is  measured by a tensorization of the H\"older norms defined at \eqref{eq:def-norm-C2} and \eqref{eq:normOCC2}. Namely, if $f\in\cp_{2,2}(V)$, we set
\begin{equation*}
\|f\|_{\ga_1;\, \ga_2} =
\sup\lcl  \frac{|f_{s_1 s_2; \, t_1 t_2}|}{|s_2-s_1|^{\ga_1} |t_2-t_1|^{\ga_2}} \, ; \, s_1,s_2,t_1,t_2\in\ott\rcl ,
\end{equation*}
and we denote by $\cp_{2,2}^{\ga_1, \ga_2}(V)$ the space of increments in $\cp_{2,2}(V)$ whose $\|\cdot \|_{\ga_1;\, \ga_2}$ norm is finite. Along the same lines, we say that $h\in \cp_{3,3}^{\ga_1, \ga_2}(V)$ if there exist $\ka_1,\ka_2,\rho_1,\rho_2$ such that $\ka_j+\rho_j=\ga_j$, $j=1,2$, and
\begin{equation*}
\sup\lcl  \frac{|h_{s_1 s_2 s_3; \, t_1 t_2 t_3}|}{|s_2-s_1|^{\ka_1} |s_3-s_2|^{\rho_1} |t_2-t_1|^{\ka_2} |t_3-t_2|^{\rho_2}} 
\, ; \, s_1,s_2,s_3,t_1,t_2,t_3\in\ott\rcl < \infty.
\end{equation*}
Similar norms, omitted here for sake of conciseness, can be defined on $\cp_{2,3}(V)$ and $\cp_{3,2}(V)$.

\smallskip

For H\"older continuous increments with regularity greater than 1, one gets the following inversion properties, which are a direct consequence of the one dimensional Proposition~\ref{prop:Lambda}:
\begin{proposition}\label{proposition:planar Lambda}
Let $\ga_1,\ga_2>1$. Then:

\smallskip

\noindent\emph{(1)}
There exist two maps $\laa_1:\cb_1\cp_{3,3}^{\ga_1,\ga_2}\to\cp_{2,3}^{\ga_1,\ga_2}$ and $\laa_2:\cb_2\cp_{3,3}^{\ga_1,\ga_2}\to\cp_{3,2}^{\ga_1,\ga_2}$ such that $\der_j\laa_j=\id$. These maps satisfy the bound $\|\laa_j(h)\|_{\ga_1,\ga_2} \le c_{\ga_j} \|h\|_{\ga_1,\ga_2}$ for $j=1,2$.

\smallskip

\noindent\emph{(2)}
There exists a map $\laa:\cb\cp_{3,3}^{\ga_1,\ga_2}\to\cp_{2,2}^{\ga_1,\ga_2}$ such that $\der\laa=\id$. This map satisfies the bound $\|\laa(h)\|_{\ga_1,\ga_2} \le c_{\ga_1,\ga_2} \|h\|_{\ga_1,\ga_2}$.
\end{proposition}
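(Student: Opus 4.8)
The plan is to reduce the planar (two-dimensional) inversion statement to the one-dimensional $\Lambda$-map from Proposition~\ref{prop:Lambda} by exploiting the tensorized structure $\der=\doo\,\dt$ and treating each direction separately. For part (1), consider the construction of $\laa_1$; the map $\laa_2$ is symmetric. The key observation is that an element $h\in\cb_1\cp_{3,3}^{\ga_1,\ga_2}$, viewed as a function of the first-direction variables $s_1,s_2,s_3$ with the second-direction variables $t_1,t_2,t_3$ held fixed as parameters, lands in $\cz\cac_3^{\ga_1}$ for each fixed value of the $t$-parameters. Indeed $h\in\cb_1\cp_{3,3}$ means $h=\der_1 g$ for some $g$, so in particular $\der_1 h=0$ (using $\der_1\der_1=0$, the planar analog of $\der\der=0$), which says precisely that the $s$-slice lies in the kernel of the one-dimensional $\der$. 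Since $\ga_1>1$, the slice sits in $\cz\cac_3^{\ga_1}$, and I would apply the one-dimensional $\Lambda$ of Proposition~\ref{prop:Lambda} in the first direction to produce $\laa_1(h)_{s_1s_2;\,t_1t_2t_3}:=\Lambda^{(1)}\!\big(h_{\cdot;\,t_1t_2t_3}\big)_{s_1s_2}$, a function with two $s$-variables and three $t$-variables, hence a candidate in $\cp_{2,3}^{\ga_1,\ga_2}$. The defining identity $\der_1\laa_1=\id$ is then immediate from the one-dimensional $\der\Lambda=\id$ applied slicewise.

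The two points needing care are the regularity in the second direction and the norm bound. For the bound, the one-dimensional estimate \eqref{ineqla} gives, for each fixed triple $t_1,t_2,t_3$,
\begin{equation*}
\big|\laa_1(h)_{s_1s_2;\,t_1t_2t_3}\big| \le \frac{1}{2^{\ga_1}-2}\,\|h_{\cdot;\,t_1t_2t_3}\|_{\ga_1}\,|s_2-s_1|^{\ga_1},
\end{equation*}
where $\|\cdot\|_{\ga_1}$ is the one-dimensional norm in the $s$-variables. Dividing by the full second-direction factor $|t_2-t_1|^{\ka_2}|t_3-t_2|^{\rho_2}$ and taking a supremum over all six variables converts this into $\|\laa_1(h)\|_{\ga_1,\ga_2}\le c_{\ga_1}\|h\|_{\ga_1,\ga_2}$ with $c_{\ga_1}=1/(2^{\ga_1}-2)$, provided one checks that the $t$-regularity is genuinely preserved under the slicewise application of $\Lambda^{(1)}$. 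This is where I expect the only real subtlety: $\Lambda^{(1)}$ is defined as an abstract limit (equivalently via the Riemann-sum/sewing description underlying Corollary~\ref{cor:integration}), and one must verify that it commutes with the H\"older control in the frozen $t$-variables, i.e.\ that the decomposition realizing the $\cp_{3,3}^{\ga_1,\ga_2}$ norm survives the inversion. Since $\Lambda^{(1)}$ is linear and continuous, and the sewing limit is constructed uniformly, the $t$-dependence passes through as a bounded parameter; making this fully rigorous amounts to noting that the contraction estimate \eqref{ineqla} holds slicewise with a constant independent of the $t$-parameters, so the supremum defining the tensorized norm is controlled.

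For part (2), I would iterate: given $h\in\cb\cp_{3,3}^{\ga_1,\ga_2}$, note $\der h=\doo\dt h=0$ forces both $\der_1$- and $\der_2$-kernel membership after one inversion. Concretely, set $\laa:=\laa_2\circ\laa_1$ (or $\laa_1\circ\laa_2$, after checking the two directions commute), first inverting direction~1 to land in $\cp_{2,3}^{\ga_1,\ga_2}$, then inverting direction~2 to reach $\cp_{2,2}^{\ga_1,\ga_2}$. The identity $\der\laa=\doo\dt\laa_2\laa_1=\id$ follows from $\dt\laa_2=\id$ and $\doo\laa_1=\id$ together with the commutation of $\doo$ with $\laa_2$ (which acts only on the $t$-variables) and of $\dt$ with $\laa_1$; one must verify that after applying $\laa_1$ the resulting increment still lies in the appropriate $\cb_2$-type domain so that $\laa_2$ applies, which again reduces to $\der_2$ annihilating the slice. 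The composed norm bound is then $\|\laa(h)\|_{\ga_1,\ga_2}\le c_{\ga_1}c_{\ga_2}\|h\|_{\ga_1,\ga_2}$, giving $c_{\ga_1,\ga_2}=c_{\ga_1}c_{\ga_2}$, and the whole statement follows as a direct tensorization of the one-dimensional result, exactly as anticipated by the phrase preceding the proposition.
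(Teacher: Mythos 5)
Your proposal is correct and follows exactly the route the paper intends: the paper omits the proof, remarking only that the result is a direct consequence of the one-dimensional Proposition~\ref{prop:Lambda} with $\laa=\laa_1\laa_2$, which is precisely your slicewise application of the one-dimensional $\Lambda$-map in each direction (with the constant in \eqref{ineqla} uniform in the frozen variables) followed by composition. Your identification of the two points needing care—preservation of regularity in the frozen direction, and the fact that $\dt\laa_1 h=\laa_1\dt h=0$ puts $\laa_1 h$ in the domain of $\laa_2$—fills in the details the authors left to the reader, and both are handled correctly.
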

We do not include the proof of this proposition for sake of conciseness. Let us just mention that (as the reader might imagine) we have $\laa=\laa_1\laa_2$. It should also be observed that some 2-dimensional Riemann sums are related to the sewing map $\laa$, echoing Corollary~\ref{cor:integration}:

\begin{proposition}\label{proposition:integ-2d}
Let $g\in\cp_{2,2}$ satisfying the following assumptions:
\begin{equation*}
\doo g \in \cp_{3,2}^{\ga_1,*}, \qquad
\dt g \in \cp_{2,3}^{*,\ga_2}, \qquad
\der g \in \cp_{3,3}^{\ga_1,\ga_2},
\end{equation*}
for $\ga_1,\ga_2>1$, where $*$ denotes any kind of H\"older regularity. Then there exists $f\in\cp^{1,1}$ such that
\begin{equation*}
\der f = \lc  \id - \laa_1 \doo \rc \lc  \id - \laa_2 \dt \rc g,
\quad\mbox{and}\quad
\lim_{|\pi|\to 0}\sum_{\si_i,\tau_j\in\pi} g_{\si_{i} \si_{i+1} ;  \tau_{j} \tau_{j+1}} = \der f_{s_1 s_2; t_1 t_2},
\end{equation*}
where $\pi$ designates a family of rectangular partitions of $[s_1,s_2]\times[t_1,t_2]$ whose mesh goes to 0.
\end{proposition}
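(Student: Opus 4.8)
The plan is to build $\der f$ by successively applying the two one-directional inversion maps $\laa_1$ and $\laa_2$ and to control the resulting Riemann sums by an iterated (direction-by-direction) application of the one-dimensional sewing mechanism of Corollary~\ref{cor:integration}. First I would set $g^{(1)} = [\id - \laa_1\doo]\,g$. Since $\doo g\in\cp_{3,2}^{\ga_1,*}$ with $\ga_1>1$, the hypotheses of Proposition~\ref{proposition:planar Lambda}(1) are met in direction $1$, so $\laa_1\doo g$ is well defined and $g^{(1)}\in\cp_{2,2}$; by construction $\doo g^{(1)} = \doo g - \doo\laa_1\doo g = 0$, using $\doo\laa_1 = \id$ on the relevant image space. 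Thus $g^{(1)}\in\cz_1\cp_{2,2}$, i.e.\ it is an \emph{exact} increment in direction $1$. Next I would apply the same construction in direction $2$, setting $\der f := [\id - \laa_2\dt]\,g^{(1)} = [\id - \laa_2\dt][\id - \laa_1\doo]\,g$, which is the expression in the statement. One must check that $\dt g^{(1)}$ has the regularity required to apply $\laa_2$; this follows from $\dt g\in\cp_{2,3}^{*,\ga_2}$ together with the regularity of $\dt\laa_1\doo g$, which in turn is governed by $\der g = \doo\dt g \in\cp_{3,3}^{\ga_1,\ga_2}$. After applying $[\id-\laa_2\dt]$ the resulting increment is annihilated by both $\doo$ and $\dt$ (the $\laa_2$ step kills the $\dt$-part exactly as above, and commutation of $\doo$ with $\laa_2,\dt$ preserves the vanishing under $\doo$ established in the first step), hence $\der f\in\cz\cp_{2,2}$ and therefore genuinely defines a rectangular increment $\der f$ of some $f\in\cp_{1,1}$.

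For the Riemann-sum convergence I would exploit the tensor/product structure of the planar sewing. The key observation is that the one-dimensional Corollary~\ref{cor:integration} expresses $[\id-\laa\der]g$ as the limit of $\sum g_{t_i t_{i+1}}$ whenever $\der g\in\cac_3^{1+}$. Applying this first in direction $1$, for each \emph{fixed} pair $\tau_j,\tau_{j+1}$ of direction-$2$ endpoints, yields
\begin{equation*}
\lim_{|\pi^1|\to 0}\sum_{\si_i\in\pi^1} g_{\si_i\si_{i+1};\tau_j\tau_{j+1}}
= \big([\id-\laa_1\doo]\,g\big)_{s_1 s_2;\tau_j\tau_{j+1}} = g^{(1)}_{s_1 s_2;\tau_j\tau_{j+1}},
\end{equation*}
the hypothesis $\doo g\in\cp_{3,2}^{\ga_1,*}$ with $\ga_1>1$ supplying exactly the $\cac_3^{1+}$ condition in the $s$-variable uniformly in the frozen $t$-variables. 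Then applying Corollary~\ref{cor:integration} again in direction $2$ to the increment $g^{(1)}$, whose direction-$2$ coboundary $\dt g^{(1)}$ lies in the appropriate $1{+}$ H\"older class by the computation above, gives $\sum_{\tau_j}g^{(1)}_{s_1 s_2;\tau_j\tau_{j+1}}\to \der f_{s_1 s_2;t_1 t_2}$. The point is that the double sum over the rectangular partition $\pi$ factorizes into these two nested one-dimensional limits, so the two limits can be taken jointly as $|\pi|\to 0$.

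The main obstacle is precisely justifying the interchange of the two limiting directions, i.e.\ that the nested limits equal the joint limit over $|\pi|\to 0$. The difficulty is that after the direction-$1$ sewing one no longer has an exact Riemann sum but an error term controlled by the $\laa_1$-bound, and this error must be shown to be summable and vanishing \emph{uniformly} when the direction-$2$ partition is simultaneously refined. Concretely I would estimate the double remainder $\sum_{\si_i,\tau_j}\big(\der[\id-\laa_1\doo]g\big)$-type corrections using the quantitative bounds $\|\laa_j h\|_{\ga_1,\ga_2}\le c\|h\|_{\ga_1,\ga_2}$ from Proposition~\ref{proposition:planar Lambda}, showing that the total error over a rectangular partition is $O(|\pi|^{(\ga_1\wedge\ga_2)-1})\to 0$ since $\ga_1,\ga_2>1$. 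The separate regularity assumptions on $\doo g$, $\dt g$ and the joint assumption $\der g\in\cp_{3,3}^{\ga_1,\ga_2}$ are tailored exactly so that each of the three error contributions (direction-$1$ only, direction-$2$ only, and the genuinely mixed corner term) carries a strictly super-linear power of the mesh, which is what makes the joint limit exist and coincide with the iterated one.
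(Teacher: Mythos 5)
Your construction of $\der f$ is sound, and since the paper states this proposition without giving its proof, I assess your argument on its own terms. The identity $\doo[\id-\laa_1\doo]g=0$, the regularity check $\dt g^{(1)}=\dt g-\laa_1\der g\in\cp_{2,3}^{*,\ga_2}$, the conclusion that an increment annihilated by both $\doo$ and $\dt$ is a rectangular increment of some $f\in\cp_{1,1}$, and the nested (iterated) limits are all correct. One small point: you build $[\id-\laa_2\dt][\id-\laa_1\doo]g$ whereas the statement reads $[\id-\laa_1\doo][\id-\laa_2\dt]g$; expanding both, these coincide only after checking $\laa_1\laa_2\der g=\laa_2\laa_1\der g$, which does hold because each $\laa_j$ acts direction-wise, but it should be said.

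The genuine gap is in your final estimate, i.e.\ precisely at the step you flag as the crux. You claim each of the three corrections carries a strictly super-linear power of the mesh, so that the total error is $O(|\pi|^{(\ga_1\wedge\ga_2)-1})$. That is false for the single-direction corrections. In the decomposition $g=\der f+\laa_1\doo g+\laa_2\dt g^{(1)}$, the increment $\laa_1\doo g$ only obeys $|(\laa_1\doo g)_{s_1s_2;t_1t_2}|\lesssim|s_2-s_1|^{\ga_1}|t_2-t_1|^{\beta_2}$, where $\beta_2$ is the unspecified exponent ``$*$'' of the hypothesis and is typically smaller than $1$ (in the Young application of Theorem \ref{prop:young-2d-y-dx} one has $\doo g=-\doo y^1\,\bx^{1;2}\in\cp_{3,2}^{2\ga_1,\ga_2}$ with $\ga_2<1$). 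Summing absolute values over an $N_1\times N_2$ rectangular partition then gives a bound of order $N_1^{1-\ga_1}N_2^{1-\beta_2}$, which diverges when the direction-2 mesh is refined much faster than the direction-1 mesh; so no term-wise bound of the kind you propose can close the joint limit. The repair uses the hypotheses structurally, not just as size bounds. First, $\laa_2\dt g^{(1)}$ is exactly additive in direction~1, since $\doo\laa_2\dt g^{(1)}=\laa_2\dt\doo g^{(1)}=0$; hence its double sum telescopes to $\sum_j(\laa_2\dt g^{(1)})_{s_1s_2;\tau_j\tau_{j+1}}=O(|\pi^2|^{\ga_2-1})$. Second, for $\laa_1\doo g$ you must sew a second time, now in direction~2, before estimating: since $\dt\laa_1\doo g=\laa_1\der g\in\cp_{2,3}^{\ga_1,\ga_2}$ --- this is where $\der g\in\cp_{3,3}^{\ga_1,\ga_2}$ really enters, not merely through a ``mixed corner term'' --- one gets for each fixed $i$
\begin{equation*}
\sum_j(\laa_1\doo g)_{\si_i\si_{i+1};\tau_j\tau_{j+1}}
=\lc(\id-\laa_2\dt)\laa_1\doo g\rc_{\si_i\si_{i+1};t_1t_2}
+O\lp|\si_{i+1}-\si_i|^{\ga_1}\,|\pi^2|^{\ga_2-1}\rp,
\end{equation*}
and both contributions, summed over $i$, are $O\lp|\pi^1|^{\ga_1-1}(1+|\pi^2|^{\ga_2-1})\rp\to0$. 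With these two replacements your argument closes.
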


\subsection{Products of planar increments}
This section is a parallel of Section \ref{sec:comp-cac}, and we mainly deal here with a state space $V=\R$. We describe the different conventions on products of 2-d increments which will be used in the sequel, starting from the equivalent of Definition \ref{def:pdt-increments-1d}:
\begin{definition}
For  $g\in\cp_{n_1,n_2}$ and $h\in\cp_{m_1,m_2}$, we denote by  $gh$ the element lying in the space $\cp_{n_1+m_1-1,n_2+m_2-1}$ defined by
\begin{equation*}
(gh)_{s_1,\ldots s_{n_1+m_1-1}; t_1,\dots,t_{n_2+m_2-1}}=
g_{s_1,\ldots s_{n_1}; t_1,\dots,t_{n_2}} h_{s_{n_1},\ldots s_{n_1+m_1-1}; t_{n_2},\dots,t_{n_2+m_2-1}}.
\end{equation*}
\end{definition}

We now define the equivalent of splitting for increments in $\cp$:
\begin{definition}\label{def:splitting-plane}
Let $g\in\cp_{n_1,n_2}$ and $h\in\cp_{m_1,m_2}$. Then:

\smallskip
\noindent $\bullet$ 
The partial splitting $S_1(g,h)$ is the element of $\cac_{n_2+m_2-1}(\cac_{n_1}\otimes\cac_{m_1})$ defined by
\begin{equation*}
\lc S_1(g,h) \rc_{s_1,\ldots s_{n_1+m_1}; t_1,\dots,t_{n_2+m_2-1}}
= g_{s_1,\ldots s_{n_1}; t_1,\dots,t_{n_2}} h_{s_{n_1+1},\ldots s_{n_1+m_1}; t_{n_2},\dots,t_{n_2+m_2-1}}.
\end{equation*}

\smallskip
\noindent $\bullet$ 
The partial splitting $S_2(g,h)$ is the element of $\cac_{n_1+m_1-1}(\cac_{n_2}\otimes\cac_{m_2})$ defined by
\begin{equation*}
\lc S_2(g,h) \rc_{s_1,\ldots s_{n_1+m_1-1}; t_1,\dots,t_{n_2+m_2}}
= g_{s_1,\ldots s_{n_1}; t_1,\dots,t_{n_1}} h_{s_{n_1},\ldots s_{n_1+m_1-1}; t_{n_2+1},\dots,t_{n_2+m_2}}.
\end{equation*}

\smallskip
\noindent $\bullet$ 
The  splitting $S(g,h)$ is the element of $\cac_{n_1}\otimes\cac_{m_1}(\cac_{n_2}\otimes\cac_{m_2})$ defined by
\begin{equation*}
\lc S_1(g,h) \rc_{s_1,\ldots s_{n_1+m_1}; t_1,\dots,t_{n_2+m_2}}
= g_{s_1,\ldots s_{n_1}; t_1,\dots,t_{n_2}} h_{s_{n_1+1},\ldots s_{n_1+m_1}; t_{n_2+1},\dots,t_{n_2+m_2}}.
\end{equation*}
\end{definition}
Notice that inverse operations of splittings can also be defined for planar increments. For sake of conciseness, we let the patient reader  generalize Definition \ref{def:gluing} in order to get the definition of gluing $G_1,G_2$ and $G$ for planar increments.

\smallskip

We close this section by introducing a last product of increments which is labeled for further computations.
\begin{definition}\label{def:g-circ-h}
Let $g\in\cp_{2,1}$ and $h\in\cp_{1,2}$. Then $g\circ h$ is the increment in $\cp_{2,2}$ defined by $[g\circ h]_{s_1s_2;t_1t_2}=g_{s_1s_2;t_1} h_{s_1;t_1t_2}$.
\end{definition}

\subsection{Iterated integrals as increments in the plane}
\label{sec:iter-intg-plane}

The relationship between iterated integrals and increments in the plane is crucial for us. Generally speaking, an iterated integral is given as follows: consider $f^j\in\cp_{1,1}^\infty$ for $j=1,\ldots,n$ and $(s_1,s_2), (t_1,t_2)\in\cs_{2}$, where we recall relation \eqref{eq:def-simplex} defining  simplexes. Then we set
\begin{eqnarray}\label{eq:def-iterated-intg-plane}
h_{s_1s_2; t_1,t_2}^{1,\ldots,n} &\equiv &
\int_{\cs_n(s_1,s_2) \times \cs_n(t_1,t_2)} 
d_{12}f_{\si_{1}; \tau_{1}}^{1} \cdots d_{12}f_{\si_{n};\tau_{n}}^{n}  \\
&=& \int_{s_1}^{s_2} \int_{t_1}^{t_2} \int_{s_{1}}^{\si_{n-1}} \int_{t_{1}}^{\tau_{n-1}} 
\cdots \int_{s_{1}}^{\si_{2}} \int_{t_{1}}^{\tau_{2}} 
d_{12}f_{\si_{1}; \tau_{1}}^{1} \cdots d_{12}f_{\si_{n};\tau_{n}}^{n} ,  \notag
\end{eqnarray}
where we recall from Section \ref{sec:general-notation} that $d_{12}f_{\si; \tau}^{j}$ stands for $\partial_{\si \tau}^{2}f_{\si; \tau}^{j}$.

\smallskip

Expression \eqref{eq:def-iterated-intg-plane} is obviously cumbersome, and it could in particular become clearer by separating the $s,\si$ from the $t,\tau$ variables. This is where the conventions introduced in equation \eqref{eq:notation-intg-multilin} turn out to be useful. Namely, one can simply tensorize \eqref{eq:notation-intg-multilin} in order to write the increment $h^{1,\ldots,n}$ defined at \eqref{eq:def-iterated-intg-plane} as
\begin{equation}\label{eq:def-intg-mult-2d}
h^{1,\ldots,n} = 
[ d_{1}, \ldots, d_{1} ] \otimes
[ d_{2}, \ldots, d_{2} ] \,
(f^1,\ldots , f^n),
\quad\mbox{or}\quad
h^{1,\ldots,n} = \int_{1} \int_{2} d_{12}f^1\cdots  d_{12}f^n,
\end{equation}
and notice that we will mainly use the second convention throughout the paper.
This notation proves to be particularly convenient when one is faced with partial integrations (as introduced in \eqref{eq:def-partial-intg}) in both directions 1 and 2. In order to illustrate this point, let us consider the simple example
\begin{equation}\label{eq:example-mixed-intg}
g\equiv [ d_{1}, d_{1} ] \otimes [ \id , d_{2} ] \, (f^1 , f^2)
= \int_1 d_1 f^1 \int_2 d_{12} f^2 .
\end{equation}
Let us now describe the algorithm which allows to go from expression \eqref{eq:example-mixed-intg} to an integral like~\eqref{eq:def-iterated-intg-plane}. It can be summarized as follows:
\begin{itemize}
\item 
For direction 1, count the number of iterated integrals starting from the left hand side (in our example this number is 2). Then interpret these integrals as integrals on the simplex in direction 1 and write the 1-variables.
\item
Do the same for direction 2. In our example, there is only one integral in this direction, so that variable 2 is frozen in the first differential $d_1f^1$.
\end{itemize}
Applying this algorithm, the reader can easily check that $g$ defined at \eqref{eq:example-mixed-intg} can be written as
\begin{equation*}
g_{s_1s_2;t_1 t_2}=
\int_{s_1<\si_1<\si_2<s_2} d_{1}f_{\si_1;t_{1}}^{1}  \int_{t_1}^{t_2} d_{12}f_{\si_2;\tau_{1}}^{2}.
\end{equation*}
For sake of conciseness, we omit generalizations of this simple example.

\section{Planar Young integration}
\label{sec:planar-young}

Before going into the computational details of Young integration, let us describe the general strategy we shall follow in order to obtain our It\^o-Stratonovich type change of variable formulae in case of non smooth functions $x$. Indeed, we start from a smooth approximation $x^{n}$ to our path $x$ and we introduce a useful notation for the remainder of the computations:

\begin{notation}\label{not:smooth-path-y-fx}
We shall drop the index $n$ of approximations in $x^{n}$, which means that $x$ will stand for a generic smooth path defined on $\ou^{2}$. For a smooth function $\vp:\R\to\R$, we also write $y$ for the path $\vp(x)$ and for all $j\ge 1$ we set $y^{j}=\vp^{(j)}(x)$.
\end{notation}

With these notations in hand, for a smooth sheet $x$ and $f\in\cac_b^{2}$ it is well known that formula \eqref{eq:simple-change-vb} holds true. Recall that we have written this relation under the following form, compatible with our convention \eqref{eq:def-intg-mult-2d}:
\begin{equation}\label{eq:ito-plane-smooth2}
\der y = \int_{1}\int_{2} y^{1} \, d_{12}x + \int_{1}\int_{2} y^{2} \, d_{\hone\htwo}x.
\end{equation}
We shall see that this formula still holds true in the limit for $x$, except that the integrals involved in the right hand side of \eqref{eq:ito-plane-smooth2} have to be interpreted in a sense which goes beyond the Riemann-Stieltjes case. Our main task will thus be to obtain a definition of $\int_{1}\int_{2} y^{1} \, d_{12}x$ and $ \int_{1}\int_{2} y^{2} \, d_{\hone\htwo}x$ involving iterated integrals of $x$ and increments of $y$ (or $y^{j}$ for $j\ge 1$) only. Though this task might overlap with some aspects of \cite{Gp}, we present it here because it is short enough and allows us to introduce part of our formalism.

\smallskip

Let us introduce what will be later interpreted as the first order elements of the planar rough path above $x$:
\begin{notation}\label{not:1st-order-increments-x}
Let $x\in\cp_{1,1}^{\ga_1,\ga_2}$ with $\ga_1,\ga_2>1/2$. We set
\begin{equation*}
\bx^{1;2} = \der x, 
\quad\text{and}\quad
\bx^{\hone;\htwo} = [\id-\laa_{1}\der_{1}] [\id-\laa_{2}\der_{2}] \left( \der_{1}x \, \der_{2}x\right).
\end{equation*}
Notice that for smooth functions we also have
\begin{equation*}
\bx^{1;2} = \int_{1}\int_{2} d_{12}x, 
\quad\text{and}\quad
\bx^{\hone;\htwo} = \int_{1}\int_{2} d_{1}x \, d_{2}x.
\end{equation*}
\end{notation}

We are now ready to express the integrals in \eqref{eq:ito-plane-smooth2} in terms of the planar sewing map $(\Lambda_i)_{i=1,2}$ and $\Lambda=\Lambda_1\Lambda_2$ and the increments introduced in Notation \ref{not:1st-order-increments-x}.

\subsection{Change of variables formula}

The following theorem gives the analog of relation~\eqref{eq:ito-plane-smooth2} in the Young case, and is a way to recast Theorem \ref{thm:young-strato-intro}. Notice that some extensions of these results are contained in~\cite{Gp}.

\begin{theorem}\label{prop:young-2d-y-dx}
Let $x\in\cp_{1,1}^{\gamma_1,\gamma_2}$ with $\ga_1,\ga_2>1/2$ and $\varphi\in C^4(\mathbb R)$. With our notations \ref{not:1st-order-increments-x} and \ref{not:smooth-path-y-fx} in mind, define two increments $z^{1},z^{2}$ as
\begin{equation}\label{eq:def-z1-z2-(Id-Lambda)}
z^{1} = [\id-\laa_{1}\der_{1}] [\id-\laa_{2}\der_{2}] ( y^1 \, \bx^{1;2} ),
\quad\text{and}\quad
z^{2} = [\id-\laa_{1}\der_{1}] [\id-\laa_{2}\der_{2}] ( y^2 \, \bx^{\hone;\htwo} ).
\end{equation}
Then items (i)--(v) of Theorem \ref{thm:young-strato-intro} hold true. Furthermore, for $j=1,2$ the following bound is satisfied: $\|z^{j}\|_{\ga_{1};\ga_{2}}\le c_{\vp} (1 + \|x\|_{\ga_{1};\ga_{2}}^{2})$.
\end{theorem}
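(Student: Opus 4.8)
The plan is to read every assertion off from the two–dimensional sewing Proposition~\ref{proposition:integ-2d} applied to the glued products $g_1 = y^1\,\bx^{1;2}$ and $g_2 = y^2\,\bx^{\hone;\htwo}$, since by definition $z^{1}=[\id-\laa_{1}\der_{1}][\id-\laa_{2}\der_{2}]g_1$ and $z^2=[\id-\laa_1\der_1][\id-\laa_2\der_2]g_2$. The first task is thus to check the three hypotheses of Proposition~\ref{proposition:integ-2d}, namely that $\der_1 g_j\in\cp_{3,2}^{\cdot,*}$, $\der_2 g_j\in\cp_{2,3}^{*,\cdot}$ and $\der g_j\in\cp_{3,3}^{\cdot,\cdot}$ have direction-1, direction-2 and joint H\"older exponents strictly larger than $1$. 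The key algebraic inputs here are the planar analogues of the Leibniz rules of Proposition~\ref{prop:difrul}, together with the two facts $\der_1\bx^{1;2}=\der_2\bx^{1;2}=0$ (because $\bx^{1;2}=\der x$ and $\der_j\der=0$) and that, by the sewing construction of Notation~\ref{not:1st-order-increments-x}, the partial increments $\der_j\bx^{\hone;\htwo}$ still carry exponent $\ga_j$ on each sub-interval. Using $\der_i y^{j}=\der_i\vp^{(j)}(x)$, which is controlled by $\|\vp^{(j+1)}\|_\infty\|x\|_{\ga_1;\ga_2}$, one finds $\der_1 g_j\in\cp_{3,2}^{2\ga_1,*}$, $\der_2 g_j\in\cp_{2,3}^{*,2\ga_2}$ and $\der g_j\in\cp_{3,3}^{2\ga_1,2\ga_2}$; since $\ga_1,\ga_2>1/2$, all these exponents exceed $1$. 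This at once gives well-posedness of $z^1,z^2$ and, directly from Proposition~\ref{proposition:integ-2d}, items (ii) and (iv): for smooth $x$ the associated Riemann sums converge to the Riemann--Stieltjes integrals $\int_1\int_2 y^1 d_{12}x$ and $\int_1\int_2 y^2 d_{\hone\htwo}x$, and in general they converge to $z^1,z^2$.

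For item (i) I would expand the operator into its four pieces,
\[
z^{1}=y^{1}\bx^{1;2}-\laa_2\der_2 g_1-\laa_1\der_1 g_1+\laa_1\der_1\laa_2\der_2 g_1,
\]
and set $\rho^1$ equal to the last three terms (and analogously for $\rho^2$). By the regularity computation above and the continuity of $\laa_1,\laa_2$ from Proposition~\ref{proposition:planar Lambda}, the term $\laa_1\der_1 g_1$ lands in $\cp_{2,2}^{2\ga_1,\ga_2}$, the term $\laa_2\der_2 g_1$ in $\cp_{2,2}^{\ga_1,2\ga_2}$, and the double term in $\cp_{2,2}^{2\ga_1,2\ga_2}$; hence $\rho^1$ is a sum of increments with doubled regularity in at least one direction, as claimed, and likewise for $\rho^2$. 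The same estimates, now tracking the size of the data, give the final bound: $\bx^{1;2}=\der x$ contributes one power of $\|x\|_{\ga_1;\ga_2}$ and each increment $\der_i y^{j}$ one more, while $\bx^{\hone;\htwo}$ carries the quadratic factor $\|x\|_{\ga_1;\ga_2}^{2}$ coming from its own sewing estimate, so that collecting terms yields $\|z^{j}\|_{\ga_1;\ga_2}\le c_\vp\,(1+\|x\|_{\ga_1;\ga_2}^{2})$.

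For the continuity statement (iii), I would note that $z^{1},z^{2}$ are obtained from $\bx^{1;2},\bx^{\hone;\htwo}$ and from $y^{j}=\vp^{(j)}(x)$ by the continuous operations of planar product and of the sewing maps $\laa_1,\laa_2$; consequently, convergence of $\bx^{n;1;2}\to\bx^{1;2}$ and $\bx^{n;\hone;\htwo}\to\bx^{\hone;\htwo}$ in $\cp_{2,2}^{\ga_1,\ga_2}$ (which also forces $y^{j,n}\to y^j$ since $\vp\in C^4$) propagates to $z^{1,n}\to z^1$ and $z^{2,n}\to z^2$. Item (v) then follows by approximation: for smooth $x$ the identity $\der y=z^1+z^2$ is exactly the classical formula~\eqref{eq:simple-change-vb} rewritten through (ii), and passing to the limit along the regularization $x^{n}$, the left-hand side converges because $x\mapsto\der\vp(x)$ is continuous on $\cp_{2,2}^{\ga_1,\ga_2}$ while the right-hand side converges by (iii).

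The step I expect to be the main obstacle is the regularity bookkeeping of the first paragraph, and specifically the analysis of $\der_1\bx^{\hone;\htwo}$ and $\der_2\bx^{\hone;\htwo}$: because $\bx^{\hone;\htwo}$ is itself produced by the double sewing operator, establishing that its partial increments keep H\"older exponent $\ga_j$ rather than degrading requires understanding how $\der_1$ and $\der_2$ interact with $\laa_1,\laa_2$, and this is where the planar product conventions (gluing, splitting and the $\circ$ product of Definition~\ref{def:g-circ-h}), together with their sign rules, must be handled with care. Once this is settled, expanding the operator and invoking the continuity of $\laa_j$ is routine.
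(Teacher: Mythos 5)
Your overall route is the paper's own: the same Leibniz computations, the same appeal to Propositions \ref{proposition:planar Lambda} and \ref{proposition:integ-2d}, and the same limiting arguments for items (ii), (iii), (v); the paper merely runs it in the opposite order (it expands the Riemann--Stieltjes integral of a smooth sheet into $y^1\bx^{1;2}+a^{11;02}+a^{01;22}+a^{11;22}$ and identifies each $a$-term as a $\laa$-image, while you start from the sewing definition and recover the smooth case from uniqueness of Riemann-sum limits). However, the step you defer as the ``main obstacle'' is a real gap in what you wrote, because without it your verification of the hypotheses of Proposition \ref{proposition:integ-2d} for $g_2=y^2\,\bx^{\hone;\htwo}$ is incomplete --- and it has a one-line resolution you missed: the partial increments of $\bx^{\hone;\htwo}$ do not merely ``keep exponent $\ga_j$'', they vanish identically. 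Indeed $\der_j\laa_j=\id$ gives $\der_j[\id-\laa_j\der_j]=0$, and $\laa_1,\der_1$ commute with the direction-2 operators, so $\der_1\bx^{\hone;\htwo}=\der_1[\id-\laa_1\der_1][\id-\laa_2\der_2](\der_1x\,\der_2x)=0$ and symmetrically $\der_2\bx^{\hone;\htwo}=0$ (for smooth $x$ this is just additivity of $\int\int d_1x\,d_2x$ in each direction). Hence $\der_1 g_2=-\der_1y^2\,\bx^{\hone;\htwo}$ and $\der g_2=\der y^2\,\bx^{\hone;\htwo}$, and the analysis of $g_2$ is verbatim that of $g_1$; this is exactly why the paper can say $z^2$ is ``obtained exactly in the same way''. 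Note that a bound of the form $|\der_1\bx^{\hone;\htwo}_{s_1s_2s_3;t_1t_2}|\lesssim|s_3-s_1|^{\ga_1}|t_2-t_1|^{\ga_2}$ alone would \emph{not} suffice: the term $y^2\,\der_1\bx^{\hone;\htwo}$ would then have direction-1 exponent $\ga_1<1$ and $\laa_1$ could not be applied, so the vanishing (or at least a $2\ga_1$-type estimate) is genuinely needed.

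A second point: your derivation of the final bound does not close. In your own four-term expansion, the last term of $z^1$ is $\laa_1\der_1\laa_2\der_2\,g_1=\laa(\der y^1\,\bx^{1;2})$, and a Taylor expansion in both directions (cf.\ Lemma \ref{f-control}) gives $\|\der y^1\|_{\ga_1,\ga_2}\lesssim c_\vp\bigl(\|\der x\|_{\ga_1,\ga_2}+\|\der_1x\|_{\ga_1,0}\,\|\der_2x\|_{0,\ga_2}\bigr)$, which is quadratic in the H\"older data of $x$; multiplied by $\bx^{1;2}$ this term is cubic, and the analogous term $\laa(\der y^2\,\bx^{\hone;\htwo})$ in $z^2$ is quartic, since $\|\bx^{\hone;\htwo}\|_{\ga_1,\ga_2}$ is itself quadratic. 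So ``collecting terms'' along your route yields bounds of the form $c_\vp(1+\|x\|^3)$ and $c_\vp(1+\|x\|^4)$, not the stated quadratic one; the sentence asserting the bound is therefore not justified as written. (The paper's own proof is silent on this estimate, so this weakness is shared, but you claim to prove it and your power count does not support the claim.)
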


\begin{proof}
We first introduce a formalism which will feature prominently in the rough case treated in the next section.

\smallskip

\noindent\textit{Step 1: Setting for our computations.}
Let us write the first step of our expansion in a usual integration language: for $s_1,s_2,t_1,t_2\in\ott$ and a continuously differentiable function $x$ we have
\begin{equation*}
z^{1}_{s_1s_2;t_1t_2} = \int_{s_1}^{s_2} \int_{t_1}^{t_2} y^1_{\si_1;\tau_1} \, d_{12}x_{\si_1;\tau_1},
\end{equation*}
where we have written $d_{12}x_{\si_1;\tau_1}$ instead of $d_{\si\tau}x_{\si_1;\tau_1}$.
Then according to the elementary identity $y_{\si_1;\tau_1}=y_{s_1;t_1}+\der_2 y^1_{\si_1;t_1\tau_1} $ we obtain
\begin{equation*}
z^{1}_{s_1s_2;t_1t_2} 
= \int_{s_1}^{s_2} y^1_{\si_1;t_1} \int_{t_1}^{t_2}  \, d_{12}x_{\si_1;\tau_1}
+  \int_{s_1}^{s_2} \int_{t_1}^{t_2} \der_2 y^1_{\si_1;t_1\tau_1} \, d_{12}x_{\si_1;\tau_1}.
\end{equation*}
Going on with this procedure, we end up with a decomposition of the form
\begin{multline}\label{eq:young-dcp-with-indices}
z^{1}_{s_1s_2;t_1t_2} 
= y^1_{s_1;t_1} \der x_{s_1s_2;t_1t_2}  
+ \int_{s_1}^{s_2} \der_1 y^1_{s_1\si_1;t_1} \int_{t_1}^{t_2}   d_{12}x_{\si_1;\tau_1}  \\
+ \int_{t_1}^{t_2} \der_2 y^1_{s_1;t_1\tau_1} \int_{s_1}^{s_2} d_{12}x_{\si_1;\tau_1}  
+\int_{s_1}^{s_2} \int_{t_1}^{t_2} \der y^1_{s_1\si_1;t_1\tau_1} \, d_{12}x_{\si_1;\tau_1}.
\end{multline}
It should be observed that for H\"older regularities smaller than $1/2$ the above decomposition is not sufficient to yield the application of $\laa$. Since further calculations with all explicit indices are cumbersome, we shall now show how to translate the above computations with the formalism of Section \ref{sec:iter-intg-plane}: we simply write
\begin{eqnarray}\label{eq:dcp-delta-z-1}
 z^{1} &=& \int_{1}\int_{2} y^1 \, d_{12} x = \int_{1} y^1 \int_{2}  d_{12} x + \int_{1}\int_{2} d_2 y^1 \, d_{12} x  \notag\\
&=& 
y \, \der x + \int_{1} d_1 y^1 \int_{2}  d_{12} x + \int_{2} d_2 y^1 \int_{1}  d_{12} x + \int_{1}\int_{2} d_{12}y^1 \, d_{12} x  \\
&\equiv&
y^1 \, \bx^{1;2} + a^{11;02} + a^{01;22} + a^{11;22}, \notag
\end{eqnarray} 
which is obviously a shorter expression than \eqref{eq:young-dcp-with-indices}. From now on, we shall carry on our computations with this simplified formalism.

\smallskip

\noindent
\textit{Step 2: Analysis of the integrals.}
Consider the term $a^{11;02}$ above. It is readily checked that $\der_{1}a^{11;02}=\der_{1}y^1 \, \bx^{1;2}$. In particular, $\der_{1}a^{11;02} \in \cp_{3,2}^{2\ga_1;\ga_2}$, and since $2\ga_1>1$ one can resort to Proposition \ref{proposition:planar Lambda} in order to get the relation $a^{11;02}=\laa_{1}(\der_{1}y \, \bx^{1;2})$. Proceeding in the same way for $a^{01;22}$ and  $a^{11;22}$ we end up with
\begin{equation}\label{eq:expr-a-young}
a^{11;02}=\laa_{1}(\der_{1}y^1 \, \bx^{1;2}), \quad
a^{01;22}=\laa_{2}(\der_{2}y^1 \, \bx^{1;2}), \quad
a^{11;22}=\laa(\der y^1 \, \bx^{1;2}),
\end{equation}
where we observe that $\vp\in C^{4}(\R)$ is the minimal assumption in order to have $\der y^1\in\cp_{2,2}^{\ga_{1},\ga_{2}}$.
Furthermore, according to relation \eqref{eq:difrul-C1-C2} we have $\der_{1}(y \, \bx^{1;2})=-\der_{1}y^1 \, \bx^{1;2}$, $\der_{2}(y^1 \, \bx^{1;2})=-\der_{2}y^1 \, \bx^{1;2}$ and $\der (y^1 \, \bx^{1;2})=\der y^1 \, \bx^{1;2}$, so that \eqref{eq:expr-a-young} can be expressed as
\begin{equation*}
a^{11;02}=-\laa_{1}\der_{1}(y^1 \, \bx^{1;2}), \quad
a^{01;22}=-\laa_{2}\der_{2}(y^1 \, \bx^{1;2}), \quad
a^{11;22}=\laa\der(y^1 \, \bx^{1;2}).
\end{equation*}
Plugging these relations into \eqref{eq:dcp-delta-z-1} we get 
\begin{equation*}
z^{1} = \int_{1}\int_{2} y^1\, d_{12}x = [\id-\laa_{1}\der_{1}] [\id-\laa_{2}\der_{2}] ( y^1 \, \bx^{1;2} ),
\end{equation*}
for smooth functions $z$, which corresponds to claim (ii) in Theorem \ref{thm:young-strato-intro}. Item (i)-(iii)-(v) are now a matter of straightforward limiting procedures on smooth sheets, and the assertions concerning $z^{2}$ are obtained exactly in the same way. Item (iv) is an easy consequence of Proposition \ref{proposition:integ-2d} and expression \eqref{eq:def-z1-z2-(Id-Lambda)}.

\end{proof}

\subsection{Riemann sums decompositions}
This section is meant as a preparation for Skorohod type computations. Indeed, change of variables in the Skorohod setting involve some mixed integrals with $dx\, dR$ terms, for which a suitable representation is required. It will also be convenient for us to express the integral $\int_{1}\int_{2} y^{2} \, d_1x d_2x$ in different ways, so that we first recall a proposition borrowed from \cite{Gp}:

\begin{proposition}
\label{Young-2d}
let $x\in\cp_{1,1}^{\gamma_1,\gamma_2}$ with $\ga_1,\ga_2>1/2$. Set $y=\vp(x)$ for $\vp\in C^{2}(\R)$ and $z^{2,y}\equiv\int_{1}\int_{2} y \, d_1x d_2x$, understood in the Young sense. Then the following series of identities hold true:
\begin{eqnarray*}
 z^{2,y}&=&  
[\id-\laa_{1}\der_{1}] [\id-\laa_{2}\der_{2}] ( y \, \bx^{\hone;\htwo} )
=  [\id-\laa_{1}\der_{1}] [\id-\laa_{2}\der_{2}] ( y \, \der_{1}x \, \der_{2} x )  \\
&=&  [\id-\laa_{1}\der_{1}] [\id-\laa_{2}\der_{2}] ( y \, \der_{2}x \, \der_{1} x )
=  [\id-\laa_{1}\der_{1}] [\id-\laa_{2}\der_{2}] ( y \, \der_{1}x \circ \der_{2} x ),
\end{eqnarray*}
where we recall that the notation $\circ$ has been introduced at Definition \ref{def:g-circ-h}.
\end{proposition}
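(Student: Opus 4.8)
The plan is to show that the increment $z^{2,y}$ admits several equivalent representations, all obtained by applying the same operator $[\id-\laa_{1}\der_{1}][\id-\laa_{2}\der_{2}]$ to increments that differ only in the way the two first-order differentials $\der_1 x$ and $\der_2 x$ are combined. The strategy mirrors exactly the argument already carried out for $z^1$ in the proof of Theorem \ref{prop:young-2d-y-dx}: for a smooth sheet $x$, start from the Riemann--Stieltjes definition $z^{2,y}_{s_1s_2;t_1t_2}=\int_{s_1}^{s_2}\int_{t_1}^{t_2} y_{\si;\tau}\, d_1 x_{\si;\tau}\, d_2 x_{\si;\tau}$ and peel off the value $y_{s_1;t_1}$ together with its partial increments in each direction, exactly as in the passage from the integral form to \eqref{eq:dcp-delta-z-1}. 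This produces the leading term $y\,\bx^{\hone;\htwo}$ plus three correction terms which, by the differentiation rule \eqref{eq:difrul-C1-C2} and the bounds of Proposition \ref{proposition:planar Lambda}, reassemble precisely into $[\id-\laa_{1}\der_{1}][\id-\laa_{2}\der_{2}](y\,\bx^{\hone;\htwo})$. This establishes the first equality. Since $\bx^{\hone;\htwo}=[\id-\laa_{1}\der_{1}][\id-\laa_{2}\der_{2}](\der_1 x\,\der_2 x)$ by Notation \ref{not:1st-order-increments-x}, and since for smooth $x$ the product $\der_1 x\,\der_2 x$ coincides with the genuine pointwise product of the two partial increments, the second equality follows once one checks that $[\id-\laa_{1}\der_{1}][\id-\laa_{2}\der_{2}]$ applied to $y\,\bx^{\hone;\htwo}$ and to $y\,\der_1 x\,\der_2 x$ coincide.

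First I would treat the remaining three identities as statements about the product of increments rather than about the integral. The key point is purely algebraic: in $\cp_{2,2}$, the increments $\der_1 x\,\der_2 x$, $\der_2 x\,\der_1 x$ and $\der_1 x\circ\der_2 x$ all agree. Indeed, using Definition \ref{def:g-circ-h}, for a single rectangle we have $[\der_1 x\circ\der_2 x]_{s_1s_2;t_1t_2}=\der_1 x_{s_1s_2;t_1}\,\der_2 x_{s_1;t_1t_2}$, and for a smooth sheet $\der_1 x\,\der_2 x$ reduces to the same pointwise product of a direction-1 increment with a direction-2 increment evaluated at the common corner. The symmetry $\der_1 x\,\der_2 x=\der_2 x\,\der_1 x$ is immediate because scalar multiplication commutes. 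Thus all three variants define the same element of $\cp_{2,2}^{\ga_1,\ga_2}$ once we account for the vanishing-on-diagonal conditions, and applying the common operator $[\id-\laa_{1}\der_{1}][\id-\laa_{2}\der_{2}]$ yields the same increment $z^{2,y}$.

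The main obstacle I anticipate is the bookkeeping needed to verify that the different orderings and the $\circ$-product genuinely produce increments that lie in the correct H\"older spaces and that agree \emph{before} one applies $\laa_1,\laa_2$, since the sewing maps are sensitive to the precise domain $\cb_j\cp_{3,3}^{\ga_1,\ga_2}$ on which they act. Concretely, one must confirm that $\der_1$ and $\der_2$ applied to each of $y\,\der_1 x\,\der_2 x$, $y\,\der_2 x\,\der_1 x$ and $y\,\der_1 x\circ\der_2 x$ land in $\cp_{3,2}^{2\ga_1,*}$ and $\cp_{2,3}^{*,2\ga_2}$ respectively, using $\ga_1,\ga_2>1/2$, so that Proposition \ref{proposition:planar Lambda} applies. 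Once the regularity is secured, the equality of the four expressions follows from the uniqueness built into the $\laa$-maps, since two increments with the same image under the pair $(\der_1,\der_2)$ and the same leading diagonal behaviour must coincide.

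To execute this cleanly, I would organize the argument in two short steps. First I would record the algebraic identity $\der_1 x\,\der_2 x=\der_2 x\,\der_1 x=\der_1 x\circ\der_2 x$ in $\cp_{2,2}$ for smooth $x$, reducing everything to a single canonical increment. Then I would invoke the computation already performed for $z^1$ verbatim, with $y^1$ replaced by $y$ and $\bx^{1;2}$ replaced by $\bx^{\hone;\htwo}$, to obtain the operator form; the only adaptation is that the relevant regularity of $\bx^{\hone;\htwo}$ is $(\ga_1,\ga_2)$, which is exactly what Notation \ref{not:1st-order-increments-x} guarantees. A final limiting argument on smooth approximations $x^n$, justified by the continuity of $\laa_1,\laa_2$ in the relevant norms, extends all four identities from smooth sheets to the general Young case $\ga_1,\ga_2>1/2$.
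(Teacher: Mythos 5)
Your argument for the first identity (peeling off $y$ as in the proof of Theorem \ref{prop:young-2d-y-dx}, with $\bx^{1;2}$ replaced by $\bx^{\hone;\htwo}$) is fine. The genuine gap is your central claim that the last three identities are ``purely algebraic'', i.e.\ that $\der_1x\,\der_2x=\der_2x\,\der_1x=\der_1x\circ\der_2x$ as elements of $\cp_{2,2}$. This is false under the paper's gluing convention for products of planar increments. Writing out the definitions, one gets
\begin{equation*}
(\der_1x\,\der_2x)_{s_1s_2;t_1t_2}=\der_1x_{s_1s_2;t_1}\,\der_2x_{s_2;t_1t_2},\qquad
(\der_2x\,\der_1x)_{s_1s_2;t_1t_2}=\der_2x_{s_1;t_1t_2}\,\der_1x_{s_1s_2;t_2},
\end{equation*}
whereas $(\der_1x\circ\der_2x)_{s_1s_2;t_1t_2}=\der_1x_{s_1s_2;t_1}\,\der_2x_{s_1;t_1t_2}$: the three expressions evaluate the partial increments at \emph{different} base points ($s_2$, $t_2$, and the corner $(s_1,t_1)$ respectively). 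Their differences involve the full rectangular increment, e.g.
\begin{equation*}
(\der_1x\,\der_2x)-(\der_1x\circ\der_2x)=\der_1x_{s_1s_2;t_1}\,\der x_{s_1s_2;t_1t_2},
\end{equation*}
which is nonzero already for the smooth sheet $x_{s;t}=st$. Consequently the three integrands $y\,\der_1x\,\der_2x$, $y\,\der_2x\,\der_1x$, $y\,\der_1x\circ\der_2x$ are genuinely distinct increments, and your fallback appeal to ``uniqueness built into the $\laa$-maps'' does not repair this, since their images under $\doo$ and $\dt$ differ as well (for instance $\doo$ of the above difference is not zero).

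What actually makes the proposition true --- and this is how the paper argues when it proves the generalization, Proposition \ref{mixed-Young-2D}, of which the present statement is the case $f=y$, $g=h=x$ --- is an analytic, not algebraic, fact: the pairwise differences of the three integrands carry \emph{double} H\"older regularity in one direction (here $(2\ga_1,\ga_2)$ or $(\ga_1,2\ga_2)$ with $2\ga_i>1$), so that after checking each integrand satisfies the hypotheses of Proposition \ref{proposition:planar Lambda}, one identifies the three increments $[\id-\laa_{1}\der_{1}][\id-\laa_{2}\der_{2}](a^{j})$ through their Riemann sums (Proposition \ref{proposition:integ-2d}) and shows that the Riemann sums of the differences vanish in an iterated limit: first sum in direction 2 to produce an integral bounded by $(s_{i+1}-s_i)^{2\ga_1}$ termwise, then let the mesh in direction 1 go to zero. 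Your proposal is missing exactly this step, which is the substance of the proof; everything else in it is bookkeeping around a claim that does not hold.
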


The following proposition gives different ways to express the increment $z^{2,y}$ as limit of Riemann sums. 

\begin{proposition}
\label{Y-R sum}
Let $0<s_1<s_2<1$, $0<t_1<t_2<1$ and denote by $\pi_1=(s_i)_i$ and $\pi_2=(t_j)$ some partitions of the intervals $[s_1,s_2]$  and $[t_1,t_2]$ respectively. Then under the assumptions of Proposition~\ref{Young-2d} we have that 
$\int_{s_1}^{s_2}\int_{t_1}^{t_2}y_{st} \, d_{12}x_{st}$ can be written as limit of Riemann sums of the form $\lim_{|\pi_1|,|\pi_2|\to0}\sum_{i,j}y_{s_i;t_j}\der x_{s_{i}s_{i+1};t_{j}t_{j+1}},
$
and recalling that $z^{2,y}\equiv\int_{1}\int_{2} y \, d_1x d_2x$ we also have:
\begin{eqnarray*}
z^{2,y}
&=&\lim_{|\pi_1|,|\pi_2|\to0}\sum_{i,j}y_{s_i;t_j}\der_1x_{s_is_{i+1};t_j}\der_2x_{s_{i+1};t_jt_{j+1}}
\\
&=&\lim_{|\pi_1|,|\pi_2|\to0}\sum_{i,j}y_{s_i;t_j}\der_2x_{s_i;t_jt_{j+1}}\der_1x_{s_is_{i+1};t_{j+1}}
=\lim_{|\pi_1|,|\pi_2|\to0}\sum_{i,j}y_{s_i;t_j}\der_1x_{s_is_{i+1};t_j}\der_2x_{s_i;t_jt_{j+1}}.
\end{eqnarray*}
\end{proposition}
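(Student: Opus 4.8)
The plan is to prove Proposition \ref{Y-R sum} by exploiting the various representations of $z^{2,y}$ furnished by Proposition \ref{Young-2d}, combined with the Riemann sum interpretation of the sewing maps established in Proposition \ref{proposition:integ-2d}. The guiding principle is that each of the three Riemann sum expressions corresponds to a particular ordering of the two one-directional differentials of $x$, and each such ordering matches one of the equivalent $[\id-\laa_1\der_1][\id-\laa_2\der_2]$ representations of $z^{2,y}$ listed in Proposition \ref{Young-2d}.

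First I would treat the auxiliary claim that $\int_{s_1}^{s_2}\int_{t_1}^{t_2}y_{st}\,d_{12}x_{st}$ is the limit of $\sum_{i,j}y_{s_i;t_j}\der x_{s_is_{i+1};t_jt_{j+1}}$. To this end set $g_{\si_i\si_{i+1};\tau_j\tau_{j+1}}=y_{\si_i;\tau_j}\der x_{\si_i\si_{i+1};\tau_j\tau_{j+1}}\in\cp_{2,2}$ and verify the three hypotheses of Proposition \ref{proposition:integ-2d}, namely $\doo g\in\cp_{3,2}^{\ga_1,*}$, $\dt g\in\cp_{2,3}^{*,\ga_2}$ and $\der g\in\cp_{3,3}^{\ga_1,\ga_2}$. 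The key computation is that, using the Leibniz-type rules of Proposition \ref{prop:difrul} together with $\der(\der x)=0$, one gets $\der g=\der y\,\der x$ up to the appropriate products, which indeed lives in $\cp_{3,3}^{2\ga_1,2\ga_2}\subset\cp_{3,3}^{\ga_1,\ga_2}$ once $2\ga_j>1$. Proposition \ref{proposition:integ-2d} then identifies the Riemann sum limit with $\der f=[\id-\laa_1\doo][\id-\laa_2\dt]g$, which matches the first representation $z^{2,y}=[\id-\laa_1\der_1][\id-\laa_2\der_2](y\,\bx^{1;2})$ of the $d_{12}x$ integral (with $\bx^{1;2}=\der x$).

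Next I would treat the three mixed Riemann sums for $z^{2,y}$ in parallel. For the first one, set $g=y\,\der_1x\,\der_2x$ interpreted so that the evaluation points are $y_{s_i;t_j}\,\der_1x_{s_is_{i+1};t_j}\,\der_2x_{s_{i+1};t_jt_{j+1}}$, which is exactly the splitting-type increment whose gluing is $y\,\der_1x\,\der_2x$. Again I would check the three regularity hypotheses of Proposition \ref{proposition:integ-2d} and read off the limit as $[\id-\laa_1\der_1][\id-\laa_2\der_2](y\,\der_1x\,\der_2x)$, which by Proposition \ref{Young-2d} equals $z^{2,y}$. The second sum, with evaluation $\der_2x_{s_i;t_jt_{j+1}}\der_1x_{s_is_{i+1};t_{j+1}}$, corresponds to the reversed product $y\,\der_2x\,\der_1x$ and hence to the third representation in Proposition \ref{Young-2d}; the third sum, with $\der_1x_{s_is_{i+1};t_j}\der_2x_{s_i;t_jt_{j+1}}$, corresponds to the $\circ$-product $y\,\der_1x\circ\der_2x$ of Definition \ref{def:g-circ-h} and hence to the fourth representation. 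In each case the only work is to confirm that the relevant partial differences and the full increment $\der g$ satisfy the $\ga_1,\ga_2>1/2$ regularity bounds required by Proposition \ref{proposition:integ-2d}, all of which follow from Proposition \ref{prop:difrul} and the H\"older continuity of $x$ and $y$.

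The main obstacle I anticipate is the bookkeeping around the different evaluation points in the three mixed sums: the indices on $\der_1x$ and $\der_2x$ shift between $s_i$ and $s_{i+1}$, respectively $t_j$ and $t_{j+1}$, depending on the order of integration, and one must argue that replacing an evaluation at $s_{i+1}$ by one at $s_i$ (and similarly in direction 2) produces only a negligible correction as the mesh tends to zero. The clean way to handle this is to show that each such replacement introduces an extra increment factor of regularity strictly greater than the rectangle area exponent, so the corresponding error sums vanish; equivalently, one verifies that all candidate increments differ by terms whose $\der$-image lies in $\cp_{3,3}^{\ga_1,\ga_2}$ with $\ga_1+\ga_1>1$ and $\ga_2+\ga_2>1$, ensuring that $\laa$-type corrections are summable and that all three sums converge to the same limit $z^{2,y}$ identified through Proposition \ref{Young-2d}.
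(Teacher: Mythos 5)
Your proposal is correct and takes essentially the approach the paper intends: Proposition \ref{Y-R sum} follows by applying the Riemann-sum statement of Proposition \ref{proposition:integ-2d} to the increments $y\,\bx^{1;2}$, $y\,\der_1x\,\der_2x$, $y\,\der_2x\,\der_1x$ and $y\,\der_1x\circ\der_2x$ (whose product conventions, together with Definition \ref{def:g-circ-h}, already encode exactly the stated evaluation points), and then invoking the equality of representations in Proposition \ref{Young-2d}; this is precisely the pattern the paper executes explicitly for the more general Proposition \ref{mixed-Young-2D}. Two minor slips do not affect correctness: the formula you label ``$z^{2,y}=[\id-\laa_{1}\der_{1}][\id-\laa_{2}\der_{2}](y\,\bx^{1;2})$'' is the representation of $\int_{1}\int_{2}y\,d_{12}x$ rather than of $z^{2,y}$, and the evaluation-point ``bookkeeping'' you worry about in the last paragraph is vacuous in your own scheme, since each sum is literally $\sum_{i,j}(a^{k})_{s_{i}s_{i+1};t_{j}t_{j+1}}$ for its corresponding increment $a^{k}$ and no sum ever needs to be compared with another once Proposition \ref{Young-2d} supplies the equality of the limiting increments (such a direct comparison is only needed when, as in Proposition \ref{mixed-Young-2D}, that equality is not available a priori).
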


Finally we shall need an extension of the last three propositions to integrals with mixed driving noises:

\begin{proposition}
\label{mixed-Young-2D}
Let $f\in\cp_{1,1}^{\gamma_1,\gamma_2}$,$g\in\cp_{1,1}^{\rho_1,\rho_2}$ and $h\in\cp_{1,1}^{\beta_1,\beta_2}$ such that $\gamma_i+\rho_i>1$, $\beta_i+\gamma_i>1$ and $\beta_i+\rho_i>1$ for $i=1,2$. Set 
\begin{equation*}
\int_{1}\int_{2} f \, d_1g \, d_2 h
= [\id-\laa_{1}\der_{1}] [\id-\laa_{2}\der_{2}] ( f \, \der_{1}g \, \der_{2} h ) .
\end{equation*}
Then we also have
\begin{equation*}
\int_{1}\int_{2} f \, d_1g d_2 h
= [\id-\laa_{1}\der_{1}] [\id-\laa_{2}\der_{2}] ( f \, \der_{2}g \, \der_{1} h )
=  [\id-\laa_{1}\der_{1}] [\id-\laa_{2}\der_{2}] ( f \, \der_{1}g \circ \der_{2} h ).
\end{equation*}
Moreover, taking up the notations of Proposition  \ref{Y-R sum} we have the following Reimann-sum representation of our integrals:
\begin{multline*}
\int_{s_1}^{s_2}\int_{t_1}^{t_2}f_{s;t} \,  d_1 g_{s;t}d_2 h_{s;t}
=\lim_{|\pi|\to0}\sum_{i,j}f_{s_i;t_j}\der_1g_{s_is_{i+1};t_j}\der_2h_{s_{i+1};t_jt_{j+1}}
\\=\lim_{|\pi|\to0}\sum_{i,j}f_{s_i;t_j}\der_1g_{s_is_{i+1};t_j}\der_2h_{s_i;t_jt_{j+1}}
=\lim_{|\pi|\to0}\sum_{i,j}f_{s_i;t_j}\der_2g_{s_i;t_jt_{j+1}}\der_1h_{s_is_{i+1};t_{j+1}}.
\end{multline*}
\end{proposition}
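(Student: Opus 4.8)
The plan is to treat the three representations in the same spirit as Propositions~\ref{Young-2d} and~\ref{Y-R sum}, reducing every assertion to the planar integration result of Proposition~\ref{proposition:integ-2d}. I would first check that the three candidate integrands appearing in the statement all lie in $\cp_{2,2}$ and satisfy the hypotheses of Proposition~\ref{proposition:integ-2d}, namely that $\der_{1}$ of each lies in $\cp_{3,2}^{\mu_{1},*}$, $\der_{2}$ of each in $\cp_{2,3}^{*,\mu_{2}}$, and $\der$ of each in $\cp_{3,3}^{\mu_{1},\mu_{2}}$ for some $\mu_{1},\mu_{2}>1$. This is exactly where the three standing assumptions $\ga_{i}+\rho_{i}>1$, $\beta_{i}+\ga_{i}>1$ and $\beta_{i}+\rho_{i}>1$ are consumed: differentiating with the rules of Proposition~\ref{prop:difrul} and using $\der_{1}\der_{1}g=\der_{2}\der_{2}h=0$, every surviving term is a product of two increments differentiated in the same direction, whose H\"older exponent in that direction equals one of these pairwise sums and is therefore strictly larger than one. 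Consequently the three integrals are well defined, and each equals the limit of its associated planar Riemann sums.

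The core of the argument is the equivalence of the three expressions, and I would base it on the elementary principle that $\id-\laa_{j}\der_{j}$ annihilates any increment whose H\"older regularity in direction $j$ exceeds one. Indeed, by the tensorised version of Corollary~\ref{cor:integration}, with the variables of the complementary direction frozen, $[\id-\laa_{1}\der_{1}]P$ is the limit of the one dimensional Riemann sums $\sum_{i}P_{s_{i}s_{i+1};\,\cdot}$, which is $O(|\pi_{1}|^{\mu_{1}-1})$ and hence vanishes as soon as $P$ is $\mu_{1}$-H\"older in direction $1$ with $\mu_{1}>1$; the symmetric statement holds in direction $2$. Since $\laa_{1}\der_{1}$ and $\laa_{2}\der_{2}$ operate on disjoint blocks of variables they commute, so that $J\equiv[\id-\laa_{1}\der_{1}][\id-\laa_{2}\der_{2}]$ annihilates any increment having regularity greater than one in at least one direction.

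It then remains to compare the two product integrands with the common circled increment $A_{3}=f\,(\der_{1}g\circ\der_{2}h)$. In $A_{3}$ the inner increments of $g$ and of $h$ are both anchored at the base point, whereas passing to the ordinary product $A_{1}=f\,\der_{1}g\,\der_{2}h$ only shifts the anchor of $\der_{2}h$ in the first direction; since $\der h=\der_{1}\der_{2}h$, the resulting difference is, up to the gluing conventions of Section~\ref{sec:comp-cac}, of the form $f\,\der_{1}g\,\der h$, which carries double regularity $\rho_{1}+\beta_{1}>1$ in direction~$1$. Symmetrically, the remaining representation differs from $A_{3}$ only through the anchor of the direction~$1$ factor in the second variable, producing a difference of the form $f\,\der_{2}h\,\der g$ of regularity $\rho_{2}+\beta_{2}>1$ in direction~$2$. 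By the annihilation principle above, $J$ kills both differences, whence the three expressions coincide. The Riemann sum identities then follow at once: each of the three sums in the statement is the planar Riemann sum attached to one of the three integrands, so Proposition~\ref{proposition:integ-2d} matches its limit with the corresponding value of $J$, and these values have just been shown to agree.

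I expect the main obstacle to be bookkeeping rather than conceptual. The delicate point is that a crude termwise bound on the difference of two of these Riemann sums need not decay, because the combined H\"older exponents need not exceed two; the cancellation becomes visible only after the directional sewing maps are applied, in the correct order, to the difference of the corresponding increments. Making this rigorous amounts to tracking the gluing conventions and the commutation of $\laa_{1}\der_{1}$ with $\laa_{2}\der_{2}$ with some care, and the heaviest computations will be the verification of the regularity memberships for the mixed product integrand and for the circled increment $A_{3}$, where the partial differentiations intertwine the roles of $g$ and $h$.
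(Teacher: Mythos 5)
Your proposal is correct, and its first half (checking, via the differentiation rules of Proposition \ref{prop:difrul}, that $\der_{1}$, $\der_{2}$ and $\der$ of each candidate integrand have the product regularities required by Proposition \ref{proposition:integ-2d}, every surviving exponent being one of the pairwise sums $\ga_{i}+\rho_{i}$, $\ga_{i}+\beta_{i}$, $\rho_{i}+\beta_{i}>1$) coincides with the opening step of the paper's proof. Where you genuinely diverge is the proof that the three expressions coincide. The paper works at the level of two-dimensional Riemann sums: it writes the difference of the sums for $a^{1}$ and $a^{3}$ as $\sum_{i,j}f_{s_i;t_j}\der_1g_{s_is_{i+1};t_j}\der h_{s_{i}s_{i+1};t_{j}t_{j+1}}$, takes the limit $|\pi_{2}|\to 0$ first (each $i$-term becoming a one-dimensional Young integral in direction 2), bounds that integral by $(s_{i+1}-s_i)^{\rho_1+\beta_1}$, and only then lets $|\pi_{1}|\to 0$; this iterated-limit device is exactly how the paper circumvents the failure of the crude termwise bound that you also identify. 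You instead exploit the same cancellation algebraically: the differences $A_{1}-A_{3}=f\,\der_{1}g\,\der h$ and $A_{2}-A_{3}=f\,\der_{2}h\,\der g$ are single increments with regularity $\rho_{1}+\beta_{1}>1$, respectively $\rho_{2}+\beta_{2}>1$, in one direction, and the projector $\id-\laa_{j}\der_{j}$ kills any such increment (after projection the increment is $\der_{j}$-closed, hence additive in direction $j$, and directional regularity above one forces it to vanish — equivalently your one-directional Riemann sum estimate, where only direction $j$ is partitioned, so no blow-up can come from the complementary direction). Your route buys a cleaner, more structural argument that avoids iterated limits altogether; its cost is the bookkeeping you acknowledge: the annihilation principle requires $\der_{j}P$ to lie in the domain of $\laa_{j}$ — mere directional H\"older regularity of $P$ above one does not guarantee the product-type bounds defining that domain — and here this is supplied by the explicit product form of the two differences; moreover $[\id-\laa_{1}\der_{1}]$ and $[\id-\laa_{2}\der_{2}]$ must be commuted (or one checks directly that $[\id-\laa_{2}\der_{2}]$ preserves direction-1 regularity, which it does on these increments). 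One last remark: like the paper's own proof, your difference computation tacitly reads the second representation as $f\,\der_{2}h\,\der_{1}g$, i.e.\ with $d_{1}$ still attached to $g$ and $d_{2}$ to $h$; this is the only reading under which the proposition holds — the literal $f\,\der_{2}g\,\der_{1}h$ appearing in the statement is a typo, as one already sees for smooth $f,g,h$, where it would equate $\int f\,\partial_{1}g\,\partial_{2}h$ with $\int f\,\partial_{2}g\,\partial_{1}h$.
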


\begin{proof}
Let $a^1=f\,\der_1g\,\der_2h$, $a^2=f\,\der_2g \, \der_1h$ and $a^3=f\,\der_1g\circ\der_2h$. Then by a simple computation we have that 
\begin{eqnarray*}
\der_1a^1&=&
-\der_1f \, \der_1g\,\der_2h-f\,\der_1g\,\der h
\in\cp_{3,2}^{\min(\gamma_1+\rho_1,\rho_1+\beta_1),\beta_2} \\
\der_2a^1&=&
-\der_2f\,\der_1g\,\der_2h-f\,\der g\,\der_2h
\in\cp_{2,3}^{\rho_1,\min(\gamma_2+\beta_2,\rho_2+\beta_2)},
\end{eqnarray*}
and
$$
\der a^1=\der f\,\der_1g\,\der_2h+\der_1f\,\der g\,\der_2h
+\der_2f\,\der_1g\,\der h+f\,\der g\,\der h
\in\cp_{3,3}^{\min(\gamma_1+\rho_1,\rho_1+\beta_1),\min(\gamma_2+\beta_2,\rho_2+\beta_2)}.
$$
This means that $a^{1}$ satisfies the assumptions of Proposition \ref{proposition:planar Lambda}, and the same is readily checked for  $a^2$ and $a^3$. Thus the increments $[\id-\laa_{1}\der_{1}] [\id-\laa_{2}\der_{2}](a^{j})$ are well defined for $j=1,2,3$. We set  $[\id-\laa_{1}\der_{1}] [\id-\laa_{2}\der_{2}](a^{1})=\int_{1}\int_{2} f \, d_1g d_2 h$ since both objects coincide for smooth functions $f,g,h$.

\smallskip

We now identify the increments $[\id-\laa_{1}\der_{1}] [\id-\laa_{2}\der_{2}](a^{j})$ by analyzing their Riemann sums. Indeed, a straightforward application of Proposition \ref{proposition:integ-2d} yields the following limits:
\begin{eqnarray*}
[\id-\laa_{1}\der_{1}] [\id-\laa_{2}\der_{2}](a^{1})&=&
\lim_{|\pi|\to0}\sum_{i,j}f_{s_i;t_j}\der_1g_{s_is_{i+1};t_j}\der_2h_{s_{i+1};t_jt_{j+1}} \\
\text{}[\id-\laa_{1}\der_{1}] [\id-\laa_{2}\der_{2}](a^{1})&=&
\lim_{|\pi|\to0}\sum_{i,j}f_{s_i;t_j}\der_2h_{s_i;t_jt_{j+1}}\der_1g_{s_is_{i+1};t_{j+1}} \\
\text{}[\id-\laa_{1}\der_{1}] [\id-\laa_{2}\der_{2}](a^{3})&=&
\lim_{|\pi|\to0}\sum_{i,j}f_{s_i;t_j}\der_1g_{s_is_{i+1};t_j}\der_2h_{s_i;t_jt_{j+1}}.
\end{eqnarray*}
We now prove that the 3 increments coincide by showing that the differences between Riemann sums vanish when the mesh of the partitions go to zero. Indeed, if we call $\pi_2$ the partition in direction 2, observe for instance that 
\begin{equation}
\begin{split}
\lim_{|\pi_2|\to0}\sum_{i,j}(a^1-a^3)_{s_{i}s_{i+1};t_{j}t_{j+1}}
&=\lim_{|\pi_2|\to0}\sum_{i,j}f_{s_i;t_j}\der_1g_{s_is_{i+1};t_j}\der h_{s_{i}s_{i+1};t_{j}t_{j+1}}
\\&=\sum_i\int_{t_1}^{t_2}f_{s_it}\der_1g_{s_is_{i+1};t} \, d_2\der_1h_{s_is_{i+1};t}
\end{split}
\end{equation}
Now if we remark that $|\int_{t_1}^{t_2}f_{s_it}\der_1g_{s_is_{i+1};t} \, d_th_{s_is_{i+1};t}|\lesssim(s_{i+1}-s_i)^{\rho_1+\beta_1}$, we easily obtain that $\lim_{|\pi_1|\to0}\lim_{|\pi_2|\to0}\sum_{i,j}(a^1-a^3)_{s_{i}s_{i+1};t_{j}t_{j+1}}=0$. The same relation holds true for $a^{2}-a^{3}$ by symmetry, which ends our proof.

\end{proof}

\section{Pathwise Stratonovich formula in the rough case}
\label{sec:pathwise-strato}
In this section, we consider a path $x\in\cp_{1,1}^{\ga_1,\ga_2}$ with $\ga_1,\ga_2>1/3$ and we wish to establish the change of variables formula for $y=\vp(x)$ (with $\vp\in C_{b}^{8}(\R)$) announced in Theorem~\ref{thm:rough-strato-intro}. In such a general context, the integration theory with respect to $x$ relies on the existence of a rough path $\X$ sitting above $x$. Now recall that the first elements of $\X$ have been introduced at Hypothesis~\ref{hyp:rough-path-intro}. However,  as mentioned in Remark \ref{rmk:complete-rough-path}, the rough path still has to be completed and we proceed to its description here.

\smallskip

Let us first introduce another indexing convention for the elements of the rough path $\X$, similarly to what is done at Section \ref{sec:strato-rough-intro}:

\smallskip

\noindent
\emph{(iii)} (Following (ii) at Section \ref{sec:strato-rough-intro}) We shall see that some overlapping integrals in directions 1 and 2 difficult the regularity analysis of certain increments. This will force us to some splitting operations on iterated integrals, leading to some split elements of the rough path $\X$. We indicate this splitting procedure by a $\otimes$ in indices of $\bx$. An example of this operation is given by the increment $\bx^{11;2\otimes2}\in\cac_{2}(\cac_{2}\otimes\cac_{2})$:
\begin{equation*}
\bx^{11;2\otimes2}_{s_1s_2;t_1t_2t_3t_4} 
= \int_{s_{1}}^{s_{2}} \int_{s_{1}}^{\si_{2}} 
\lp \int_{t_{1}}^{t_{2}} d_{12}x_{\si_{1};\tau_{1}} \rp 
\lp \int_{t_{3}}^{t_{4}} d_{12}x_{\si_{2};\tau_{2}} \rp .
\end{equation*}

\smallskip

\noindent
With this additional notation in mind, the complete description of $\X$ is given below:

\begin{hypothesis}\label{hyp:iter-intg-x}
The function $x$ is such that $\der x\in\cp_{2,2}^{\ga_{1},\ga_{2}}$ with $\ga_{1},\ga_{2}>1/3$, and fulfills Hypothesis~\ref{hyp:rough-path-intro}. In addition, the stack $\X$ of iterated integrals related to $x$ is required to contain the elements in the table below, where we let the reader guess the natural H\"older regularities related to each increment.
\begin{table}[htdp]
\caption{Further elements of $\X$}
\begin{center}
\begin{tabular}{|c|c||c|c|}
\hline
\emph{Increment} & \emph{Interpretation}  & \emph{Increment} & \emph{Interpretation}  \\
\hline
$\bx^{1\hone;2\htwo}$ & $\int_{1}\int_{2} d_{12}x d_{\hone\htwo}x$ &  
$\bx^{1\otimes1;22}$ & $\int_{1}\int_{2}d_{12}x\otimes_{1}\int_{1} d_{12}x$  \\
\hline
$\bx^{11\cdot1;022}$ & $\int_{1}d_{1}x\int_{2} d_{12}x\int_{1}d_{12}x$  & 
$\bx^{11\cdot1;222}$ & $\int_{1}\int_{2}d_{12}x d_{12}x\int_{1}d_{12}x$  \\
\hline
$\bx^{1\hone;0\htwo}$ & $\int_{1} d_{1}x\int_{2} d_{\hone\htwo}x$ & 
$\bx^{11\otimes1;222}$ & $\int_{1}\int_{2}d_{12}x d_{12}x\otimes_{1}\int_{1} d_{12}x$  \\
\hline
$\bx^{11\otimes 1;022}$ & $\int_{1} d_{1}x\int_{2} d_{12}x \otimes_{1} \int_{1} d_{12}x$ & 
$\bx^{11\cdot1;2\cdot22}$ & $\int_{1}\int_{2}d_{12}x \int_{2}d_{12}x\int_{1} d_{12}x$  \\
\hline
\end{tabular}
\end{center}
\label{tab:rough-path-2}
\end{table}%
As in Hypothesis~\ref{hyp:rough-path-intro}, the iterated integrals above are assumed to be limits along approximations of $x$ by smooth functions. Furthermore, the rough path $\X$ should also contain all the elements $\bx$ obtained by symmetrizing the increments of Table \ref{tab:rough-path-2} with respect to $1\leftrightarrow 2$, as well as those for which we change the last indices $1;2$ by $\hone;\htwo$. In total, we have to assume the existence of 26 additional increments.
\end{hypothesis}

\smallskip

With the additional notation introduced above, we are now ready to give the expression of $z^1$ in terms of the rough path $\X$.

\begin{proof}[Proof of Theorem \ref{thm:rough-strato-intro}]
Consider a smooth function $x$ and $y=\vp(x)$. According to our notational conventions of Section \ref{sec:iter-intg-plane}, we wish to express $\int_{1}\int_{2} y^{1} \,d_{12}x$ in terms of elementary increments of $y$, plus some elements of the rough path $\X$. As in the Young case, we thus start from expression \eqref{eq:dcp-delta-z-1}, but now the terms $a^{11;02}$, $a^{01;22}$ and $a^{11;22}$ need a further decomposition. We proceed to their analysis.

\smallskip

\smallskip

\noindent\textit{Step 1: First boundary integrals.} Consider the term $a^{11;02}$ in expression \eqref{eq:dcp-delta-z-1}. Two building blocks of our methodology can be observed on the analysis of this term in an elementary way:

\smallskip

\noindent
\emph{(i)} As noticed in the proof of Proposition \ref{prop:young-2d-y-dx}, our definition $a^{11;02}=\int_{1} d_1 y^1 \int_{2}  d_{12} x$ easily yields the identity 
\begin{equation}\label{eq:delta1-a1102}
\der_1 a^{11;02}= \der_1 y^1 \, \der x.
\end{equation}
Indeed, a possible way to understand this relation is to write:
\begin{equation*}
a^{11;02} = [d_1, \, d_1] \, [\id, \, \dt] (y^1, \, x)
\quad\Longrightarrow\quad
\doo a^{11;02} = [\doo, \, \doo] \, [\id, \, \dt] (y^1, \, x) = \doo y^1 \, \der x,
\end{equation*}
where we have invoked Proposition \ref{prop:dif-intg-1} for the first variable only
\smallskip

\noindent
\emph{(ii)} If we only consider the integral  $\int_{1} d_1 y^1$ within $a^{11;02}$, we can write
\begin{equation}\label{eq:int-d1-y}
\int_{1}d_1 y^1 = \int_{1} y^{1} \, d_1 x = y^{1} \der_1 x + \int_{1} d_{1} y^{2} \, d_1 x,
\end{equation}
where we recall that we have set $y^{1}=f'(x)$ and $y^2=f''(x)$ in Notation \ref{not:smooth-path-y-fx}.

\smallskip

\noindent
Plugging relation \eqref{eq:int-d1-y} into the definition of $a^{11;02}$ we obtain
\begin{equation}\label{eq:def-a-11-02}
a^{11;02} = y^{2} \int_{1} d_{1} x \int_{2} d_{12} x + \int_{1} d_{1} y^{2} \, d_1 x \int_{2} d_{12} x
= y^{2} \, \bx^{11;02} + \rho^{11;02},
\end{equation}
where $\bx^{11;02}$ has been defined at Hypothesis \ref{hyp:rough-path-intro} and where we have just defined the remainder $\rho^{11;02}$ by $\rho^{11;02}=\int_{1} d_{1} y^{2} \, d_1 x \int_{2} d_{12} x$. Now notice that $\rho^{11;02}$ is an increment which should belong to $\cp_{2,2}^{3\ga_1,\ga_2}$ if $x\in\cp_{1,1}^{\ga_1,\ga_2}$, and thus should be expressed in terms of $\laa_1$. In order to check this fact, write
\begin{equation*}
\rho^{11;02} = a^{11;02} - y^{2} \, \bx^{11;02},
\end{equation*}
and apply $\doo$ to both sides of this identity. Namely, the increment $\doo a^{11;02}$ is computed according to relation \eqref{eq:delta1-a1102}, while the term $y^{2} \, \bx^{11;02}$ is handled by means of identity \eqref{eq:difrul-C1-C2}:
\begin{equation*}
\doo\lp  y^{2} \, \bx^{11;02} \rp
= -\doo y^{2} \, \bx^{11;02} + y^{2} \, \doo\bx^{11;02}.
\end{equation*}
In addition, Hypothesis \ref{hyp:rough-path-intro} states that $\X$ is geometric, so that $\der_{1}\bx^{11;02}=\der_{1}x\,\der x$ like in the smooth case. We thus end up with:
\begin{equation*}
\doo\rho^{11;02} 
= \doo y^1 \, \der x + \doo y^{2} \, \bx^{11;02} - y^{2} \, \doo x \, \der x  .
\end{equation*}
Observe now, thanks to a simple Taylor expansion, that $\doo y^1 = y^{2} \doo x + r^{y,1}$, where $r^{y,1}$ is an element of $\cp_{2,1}^{2\ga_1,\ga_2}$. We thus get
\begin{equation*}
\doo\rho^{11;02}
= \lp y^{2} \doo x + r^{y,1} \rp \, \der x + \doo y^{1} \, \bx^{11;02} - y^{1} \, \doo x \, \der x
=r^{y,1} \, \der x + \doo y^{2} \, \bx^{11;02},
\end{equation*}
and the last increment is easily seen to be an element of $\cp_{2,2}^{3\ga_1,\ga_2}$, to which $\laa_1$ can be applied. Hence 
\begin{equation*}
\rho^{11;02} = \laa_1\lp r^{y,1} \, \der x + \doo y^{2} \, \bx^{11;02} \rp,
\end{equation*}
and plugging this identity into \eqref{eq:def-a-11-02}, we get
\begin{equation}\label{eq:exp-a-11-02-Lambda1}
a^{11;02} =  y^{2} \, \bx^{11;02} + \laa_1\lp r^{y,1} \, \der x + \doo y^{2} \, \bx^{11;02} \rp.
\end{equation}
Along the same lines, one can show that
\begin{equation}\label{eq:exp-a-01-22-Lambda2}
a^{01;22} =  y^{2} \, \bx^{01;22} + \laa_2\lp r^{y,2} \, \der x + \dt y^{2} \, \bx^{01;22} \rp.
\end{equation}

\smallskip

\noindent\textit{Step 2: First decomposition of the double integral.}
 Let us handle now the term $a^{11;22}$, starting by the equivalent of ingredient (ii) above. We thus write an expansion for the term $\int_{1}\int_{2}d_{12} y$, which can be handled thanks to relation \eqref{eq:ito-plane-smooth2} as
 \begin{equation*}
\int_{1}\int_{2} d_{12} y^1 = \int_{1}\int_{2} y^{2} \, d_{12} x + \int_{1}\int_{2} y^{3} \, d_{1} x \, d_{2} x,
\end{equation*}
and plugging this relation into the definition of $a^{11;22}$ we obtain
\begin{equation}\label{eq:first-dcp-double-intg}
a^{11;22} = 
\int_{1}\int_{2} y^{2} \, d_{12} x \, d_{12} x + \int_{1}\int_{2} y^{3} \, d_{1} x \, d_{2} x \, d_{12} x
\equiv b^{11;22} + b^{\hone 1;\htwo 2}.
\end{equation}
We proceed now to the analysis of the term $b^{11;22}$. To this aim, go back to relation \eqref{eq:dcp-delta-z-1} in order to write
\begin{eqnarray}\label{eq:dcp-b-11-22}
b^{11;22}&=&
\int_{1}\int_{2} \lp 
y^{2} \, \int_{1}\int_{2}  d_{12} x + \int_{1} d_1 y^{2} \int_{2}  d_{12} x + \int_{2} d_2 y^{2} \int_{1}  d_{12} x + \int_{1}\int_{2} d_{12}y^{2} \, d_{12} x  
\rp \, d_{12} x \notag \\
&=& y^{2} \, \bx^{11;22} + b^{111;022} + b^{011;222} + b^{111;222},
\end{eqnarray}
where
\begin{equation}\label{eq:def-b111022-b011222}
b^{111;022} = \int_{1} d_1 y^{2} \int_{2}  d_{12} x \, d_{12} x, \quad
b^{011;222} = \int_{2} d_2 y^{2} \int_{1}  d_{12} x \, d_{12} x,
\end{equation}
and $b^{111;222} = \int_{1}\int_{2} d_{12}y^{2} \, d_{12} x \, d_{12} x$. We shall treat those 3 terms separately.

\smallskip

\noindent\textit{Step 3: Analysis of the boundary integrals.}
The term $b^{111;022}$ is a third order iterated integral in direction 1. One can thus try to apply ingredient (i) above, namely compute $\doo b^{111;022}$ in order to observe if we get an increment in $\cp_{3,2}^{3\ga_1,*}$.  Unfortunately, this brings some extra complications due to overlapping integrations. This is why we have postponed these computations to Lemma \ref{lem:b-111-022} below, which asserts that $\doo b^{111;022}\in\cp_{3,2}^{3\ga_1,\ga_2}$ and is a continuous function of the couple $(\X,y)$. We have thus obtained that $b^{111;022}=\laa_{1}(\rho^{y^{2},1})$ for a certain $\rho^{y^{2},1}\in\cp_{3,2}^{3\ga_1,\ga_2}$. Exactly along the same lines, we also get that $b^{011;222}=\laa_{2}(\rho^{y^{2},2})$.

\smallskip

Going back to expression \eqref{eq:dcp-b-11-22}, we see that we are now only left with the definition of the triple iterated integral $b^{111;222}$.

\smallskip

\noindent\textit{Step 4: Analysis of the triple iterated integral.}
Recall that $b^{111;222} = \int_{1}\int_{2} d_{12}y^{2} \, d_{12} x \, d_{12} x$. It is thus natural to analyze this increment thanks to an application of the operator $\der$. The reader might verify that, applying successively $\doo$ and $\dt$ plus Proposition \ref{prop:dif-intg-1} in each direction, one gets:
\begin{equation*}
\der b^{111;222} = \der y^{2} \, \bx^{11;22} + b^{1\cdot11;22\cdot2}+b^{11\cdot1;2\cdot22}
+ b^{11\cdot1;22\cdot2},
\end{equation*}
where
\begin{equation}\label{eq:def-b-11.1-2.22}
b^{1\cdot11;22\cdot2} = \int_{1}\int_{2} d_{12}y^{2} \int_{1} d_{12}x \int_{2} d_{12}x, \quad\text{ }\quad
b^{11\cdot1;2\cdot22} = \int_{1}\int_{2} d_{12}y^{2} \int_{2} d_{12}x \int_{1} d_{12}x,
\end{equation}
and $b^{11\cdot1;22\cdot2}=( \int_{1}\int_{2} d_{12}y^{2} d_{12}x ) \, \der x$. 
Now we resort to Lemma \ref{lem:b-11.1-2.22} to assert that $b^{1\cdot11;22\cdot2}$ and $b^{11\cdot1;2\cdot22}$ lye into $\cp_{3,3}^{3\ga_{1},3\ga_{2}}$. We shall see at the end of the proof that  $b^{11\cdot1;22\cdot2}$ can also be included in a $\cp_{3,3}^{3\ga_{1},3\ga_{2}}$ term. 

\smallskip

Going back to relation \eqref{eq:dcp-b-11-22} and summarizing our computations from Steps 3 and 4, we have thus obtained that
\begin{equation}\label{eq:dissec-b-11-22}
b^{11;22} = y^{2} \, \bx^{11;22} + \laa_{1}(\rho^{y^{2},1}) + \laa_{2}(\rho^{y^{2},2}) + b^{111;222},
\end{equation}
where $\rho^{y^{2},1}\in\cp_{3,2}^{3\ga_{1},\ga_{2}}$, $\rho^{y^{2},2}\in\cp_{2,3}^{\ga_{1},3\ga_{2}}$ and where $b^{111;222}$ is dissected up to the term $b^{11\cdot1;22\cdot2}$.

\smallskip

\noindent\textit{Step 5: Analysis of $b^{\hone 1;\htwo 2}$.}
We now go back to the term $b^{\hone 1;\htwo 2}$ introduced at equation \eqref{eq:first-dcp-double-intg}. Its analysis is very similar to the one we developed for $b^{11;22}$, so that we only sketch the main differences.

\smallskip

First of all, in order to get an equivalent to relation \eqref{eq:dcp-delta-z-1}, we resort to the differential element $d_{\hone\htwo}x$. This yields:
\begin{equation}\label{eq:dcp-int-y2-d1x-d2x}
\int_{1}\int_{2} y^{3} \, d_{\hone\htwo}x 
=
y^{2} \, \int_{1}\int_{2}  d_{\hone\htwo}x 
+ \int_{1} d_1 y^{2}  \int_{2}  d_{\hone\htwo}x + \int_{2} d_2 y^{3}  \int_{1}  d_{\hone\htwo}x 
+ \int_{1}\int_{2} d_{12}y^{3} \, d_{\hone\htwo}x,
\end{equation}
and we plug this relation into \eqref{eq:first-dcp-double-intg} in order to get
\begin{equation*}
b^{\hone 1;\htwo 2}
= y^{3} \, \bx^{\hone 1;\htwo 2} + b^{1\hone1;0\htwo2} + b^{0\hone1;2\htwo2} 
+ b^{1\hone1;2\htwo2}
\end{equation*}
where
\begin{equation*}
b^{1\hone1;0\htwo2} = \int_{1} d_1 y^{3}  \int_{2}  d_{\hone\htwo} x \, d_{12} x, \qquad
b^{0\hone1;2\htwo2} = \int_{2} d_2 y^{3}  \int_{1}  d_{\hone\htwo} x \, d_{12} x, 
\end{equation*}
and $b^{1\hone1;2\htwo2} = \int_{1}\int_{2} d_{12}y^{3} \, d_{\hone\htwo} x  \, d_{12} x$. 

\smallskip

The 2 terms $b^{1\hone1;0\htwo2}$ and $b^{0\hone1;2\htwo2}$ can now be respectively handled similarly to $b^{111;022}$ and $b^{011;222}$. We obtain that 
\begin{equation*}
b^{1\hone1;0\htwo2} = \laa_{1}(\rho^{y^{3},1}), 
\quad\text{and}\quad
b^{0\hone1;2\htwo2} = \laa_{2}(\rho^{y^{3},2}).
\end{equation*}
As far as $b^{1\hone1;2\htwo2}$ is concerned, we dissect it through the application of $\der$, which yields
\begin{equation*}
\der b^{1\hone1;2\htwo2} = \der y^{3} \, \bx^{1\hone;2\htwo} 
+ b^{1\cdot\hone1;2\htwo\cdot2}+b^{1\hone\cdot1;2\cdot\htwo2}
+ b^{1\hone\cdot1;2\htwo\cdot2},
\end{equation*}
where $b^{1\cdot\hone1;2\htwo\cdot2}$, $b^{1\hone\cdot1;2\cdot\htwo2}$ and
$b^{1\hone\cdot1;2\htwo\cdot2}$ are defined as in \eqref{eq:def-b-11.1-2.22}. In addition, as in Step 4, it can be proven that $\der y^{3} \, \bx^{1\hone;2\htwo} 
+ b^{1\cdot\hone1;2\htwo\cdot2}+b^{1\hone\cdot1;2\cdot\htwo2}\in\cp_{3,3}^{3\ga_1,3\ga_2}$.
Similarly to \eqref{eq:dissec-b-11-22}, we thus end up with a relation of the form
\begin{equation}\label{eq:dissec-b-hat11-hat22}
b^{\hone1;\htwo2} = y^{2} \, \bx^{1\hone;2\htwo} + \laa_{1}(\rho^{y^{3},1}) + \laa_{2}(\rho^{y^{3},2}) + b^{1\hone1;2\htwo2},
\end{equation}
where $\rho^{y^{2},1}\in\cp_{3,2}^{3\ga_{1},\ga_{2}}$, $\rho^{y^{2},2}\in\cp_{2,3}^{\ga_{1},3\ga_{2}}$ and where $b^{1\hone1;2\htwo2}$ is conveniently decomposed up to the term $b^{1\hone\cdot1;2\htwo\cdot2}$.

\smallskip

\noindent\textit{Step 6: Analysis of $b^{11\cdot1;22\cdot2}$ and $b^{1\hone\cdot1;2\htwo\cdot2}$.}
Write $b^{11\cdot1;22\cdot2}+b^{1\hone\cdot1;2\htwo\cdot2}=\ell \, \der x$, with 
\begin{equation*}
\ell = \int_{1}\int_{2} d_{12}y^{2} \, d_{12}x + \int_{1}\int_{2} d_{12}y^{3} \, d_{\hone\htwo}x.
\end{equation*}
We wish to show that $b^{11\cdot1;22\cdot2}+b^{1\hone\cdot1;2\htwo\cdot2}\in\cp_{3,3}^{3\ga_{1},3\ga_{2}}$, which amounts to check that $\ell$ lies into $\cp_{2,2}^{2\ga_{1},2\ga_{2}}$.
In order to prove this claim, start again from a change of variable formula for $y=\vp(x)$ plus elementary manipulations like those of relation \eqref{eq:dcp-delta-z-1}. This yields
\begin{eqnarray*}
\ell &=& \der y^{1} - \int_{1}\int_{2} y^{2} \, d_{12}x - \int_{1}\int_{2} y^{3} \, d_{\hone\htwo}x \\
&=& \der y^{1} - \lp y^{2} \, \bx^{1;2} + a^{11;02}(y^{2}) + a^{01;22}(y^{2}) 
+ a^{1\hone;0\htwo}(y^{3}) + a^{0\hone;2\htwo}(y^{3}) \rp,
\end{eqnarray*}
where $a^{11;02}(y^{2}),\ldots,a^{0\hone;2\htwo}(y^{3})$ are defined similarly to \eqref{eq:def-a-11-02}, with either $y^{2}$ or $y^{3}$ instead of $y^{1}$. With the same calculations as for \eqref{eq:exp-a-11-02-Lambda1}--\eqref{eq:exp-a-01-22-Lambda2}, we thus get
\begin{eqnarray*}
a^{11;02}(y^{2}) &=& 
y^{2} \, \bx^{11;02} + \laa_1\lp r^{y^{2},1} \, \der x + \doo y^{3} \, \bx^{11;02} \rp \\
a^{01;22}(y^{2}) &=& y^{3} \, \bx^{01;22} + \laa_2\lp r^{y^{2},2} \, \der x + \dt y^{3} \, \bx^{01;22} \rp,
\end{eqnarray*}
and
\begin{eqnarray*}
a^{1\hone;0\htwo}(y^{3}) &=& 
y^{4} \, \bx^{1\hone;0\htwo} + \laa_1\lp r^{y^{3},1} \, \bx^{\hone;\htwo} 
+ \doo y^{4} \, \bx^{1\hone;0\htwo} \rp \\
a^{0\hone;2\htwo}(y^{3}) &=& y^{4} \, \bx^{0\hone;2\htwo} 
+ \laa_2\lp r^{y^{3},2} \, \bx^{\hone;\htwo} + \dt y^{4} \, \bx^{0\hone;2\htwo} \rp.
\end{eqnarray*}
These identities easily yield the following claims:

\smallskip

\noindent
\emph{(a)}
The increment $\ell$ is a continuous function of $\bx^{1;1}, \bx^{11;02}, \bx^{01;22}, \bx^{\hone;\htwo}, \bx^{1\hone;0\htwo}, \bx^{0\hone;2\htwo}$ and $y^1$.

\smallskip

\noindent
\emph{(b)}
The fact that $\ell\in\cp_{2,2}^{2\ga_{1},2\ga_{2}}$ is proven thanks to long and tedious Taylor type expansions. We refer to the stability result \cite[Theorem 7.13]{Gp} for further details. 

\smallskip

\noindent
We now plug this information into \eqref{eq:dissec-b-11-22} and \eqref{eq:dissec-b-hat11-hat22} in order to write
\begin{equation*}
b^{11;22} + b^{\hone1;\htwo2}
= 
y^{2} \, \bx^{11;22} + y^{2} \, \bx^{1\hone;2\htwo}
+ \laa_{1}(\rho^{y^{2},1}+\rho^{y^{3},1}) + \laa_{2}(\rho^{y^{2},2}+\rho^{y^{3},2}) 
+\laa(\ell\,\der x).
\end{equation*}
Finally, we propagate this identity back into \eqref{eq:first-dcp-double-intg}, \eqref{eq:exp-a-01-22-Lambda2}, \eqref{eq:exp-a-11-02-Lambda1} and eventually \eqref{eq:dcp-delta-z-1}, which yields our identity \eqref{eq:strato-der-z1-rough}. We leave the details of these last computations to the patient reader.

\smallskip

\noindent\textit{Step 7: Conclusion.}
We have just obtained expression \eqref{eq:strato-der-z1-rough} for $z^{1}$. 
As far as $z^{2}$ is concerned, we start by replacing relation \eqref{eq:dcp-delta-z-1} by:
\begin{equation*}
\int_{1}\int_{2} y^{2} \, d_{\hone\htwo}x 
=
y^{2} \, \int_{1}\int_{2}  d_{\hone\htwo}x 
+ \int_{1} d_1 y^{2}  \int_{2}  d_{\hone\htwo}x + \int_{2} d_2 y^{2}  \int_{1}  d_{\hone\htwo}x 
+ \int_{1}\int_{2} d_{12}y^{2} \, d_{\hone\htwo}x.
\end{equation*}
The expansion then proceeds exactly like in the expansion of $z^{1}$, and details are omitted for sake of conciseness. The claims (ii) and (iii)  of Theorem \ref{thm:rough-strato-intro} follow by a limiting procedure.

\end{proof}

\begin{lemma}\label{lem:b-111-022}
Let $b^{111;022}$ and $b^{011;222}$  be the terms defined by \eqref{eq:def-b111022-b011222}. Then: 

\smallskip

\noindent
\emph{(i)} The increment $\doo b^{111;022}$ is contained in  $\cp_{3,2}^{3\ga_{1},\ga_{2}}$ and is a continuous function of the following elements of $\X$: $\bx^{11;22}, \bx^{11\cdot1;022}, \bx^{1\otimes1;22}, \bx^{11\otimes1;022}$ and $y$.

\smallskip

\noindent
\emph{(ii)} The increment $\dt b^{011;222}$ is contained in$\cp_{2,3}^{\ga_{1},3\ga_{2}}$ and is a continuous function of the following elements of $\X$: $\bx^{11;22}, \bx^{011;22\cdot2}, \bx^{11;2\otimes2}, \bx^{011;02\otimes2}$ and $y$.
\end{lemma}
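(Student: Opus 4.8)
The plan is to compute $\doo b^{111;022}$ by hand, read off its regularity and its dependence on the rough path $\X$, and then deduce part (ii) from the $1\leftrightarrow2$ symmetry that underlies every definition in Hypothesis~\ref{hyp:iter-intg-x}. Recall that $b^{111;022}=\int_{1} d_1 y^{2} \int_{2} d_{12} x \, d_{12} x$ is a third order iterated integral in which the three factors $y^{2},x,x$ are all integrated in direction $1$ (on a $3$-simplex in the $\si$-variables), whereas only the two copies of $x$ are integrated in direction $2$ (on a $2$-simplex in the $\tau$-variables). The central tool is the direction-$1$ version of the Chen-type relation of Proposition~\ref{prop:dif-intg-1}: applying $\doo$ cuts the direction-$1$ simplex at its interior break-points and leaves the direction-$2$ integrations untouched.

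First I would apply $\doo$ and split the $\si$-simplex at its two interior points. Splitting after the first factor produces the clean product $\doo y^{2}\,\bx^{11;22}$, since $\int_{1}d_1 y^{2}=\doo y^{2}$ while the two remaining copies of $x$ reassemble into $\bx^{11;22}$; here no direction-$2$ overlap occurs because $y^{2}$ is frozen in direction $2$. Splitting after the second factor is the delicate case: the two copies of $x$ stay linked through the shared $\tau$-simplex while the $\si$-integration is now cut in two, which is exactly the overlapping phenomenon anticipated in the main proof. This is where the split elements of $\X$ must enter, and I would record this contribution through the tensorized increment $\bx^{1\otimes1;22}$ together with the broken increment $\bx^{11\cdot1;022}$, i.e.\ precisely the members of $\X$ carrying a direction-$1$ splitting ($\otimes$) or a direction-$1$ break ($\cdot$).

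Next I would expand the surviving $y$-increment by a first order Taylor formula, $\doo y^{2}=y^{3}\,\doo x + r^{y^{2},1}$ with $r^{y^{2},1}\in\cp_{2,1}^{2\ga_1,\ga_2}$, exactly as in \eqref{eq:exp-a-11-02-Lambda1}. The factor $y^{3}$ together with the three underlying copies of $\doo x$ then assembles the last required element $\bx^{11\otimes1;022}$, while the remainder $r^{y^{2},1}$ carries an extra power of $\ga_1$ and is therefore harmless. Collecting all terms exhibits $\doo b^{111;022}$ as an explicit bilinear-continuous function of $y$ and of the four increments $\bx^{11;22},\bx^{11\cdot1;022},\bx^{1\otimes1;22},\bx^{11\otimes1;022}$, which is the continuity claim. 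For the regularity, every surviving summand carries three direction-$1$ differentials of $x$ (either the three original factors, or one of them replaced by $y^{3}\,\doo x$), so the direction-$1$ H\"older exponent equals $3\ga_1>1$; since at least one direction-$2$ integration survives in each factor, the direction-$2$ exponent is at least $\ga_2$. Hence $\doo b^{111;022}\in\cp_{3,2}^{3\ga_1,\ga_2}$, and the bound $3\ga_1>1$ is exactly what lets $\laa_1$ act on it in the main proof. Part (ii) is then obtained verbatim after the substitution $1\leftrightarrow2$, which turns the four increments above into $\bx^{11;22},\bx^{011;22\cdot2},\bx^{11;2\otimes2},\bx^{011;02\otimes2}$.

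The main obstacle is the bookkeeping at the second splitting: because $\doo$ cuts the $\si$-simplex while the $\tau$-simplex remains intact across the cut, the resulting term genuinely cannot be written as a plain product $\bx\,\bx$ of the basic increments, and one is forced to introduce the split ($\otimes$) and broken ($\cdot$) members of $\X$ and to check that their tensorial structure reassembles correctly. Once these objects are correctly identified, the H\"older estimates are routine tensorizations of the one dimensional bounds of Section~\ref{sec:one-dim}, and the continuity in $\X$ and $y$ is immediate from the explicit formula.
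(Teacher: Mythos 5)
Your proposal is correct and follows essentially the same route as the paper's proof: you apply the direction-1 Chen relation to isolate the clean term $\doo y^{2}\,\bx^{11;22}$, handle the overlapping term through the splitting/broken elements of $\X$ together with the Taylor expansion $\doo y^{2}=y^{3}\doo x+r^{y^{2},1}$, arrive at the same four increments $\bx^{11;22},\bx^{11\cdot1;022},\bx^{1\otimes1;22},\bx^{11\otimes1;022}$, and get part (ii) by the $1\leftrightarrow 2$ symmetry. The only step the paper spells out that you leave implicit is the reassembly mechanism: the split remainder $\eta^{11\cdot1;022;\sharp}=r^{y^{2},1}\,\bx^{1\otimes1;22}+\doo y^{3}\,\bx^{11\otimes1;022}$ is inverted by $\laa_{1}\otimes_{1}\id$ and then glued back via $G_{1}$, which is precisely the "tensorial reassembly" you allude to at the end.
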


\begin{proof}
Let us prove the result for $\doo b^{111;022}$, since $\dt b^{011;222}$ is analyzed exactly in the same way. 

\smallskip

Towards this aim, invoking Proposition \ref{prop:dif-intg-1} in direction 1 only we get
\begin{equation}\label{eq:delta1-b-111-022}
\doo b^{111;022} = \doo y^{2} \, \bx^{11;22} +  h^{11\cdot1;022},
\quad\text{with}\quad
h^{11\cdot1;022} = \int_{1} d_1 y^{2} \int_{2}  d_{12} x  \int_{1} d_{12} x.
\end{equation}
Now, under Hypothesis \ref{hyp:iter-intg-x} the term $\doo y^{2} \, \bx^{11;22}$ above is easily seen to lye into $\cp_{3,2}^{3\ga_1,2\ga_2}$. However, the term $h$ is more troublesome. In order to see this additional problem more clearly, let us specify $h$ a little: for  $s_1,s_2,s_3\in\cs_3$ and $t_1,t_2\in\cs_2$ it can be written as
\begin{eqnarray}\label{eq:increment-h-111-022}
h_{s_1 s_2 s_3;t_1 t_2}^{11\cdot1;022}&=&
\int_{t_1}^{t_2} \lp 
\int_{t_1}^{\tau_2}\int_{s_1}^{s_2} \lp \int_{s_1}^{\si_2} d_{1} y^{2}_{\si_1;t_1} \rp 
d_{12} x_{\si_2;\tau_1} \int_{s_2}^{s_3} d_{12} x_{\si_3;\tau_2}
\rp  \notag\\
&=& 
\int_{t_1}^{t_2} \lp \int_{s_1}^{s_2} \doo y^{2}_{s_1\si_2;t_1}  d_{1} \dt x_{\si_2;t_1\tau_2}  \rp 
d_{2} \doo x_{s_2s_3;\tau_2},
\end{eqnarray}
and the regularity of this last increment is far from being obvious to determinate. Furthermore, a closer look at $h$ leads to the following conclusion: the problem in the regularity analysis stems from the overlap in integrations for directions 1 and 2, apparent in the definition of $h$. In order to fix this new problem (which can obviously only occur when integrals of order 3 or more are showing up), we introduce a third ingredient of our methodology, based on the splitting operation of Definition \ref{def:splitting-plane}:

\smallskip

\noindent
\emph{(iii)} The splitting operation can be applied to overlapping terms like $h^{11\cdot1;022}$, in order to separate the term $\int_{1} d_1 y^{2} \int_{2}  d_{12} x$ from the term  $\int_{1} d_{12} x$. This simple trick allows to solve the intricate structure of integration in both directions 1 and 2. Unfortunately, the remaining terms have to be defined separately now, and are not of order 3 anymore. This means that another step of expansion might be necessary for their proper definition. In our running example, this additional step concerns the term $\int_{1} d_1 y^{2} \int_{2}  d_{12} x$ in \eqref{eq:delta1-b-111-022}.

\smallskip

Ingredient (iii) applied to our increment $h^{11\cdot1;022}$ yields the following developments: we have
\begin{equation}\label{eq:split-h}
S_{1}(h^{11\cdot1;022}) = 
\int_{1} d_1 y^{2} \int_{2}  d_{12} x \otimes_{1}  \int_{1} d_{12} x, 
\end{equation}
and let us add the following comments: 

\smallskip

\noindent
(a) In direction 2, the quantity $S_{1}(h^{11\cdot1;022})$ can be simply expressed in terms of iterated integrals of the driving process $x$ and elementary increments of $y^1$.

\smallskip

\noindent
(b) The exact expression of $S_{1}(h^{11\cdot1;022})$ can be obtained from \eqref{eq:increment-h-111-022} by splitting the variables $s_2,s_3$ into 3 variables $s_2,s_3,s_4$:
\begin{equation*}
\lc S_{1}(h^{11\cdot1;022}) \rc_{s_1 s_2 s_3 s_4;t_1 t_2}
=\int_{t_1}^{t_2} \lp \int_{s_1}^{s_2} \doo y^{2}_{s_1\si_2;t_1}  d_{1} \dt x_{\si_2;t_1\tau_2}  \rp 
d_{2} \doo x_{s_3s_4;\tau_2}.
\end{equation*}

\smallskip

\noindent
We now set $h^{11;02}=\int_{1} d_1 y^{2} \int_{2}  d_{12} x$, which is the increment we shall further expand in~\eqref{eq:split-h}. This can be performed exactly as for $a^{11;02}$ in Step 2, just replacing $y$ by $y^{2}$, and enables the following expression which should be compared to equality \eqref{eq:exp-a-11-02-Lambda1}:
\begin{equation*}
h^{11;02} =  y^{3} \, \bx^{11;02} + \laa_1\lp r^{y^{2},1} \, \der x + \doo y^{3} \, \bx^{11;02} \rp,
\end{equation*}
where we recall that $\bx^{11;02}=\int_{1} d_{1}x\int_{2}d_{12}x$.
We plug this relation into \eqref{eq:split-h} and we obtain
\begin{equation}\label{eq:def-S1-h-11.1-022}
S_{1}(h^{11\cdot1;022})=
y^{3} \, \int_{1} d_{1}x\int_{2}d_{12}x \otimes_{1} \int_{1}d_{12}x + h^{11\cdot1;022;\sharp}.
\end{equation}
In the last expression, $h^{11\cdot1;022;\sharp}$ is defined as follows: we have
\begin{eqnarray}\label{eq:exp-eta-11.1-022}
\eta^{11\cdot1;022;\sharp} &\equiv&
\lc  \der_1\otimes_{1}\id\rc h^{11\cdot1;022;\sharp} \notag\\
&=& r^{y^{2},1} \, \int_{1}\int_{2} d_{12}x \otimes_{1} \int_{1} d_{12}x 
+ \doo y^{3} \, \int_{1} d_{1}x\int_{2}d_{12}x \otimes_{1} \int_{1}d_{12}x\notag\\
&=& r^{y^{2},1} \, \bx^{1\otimes1;22} 
+ \doo y^{3} \, \bx^{11\otimes1;022} ,
\end{eqnarray}
and notice that $\eta^{11\cdot1;022;\sharp}$ is only expressed in terms of elementary increments based on $y$ and iterated integrals of $x$. Furthermore, Hypothesis \ref{hyp:iter-intg-x} guarantees that $\eta^{11\cdot1;022;\sharp}$ lies into $[\cac_{2}^{3\ga_1}\otimes\cac_{2}^{\ga_1}](\cac_{2}^{\ga_2})$, and thus $h^{11\cdot1;022;\sharp}=[\laa_{1} \otimes_{1} \id](\eta^{11\cdot1;022;\sharp})$. Going back to \eqref{eq:def-S1-h-11.1-022}, we end up with
\begin{equation*}
S_{1}(h^{11\cdot1;022})= y^{3} \, \int_{1} d_{1}x\int_{2}d_{12}x \otimes_{1} \int_{1}d_{12}x 
+ \lc\laa_{1} \otimes_{1} \id\rc (\eta^{11\cdot1;022;\sharp}).
\end{equation*}
We can now go back to an expression for $h^{11\cdot1;022}$ itself by applying the inverse map $G_{1}$ of $S_{1}$. This gives
\begin{eqnarray*}
h^{11\cdot1;022} &=& y^{3} \, \int_{1} d_{1}x\int_{2}d_{12}x \int_{1}d_{12}x 
+ G_{1}\lp \lc\laa_{1} \otimes_{1} \id\rc (\eta^{11\cdot1;022;\sharp}) \rp  \\
&=& y^{3} \, \bx^{11\cdot1;022} + G_{1}\lp \lc\laa_{1} \otimes_{1} \id\rc (\eta^{11\cdot1;022;\sharp}) \rp.
\end{eqnarray*}
Plugging this relation back in equation \eqref{eq:delta1-b-111-022}, we end up with
\begin{equation}\label{eq:b-111-022-order-3}
\doo b^{111;022} = \doo y^{2} \, \bx^{11;22} + y^{2} \, \bx^{11\cdot 1;022} + G_{1}\lp \lc\laa_{1} \otimes_{1} \id\rc (\eta^{11\cdot1;022;\sharp}) \rp,
\end{equation}
which is now clearly an element of $\cp_{3,2}^{3\ga_1,\ga_2}$ and a continuous function of the increments $\bx^{11;22}, \bx^{11\cdot1;022}, \bx^{1\otimes1;22}, \bx^{11\otimes1;022}$ and $y$ as claimed in our proposition. 

In the same spirit, we also have 
\begin{equation*}
b^{011;222} = \laa_{2}\lc \dt y^{2} \, \bx^{11;22} + y^{3} \, \bx^{011;22\cdot2} 
+ G_{2}\lp \lc\laa_{2} \otimes_{2} \id\rc (\eta^{011;22\cdot2;\sharp}) \rp \rc,
\end{equation*}
where $\eta^{011;22\cdot2;\sharp}$ is defined similarly to $\eta^{11\cdot1;022;\sharp}$ in \eqref{eq:exp-eta-11.1-022}.

\end{proof}

\begin{lemma}\label{lem:b-11.1-2.22}
Let $b^{1\cdot11;22\cdot2}$ and $b^{11\cdot1;2\cdot22}$  be the terms defined by \eqref{eq:def-b-11.1-2.22}. Then: 

\smallskip

\noindent
\emph{(i)} The increment $\doo b^{1\cdot11;22\cdot2}$ is an element of $\cp_{3,3}^{3\ga_{1},3\ga_{2}}$ and is a continuous function of $\bx^{11\cdot1;022}$, $\bx^{11\cdot1;2\cdot22}$, $\bx^{1\otimes1;22}$, $\bx^{11\otimes1;022}$, $\bx^{11\otimes1;222}$ and $y$.

\smallskip

\noindent
\emph{(ii)} The increment $\dt b^{11\cdot1;2\cdot22}$ is an element of $\cp_{3,3}^{3\ga_{1},3\ga_{2}}$ and is a continuous function of $\bx^{011;22\cdot2}$, $\bx^{1\cdot11;22\cdot2}$, $\bx^{11;2\otimes2}$, $\bx^{011;22\otimes2}$, $\bx^{111;22\otimes2}$ and $y$.
\end{lemma}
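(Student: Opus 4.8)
The plan is to reproduce, for the genuine third order increment $b^{1\cdot11;22\cdot2}$, the three-ingredient scheme already used for $b^{111;022}$ in the proof of Lemma \ref{lem:b-111-022}; the only structural novelty is that $b^{1\cdot11;22\cdot2}$ carries a break in \emph{both} directions (after the first differential in direction $1$ and after the second in direction $2$). First I would apply $\doo$ and invoke Proposition \ref{prop:dif-intg-1} in direction $1$ only. Differentiating the direction-$1$ iterated integrals hidden in the structure $1\cdot11$ produces a leading contribution built from $\doo y^{2}$ and one of the listed pure iterated integrals of $x$ (namely $\bx^{11\cdot1;2\cdot22}$), together with a remainder in which the two direction-$1$ differentials $d_{12}x$ still overlap the direction-$2$ integrations, in complete analogy with the term $h^{11\cdot1;022}$ appearing in \eqref{eq:delta1-b-111-022}.

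The core of the argument is then the disentangling of this overlapping remainder through the splitting operation $S_{1}$ of Definition \ref{def:splitting-plane}. Applying $S_{1}$ separates the $y$-carrying block from the trailing direction-$1$ integral of $x$, yielding an element whose first tensor slot is a lower order increment that must be expanded once more. I would re-expand that inner block exactly as $h^{11;02}$ was expanded in the proof of Lemma \ref{lem:b-111-022} (replacing $y$ by $y^{2}$, so that the derivative $y^{3}$ now enters), which in the $\otimes_{1}$ notation reproduces the elements $\bx^{1\otimes1;22}$, $\bx^{11\otimes1;022}$ and $\bx^{11\otimes1;222}$. The residual $\sharp$-term then has the form $[\der_{1}\otimes_{1}\id](\,\cdot\,)$ and carries regularity $3\ga_{1}$ in its first tensor slot; since $3\ga_{1}>1$ the tensorized sewing map $[\laa_{1}\otimes_{1}\id]$ applies, and a final gluing by $G_{1}$ delivers $\doo b^{1\cdot11;22\cdot2}$ as an explicit continuous function of $\bx^{11\cdot1;022}$, $\bx^{11\cdot1;2\cdot22}$, $\bx^{1\otimes1;22}$, $\bx^{11\otimes1;022}$, $\bx^{11\otimes1;222}$ and $y$, with the announced regularity $(3\ga_{1},3\ga_{2})$. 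Part (ii) is then obtained by the symmetry $1\leftrightarrow2$, which exchanges $\doo$ with $\dt$, $S_{1}$ with $S_{2}$, $G_{1}$ with $G_{2}$, and permutes the listed increments accordingly.

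The step I expect to be the genuine obstacle is the regularity bookkeeping forced by the double break. For $b^{111;022}$ the break occurred in a single direction, so one pass of $S_{1}$ sufficed and only the first tensor slot needed a H\"older estimate; here the extra break in direction $2$ means one must still certify that every piece produced after the direction-$1$ splitting reaches the full regularity $3\ga_{2}$ in the second direction before $[\laa_{1}\otimes_{1}\id]$ can legitimately be applied. Concretely, I would have to check that $\doo y^{2}\,\bx^{11\cdot1;2\cdot22}$ and each $\otimes_{1}$-remainder lie in the correct tensorized H\"older spaces, which rests entirely on Hypothesis \ref{hyp:iter-intg-x} supplying the regularities of the new elements $\bx^{11\cdot1;2\cdot22}$ and $\bx^{11\otimes1;222}$. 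Establishing these tensor-H\"older bounds --- as opposed to the algebraic identities, which are routine once the correct splitting is selected --- is the delicate part, and is precisely the type of Taylor expansion estimate that the paper defers to the stability result \cite[Theorem 7.13]{Gp}.
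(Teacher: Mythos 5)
Your second paragraph does land on the paper's actual machinery --- direct splitting, re-expansion of the split-off block so that $y^{3}$ enters, the tensorized sewing map $[\laa_{1}\otimes_{1}\id]$, gluing by $G_{1}$, with Hypothesis \ref{hyp:iter-intg-x} supplying the regularities --- but your opening step is a false move, and it propagates. The terms $b^{1\cdot11;22\cdot2}$ and $b^{11\cdot1;2\cdot22}$ of \eqref{eq:def-b-11.1-2.22} are \emph{already} elements of $\cp_{3,3}$: they carry a break in each direction precisely because they are the output of applying $\der$ to the chained integral $b^{111;222}$ in Step 4 of the proof of Theorem \ref{thm:rough-strato-intro}. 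They are the analogues of the overlapping remainder $h^{11\cdot1;022}$ of \eqref{eq:delta1-b-111-022}, not of $b^{111;022}$ itself, so there is nothing left to differentiate: applying $\doo$ and Proposition \ref{prop:dif-intg-1} would land in $\cp_{4,3}$, and your claimed leading term $\doo y^{2}\,\bx^{11\cdot1;2\cdot22}$ cannot be correct (as a product of a $\cp_{2,1}$ increment with a $\cp_{3,3}$ increment it also sits in $\cp_{4,3}$, while the object to be controlled lives in $\cp_{3,3}$). The paper never differentiates these terms: it splits one of them directly, $S_{1}(b^{11\cdot1;2\cdot22})=c^{11;2\cdot2}\otimes_{1}\int_{1}d_{12}x$ with $c^{11;2\cdot2}=\int_{1}\int_{2}d_{12}y^{2}\int_{2}d_{12}x$, and the element $\bx^{11\cdot1;2\cdot22}$ enters the final decomposition with coefficient $y^{3}$, next to $\dt y^{3}\,\bx^{11\cdot1;022}$; both coefficients come from the identity $\int_{1}\int_{2}d_{12}y^{2}=\int_{1}\dt y^{3}\,d_{1}x+\int_{1}y^{3}\,\dt d_{1}x$ followed by freezing $y^{3}$ in direction 1 --- an expansion, not a differentiation.

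There is a second, related mismatch: you run the direction-1 machinery on $b^{1\cdot11;22\cdot2}$, but it only fits $b^{11\cdot1;2\cdot22}$. In $b^{11\cdot1;2\cdot22}$ the direction-1 break isolates the trailing $\int_{1}d_{12}x$, so the first tensor slot after $S_{1}$ is the mixed $y$--$x$ block $c^{11;2\cdot2}$, and expanding that block is what produces the list $\bx^{11\cdot1;022}$, $\bx^{11\cdot1;2\cdot22}$, $\bx^{1\otimes1;22}$, $\bx^{11\otimes1;022}$, $\bx^{11\otimes1;222}$. In $b^{1\cdot11;22\cdot2}$ the direction-1 break sits between $y^{2}$ and the two $x$'s, so $S_{1}$ isolates a pure $y$-block and leaves the genuine overlap (the first $x$ is chained to $y^{2}$ in direction 2 and to the second $x$ in direction 1) crossing the tensor slots; its expansion would call for elements such as $\bx^{1\otimes11;22\cdot2}$, which appear neither in Hypothesis \ref{hyp:iter-intg-x} nor in your list. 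That term must instead be split in direction 2 and handled with $[\laa_{2}\otimes_{2}\id]$ and $G_{2}$, the mirror computation that yields $\bx^{011;22\cdot2}$, $\bx^{1\cdot11;22\cdot2}$, $\bx^{11;2\otimes2}$, $\bx^{011;22\otimes2}$, $\bx^{111;22\otimes2}$. In fairness, the lemma as stated pairs each $b$ with the opposite list from the one its own proof produces, and its prefixes $\doo$, $\dt$ are inconsistent with the way the lemma is invoked in Step 4 (where the $b$'s themselves are asserted to lie in $\cp_{3,3}^{3\ga_{1},3\ga_{2}}$); this may have misled you. But even granting the swap, the mechanics you describe are attached to the wrong term, and the essential ingredient --- split first, with no differentiation step at all, then expand the mixed block through $y^{3}$ --- is absent from your plan as written.
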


\begin{proof}

Those two terms contain overlapping integrations in direction 1 and 2 again, and we thus proceed to their dissection through ingredient~(iii). In the case of $b^{11\cdot1;2\cdot22}$ we will use an additional expansion in direction 1 only, and thus write
\begin{equation}\label{eq:splitting-b-111-222}
S_1\lp b^{11\cdot1;2\cdot22} \rp = \int_{1}\int_{2} d_{12}y^{2} \int_{2} d_{12}x \otimes_{1} \int_{1} d_{12}x,
\quad\text{and set}\quad
c^{11;2\cdot2}= \int_{1}\int_{2} d_{12}y^{2} \int_{2} d_{12}x.
\end{equation}
In order to expand $c^{11;2\cdot2}$ one step further in direction 1, observe that
\begin{equation*}
\int_{1}\int_{2} d_{12}y^{2} = \int_{1} \dt d_{1}y^{2} = \int_{1} \dt \lp y^{3} d_{1}x \rp
= \int_{1} \dt  y^{3} \, d_{1}x + \int_{1}   y^{3} \, \dt d_{1}x.
\end{equation*}
Plugging this identity into the definition of $c^{11;2\cdot2}$ we get a decomposition as
\begin{equation*}
c^{11;2\cdot2}= k^{1} + k^{2},
\quad\text{where}\quad
k^{1}= \int_{1} \dt  y^{3} \, d_{1}x \int_{2} d_{12}x, \quad
k^{2}= \int_{1} y^{3} \, \dt d_{1}x \int_{2} d_{12}x .
\end{equation*}
We now apply ingredients (i) and (ii) to analyze those terms. First, one can compute $\doo c^{11;2\cdot2}$ as
\begin{equation}\label{eq:delta-1-c-11-22}
\doo c^{11;2\cdot2} = \int_{2} \doo d_{2}y^{2} \int_{2} \doo d_{2}x
= \der y^{2} \, \der x.
\end{equation}
Next expand $y^{3}$ in direction 1 in both $k^{1}$ and $k^{2}$, which yields
\begin{equation*}
k^{1}= \dt y^{3} \, \bx^{11;02} + \rho^{1},
\quad\text{and}\quad
k^{2}= y^{3} \, \bx^{11;2\cdot2} + \rho^{2},
\end{equation*}
where $\rho^{1}$ and $\rho^{2}$ are two remainder terms morally lying in $\cp_{2,2}^{3\ga_1,2\ga_2}$. Setting $\rho^{c}=\rho^{1}+\rho^{2}$, we have thus obtained the following relation:
\begin{equation*}
c^{11;2\cdot2} = \dt y^{3} \, \bx^{11;02} + y^{3} \, \bx^{11;2\cdot2} + \rho^{c}
\quad\Longrightarrow\quad
\rho^{c} = c^{11;2\cdot2} - \dt y^{3} \, \bx^{11;02} - y^{3} \, \bx^{11;2\cdot2} 
\end{equation*}
Applying $\doo$ to both sides of the last identity plus relation \eqref{eq:delta-1-c-11-22} and Hypothesis \ref{hyp:iter-intg-x}, we end up with:
\begin{equation*}
\doo \rho^{c} =
\lp  \der y^{2} - \dt y^{3} \, \doo x - y^{3} \der x \rp \der x
+\der y^{3} \, \bx^{11;02} + \doo y^{3} \, \bx^{11;2\cdot2}.
\end{equation*}
Furthermore, an elementary Taylor type computation shows that $(\der y^{1} - \dt y^{3} \, \doo x - y^{3} \der x)\in \cp_{2,2}^{2\ga_1,*}$ and thus it is readily checked that $\doo \rho^{c}\in\cp_{3,2}^{3\ga_1,\ga_2}$.
We now plug the decomposition of  $\doo \rho^{c}$ back into the relation \eqref{eq:splitting-b-111-222} defining  $S_1(b^{11\cdot1;2\cdot22})$ in order to obtain
\begin{multline*}
S_1\lp b^{11\cdot1;2\cdot22} \rp 
= \dt y^{3} \, \int_{1} d_{1} x \int_{2} d_{12}x  \otimes_{1} \int_{1} d_{12}x
+ y^{3} \, \int_{1} \int_{2} d_{12} x \int_{2} d_{12}x  \otimes_{1} \int_{1} d_{12}x  \\
+ h^{11\cdot1;2\cdot22;\sharp},
\end{multline*}
where $h^{11\cdot1;2\cdot22;\sharp}$ is defined in the following way: setting $\eta^{11\cdot1;2\cdot22;\sharp}\equiv[\doo\otimes_{1}\id]h^{11\cdot1;2\cdot22;\sharp}$, we have 
\begin{equation*}
\eta^{11\cdot1;2\cdot22;\sharp} = 
\lp  \der y^{3} - \dt y^{3} \, \doo x - y^{3} \der x \rp \, \bx^{1\otimes1;22}  
+\der y^{3} \, \bx^{11\otimes1;022}  
+\doo y^{3} \, \bx^{11\otimes1;222}.
\end{equation*}
As in the proof of Lemma \ref{lem:b-111-022}, notice that $\eta^{11\cdot1;2\cdot22;\sharp}$ is only expressed in terms of elementary increments based on $y$ and $\X$, and belongs to the domain of  $\laa_1\otimes_{1}\id$. Hence, applying the gluing operator $G_1$ we end up with the relation
\begin{equation*}
b^{11\cdot1;2\cdot22} = 
\dt y^{3} \, \bx^{11\cdot1;022} + y^{3} \, \bx^{11\cdot1;2\cdot22} 
+ G_{1}\lp \lc  \laa_1\otimes_{1}\id\rc \eta^{11\cdot1;2\cdot22;\sharp}\rp.
\end{equation*}
In the same manner, we obtain
\begin{equation*}
b^{1\cdot11;22\cdot2} = 
\doo y^{3} \, \bx^{011;22\cdot2} + y^{3} \, \bx^{1\cdot11;22\cdot2} 
+ G_{2}\lp \lc  \laa_2\otimes_{2}\id\rc \eta^{1\cdot11;22\cdot2;\sharp}\rp,
\end{equation*}
where
\begin{equation*}
\eta^{11\cdot1;2\cdot22;\sharp} = 
\lp  \der y^{2} - \doo y^{3} \, \dt x - y^{3} \der x \rp \, \bx^{11;2\times2}
+\der y^{3} \, \bx^{22\otimes2;011} 
+\dt y^{3} \, \bx^{111;22\otimes2},
\end{equation*}
which ends the proof.

\end{proof}

\section{Skorohod's calculus in the Young case}
\label{sec:young-vs-malliavin}
This section is devoted to relate the Young type integration theory introduced at Section~\ref{sec:planar-young} and the Skorohod integral in the plane handled in~\cite{TVp1}. Specifically, we shall first generalize the Skorohod change of variables formula given in~\cite{TVp1} for a fractional Brownian sheet with Hurst parameter greater than $1/2$ to a fairly general Gaussian process. We shall then compare this formula with Theorem \ref{thm:young-strato-intro} item (v).

\smallskip

\smallskip

Before going on with our computations, let us label some notations for further use:

\begin{notation}\label{not:lesssim-partitions}
We write $X\lesssim_{a,b,...}Y$ if there exist a constant $c$ depending on $a,b,...$ such that the quantities $X,Y$ satisfy
$X\leq cY$. For a partition $\{(s_i,t_j)_{i,j}\}$ of a rectangle  $\Delta=[s_1,s_2]\times[t_1,t_2]$, $\Delta_{ij}$ denotes the rectangle $[s_i,s_{i+1}]\times[t_j,t_{j+1}]$.
\end{notation}

\subsection{Malliavin calculus framework}
\label{sec:malliavin-framework}
We consider in this section a centered Gaussian process  $\{x_{s;t} ; \, (s,t)\in[0,1]^2\}$  defined on a complete probability space $(\Omega,\cf,\mathbb{P})$, with covariance function  $\E[x_{s_1;t_1}x_{s_2;t_2}]=R_{s_{1}s_{2};t_{1}t_{2}}$. We now briefly define the basic elements of Malliavin calculus with respect to $x$ and then specify a little the setting under which we shall work.

\subsubsection{Malliavin calculus with respect to $x$}
\label{sec:mall-calc-x}
We first relate a Hilbert space $\mathcal H$ to our process $x$, defined as the closure of the linear space generated by the functions $\{\1_{[0,s]\times[0,t]} ,(s,t)\in[0,1]^2\}$ with respect to the semi define positive form $\langle \1_{[0,s_{1}]\times[0,t_{1}]},\1_{[0,s_{2}]\times[0,t_{2}]}\rangle=R_{s_{1}s_{2};t_{1}t_{2}}$. Then the map $I_1:\1_{[0,s]\times[0,t]}\to x_{s;t}$ can be extended to an isometry between $\mathcal H$ and the first chaos  generated by $\{x_{s;t} ;(s,t)\in[0,1]^2\}$. 

\smallskip

Starting from the space $\ch$, a  Malliavin calculus with respect to $x$ can now be developped in the usual way (see \cite{HJT,Nu06} for further details). Namely, we first define a set of smooth functionals of $x$ by 
\begin{equation*}
\mathcal S:=\left\{f(I_1(\psi_1),\ldots,I_1(\psi_n)) ; 
\, n\in\mathbb N,f\in C^{\infty}_b(\mathbb R^n),\psi_1,\ldots,\psi_n\in\mathcal{H} \right\}
\end{equation*}
and for $F=f(I_1(\psi_1),\ldots,I_1(\psi_n))\in\cs$ we define 
$$
DF=\sum_{i=1}^{n}\partial_if(I_1(\psi_1),\ldots,I_1(\psi_n)) \, \psi_i.
$$
Then $D$ is a closable operator from $L^p(\Omega)$ into $L^p(\Omega,\mathcal H)$. Therefore we can extend $D$ to the closure of smooth functionals under the norm 
$$
\|F\|_{1,p}=(\E[|F|^p]+\E[\|DF\|^p_{\mathcal H}])^{\frac{1}{p}}
$$
The iteration of the operator $D$ is defined in such a way that for a smooth random
variable $F\in\mathcal S$ the iterated derivative $D^{k}F$ is a random variable with values in $\mathcal H^{\otimes k}$. The domain $\mathbb D^{k,p}$ of $D^{k}$ is the completion of the family of smooth random variables $F\in\mathcal S$ with respect to the semi-norm :
$$
\|F\|_{k,p}=\lp \E[|F|^p]+\sum_{j=1}^{k}\E[\|D^{j}F\|^p_{\mathcal H^{\otimes j}}]\rp^{\frac{1}{p}}.
$$
Similarly, for a given Hilbert space $V$ we can define the space $\mathbb D^{k,p}(V)$ of $V$-valued random variables, and $\D^{\infty}(V)=\cap_{k,p\ge 1}\D^{k,p}$. 

\smallskip

Consider now the adjoint $\der^{\diamond}$ of $D$. The domain of this operator is defined as the set of $u\in L^2(\Omega,\mathcal H)$ such that 
$\E[|\langle DF,u\rangle_{\mathcal H}|]\lesssim \|F\|_{1,2}$,
 and for this kind of process $\der^{\diamond}(u)$ (called Skorohod integral of $u$) is the unique element of $L^2(\Omega)$ such that
 $$
\E[\der^{\diamond}(u)F]=\E[\langle DF,u\rangle_{\mathcal H}],
\quad\text{for}\quad
F\in\D^{1,2}.
$$
Note that $\E[\der^{\diamond}(u)]=0$ and  
$$
\E[|\der^{\diamond}(u)|^2]\leq\E[\|u\|^2_{\mathcal H}]+\E[\|Du\|^2_{\mathcal H\otimes \mathcal H}].
$$
The following divergence type property of $\der^{\diamond}$ will be useful in the sequel:
\begin{equation}\label{ipp div}
\der^{\diamond}(Fu)=F\der^{\diamond}(u)-\langle Du,F\rangle_{\mathcal H},
\end{equation}
and we also recall the following compatibility of $\der^{\diamond}$ with limiting procedures:
\begin{lemma}
\label{clos}
let $u_{n}$ be a sequence of elements in $Dom(\der^{\diamond})$, which converges to $u$ in $L^{2}(\Omega,\mathcal H)$. We further assume that $\der^{\diamond}(u_{n})$ converges in $L^{2}(\Omega)$ to some random variable $F\in L^{2}(\Omega)$. Then $u\in Dom(\der^{\diamond})$ and $\der^{\diamond}(u)=F$.
\end{lemma}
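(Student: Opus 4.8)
The statement is exactly the assertion that the divergence operator $\der^{\diamond}$, being the adjoint of the Malliavin derivative $D$, is closed; so the plan is to run the standard duality argument: test against an arbitrary smooth functional and pass to the limit inside the bracket. First I would fix a test functional $G\in\D^{1,2}$ (it would suffice to take $G$ in the dense class $\cs$ of smooth functionals and extend by density, but $\D^{1,2}$ works directly). Since each $u_{n}$ lies in $\dom(\der^{\diamond})$, the defining duality relation recalled just above the lemma gives
\[
\E[\der^{\diamond}(u_{n})\,G] = \E[\langle DG, u_{n}\rangle_{\mathcal{H}}], \qquad n\ge 1 .
\]
The entire content of the proof is then the justification that both sides converge. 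On the left, $\der^{\diamond}(u_{n})\to F$ in $L^{2}(\Omega)$ by hypothesis and $G\in L^{2}(\Omega)$, so Cauchy--Schwarz in $L^{2}(\Omega)$ yields $\E[\der^{\diamond}(u_{n})\,G]\to\E[F G]$. On the right, $u_{n}\to u$ in $L^{2}(\Omega,\mathcal{H})$ by hypothesis and $DG\in L^{2}(\Omega,\mathcal{H})$, so Cauchy--Schwarz in $L^{2}(\Omega,\mathcal{H})$ yields $\E[\langle DG,u_{n}\rangle_{\mathcal{H}}]\to\E[\langle DG,u\rangle_{\mathcal{H}}]$. Passing to the limit I would thus obtain
\begin{equation}\label{eq:star-clos}
\E[F G] = \E[\langle DG,u\rangle_{\mathcal{H}}]\qquad\text{for every } G\in\D^{1,2}.
\end{equation}

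Next I would read off the two conclusions from \eqref{eq:star-clos}. For the membership $u\in\dom(\der^{\diamond})$, identity \eqref{eq:star-clos} gives
\[
|\E[\langle DG,u\rangle_{\mathcal{H}}]| = |\E[F G]| \le \|F\|_{L^{2}(\Omega)}\,\|G\|_{L^{2}(\Omega)} ,
\]
so the linear functional $G\mapsto \E[\langle DG,u\rangle_{\mathcal{H}}]$ is continuous with respect to the $L^{2}(\Omega)$-norm of $G$, which is precisely the requirement placing $u$ in the domain of the divergence. Then, since $\der^{\diamond}(u)$ is by definition the unique element of $L^{2}(\Omega)$ satisfying $\E[\der^{\diamond}(u)\,G]=\E[\langle DG,u\rangle_{\mathcal{H}}]$ for all $G\in\D^{1,2}$, and since \eqref{eq:star-clos} says that $F$ satisfies exactly this relation, uniqueness forces $\der^{\diamond}(u)=F$, which completes the argument.

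There is no genuinely hard step here: the result is the abstract closedness of an adjoint operator, and all the analytic content is the interchange of limit and expectation, controlled by the two Cauchy--Schwarz estimates above together with the assumed $L^{2}$-convergences. The only point deserving a word of care is which characterization of $\dom(\der^{\diamond})$ one invokes: the operative one is the continuity of $G\mapsto\E[\langle DG,u\rangle_{\mathcal{H}}]$ in the $L^{2}(\Omega)$-norm of $G$, equivalently $|\E[\langle DG,u\rangle_{\mathcal{H}}]|\lesssim\|G\|_{L^{2}(\Omega)}$, since this is what makes $\der^{\diamond}(u)$ well defined through Riesz representation, and it is exactly what \eqref{eq:star-clos} delivers.
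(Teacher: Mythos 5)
Your proof is correct, and it is the canonical argument: the paper states Lemma \ref{clos} without proof, recalling it as the standard closedness of the adjoint operator $\der^{\diamond}$, and the duality-plus-passage-to-the-limit reasoning you give is exactly what is intended. Your closing remark is also well taken: the operative characterization of $\dom(\der^{\diamond})$ is the bound $|\E[\langle DG,u\rangle_{\mathcal H}]|\lesssim\|G\|_{L^{2}(\Omega)}$ (which Riesz representation turns into an $L^{2}$ element $\der^{\diamond}(u)$), and your inequality $|\E[\langle DG,u\rangle_{\mathcal H}]|\le\|F\|_{L^{2}(\Omega)}\|G\|_{L^{2}(\Omega)}$ delivers precisely this, which in fact tidies up the slightly loose formulation ($\lesssim\|F\|_{1,2}$) given in the paper's definition of the domain.
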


\subsubsection{Wick products}
Some of our results below will be expressed in terms of Rieman-Wick sums. We give a brief account on these objects, mainly borrowed from \cite{HJT,HY}.

\smallskip

Among functionals $F$ of $x$ such that $F\in\D^{\infty}$, the set of multiple integrals plays a special role. In order to introduce it in the context of a general process $x$ indexed by the plane, consider  an orthonormal basis $\{ e_n; \, n\ge 1 \}$   of
$\mathcal H$ and let  $\hatotimes$ denote  the symmetric tensor
product.  Then
\begin{equation}\label{e.2.2}
f_n=\sum_{\rm { finite}} f_{i_1, \cdots, i_n}
e_{i_1}\hatotimes\cdots \hatotimes e_{i_n}, \quad  f_{i_1, \cdots,
i_n}\in \R
\end{equation}
is an element of $\mathcal H^{\hatotimes n}$ satisfying the relation:
\begin{equation}\label{eq:def-norm-Hn}
\|f_n\|_{\mathcal H^{\hatotimes n}}^2 =\sum_{\rm { finite}}
|f_{i_1, \cdots, i_n}|^2\, .
\end{equation}
Moreover, $\mathcal H^{\hatotimes n}$  is the completion  of  the set of elements like (\ref{e.2.2}) with respect to the norm~(\ref{eq:def-norm-Hn}).

\smallskip

For an element $f_n\in \mathcal H^{\hatotimes n}$, the multiple
It\^o integral of order $n$ is well-defined. First, any  element of
the form given by (\ref{e.2.2}) can be rewritten as
\begin{equation}\label{htensor}
f_n=\sum_{\rm { finite}} f_{j_1 \cdots j_m} e_{j_1}^{\hatotimes
k_1}\hatotimes\cdots \hatotimes e_{j_m}^{\hatotimes
k_m},\end{equation} where the $j_1,\ldots, j_m$ are different and
$k_1+\cdots+k_m=n$. Then, if $f_n\in \ch^{\hatotimes n}$  is given under the form (\ref{htensor}), define its multiple integral as:
\begin{equation}\label{eq:def-mult-intg}
I_n(f_n)=\sum_{\rm { finite}} f_{j_1, \cdots, j_m} H_{k_1}( I_1(e_{j_1}))\cdots
H_{k_m}(I_1( e_{j_m})),
\end{equation}
where $H_k$ denotes the $k$-th normalized
Hermite polynomial given by
$$H_k(x)=(-1)^k
e^{\frac{x^2}2}\frac{d^k}{dx^k}e^{-\frac{x^2}2}=\sum_{j\le k/2}
\frac{(-1)^j k!}{2^j\,j!\,(k-2j)!} x^{k-2j}.$$ It holds that the
multiple integrals of different order are
orthogonal and that
$$\E\lc |I_n(f_n)|^2\rc=n!\,\|f_n\|_{\ch^{\hatotimes n}}^2.$$
This last isometric property allows to extend the multiple integral
for a  general
$f_n\in\ch^{\hatotimes n}$
by $L^2(\Omega)$ convergence. Finally, one can define the
integral of $f_n\in\ch^{\otimes n}$ by putting $I_n(f_n):=I_n(\tilde
f_n),$ where $\tilde f_n\in\ch^{\hatotimes n}$ denotes the
symmetrized version of $f_n$. Moreover, the chaos expansion theorem
states that any square integrable random variable $F\in
L^2(\Omega,\mathcal G,\PP)$, where $\mathcal G$ is the $\sigma$- field
generated by $x$,
 can be written as
\begin{equation}\label{chaos}
F=\sum_{n=0}^\infty I_n(f_n)
\quad\mbox{with}\quad
\E[F^2]=\sum_{n=0}^\infty  n! \|f_n||_{\mathcal H^{\hatotimes
n}}^2\,.
\end{equation}

\smallskip

With these notations in mind, one way to introduce Wick products on a Wiener space is to impose the relation
\begin{equation}\label{eq:wick-product-mult-intg}
I_n(f_n)\di I_m(g_m)=I_{n+m}(f_n\hatotimes g_m)
\end{equation}
for any $f_n\in\ch^{\hatotimes n}$ and $g_m\in\ch^{\hatotimes m}$, where the multiple integrals $I_n(f_n)$ and $I_m(g_m)$ are defined by (\ref{eq:def-mult-intg}).
If $F=\sum_{n=1}^{N_1}I_n(f_n)$ and
$G=\sum_{m=1}^{N_2}I_m(g_m)$, we define $F\di G$ by
$$F\di G=\sum_{n=1}^{N_1}\sum_{m=1}^{N_2}I_{n+m}(f_n\hatotimes g_m).$$
By a limit argument, we can then extend the Wick product to more general
random variables (see \cite{HY} for further details). In this paper, we will take the limits in the
$L^2(\Omega)$ topology.

\smallskip

Some corrections between ordinary and Wick products will be computed below. A simple example occurs for products of $f(x)$ by a Gaussian increment. Indeed, for a smooth function $f$ and $g_1,g_2\in\ch$, it is shown in \cite{HY} that
\begin{equation}\label{eq:correc-products-1}
f(I_{1}(g_{1})) \diamond I_{1}(g_{2})
= f(I_{1}(g_{1})) \, I_{1}(g_{2}) - f'(I_{1}(g_{1})) \lla g_{1}, g_{2}\rra_{\ch}.
\end{equation}

\smallskip

We now state a result which is proven in \cite[Proposition 4.1]{HJT}.
\begin{proposition}\label{prop:wick-mult-intg}
Let $F\in\D^{k,2}$ and $g\in\mathcal H^{\otimes k}$. Then
\begin{enumerate}
\item $F\diamond I_k(g)$ is well defined in $L^2(\Omega)$.
\item $Fg\in\dom\,\delta^{\di k}.$
\item $F\diamond I_k(g)=\delta^{\di k}(Fg)$.
\end{enumerate}
\end{proposition}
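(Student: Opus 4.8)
The plan is to reduce everything to the case of a single Wiener chaos and then verify the identity by an explicit computation based on the multiplication formula for multiple integrals. I would first dispose of the base case $k=1$, which is essentially contained in the tools recalled above: combining the divergence rule \eqref{ipp div} with $\ddi(g)=I_1(g)$ gives $\ddi(Fg)=F\,I_1(g)-\langle DF,g\rangle_{\ch}$, whereas the Wick correction \eqref{eq:correc-products-1} (established there for simple functionals $f(I_1(g_1))$ and extended to all $F\in\D^{1,2}$ by density) yields precisely $F\di I_1(g)=F\,I_1(g)-\langle DF,g\rangle_{\ch}$. The two right-hand sides coincide, so (1)--(3) hold for $k=1$.

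For the general case I would use the chaos expansion \eqref{chaos}. Writing $F=\sum_{n\ge0}I_n(f_n)$ with $f_n\in\ch^{\hatotimes n}$, membership $F\in\D^{k,2}$ is equivalent to $\sum_n (1+n)^k\,n!\,\|f_n\|_{\ch^{\hatotimes n}}^2<\infty$. By linearity and $L^2(\Omega)$-continuity of both $G\mapsto G\di I_k(g)$ and $G\mapsto\delta^{\di k}(Gg)$ (shown bounded below), it suffices to prove the statement for $F=I_n(f_n)$. There the left-hand side is $I_n(f_n)\di I_k(g)=I_{n+k}(f_n\hatotimes g)$ by the defining relation \eqref{eq:wick-product-mult-intg}, so the whole proposition reduces to
\[
\delta^{\di k}\bigl(I_n(f_n)\,g\bigr)=I_{n+k}(f_n\hatotimes g),\qquad g\in\ch^{\hatotimes k}.
\]
I would establish this by induction on $k$: applying $\ddi$ once through \eqref{ipp div} produces the ordinary product $I_n(f_n)\,I_1(\cdot)$ minus a contraction coming from $DI_n(f_n)=n\,I_{n-1}(f_n)$, and the multiplication formula for multiple integrals (see \cite{Nu06}) shows that the contraction term is exactly cancelled, leaving only the symmetric tensor product. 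Iterating this cancellation $k$ times, using the factorization $\ch^{\hatotimes k}=\ch^{\hatotimes(k-1)}\hatotimes\ch$ at each step, yields the displayed identity.

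The remaining — and I expect most delicate — point is the well-posedness in (1) and (2). For (1) I would invoke the orthogonality of chaoses together with the elementary bound $\|f_n\hatotimes g\|_{\ch^{\hatotimes(n+k)}}\le\|f_n\|_{\ch^{\hatotimes n}}\|g\|_{\ch^{\otimes k}}$ (symmetrization is a contraction) to get
\[
\E\bigl[|F\di I_k(g)|^2\bigr]=\sum_{n\ge0}(n+k)!\,\|f_n\hatotimes g\|^2_{\ch^{\hatotimes(n+k)}}
\lesssim_{k}\|g\|_{\ch^{\otimes k}}^2\sum_{n\ge0}(1+n)^k\,n!\,\|f_n\|^2_{\ch^{\hatotimes n}}<\infty,
\]
the last sum being finite precisely because $F\in\D^{k,2}$, where I used $(n+k)!\le (1+n)^{k}\,n!\,c_k$. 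For (2), membership $Fg\in\dom\,\delta^{\di k}$ follows from the duality definition of the iterated divergence: testing against $G\in\D^{k,2}$ and integrating by parts $k$ times one checks $\E[|\langle D^kG,Fg\rangle_{\ch^{\otimes k}}|]\lesssim_{k}\|G\|_{k,2}$, with a constant controlled by $\|g\|_{\ch^{\otimes k}}$ and the $\D^{k,2}$-norm of $F$. The main obstacle is thus purely quantitative: organizing the combinatorial factors $(n+k)!/n!$ and the Leibniz-type expansion of $D^k(Fg)$ so that they remain uniformly dominated by the $\D^{k,2}$ data. Once this is in place, Lemma \ref{clos} legitimates passing from the single-chaos identity to the full chaos expansion, completing the proof.
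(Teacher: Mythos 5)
The paper does not actually prove this proposition: it is imported wholesale from the literature (``We now state a result which is proven in \cite[Proposition 4.1]{HJT}''), so there is no internal proof to compare yours against. Your strategy — expand $F$ in chaos, reduce to $F=I_n(f_n)$, prove the single-chaos identity $\delta^{\di k}(I_n(f_n)g)=I_{n+k}(f_n\hatotimes g)$ by iterating \eqref{ipp div} against the multiplication formula (the contraction terms cancel exactly), and control everything through $\E[|F\di I_k(g)|^2]=\sum_n(n+k)!\,\|f_n\hatotimes g\|^2$ — is the natural and standard route for this kind of statement. The quantitative ingredients you use are all correct: $F\in\D^{k,2}$ is indeed equivalent to $\sum_n(1+n)^k n!\,\|f_n\|^2_{\ch^{\hatotimes n}}<\infty$, symmetrization is a contraction, $(n+k)!\le c_k(1+n)^k n!$, and the chaoses $I_{n+k}(f_n\hatotimes g)$ are mutually orthogonal as $n$ varies, so item (1) is fully justified.

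There is, however, one step that is wrong as written: your duality criterion for item (2). Membership of $u=Fg$ in $\dom\,\delta^{\di k}$ requires $|\E[\langle D^kG,u\rangle_{\ch^{\otimes k}}]|\lesssim \|G\|_{L^2(\Omega)}$ for all test variables $G$ — the $L^2$ norm, not the Sobolev norm. The bound you propose, $\E[|\langle D^kG,Fg\rangle_{\ch^{\otimes k}}|]\lesssim_k\|G\|_{k,2}$, holds for \emph{every} $u\in L^2(\Omega;\ch^{\otimes k})$ by Cauchy--Schwarz (with constant $\|F\|_{L^2}\|g\|_{\ch^{\otimes k}}$), so it cannot characterize the domain of the adjoint and proves nothing; no amount of combinatorial bookkeeping will fix it. (You were arguably misled by the paper's own definition of $\dom(\der^{\diamond})$, which contains the same misprint $\lesssim\|F\|_{1,2}$.) The same circularity infects the phrase ``$L^2(\Omega)$-continuity of $G\mapsto\delta^{\di k}(Gg)$'' in your reduction step: that map is not known to be defined before (2) is established. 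Fortunately, the closability argument you invoke in your last sentence repairs both defects and should simply \emph{be} the proof of (2)--(3): for the partial sums $F_N=\sum_{n\le N}I_n(f_n)$ one has $F_Ng\in\dom\,\delta^{\di k}$ with $\delta^{\di k}(F_Ng)=\sum_{n\le N}I_{n+k}(f_n\hatotimes g)$ by the single-chaos identity; $F_Ng\to Fg$ in $L^2(\Omega;\ch^{\otimes k})$, and $\delta^{\di k}(F_Ng)$ converges in $L^2(\Omega)$ by your estimate in (1); since $\delta^{\di k}$ is the adjoint of the closed, densely defined operator $D^k$, it is itself closed — this is the $k$-fold analogue of Lemma \ref{clos}, which as stated covers only $k=1$ and should be noted as such — whence $Fg\in\dom\,\delta^{\di k}$ and $\delta^{\di k}(Fg)=F\di I_k(g)$. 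With the duality estimate deleted and the closedness argument promoted to the main proof of (2)--(3), your proof is complete and correct.
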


\subsubsection{Further assumptions and preliminary results}
In order to simplify our computations, let us introduce some additional assumptions on the covariance $R$:

\begin{hypothesis}\label{hyp:Young}
The covariance $R$ of our centered Gaussian process $x$ belongs to the space $\cac^{1\text{-var}}([0,1]^4)$, and satisfies a factorization property  of the  form
$$
\E[x_{s_{1};t_{1}}x_{s_{2};t_{2}}]=R_{s_{1}s_{2};t_{1}t_{2}}=R^1_{{s_{1}s_{2}}}R^2_{t_{1}t_{2}},
$$
for two covariance functions $R^{1},R^{2}$ on $[0,1]$. In addition, setting $R^i_a=R^{i}_{aa}$ for $a\in[0,1]$ and $i=1,2$,  we assume that $a\mapsto R^{i}_{a}$ is differentiable and we suppose that
\begin{equation}\label{eq:regular-R}
|2R^{i}_{ab}-R^{i}_{aa}-R^{i}_{bb}|\lesssim |a-b|^{\gamma_i}
\end{equation}
for all $a,b\in[0,1]$, with $\gamma_i>1$. Finally we suppose that $(R^{i})'_{a}=\partial_{a}R^{i}_{aa}\in L^{\infty}([0,1])$.
\end{hypothesis}

The first consequence of our Hypothesis \ref{hyp:Young} is that the regularity of $x$ corresponds to the Young type regularity of Section \ref{sec:planar-young}. Indeed, it is readily checked that relation \eqref{eq:regular-R} yields
$$
\E\left[(\der x_{s_{1}s_{2};t_{1}t_{2}})^2\right]\lesssim|s-s'|^{\gamma_1}|t-t'|^{\gamma_2}.
$$
Since $x$ is Gaussian, an easy application of Kolmogorov's criterion ensures that 
\begin{equation}\label{eq:x-Gauss-Holder}
x\in\cp_{1,1}^{\al_{1},\al_{2}},
\quad\text{with}\quad
\al_{1}=\frac{\ga_{1}}{2}-\epsilon_1>\frac12, \quad \al_{2}=\frac{\ga_{2}}{2}-\epsilon_2 >\frac12,
\end{equation}
for arbitrarily small $\epsilon_1,\epsilon_2>0$. This enables us to appeal to Young's integration theory in order to define integrals of the form $\int_{1}\int_{2}\vp(x)\, d_{12}x$.

\smallskip

Let us quote two lemmas concerning H\"older norms in the plane which will feature in our comparison between Stratonovich and Skorohod integrations. The first one deals with the composition of a H\"older process with a nonlinearity $f$:

\begin{lemma}\label{f-control}
Let $f\in C^{2}(\mathbb R)$, $\theta_1,\theta_2>0$ and a rectangle $\Delta\subset[0,1]^{2}$. Then on $\Delta$ we have that 
$$
\|\der_1y\|_{\theta_1,0}\leq
\|y^1\|_{0;\Delta} \, \|\der_1x\|_{\theta_1,0}
$$
and
$$
\|\der y\|_{\theta_1,\theta_2}\lesssim
\lp \|y^1\|_{0;\Delta} + \|y^2\|_{0;\Delta}\rp 
\|\der x\|_{\theta_1,\theta_2} (1+\|\der x\|_{\theta_1,\theta_2}),
$$
where $\|\cdot\|_{0;\Delta}$ stands for the supremum norm on $\Delta$ and $y^{j}$ still denotes $\vp^{(j)}(x)$.
\end{lemma}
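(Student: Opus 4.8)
The plan is to obtain both bounds from elementary Taylor expansions of $f$ along increments of $x$, the one-directional estimate serving as a warm-up for the rectangular one. For the first inequality I would simply write $\der_1 y_{s_1 s_2; t} = f(x_{s_2; t}) - f(x_{s_1; t})$ and use the fundamental theorem of calculus to get $\der_1 y_{s_1 s_2; t} = \lp \iou f'(x_{s_1; t} + r\,\der_1 x_{s_1 s_2; t})\, dr \rp \der_1 x_{s_1 s_2; t}$. The argument of $f'$ stays in the range of $x$ over $\Delta$, so the bracket is bounded by $\|y^1\|_{0;\Delta}$; dividing by $|s_2 - s_1|^{\theta_1}$ and taking the supremum over $\Delta$ yields $\|\der_1 y\|_{\theta_1,0} \le \|y^1\|_{0;\Delta}\,\|\der_1 x\|_{\theta_1,0}$, where $y^1 = f'(x)$.

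For the rectangular increment I would start from $\der y_{s_1 s_2; t_1 t_2} = \lc f(d) - f(c)\rc - \lc f(b) - f(a)\rc$, with corner values $a = x_{s_1;t_1}$, $b = x_{s_2;t_1}$, $c = x_{s_1;t_2}$, $d = x_{s_2;t_2}$. Writing $P = \der_1 x_{s_1 s_2; t_1} = b-a$, $Q = \der_1 x_{s_1 s_2; t_2} = d-c$ and observing that $Q - P = \der x_{s_1 s_2; t_1 t_2}$, two applications of the fundamental theorem of calculus produce
\begin{equation*}
\der y = \lp \iou f'(c + rQ)\, dr \rp \der x + \iou \lc f'(c + rQ) - f'(a + rP)\rc P\, dr .
\end{equation*}
The first term is bounded by $\|y^1\|_{0;\Delta}\,|\der x|$ and supplies the linear contribution $\|y^1\|_{0;\Delta}\,\|\der x\|_{\theta_1,\theta_2}$. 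In the second term I would expand $f'(c+rQ) - f'(a+rP)$ once more by the fundamental theorem of calculus (here $f\in C^2$ is exactly what is needed); since $(c+rQ) - (a+rP) = \der_2 x_{s_1; t_1 t_2} + r\,\der x$, this factor is dominated by $\|y^2\|_{0;\Delta}\lp |\der_2 x_{s_1; t_1 t_2}| + |\der x|\rp$, so the whole second term is controlled by $\|y^2\|_{0;\Delta}\lp|\der_2 x_{s_1; t_1 t_2}| + |\der x|\rp|\der_1 x_{s_1 s_2; t_1}|$, where $y^2 = f''(x)$.

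The hard part is that this last bound features the one-directional increments $\der_1 x_{s_1 s_2; t_1}$ and $\der_2 x_{s_1; t_1 t_2}$, while the claimed right-hand side carries only the rectangular norm $\|\der x\|_{\theta_1,\theta_2}$ (and indeed, for a general $x$, a one-directional increment cannot be dominated by rectangular ones). The step I would rely on to close this gap is that the sheet is anchored on the axes, $x_{\cdot;0} = x_{0;\cdot} = 0$ — the standing normalization here, satisfied in particular by the fractional Brownian sheet — so that each boundary increment is itself a rectangular increment based on an axis: $\der_1 x_{s_1 s_2; t_1} = \der x_{s_1 s_2; 0 t_1}$ and $\der_2 x_{s_1; t_1 t_2} = \der x_{0 s_1; t_1 t_2}$. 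Using $|s_1|, |t_1| \le 1$ this gives $|\der_1 x_{s_1 s_2; t_1}| \le \|\der x\|_{\theta_1,\theta_2}\,|s_2-s_1|^{\theta_1}$ and $|\der_2 x_{s_1; t_1 t_2}| \le \|\der x\|_{\theta_1,\theta_2}\,|t_2-t_1|^{\theta_2}$. Substituting these, bounding $|\der x| \le \|\der x\|_{\theta_1,\theta_2}|s_2-s_1|^{\theta_1}|t_2-t_1|^{\theta_2}$, and absorbing the surplus length factors (all $\le 1$ on $[0,1]^2$ since $\theta_1,\theta_2 > 0$), the second term is at most a constant times $\|y^2\|_{0;\Delta}\,\|\der x\|_{\theta_1,\theta_2}^2\,|s_2-s_1|^{\theta_1}|t_2-t_1|^{\theta_2}$. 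Adding the linear contribution, dividing by $|s_2-s_1|^{\theta_1}|t_2-t_1|^{\theta_2}$ and taking the supremum over $\Delta$ delivers $\|\der y\|_{\theta_1,\theta_2} \lesssim \lp\|y^1\|_{0;\Delta} + \|y^2\|_{0;\Delta}\rp\|\der x\|_{\theta_1,\theta_2}(1 + \|\der x\|_{\theta_1,\theta_2})$. The only routine bookkeeping left is matching exponents after the axis-anchoring step; if one does not impose the axis normalization, the very same computation instead produces the one-directional H\"older seminorms of $x$ on the right, which are harmless for the intended applications.
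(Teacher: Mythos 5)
There is nothing in the paper to compare your argument against: the authors explicitly ``quote'' Lemma \ref{f-control} without proof, just before Lemma \ref{lemma:G-R}. Judged on its own terms, your proof is correct and is the natural one: the fundamental theorem of calculus along the two horizontal edges, a second application to the difference of the $f'$-factors, and the observation that all intermediate values such as $c+rQ$ lie in the range of $x$ over $\Delta$ (because $x$ is continuous and $\Delta$ connected), so that bounding by $\|y^1\|_{0;\Delta}$ and $\|y^2\|_{0;\Delta}$ is legitimate. The first inequality comes out with constant $1$, exactly as stated.

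More importantly, the difficulty you isolate is genuine and shows that the lemma as printed is false for an arbitrary sheet: the cross term really does carry $\der_1x_{s_1s_2;t_1}$ and $\der_2x_{s_1;t_1t_2}$, and no rectangular norm controls these. Concretely, for $x_{s;t}=\alpha(s)+\beta(t)$ and $\vp(u)=u^2$ one has $\der x\equiv 0$ while $\der y_{s_1s_2;t_1t_2}=2\lp\alpha(s_2)-\alpha(s_1)\rp\lp\beta(t_2)-\beta(t_1)\rp\neq 0$, so the asserted bound cannot hold. Two caveats on your repair, however. First, the normalization $x_{\cdot\,;0}=x_{0;\cdot}=0$ is not a standing assumption of the paper: it holds a.s.\ for the fractional Brownian sheet, and for Gaussian sheets whose factorized covariance has $R^i_{00}=0$, but Hypothesis \ref{hyp:Young} does not impose it. Second, even granting it, the anchored rectangles $[s_1,s_2]\times[0,t_1]$ and $[0,s_1]\times[t_1,t_2]$ leave $\Delta$, so the $\|\der x\|_{\theta_1,\theta_2}$ on the right-hand side must be understood as the norm over $[0,1]^2$ (or over any region joining $\Delta$ to the axes), not over $\Delta$; with every norm restricted to $\Delta$ the statement stays false even for sheets vanishing on the axes (take $x_{s;t}=\alpha(s)\chi(t)+\beta(t)\chi(s)$ with $\chi(0)=0$ and $\chi\equiv 1$ on the relevant range, so that $x=\alpha(s)+\beta(t)$ on $\Delta$). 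The cleanest statement---and the one your computation proves with no side conditions---keeps $\|\der_1x\|_{\theta_1,0}$ and $\|\der_2x\|_{0,\theta_2}$ on the right-hand side; as you note, this weaker version suffices for the only place the lemma is used, namely the moment bounds in the proof of Proposition \ref{f-conv} via condition (GC) and Lemma \ref{lemma:G-R}.
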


Next we also need an integral semi-norm dominating H\"older's norms in the plane. This is given by the following Garsia type result:
\begin{lemma}\label{lemma:G-R}
Let $p>1$, $\theta_1,\theta_2>0$ and $y\in\cp_{1,1}$. The following relation holds true: 
\begin{equation}\label{eq:GRR-ineq}
\|\der y\|^p_{\theta_1,\theta_2}\lesssim_{\theta_1,\theta_2,p}
\int_{[0,1]^4}\frac{|\der y_{u_1u_2;v_1v_2}|^p}{|u_2-u_1|^{\theta_1p+2}|v_2-v_1|^{\theta_2p+2}}du_1du_2dv_1dv_2.
\end{equation}
\end{lemma}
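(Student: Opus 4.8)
The plan is to obtain this two-parameter Garsia--Rodemich--Rumsey (GRR) inequality by tensorizing the classical one-dimensional version (see e.g. \cite{FV-bk}), applying it successively in each direction and then merging the resulting integrals by Fubini. Recall the 1D statement in the form we need: for a continuous $g:\ou\to\R$ and $\theta,p>0$,
\[
\sup_{a,b\in\ou}\frac{|g_b-g_a|^p}{|b-a|^{\theta p}}
\lesssim_{\theta,p}
\int_{\ou^2}\frac{|g_b-g_a|^p}{|b-a|^{\theta p+2}}\,da\,db .
\]
Here the denominator exponent $\theta p+2$ is exactly the one producing $C^{\theta}$-regularity, and the usual admissibility condition reduces to $\theta p>0$, which is automatic. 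This is precisely why no extra restriction on $\theta_1,\theta_2$ beyond positivity is imposed in the statement.

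First I would freeze the pair $(t_1,t_2)$ and apply this estimate in direction $1$ to the one-parameter increment $g_s:=\dt y_{s;t_1t_2}=y_{s;t_2}-y_{s;t_1}$, which is continuous in $s$ because $y\in\cp_{1,1}$. Since the rectangular increment factorizes as $\der=\doo\dt$, we have $g_{s_2}-g_{s_1}=\der y_{s_1s_2;t_1t_2}$, whence
\[
|\der y_{s_1s_2;t_1t_2}|
\lesssim_{\theta_1,p}
|s_2-s_1|^{\theta_1}
\lp \int_{\ou^2}\frac{|\der y_{u_1u_2;t_1t_2}|^p}{|u_2-u_1|^{\theta_1 p+2}}\,du_1\,du_2\rp^{1/p}.
\]
Symmetrically, for fixed $(u_1,u_2)$ I would apply the 1D inequality in direction $2$ to $h_t:=\doo y_{u_1u_2;t}$, using $h_{t_2}-h_{t_1}=\der y_{u_1u_2;t_1t_2}$, and raise the outcome to the power $p$ to get
\[
|\der y_{u_1u_2;t_1t_2}|^p
\lesssim_{\theta_2,p}
|t_2-t_1|^{\theta_2 p}
\int_{\ou^2}\frac{|\der y_{u_1u_2;v_1v_2}|^p}{|v_2-v_1|^{\theta_2 p+2}}\,dv_1\,dv_2 .
\]

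The two estimates are then combined by substituting the second bound into the integrand of the first and invoking Fubini to merge the two double integrals over $\ou^2$ into a single integral over $\ou^4$. This yields
\[
|\der y_{s_1s_2;t_1t_2}|
\lesssim_{\theta_1,\theta_2,p}
|s_2-s_1|^{\theta_1}|t_2-t_1|^{\theta_2}
\lp \int_{\ou^4}\frac{|\der y_{u_1u_2;v_1v_2}|^p}{|u_2-u_1|^{\theta_1 p+2}|v_2-v_1|^{\theta_2 p+2}}\,du_1du_2dv_1dv_2\rp^{1/p},
\]
and dividing by $|s_2-s_1|^{\theta_1}|t_2-t_1|^{\theta_2}$ and taking the supremum over $s_1,s_2,t_1,t_2\in\ou$ gives exactly \eqref{eq:GRR-ineq}. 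As for difficulty, the argument is essentially routine once the tensorization is organized: the only points demanding care are checking joint measurability of the integrand so that Fubini applies in the merging step (immediate from continuity of $y$), and matching the 1D exponent $\theta_i p+2$ with $C^{\theta_i}$-regularity under a vacuous admissibility condition. The genuinely hard content lives entirely in the one-dimensional GRR inequality, which I would simply cite rather than reprove; the planar part is then just the two successive applications plus the Fubini combination.
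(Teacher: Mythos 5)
The paper states Lemma \ref{lemma:G-R} without proof (it is simply quoted as a Garsia-type result), so there is no internal argument to compare yours against; the review therefore bears only on correctness, and your proof is correct. The two successive applications of the one-dimensional power-form GRR inequality are legitimate: for fixed $(t_1,t_2)$ the map $s\mapsto \dt y_{s;t_1t_2}$ is continuous and its increments are exactly $\der y_{s_1s_2;t_1t_2}$, and symmetrically in direction $2$ for fixed $(u_1,u_2)$; since the GRR constant depends only on $(\theta_i,p)$ and not on the function, the second estimate may indeed be substituted pointwise in $(u_1,u_2)$ into the integrand of the first, and the merging of the two double integrals is then immediate. Three small tightenings are worth recording. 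First, the merging step is really Tonelli rather than Fubini: the integrand is nonnegative and jointly measurable, so no integrability hypothesis is needed, and the case where the right-hand side of \eqref{eq:GRR-ineq} is infinite is vacuous (both the one-dimensional inequality and the final statement hold trivially then). Second, your reduction works precisely because both sides of \eqref{eq:GRR-ineq} involve only the rectangular increment $\der y=\doo\dt y$, whose double-difference structure is what lets the two one-parameter estimates compose; the same tensorization would not control, say, $|y_{s_2;t_2}-y_{s_1;t_1}|$, which is why general multiparameter GRR statements require more work. Third, the clean multiplicative form $|g_b-g_a|\lesssim |b-a|^{\theta}B^{1/p}$ of the one-dimensional conclusion, essential for the substitution step, is special to the power choice $\Psi(x)=x^{p}$, $p(u)=u^{\theta+2/p}$ with $\theta>0$; this is exactly the admissibility remark you make, and it is the only place where the hypotheses $\theta_1,\theta_2>0$ enter.
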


\smallskip

We now turn to a consequence of our additional Hypothesis \ref{hyp:Young} on  embedding properties of the Hilbert space $\ch$ defined above:

\begin{lemma}
Under Hypothesis \ref{hyp:Young}, we have $\|f\|_{\ch}\le \|f\|_{\infty} \|R\|_{1\text{-var};[0,1]^{4}}$.
\end{lemma}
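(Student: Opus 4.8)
The plan is to identify the $\ch$-inner product with integration against the finite signed measure carried by the covariance $R$, and then to bound that integral crudely by the supremum norm of $f$ times the total mass of the measure. I would first treat step functions, where the computation is explicit, and then pass to a general bounded $f$ by approximation.

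The first step is to attach a measure to $R$. Under Hypothesis~\ref{hyp:Young} we have $R\in\cac^{1\text{-var}}([0,1]^4)$, so the rectangular increments of $R_{s_1s_2;t_1t_2}$, viewed as a function of the four variables $(s_1,s_2,t_1,t_2)$, define a finite signed measure $\mu$ on $[0,1]^4$ whose total variation is, by definition of the $1$-variation norm, $|\mu|([0,1]^4)=\|R\|_{1\text{-var};[0,1]^4}$. (The factorization $R=R^1R^2$ even yields $\mu=\mu^1\otimes\mu^2$, but only the finiteness of $|\mu|$ will be used.) Next I would establish the integral representation of the inner product: expanding each rectangle indicator as a signed sum of the four corner indicators $\1_{[0,\cdot]\times[0,\cdot]}$ and using the defining relation $\langle\1_{[0,s_1]\times[0,t_1]},\1_{[0,s_2]\times[0,t_2]}\rangle_\ch=R_{s_1s_2;t_1t_2}$, bilinearity shows that for a rectangle-step function $f=\sum_i f_i\,\1_{\Delta_i}$,
\[
\|f\|_\ch^2=\sum_{i,j}f_if_j\,\langle\1_{\Delta_i},\1_{\Delta_j}\rangle_\ch
=\int_{[0,1]^4}f_{u_1;v_1}\,f_{u_2;v_2}\,\mu(du_1\,du_2\,dv_1\,dv_2),
\]
since $\langle\1_{\Delta_i},\1_{\Delta_j}\rangle_\ch$ is precisely the $\mu$-mass of the box obtained by pairing the direction-$1$ edges of $\Delta_i,\Delta_j$ with $(u_1,u_2)$ and the direction-$2$ edges with $(v_1,v_2)$.

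To reach an arbitrary bounded $f$, I would approximate it by step functions $f^n$ with $\|f^n\|_\infty\le\|f\|_\infty$ and $f^n\to f$ pointwise; since $|\mu|$ is finite, dominated convergence applied to the double integral shows that $(f^n)$ is Cauchy in $\ch$, which both identifies $f$ with an element of $\ch$ and propagates the displayed identity to the limit. Bounding the integrand by $\|f\|_\infty^2$ and integrating against $|\mu|$ then gives $\|f\|_\ch^2\le\|f\|_\infty^2\,\|R\|_{1\text{-var};[0,1]^4}$, that is $\|f\|_\ch\le\|f\|_\infty\,\|R\|_{1\text{-var};[0,1]^4}^{1/2}$, which yields the stated bound (in particular a fortiori whenever $\|R\|_{1\text{-var};[0,1]^4}\ge1$). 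I expect the only genuine subtlety to be the passage from the formal bilinear form on indicators to an honest integral against the finite measure $\mu$ — this is exactly where the finite-variation Hypothesis~\ref{hyp:Young} is indispensable, for otherwise $dR$ need not have finite total mass and the uniform estimate would break down.
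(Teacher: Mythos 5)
Your proof is correct and takes essentially the same approach as the paper's: write $\|f\|_{\ch}^{2}$ for a step function as the integral of $f_{s_1;t_1}f_{s_2;t_2}$ against the finite-total-variation measure $d_{12}R^{1}\,d_{12}R^{2}$ on $[0,1]^{4}$, bound it by $\|f\|_{\infty}^{2}\,\|R\|_{1\text{-var};[0,1]^{4}}$, and extend by density. Your observation that this actually yields $\|f\|_{\ch}\le\|f\|_{\infty}\,\|R\|_{1\text{-var};[0,1]^{4}}^{1/2}$ is also what the paper's own computation gives (its display ends with $\|f\|_{\ch}^{2}\le\|f\|_{\infty}^{2}\,\|R\|_{1\text{-var};[0,1]^{4}}$), so the missing square root is an inaccuracy in the lemma's statement rather than a gap in your argument.
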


\begin{proof}
Consider a step function $f=\sum_{ij}a_{ij}\1_{\Delta_{ij}}$ related to a partition $(\Delta_{ij})_{ij}$ of $[0,1]^{2}$. We have
\begin{multline}\label{emb}
\|f\|^{2}_{\mathcal H}=\sum_{i,j,l,k}a_{ij}a_{lk}R^{1}_{s_is_l}R^{2}_{t_jt_k}
=\sum_{i,j,k,l}a_{ij}a_{kl}\int_0^{s_i}\int_{0}^{s_l}\int_0^{t_j}\int_{0}^{t_k}d_{12}R^1_{{s_{1}s_{2}}}d_{12}R^2_{t_{1}t_{2}}
\\
=\int_{[0,1]^4}f_{s_{1};t_{1}} \, f_{s_{2};t_{2}}d_{12}R^1_{{s_{1}s_{2}}}d_{12}R^2_{t_{1}t_{2}}
\leq\|f\|_{\infty}^2 \, \|R\|_{1\text{-var};[0,1]^{4}}.
\end{multline}
The general case now easily follows by density of the step functions in $\ch$.

\end{proof}

\smallskip

Let us now recall that we work under the usual assumptions for Skorohod  type change of variables formulae given at Definition \ref{growuthcond} and referred to as (GC) condition in the sequel. Notice that $\max_{s,t\in[0,1]}( R_{s}^{1}R_{t}^{2})=\max_{s,t\in[0,1]}\E[|x_{s;t}|^2]$. Thus condition (GC) implies that
\begin{equation}\label{cotamoments}
\E \Big[\sup_{s,t\in[0,1]}|f(x_{s;t})|^r\Big]<\infty,
\quad\text{for all}\quad
r\ge 1.
\end{equation}
 
We now state an approximation result  in $\ch$ which proves to be useful in order to get our It\^o type formula.

\begin{proposition}\label{prop:cvgce-r-sums-in-H}
Let $x$ be a centered Gaussian process on $[0,1]$ satisfying Hypothesis~\ref{hyp:Young} and $f\in C^1(\mathbb R)$ such that the growth condition (GC) is fulfilled for $\vp$ and $\vp^{(1)}$. Consider a rectangle $\Delta=[s_1,s_2]\times[t_1,t_2]$ and $\pi_1=(s_i)_i$, $\pi_2=(t_j)_j$ two respective dissections of the intervals $[s_1,s_2]$ and $[t_1,t_2]$. Then 
$$
\lim_{|\pi_1|,|\pi_2|\to0}
\E\left[\Big\|y_{\cdot}\1_{\Delta}-\sum_{i,j}y_{s_i;t_j}\1_{\Delta_{i,j}}\Big\|^2_{\mathcal H}\right]=0,
$$
where we have used Notation \ref{not:lesssim-partitions} for the rectangles $\Delta_{i,j}$.
\end{proposition}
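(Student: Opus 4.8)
The plan is to reduce the $\ch$-norm to a supremum norm by means of the embedding lemma $\|f\|_{\ch}\le\|f\|_{\infty}\|R\|_{1\text{-var};[0,1]^{4}}$ established immediately above, and then to pass to the limit by a dominated convergence argument carried out pathwise. To this end I would first introduce the difference increment
$$
g = y_{\cdot}\1_{\Delta} - \sum_{i,j} y_{s_i;t_j}\1_{\Delta_{i,j}} = \sum_{i,j}\lp y_{\cdot} - y_{s_i;t_j}\rp \1_{\Delta_{i,j}},
$$
which, for each fixed $\om\in\Omega$, is a bounded measurable function on $[0,1]^{2}$ supported on $\Delta$. Applying the embedding lemma pathwise (recall that $\|R\|_{1\text{-var};[0,1]^{4}}<\infty$ by Hypothesis \ref{hyp:Young}) yields
$$
\|g\|_{\ch}^2 \le \|R\|_{1\text{-var};[0,1]^{4}}^2 \, \|g\|_{\infty}^2,
$$
so that it suffices to control the supremum norm of $g$ and then to integrate in $\om$.

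Next I would show that $\|g\|_{\infty}\to 0$ almost surely as $|\pi_1|,|\pi_2|\to 0$. Indeed, relation \eqref{eq:x-Gauss-Holder} ensures that $x$ is almost surely $(\al_1,\al_2)$-H\"older continuous, hence the path $(u,v)\mapsto y_{u;v}=\vp(x_{u;v})$ is almost surely continuous, and therefore uniformly continuous on the compact rectangle $\Delta$. Consequently, for any $\ep>0$ there is an $\eta>0$ such that $|y_{u;v}-y_{s_i;t_j}|<\ep$ whenever $(u,v)\in\Delta_{i,j}$ with $|u-s_i|<\eta$ and $|v-t_j|<\eta$; as soon as the meshes $|\pi_1|$ and $|\pi_2|$ are both smaller than $\eta$, every $(u,v)\in\Delta$ satisfies this constraint, so that $\|g\|_{\infty}<\ep$. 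This gives the announced almost sure convergence.

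Finally I would invoke dominated convergence. Since $|g|\le 2\sup_{(u,v)\in[0,1]^2}|y_{u;v}|$ pointwise, the bound above yields
$$
\|g\|_{\ch}^2 \le 4\,\|R\|_{1\text{-var};[0,1]^{4}}^2 \, \sup_{(u,v)\in[0,1]^2}|y_{u;v}|^2,
$$
and the right-hand side is integrable thanks to the moment estimate \eqref{cotamoments} with $r=2$, which is exactly granted by the growth condition (GC) satisfied by $\vp$. Combining the almost sure convergence $\|g\|_{\ch}^2\to 0$ with this uniform, $\om$-independent domination, dominated convergence gives $\E[\|g\|_{\ch}^2]\to 0$, which is the desired claim. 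The argument is essentially routine once the embedding lemma is in hand; the only delicate point is the domination, and it is precisely the growth condition (GC) together with the resulting bound \eqref{cotamoments} on $\E[\sup|y|^2]$ that makes it legitimate to interchange the limit and the expectation.
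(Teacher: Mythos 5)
Your proof is correct, and it shares the paper's skeleton---the same decomposition $\sum_{i,j}(y_{\cdot}-y_{s_i;t_j})\1_{\Delta_{ij}}$ and the same reduction of the $\ch$-norm to a supremum norm via the embedding \eqref{emb}---but the passage to the limit is genuinely different. The paper argues quantitatively: it bounds $|y_{s;t}-y_{s_i;t_j}|$ through the mean value theorem by $\sup_{\Delta}|y^{1}|$ times the modulus of continuity of $x$, then applies Cauchy--Schwarz in expectation, using (GC) for $\vp^{(1)}$ to control $\E^{1/2}[\sup_{\Delta}|y^{1}|^{4}]$ and the H\"older regularity \eqref{eq:x-Gauss-Holder} to make the fourth moment of the oscillation of $x$ vanish with the mesh. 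You argue qualitatively: almost sure uniform continuity of $y$ on the compact $\Delta$ gives $\|g\|_{\infty}\to 0$ pathwise, uniformly over all partitions of sufficiently small mesh, and the partition-independent dominating bound $4\|R\|_{1\text{-var};[0,1]^{4}}^{2}\sup_{[0,1]^2}|y|^{2}\in L^{1}(\Omega)$ (granted by \eqref{cotamoments} with $r=2$) lets you conclude by dominated convergence. Your route is more elementary and needs strictly weaker hypotheses: you never differentiate $\vp$, so neither $\vp\in C^{1}$ nor (GC) for $\vp^{(1)}$ is used, only continuity and (GC) for $\vp$ itself; the paper's computation, on the other hand, is quantitative and would produce a rate in $|\pi_1|,|\pi_2|$ if one tracked moments of the H\"older norm of $x$. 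Two points worth making explicit in your write-up: dominated convergence is a statement about sequences, so one should observe (as your uniform-continuity step in fact provides) that the almost sure convergence holds uniformly over all partitions with mesh below a threshold, which reduces the net limit to a sequential one; and the embedding \eqref{emb} is proved for step functions and extended by density, so its application to $g$ rests on $g$ being a uniform limit of step functions---the same implicit point the paper itself relies on.
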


\begin{proof}
Observe first that
$$
y_{s;t}\1_{\Delta}(s,t)-\sum_{i,j}y_{s_i;t_j}\1_{\Delta_{ij}}(s,t)=\sum_{i,j}(y_{s;t}-y_{s_i;t_j})\1_{\Delta_{i,j}}(s,t)
$$
from which the following estimation is easily obtained:
$$
|(y_{s;t}-y_{s_i;t_j})\1_{\Delta_{i,j}}(s,t)|\leq
\left(\sup_{(s,t)\in\Delta}|y^1_{s;t}|\max_{|s_{1}-s_{2}|\leq|\pi_1|,|t_{1}-t_{2}|\leq|\pi_2|}|x_{s_{1};t_{1}}-x_{s_{2};t_{2}}|
\right)
\1_{\Delta_{i,j}}.
$$
Hence if we take expectations in this last estimation and resort to H\"older's inequality, we obtain that 
\begin{multline*}
\E\left[\Big\|y_{\cdot}\1_{\Delta}-\sum_{i,j}y_{s_i;t_j}\1_{\Delta_{i,j}}\Big\|^2_{\infty}\right] \\
\leq\E^{1/2}\left[\sup_{(s,t)\in\Delta}|y^1_{s;t}|^{4}\right] \,
\E^{1/2}\left[\max_{|s-s'|\leq|\pi_1|,|t-t'|\leq|\pi_2|}|x_{s;t}-x_{s_{2};t_{2}}|^{4}\right] \,
\Big\|\sum_{i,j}\1_{\Delta_{ij}}\Big\|_{\infty}.
\end{multline*}
Now the r.h.s of this inequality goes to zero when the mesh of the partitions $\pi_1,\pi_2$ goes to zero by continuity properties of $x$ (see \eqref{eq:x-Gauss-Holder}). Our claim thus easily stems from the embedding~\eqref{emb}.

\end{proof}

\subsection{It\^o-Skorohod type formula}

We now turn to one of the main aim of this article, namely the proof of a Skorohod type change of variable formula for a general Gaussian process $x$ defined on $[0,1]^{2}$, under our assumptions \ref{hyp:Young}. Our starting point is the relation between $z^{1}$ and its Skorohod equivalent.

\begin{proposition}\label{f-conv}
Assume $x$ is a centered Gaussian process on $[0,1]^{2}$ with a covariance function satisfying \eqref{eq:factorize-R-intro}. Consider a function $\vp\in C^{2}(\mathbb R)$ satisfying condition (GC) and a rectangle $\Delta=[s_{1},s_{2}]\times[t_{1},t_{2}]$. Then we have that 
$y^{1}_{\cdot}\1_{\Delta}\in Dom(\der^{\diamond})$, and if we define the increment 
$z^{1,\di}\equiv\der^{\diamond}(y^{1}_{\cdot}\1_{\Delta})$ the following relation holds true: 
\begin{equation}\label{eq:rel-sko-strato-d12x-young}
z^{1,\di}_{s_{1}s_{2};t_{1}t_{2}}
=z^{1}_{s_{1}s_{2};t_{1}t_{2}}
-\frac14\int_{s_{1}}^{s_{2}}\int_{t_{1}}^{t_{2}}y^2_{s;t} \, d_{1}R^{1}_{s} d_{2}R^{2}_{t},
\end{equation}
where $z^{1}$ is given by Proposition~\ref{prop:young-2d-y-dx} and the second integral in the right hand side of  \eqref{eq:rel-sko-strato-d12x-young} is of Riemann-Stietjes type. Moreover, relation \eqref{eq:riemann-wick-z1} holds true in the $L^{2}(\Omega)$ and almost sure sense.
\end{proposition}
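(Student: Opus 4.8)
I would follow the methodology labelled \emph{(b)} in the introduction: approximate $y^{1}_{\cdot}\1_{\Delta}$ by step functions, identify the Skorohod integral of each step function as a Wick Riemann sum, expand the Wick products into ordinary products plus covariance corrections via \eqref{eq:correc-products-1}, and finally pass to the limit using the closability Lemma \ref{clos}. Concretely, I would fix a rectangular partition $\pi=(s_i)_i\times(t_j)_j$ of $\Delta$ and set $u^{\pi}=\sum_{i,j} y^{1}_{s_i;t_j}\,\1_{\Delta_{ij}}$. Proposition \ref{prop:cvgce-r-sums-in-H}, applicable since $\vp$ and $\vp^{(1)}$ satisfy (GC), gives $u^{\pi}\to y^{1}_{\cdot}\1_{\Delta}$ in $L^{2}(\Omega,\ch)$ as $|\pi|\to0$. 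Each summand lies in $\dom(\der^{\diamond})$ by item (2) of Proposition \ref{prop:wick-mult-intg} (with $F=y^{1}_{s_i;t_j}\in\D^{1,2}$, a consequence of (GC) and \eqref{cotamoments}, and $g=\1_{\Delta_{ij}}\in\ch$), so $u^{\pi}\in\dom(\der^{\diamond})$. By Lemma \ref{clos} it then suffices to show that $\der^{\diamond}(u^{\pi})$ converges in $L^{2}(\Omega)$ to the right hand side of \eqref{eq:rel-sko-strato-d12x-young}; this single convergence simultaneously yields $y^{1}_{\cdot}\1_{\Delta}\in\dom(\der^{\diamond})$, the identity \eqref{eq:rel-sko-strato-d12x-young}, and the $L^{2}$ part of \eqref{eq:riemann-wick-z1}.

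\textbf{Computation of the finite sum.} Using item (3) of Proposition \ref{prop:wick-mult-intg} with $k=1$ together with $I_{1}(\1_{\Delta_{ij}})=\bx^{1;2}_{s_i s_{i+1};t_j t_{j+1}}$, each term equals $\der^{\diamond}(y^{1}_{s_i;t_j}\1_{\Delta_{ij}})=y^{1}_{s_i;t_j}\di\bx^{1;2}_{s_i s_{i+1};t_j t_{j+1}}$, which is exactly the Wick Riemann sum occurring in \eqref{eq:riemann-wick-z1}. Applying \eqref{eq:correc-products-1} with $f=\vp^{(1)}$, $g_{1}=\1_{[0,s_i]\times[0,t_j]}$ and $g_{2}=\1_{\Delta_{ij}}$, I obtain
\begin{equation*}
\der^{\diamond}(u^{\pi})
=\sum_{i,j} y^{1}_{s_i;t_j}\,\bx^{1;2}_{s_i s_{i+1};t_j t_{j+1}}
-\sum_{i,j} y^{2}_{s_i;t_j}\,\lla \1_{[0,s_i]\times[0,t_j]},\1_{\Delta_{ij}}\rra_{\ch}.
\end{equation*}
The factorization \eqref{eq:factorize-R-intro} makes the inner product split, and a direct bilinear expansion gives $\lla \1_{[0,s_i]\times[0,t_j]},\1_{\Delta_{ij}}\rra_{\ch}=(R^{1}_{s_i s_{i+1}}-R^{1}_{s_i})(R^{2}_{t_j t_{j+1}}-R^{2}_{t_j})$. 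The crucial point is that the regularity assumption \eqref{eq:regular-R} allows me to replace $R^{1}_{s_i s_{i+1}}-R^{1}_{s_i}$ by $\tfrac12\,\doo R^{1}_{s_i s_{i+1}}$ up to an error of size $|s_{i+1}-s_i|^{\ga_1}$ with $\ga_1>1$, and symmetrically in direction 2, so that
\begin{equation*}
\lla \1_{[0,s_i]\times[0,t_j]},\1_{\Delta_{ij}}\rra_{\ch}
=\frac14\,\doo R^{1}_{s_i s_{i+1}}\,\dt R^{2}_{t_j t_{j+1}}+\ep_{ij},
\end{equation*}
where $\ep_{ij}$ collects the cross terms and satisfies $\sum_{i,j}|\ep_{ij}|\to0$ under mesh refinement.

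\textbf{Passage to the limit.} The error terms sum to zero because $\sum_i|s_{i+1}-s_i|^{\ga_1}\le|\pi_1|^{\ga_1-1}(s_2-s_1)\to0$ together with the finite variation of $R^{i}$, so the correction sum converges path by path to the Riemann--Stieltjes integral $\tfrac14\int_{s_1}^{s_2}\int_{t_1}^{t_2} y^{2}_{s;t}\,d_{1}R^{1}_{s}\,d_{2}R^{2}_{t}$. The first sum converges almost surely to $z^{1}$ by the Young Riemann-sum representation of Proposition \ref{Y-R sum} and of Theorem \ref{thm:young-strato-intro}(iv). This already produces the almost sure statement, and identifies the limit as $z^{1}-\tfrac14\int\int y^{2}\,d_{1}R^{1}\,d_{2}R^{2}$. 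To upgrade to $L^{2}(\Omega)$ convergence, which is what Lemma \ref{clos} requires, I would prove uniform integrability of both Riemann sums: the correction sums are dominated, via (GC) and \eqref{cotamoments}, by an $L^{2}$ variable built from $\sup_{\Delta}|y^{2}|$, while the Stratonovich sums are bounded in $L^{p}(\Omega)$ for large $p$ through the continuity estimate $\|z^{1}\|_{\ga_1;\ga_2}\le c_{\vp}(1+\|x\|_{\ga_1;\ga_2}^{2})$ of Theorem \ref{prop:young-2d-y-dx}, the Garsia-type bound of Lemma \ref{lemma:G-R}, and the finiteness of all Gaussian moments of the H\"older norms of $x$.

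\textbf{Main obstacle.} The hard part is precisely this last upgrade from almost sure to $L^{2}(\Omega)$ convergence. The pathwise Young theory delivers convergence for each fixed $\omega$, but controlling the $L^{2}$ norm of the Riemann sums uniformly in the mesh forces me to combine the deterministic H\"older continuity bounds for the Young integral with Gaussian moment estimates; here the role of the (GC) condition is exactly to guarantee finiteness of the exponential moments of $\vp^{(j)}(x)$ that render those bounds integrable. Once this uniform integrability is secured, the $L^{2}$ limit of $\der^{\diamond}(u^{\pi})$ is identified and Lemma \ref{clos} closes the argument, yielding membership in $\dom(\der^{\diamond})$, formula \eqref{eq:rel-sko-strato-d12x-young}, and relation \eqref{eq:riemann-wick-z1}.
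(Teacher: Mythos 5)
Your proposal is correct and follows essentially the same route as the paper's proof: step-function approximation of $y^{1}_{\cdot}\1_{\Delta}$, expansion of its Skorohod integral into a Stratonovich Riemann sum minus a covariance correction, the diagonal decomposition $R^{k}_{s_is_{i+1}}-R^{k}_{s_is_i}=\tfrac12(R^{k}_{s_{i+1}}-R^{k}_{s_i})+\rho^{k}_{s_is_{i+1}}$ with $|\rho^{k}_{ab}|\lesssim|a-b|^{\ga_k}$, and closure via Proposition \ref{prop:cvgce-r-sums-in-H} and Lemma \ref{clos}. The only deviations are cosmetic: you perform the expansion through the Wick correction \eqref{eq:correc-products-1} and Proposition \ref{prop:wick-mult-intg} where the paper uses the divergence identity \eqref{ipp div} (the same identity in different form), and you upgrade the almost sure convergence of the Riemann sums to $L^{2}(\Omega)$ by uniform integrability where the paper proves the quantitative bound $\E[|D_{1}^{n}|^{2}]\lesssim |\pi_1|^{4\al_{1}-2}+|\pi_2|^{4\al_{2}-2}$ — both arguments resting on the same ingredients, namely the sewing-map representation of $z^{1}$, Lemmas \ref{f-control} and \ref{lemma:G-R}, and condition (GC).
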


\begin{proof}
Consider a sequence of partitions $\pi_{n}=(\pi_{n}^{1},\pi_{n}^{2})$ whose mesh go to 0 as $n\to\infty$. The generic elements of $\pi_{n}$ will be denoted by $(s_{i},t_{j})$. Owing to formula~\eqref{ipp div}, we have that 
\begin{align*}
&\sum_{\pi_{n}}\der^{\diamond}(y^{1}_{s_i;t_j}\1_{\Delta_{ij}})=\sum_{\pi_{n}}y^{1}_{s_i;t_j}\der x_{s_{i}s_{i+1};t_{j}t_{j+1}}-\sum_{\pi_{n}}y^2_{s_i;t_j}\E[x_{s_i;t_j}\der x_{s_{i}s_{i+1};t_{j}t_{j+1}}]
\\
&=\sum_{\pi_{n}}y^{1}_{s_i;t_j}\der x_{s_{i}s_{i+1};t_{j}t_{j+1}}-\sum_{\pi_{n}}y^2_{s_i;t_j}(R^{1}_{s_is_{i+1}}-R^{1}_{s_is_i})(R^{2}_{t_jt_{j+1}}-R^{2}_{t_jt_j})
\equiv A_1^{n}-A_2^{n}.
\end{align*}
We now treat those two terms separately.

\smallskip

\noindent
\textit{Step 1: Estimation of $A_1^{n}$.}
The term $A_1^{n}=\sum_{\pi_{n}}y^{1}_{s_i;t_j}\der x_{s_{i}s_{i+1};t_{j}t_{j+1}}$ is a Riemann type sum. Since $x\in\cp_{1,1}^{\al_1,\al_2}$ with $\al_1,\al_2>1/2$, it converges a.s to $\iint y^{1} \, d_{12}x$ as $n$ goes to $\infty$, according to \eqref{eq:cvgce-riem-sums-z1-z2}. 

\smallskip

The $L^{2}(\Omega)$ convergence of the Riemann sums defining $A_1^{n}$ is more cumbersome, and we have to go back to the definition of the Young integral given by Theorem~\ref{prop:young-2d-y-dx}. Indeed one can write $\int_{s_1}^{s_2}\int_{t_1}^{t_2} y^{1}_{s;t}d_{12}x_{s;t}=\sum_{\pi_{n}}\int_{\Delta_{ij}}y^{1}_{s;t}d_{12}x_{s;t}$, and thanks to \eqref{eq:def-z1-z2-(Id-Lambda)} we get that 
\begin{multline*}
\int_{s_1}^{s_2}\int_{t_1}^{t_2} y^{1}_{s;t}d_{12}x_{s;t}
=A_{1}^{n}+\sum_{\pi_{n}}(\id-\Lambda_2\der_2)(\Lambda_1\der_1)(y^{1}\der x)_{s_{i}s_{i+1};t_{j}t_{j+1}}
\\
+\sum_{\pi_{n}}(\id-\Lambda_1\der_1)(\Lambda_2\der_2)(y^{1}\der x)_{s_{i}s_{i+1};t_{j}t_{j+1}}-\sum_{\pi_{n}}\Lambda\der(y^{1}\der x)_{s_{i}s_{i+1};t_{j}t_{j+1}}.
\end{multline*}
Furthermore, some partial summations can be performed on the terms $\id-\laa_{i}\der_{i}$ for $i=1,2$ and setting $D_{1}^{n}\equiv \int_{s_1}^{s_2}\int_{t_1}^{t_2} y^{1}_{s;t}d_{12}x_{s;t}-A_{1}^{n}$ we deduce that
\begin{multline*}
D_{1}^{n}=\sum_{\pi_{n}^{2}}\lc (\id-\Lambda_1\der_1)(\Lambda_2\der_2)\rc(y^{1}\der x)_{s_1s_2;t_jt_{j+1}}
\\
+\sum_{\pi_{n}^{1}}\lc(\id-\Lambda_2\der_2)(\Lambda_1\der_1)\rc(y^{1}\der x)_{s_is_{i+1};t_1t_2}
-\sum_{\pi_{n}}\Lambda\der(y^{1}\der x)_{s_{i}s_{i+1};t_{j}t_{j+1}}.
\end{multline*}
Recalling the H\"older regularity \eqref{eq:x-Gauss-Holder} of our process $x$, we thus obtain
\begin{equation*}
\begin{split}
\left|D_{1}^{n}\right|
&\lesssim 
|s_2-s_1|^{\al_{1}}\|\der_2y^{1}\|_{0,\al_{1}}\|\der x\|_{\al_{1},\al_{2}}\sum_j|t_{j+1}-t_j|^{2\al_{2}}
\\&+
|t_2-t_1|^{\al_{2}}\|\der_1y^{1}\|_{\al_{1},0}\|\der x\|_{\al_{1},\al_{2}}\sum_{\pi_{n}^{1}}|s_{i+1}-s_{i}|^{2\al_{1}}
\\&+\|\der y^{1}\|_{\al_{1},\al_{2}}\|\der x\|_{\al_{1},\al_{2}}
\sum_{\pi_{n}}|s_{i+1}-s_i|^{2\al_{1}}|t_{j+1}-t_j|^{2\al_{2}}.
\end{split}
\end{equation*}
Now taking expectations in the last relation and using H\"older's inequality we end up with 
\begin{multline}\label{A_1 estim}
\E[|D_{1}^{n}|^2]\lesssim_{s_1,s_2,t_1,t_2,\al_{1},\al_{2}}
\left(|\pi_1|^{4\al_{1}-2}+|\pi_2|^{4\al_{2}-2}\right) \,
\E^{1/2}[\|\der x\|^4_{\al_{1},\al_{2}}] \\
\times\left(\E^{1/2}[\|\der_1y^{1}\|^4_{0,\al_{1}}]
+\E^{1/2}[\|\der_2y^{1}\|^{4}_{\al_{2}}]+\E^{1/2}[\|\der y^{1}\|^{4}_{\al_{1},\al_{2}}]\right),
\end{multline}
for $n$ large enough. 

\smallskip

We are now going to prove that one can recast \eqref{A_1 estim} into
\begin{equation}\label{D1-estim-2}
\E[|D_{1}^{n}|^2]\lesssim_{s_1,s_2,t_1,t_2,\al_{1},\al_{2}}
\left(|\pi_1|^{4\al_{1}-2}+|\pi_2|^{4\al_{2}-2}\right).
\end{equation}
Indeed, applying Lemma \ref{lemma:G-R} to $y=x$, $\theta_{1}=\al_{1}$, $\theta_{2}=\al_{2}$ and $p$ large enough, one can easily check that $\E[\|\der x\|_{\al_{1},\al_{2}}^4]<+\infty$. In addition, combining the fact that  $\E[\|\der x\|_{\al_{1},\al_{2}}^4]<+\infty$ with Lemma~\ref{f-control}  plus condition (GC) on the function $f$, we obtain that 
$$
E[\|\der x\|^4_{\al_{1},\al_2}]^{1/2}
\left(\E[\|\der_1y^{1}\|^4_{0,\al_{1}}]^{1/2}
+\E[\|\der_2y^{1}\|^{4}_{\al_{2}}]^{1/2}
+\E[\|\der y^{1}\|^{4}_{\al_{1},\al_2}]^{1/2}\right)<+\infty. 
$$
Hence inequality \eqref{D1-estim-2} is easily deduced from \eqref{A_1 estim}, and this proves that  $A_{1}^{n}$ converges in $L^2(\Omega)$ to $\int_{s_1}^{s_2}\int_{t_1}^{t_2}y^{1}_{s;t}d_{12}x_{s;t}$ as $n\to\infty$.

\smallskip

\noindent
\textit{Step 2: Estimation of $A_2^{n}$.}
Recall that $A_2^{n}$ is defined by 
\begin{equation*}
A_{2}^{n}=\sum_{\pi_{n}}y^{2}_{s_i;t_j}(R^{1}_{s_{i+1}s_i}-R^1_{s_is_i})(R^{2}_{t_{j+1}t_j}-R^2_{t_jt_j}).
\end{equation*}
In order to treat this term, first remark that  for $k=1,2$ we have
\begin{equation*}
R_{s_is_{i+1}}^{k}-R^{k}_{s_is_i}=\frac12(R^{k}_{s_{i+1}s_{i+1}}-R^{k}_{s_is_i})
+\rho^{k}_{s_is_{i+1}},
\end{equation*}
where $\rho^{k}_{s_is_{i+1}}=\frac12(2R^{k}_{s_is_{i+1}}-R^{k}_{s_is_i}-R^{k}_{s_{i+1}s_{i+1}})$.
Injecting this relation in the definition of the term $A_{2}^{n}$ and recalling that we have set $R_{a}^{k}\equiv R_{aa}^{k}$, we obtain
\begin{eqnarray*}
A_{2}^{n}&=&1/4\sum_{\pi_{n}}y^{2}_{s_i;t_j}(R^{1}_{s_{i+1}}-R^{1}_{s_i})(R^{2}_{t_{j+1}}-R^{2}_{t_j})
\\
&&+1/2\sum_{\pi_{n}}y^{2}_{s_i;t_j}\lc 
(R^{2}_{t_{j+1}}-R^{2}_{t_j})\rho^1_{s_is_{i+1}}
+(R^{1}_{s_{i+1}}-R^{1}_{s_i})\rho^2_{t_jt_{j+1}}
+\rho^1_{s_is_{i+1}}\rho^2_{t_jt_{j+1}}
\rc
\\
&\equiv& A_{21}^{n} + A_{22}^{n} + A_{23}^{n} + A_{24}^{n}.
\end{eqnarray*}
We will now show that
\begin{equation}\label{eq:lim-A21-A24}
\lim_{n\to\infty} A_{21}^{n} = \frac14\int_{s_1}^{s_2}\int_{t_1}^{t_2}y^{2}_{s;t}
\, d_{1}R^1_sd_{2}R^2_t,
\quad\text{and}\quad
\lim_{n\to\infty} \sum_{j=2}^{4} A_{2j}^{n} = 0,
\end{equation}
where the limits are understood in the almost sure and $L^{2}$ sense.

\smallskip

Indeed, it is easily understood that the terms $A_{22}^{n},A_{23}^{n}, A_{24}^{n}$ are remainder terms: according to Hypothesis \ref{hyp:Young} we have that $|\rho^{i}_{ab}|\lesssim|a-b|^{\gamma_i}$, and we get the following inequality for $A_{22}^{n}$:
\begin{eqnarray*}
A_{22}^{n}&\lesssim&
|\pi_1|^{\gamma_1-1}\sup_{(s,t)\in\Delta}|y^{2}_{s;t}| \,
\sum_{\pi_{n}} (s_{i+1}-s_i) \, |R^{2}_{t_{j+1}}-R^2_{t_j}|
\\
&\leq&
|\pi_1|^{\gamma_1-1}\sup_{(s,t)\in\Delta}|y^{2}_{s;t}| \, (s_2-s_1)\int_{t_1}^{t_2}|d_{2}R^{2}_t|.
\end{eqnarray*}
This relation, plus the condition (GC) on $f$, obviously entails that $\lim_{n\to\infty}A_{22}^{n}=0$ in the almost sure and $L^{2}(\Omega)$ sense. The case of $A_{23}^{n}, A_{24}^{n}$ follow exactly along the same lines.

\smallskip

We now focus on the term $A_{21}^{n}$: observe that 
\begin{multline*}
\left|a_1-1/4\int_{s_1}^{s_2}\int_{t_1}^{t_2}y^{2}_{s;t}d_{1}R^1_sd_{2}R^2_t\right|  \\
\lesssim
\sup_{(s,t)\in\Delta}|y^2_{s;t}|
\max_{|s-s'|  \leq|\pi_1|,|t-t'|\leq|\pi_2|}|x_{s;t}-x_{s_{2};t_{2}}|
\int_{s_1}^{s_2}\int_{t_1}^{t_2}|d_{1}R^1_s\|d_{2}R^2_t|.
\end{multline*}
Invoking the same estimates as before for the H\"older norm of $x$ and condition (GC) on $f$, the proof of our assertion \eqref{eq:lim-A21-A24} is now completed. 

\smallskip

\noindent
\textit{Step 3: Conclusion.}
Let us summarize the results obtained in the last two steps: plugging  relation~\eqref{eq:lim-A21-A24} into the definition of $A_{2}^{n}$ and recalling the limiting behavior of  $A_{1}^{n}$ established at Step 1, we have obtained that
\begin{equation*}
\lim_{n\to\infty} \sum_{\pi_{n}}\der^{\diamond}(y^{1}_{s_i;t_j}\1_{\Delta_{ij}})
=
\int_{s_1}^{s_2}\int_{t_1}^{t_2} y^{1}_{s;t}d_{12}x_{s;t} 
-\frac14\int_{s_1}^{s_2}\int_{t_1}^{t_2}y^{2}_{s;t}d_{1}R^1_sd_{2}R^2_t.
\end{equation*}
where the convergence is understood in both a.s and $L^2(\Omega)$ sense. Furthermore, Proposition~\ref{prop:cvgce-r-sums-in-H} asserts that $\sum_{\pi_{n}}y^{1}_{s_i;t_j}\1_{\Delta_{ij}}$ converges in $L^{2}(\Omega,\mathcal H)$  to $y^{1}_{\cdot}\1_{\Delta}$. This finishes our proof  of relation~\eqref{eq:rel-sko-strato-d12x-young} thanks to a direct application of Lemma~\ref{clos}.

\smallskip

As far as expression \eqref{eq:riemann-wick-z1} with Wick-Riemann sums is concerned, recall that we have proved that 
\begin{equation*}
\der^{\diamond}(y^{1}_{\cdot}\1_{\Delta})
=\lim_{|\pi_1|,|\pi_2|\to0}\sum_{\pi_{n}}\der^{\diamond}(y^{1}_{s_i;t_j}\1_{\Delta_{ij}}).
\end{equation*}
Now invoke Proposition \ref{prop:wick-mult-intg} for $k=1$ in order to state that
\begin{equation*}
\der^{\diamond}(y^{1}_{s_i;t_j}\1_{\Delta_{ij}})
=y^{1}_{s_i;t_j} \diamond \der^{\diamond}(\1_{\Delta_{ij}})
=y^{1}_{s_i;t_j}\diamond\der x_{s_{i}s_{i+1};t_{j}t_{j+1}},
\end{equation*}
which ends the proof.

\end{proof}

\smallskip

Proposition \ref{f-conv} gives a meaning to the increment $z^{1,\di}$  and compares them to the corresponding Stratonovich increment $z^{1}$. In order to compare change of variables formulae, we still have to define Skorohod integrals of the form $z^{2,\di}$, which is what we proceed to do now.

\smallskip

To this aim, let us start by some formal considerations: it is easily conceived that 
\begin{equation}\label{eq:mix-term}
\int_{0}^{s}\int_{0}^{t}y^{2}_{u;v}d^{\diamond}_1x_{u;v}d^{\diamond}_2x_{u;v}
=\int_{0}^{s}\int_{0}^{t}\int_{0}^{u}\int_{0}^{v}y^{2}_{u;v}d^{\diamond}_{12}x_{u'v}d^{\diamond}_{12}x_{uv'}
=\der^{\diamond,2}(N(y))
\end{equation}
where, similarly to \cite{TVp1}, we set
$$
N(y)_{u'u;vv'}:=y \, \1_{[0,s]\times[0,v]}(u,v') \, \1_{[0,u]\times[0,t]}(u',v),
$$
where we integrate firstly  in $(u',v)$ and then in $(u,v')$, and where the notation $\der^{\diamond,2}$ specifies that we perform double integrals in the Skorohod sense. Our objective in what follows is to give a rigorous meaning to equation~\eqref{eq:mix-term}. 

\begin{lemma}\label{conver-mixed}
Take up the notation of Proposition \ref{f-conv}, and consider $f\in C^{3}(\mathbb R)$ satisfying condition (GC). For a sequence of partitions $(\pi_{n})_{n\ge 1}$ whose mesh goes to 0  define
\begin{equation}\label{eq:def-a-pi1-pi2-mixed}
a^{\pi_{n}}_{u'u;vv'}=\sum_{i,j}y^{2}_{s_i;t_j} \,
\1_{[0,s_i]\times[t_j,t_{j+1}]}(u',v) \, \1_{[s_i,s_{i+1}]\times[0,t_j]}(u,v').
\end{equation}
Then $a^{\pi_{n}}$ converges to $N(y)$ in $L^2(\Omega,\mathcal \ch^{\otimes2})$ as $n$ goes to infinity.
\end{lemma}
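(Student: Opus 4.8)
The plan is to prove the convergence by reducing it, via an explicit description of the $\ch^{\otimes2}$ inner product, to a dominated convergence argument on the product space $\Omega\times\ou^{8}$. As a preliminary I would record the form of the inner product. Viewing an element of $\ch^{\otimes2}$ as a kernel indexed by two points of $\ou^{2}$, where according to the prescribed integration order the first slot carries the variables $(u',v)$ and the second slot the variables $(u,v')$, the factorization $R=R^{1}R^{2}$ of Hypothesis~\ref{hyp:Young} gives, exactly as in the proof of the embedding \eqref{emb},
\[
\langle K,L\rangle_{\ch^{\otimes2}}
=\int_{\ou^{8}} K_{(u_1',v_1);(u_1,v_1')}\, L_{(u_2',v_2);(u_2,v_2')}\,
d_{12}R^{1}_{u_1'u_2'}\,d_{12}R^{2}_{v_1v_2}\,d_{12}R^{1}_{u_1u_2}\,d_{12}R^{2}_{v_1'v_2'}.
\]
Denoting by $\nu$ the associated (finite, signed) product covariance measure on $\ou^{8}$, with total variation $|\nu|(\ou^{8})=\|R\|^{2}_{1\text{-var};\ou^{4}}$, the bound $\|K\|_{\ch^{\otimes2}}\le\|K\|_{\infty}\|R\|_{1\text{-var};\ou^{4}}$ follows and shows in particular that $N(y)$ and each $a^{\pi_{n}}$, being bounded functions times indicators, genuinely define elements of $\ch^{\otimes2}$.

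With this representation, I would write $\E[\|a^{\pi_{n}}-N(y)\|^{2}_{\ch^{\otimes2}}]$ as the integral over $\Omega\times\ou^{8}$ against $\PP\otimes\nu$ of
\[
\Phi_{n}=\big(a^{\pi_{n}}-N(y)\big)_{(u_1',v_1);(u_1,v_1')}\,
\big(a^{\pi_{n}}-N(y)\big)_{(u_2',v_2);(u_2,v_2')} .
\]
Since $|a^{\pi_{n}}|\le\|y^{2}\|_{0;\Delta}$ and $|N(y)|\le\|y^{2}\|_{0;\Delta}$, one has $|\Phi_{n}|\le 4\,\|y^{2}\|^{2}_{0;\Delta}$; because the growth condition (GC) forces $\E[\|y^{2}\|^{2}_{0;\Delta}]<\infty$ through \eqref{cotamoments} (recall $y^{2}=\vp^{(2)}(x)$), and because $|\nu|$ is finite, the family $(\Phi_{n})$ is dominated by a fixed $\PP\otimes|\nu|$-integrable function. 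Hence it suffices to show $\Phi_{n}\to0$ for $\PP\otimes\nu$-almost every point, which in turn reduces to the pointwise almost everywhere convergence of kernels $a^{\pi_{n}}\to N(y)$.

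To obtain this, fix a configuration of variables and, for $n$ large, let $(i,j)$ be the unique pair with $u_1\in[s_i,s_{i+1})$ and $v_1\in[t_j,t_{j+1})$, so that $s_i\to u_1$ and $t_j\to v_1$. The coefficient converges, $y^{2}_{s_i;t_j}\to y^{2}_{u_1;v_1}$, by continuity of $y^{2}=\vp^{(2)}(x)$ (which follows from the H\"older continuity \eqref{eq:x-Gauss-Holder} of $x$ and the fact that $\vp\in C^{3}$), while $\1_{u_1'\le s_i}\1_{v_1'\le t_j}\to\1_{u_1'\le u_1}\1_{v_1'\le v_1}$ off the hyperplanes $\{u_1'=u_1\}$ and $\{v_1'=v_1\}$, and symmetrically for the primed copy. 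Thus $a^{\pi_{n}}\to N(y)$ everywhere outside the union of $\{u_1'=u_1\}$, $\{v_1'=v_1\}$, $\{u_2'=u_2\}$ and $\{v_2'=v_2\}$. The remaining point is that $\nu$ does not charge these sets: each such set is a diagonal between a first-slot and a second-slot variable integrated against two distinct copies of $d_{12}R^{1}$ (resp.\ $d_{12}R^{2}$), and Hypothesis~\ref{hyp:Young} makes $R^{1},R^{2}$ continuous, hence the marginals of $d_{12}R^{i}$ are atomless and the product measure assigns them zero mass. Dominated convergence then yields $\E[\|a^{\pi_{n}}-N(y)\|^{2}_{\ch^{\otimes2}}]\to0$, which is the claim.

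The main obstacle is precisely this indicator-mismatch analysis: in contrast with Proposition~\ref{prop:cvgce-r-sums-in-H}, where the indicators of the Riemann sum and of the limit coincide and a single modulus-of-continuity estimate suffices, here the indicators of $a^{\pi_{n}}$ and of $N(y)$ differ on thin boundary layers $\{s_i<u_1'\le u_1\}$ and $\{t_j<v_1'\le v_1\}$ (and their counterparts), so one must keep careful track of which of the eight integration variables sits in which tensor slot and verify that $\nu$ ignores the associated diagonals. This is exactly where the continuity, hence atomlessness, of $R^{1}$ and $R^{2}$ built into Hypothesis~\ref{hyp:Young} is indispensable: without it those boundary layers could retain positive covariance mass in the limit and the convergence would fail.
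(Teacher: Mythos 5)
Your proof is correct and takes essentially the same route as the paper's: both expand the $\ch^{\otimes2}$ norm against the finite-variation product covariance measure (the paper's inequality \eqref{embb-2}), split the discrepancy $a^{\pi_n}-N(y^{2})$ into a coefficient mismatch and an indicator mismatch, establish pointwise convergence of the kernels off exceptional sets, and conclude by dominated convergence (in $\PP\otimes\nu$ at once in your version, in two successive steps in the paper's) using condition (GC) for the domination. If anything, your identification of the exceptional set as the diagonals $\{u'=u\}$, $\{v'=v\}$, together with the verification that these are null for the covariance measure because the continuity of $R^{1},R^{2}$ makes the marginals of $|d_{12}R^{i}|$ atomless, is more precise than the paper's brief remark, which states the condition in terms of the ($n$-dependent) partition points.
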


\begin{proof}

First notice that the tensor norm of an element $K\in\mathcal \ch^{\otimes2}$ can be bounded as:
\begin{eqnarray}\label{embb-2}
\|K\|_{\mathcal \ch^{\otimes2}}
&=&\int_{[0,1]^8}K_{a_{1}a'_{1};b_{1}b'_{1}}K_{a_{2}a'_{2};b_{2}b'_{2}}d_{12}R^{1}_{a_1a'_1}d_{12}R^{1}_{a_2a'_2}d_{12}R^{2}_{b_1b'_1}d_{12}R^{1}_{b_2b'_2}
\notag\\
&\leq&\int_{[0,1]^8}|K_{a_{1}a'_{1};b_{1}b'_{1}}K_{a_{2}a'_{2};b_{2}b'_{2}}|
\, |d_{12}R^{1}_{a_1a'_1}| |d_{12}R^{1}_{a_2a'_2}| |d_{12}R^{2}_{b_1b'_1}| 
|d_{12}R^{1}_{b_2b'_2}|.
\end{eqnarray}
Furthermore, a  simple computation shows that
\begin{multline*}
a^{\pi_{n}}_{u'u;vv'}-N(y^{2})_{u'u;vv'}
=\sum_{\pi_{n}}
\lc y^{2}_{s_i;t_j}-y^{2}_{u;v}\rc
\lc\1_{[0,u]\times[t_j,t_{j+1}]}(u',v)\1_{[s_i,s_{i+1}]\times[0,v]}(u,v')\rc
\\
+\sum_{\pi_{n}}y^{2}_{s_i;t_j}
\lc\left(\1_{[0,s_i]\times[0,t_j]}(u',v')-\1_{[0,u]\times[0,v]}(u',v')\right)\1_{[s_i,s_{i+1}]\times[t_j,t_{j+1}]}(u,v)\rc,
\end{multline*}
and thus, 
\begin{multline}\label{eq:bnd-a-p1-pi2}
\lln a^{\pi_{n}}_{u'u;vv'}-N(y^{2})_{u'u;vv'} \rrn
\leq
\Big(\sup_{(a,b)\in[0,s]\times[0,t]}|y^3_{a;b}|\sup_{|a_2-a_1|\leq|\pi_1|,|b_2-b_1|\leq|\pi_2|}|x_{a_2;b_2}-x_{a_1;b_1}|\\
+\max_{i,j}(\1_{[s_i,s_{i+1}]}(u')+\1_{[t_j,t_{j+1}]}(v'))\sup_{(a,b)\in[0,s]\times[0,t]}|y^{2}_{a;b}|
\Big).
\end{multline}
Our claims are now easily derived: on the one hand the right hand side of \eqref{eq:bnd-a-p1-pi2} converges to zero when $n\to\infty$
if $u'\ne s_i$ and $v'\ne t_j$ for all $i,j$. Then using  inequality~\eqref{embb-2} and dominated convergence we obtain that  $a^{\pi_{n}}$ converges a.s to $N(y)$ in $\mathcal \ch^{\otimes2}$. On the other hand, in order to obtain the convergence in $L^2(\Omega,\mathcal \ch^{\otimes2})$ it suffices to use the fact that $f$ satisfies condition (GC) and apply once again dominated convergence.

\end{proof}

Now we are able to define our mixed integral in the Skorohod sense and connect it to the equivalent integral in the Young theory:
\begin{proposition}\label{proposition:contrac 2}
Assume $x$ is a centered Gaussian process on $[0,1]^{2}$ with a covariance function satisfying \eqref{eq:factorize-R-intro}. Consider a function $\vp\in C^{4}(\mathbb R)$ satisfying condition (GC) and a rectangle $\Delta=[s_{1},s_{2}]\times[t_{1},t_{2}]$. Then we have that $N(y)\in Dom(\der^{\diamond,2})$, and if we define $z^{2,\di}=\der^{\diamond,2}(N(y))$  the following relation holds: 
\begin{multline}\label{eq:strato-skoro-mixed-intg}
z^{2,\di}_{s_{1}s_{2};t_{1}t_{2}}
=z^{2}_{s_{1}s_{2};t_{1}t_{2}}
-\frac14\int_{s_{1}}^{s_{2}}\int_{t_{1}}^{t_{2}}y^{2}_{u;v}d_{1}R^1_u \, d_{2}R^2_v 
-\frac12\int_{s_{1}}^{s_{2}}\int_{t_{1}}^{t_{2}}y^{3}_{u;v}R^{1}_u \, d_{2}R^{2}_v \, d_{1}x_{u;v} \\
-\frac12\int_{s_{1}}^{s_{2}}\int_{t_{1}}^{t_{2}}y^{3}_{u;v}R^2_v \, d_{1}R^1_u \, d_{2}x_{u;v} 
+\frac14\int_{s_{1}}^{s_{2}}\int_{t_{1}}^{t_{2}}y^{4}_{u;v}R^1_uR^2_v \, d_{1}R^1_u \, d_{2}R^2_v,
\end{multline}
where $\int_{s_{1}}^{s_{2}}\int_{t_{1}}^{t_{2}}y^{3}_{u;v}R^1_u \, d_{2}R^{2}_v \, d_{1}x_{u;v}$ and $\int_{s_{1}}^{s_{2}}\int_{t_{1}}^{t_{2}}y^{3}_{u;v} R^2_v \, d_{1}R^1_u \,d_{2}x_{u;v}$ are defined according to Proposition \ref{mixed-Young-2D}. Moreover, relation \eqref{eq:riemann-wick-z2} holds true in the $L^{2}(\Omega)$ and almost sure sense.

\end{proposition}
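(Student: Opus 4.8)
The plan is to mimic closely the strategy of Proposition~\ref{f-conv}, replacing the single divergence by the second order operator $\der^{\diamond,2}$ and using Lemma~\ref{conver-mixed} as the approximation device. Concretely, fix a sequence of partitions $\pi_n=(\pi_n^1,\pi_n^2)$ with mesh tending to $0$ and apply $\der^{\diamond,2}$ to the discretized kernel $a^{\pi_n}$ of \eqref{eq:def-a-pi1-pi2-mixed}. For each cell, Proposition~\ref{prop:wick-mult-intg} taken with $k=2$ allows to write $\der^{\diamond,2}(y^2_{s_i;t_j}\,g_{ij})=y^2_{s_i;t_j}\di \der_1x_{s_is_{i+1};t_j}\di \der_2x_{s_i;t_jt_{j+1}}$, where $g_{ij}$ is the tensor kernel attached to the two Gaussian increments. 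The heart of the argument is then to expand this iterated Wick product into an ordinary product plus contraction terms.

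The expansion is carried out by applying the correction rule \eqref{eq:correc-products-1} twice, together with the Leibniz rule for $D$. Writing $\eta_{ij}$ for the kernel of $x_{s_i;t_j}$ and $g_1,g_2$ for those of the two increments, one obtains a five term identity of the schematic form
\begin{align*}
y^2_{s_i;t_j}\di \der_1x\di \der_2x
&= y^2_{s_i;t_j}\,\der_1x\,\der_2x
- y^3_{s_i;t_j}\,\lla\eta_{ij},g_2\rra_{\ch}\,\der_1x
- y^3_{s_i;t_j}\,\lla\eta_{ij},g_1\rra_{\ch}\,\der_2x \\
&\quad - y^2_{s_i;t_j}\,\lla g_1,g_2\rra_{\ch}
+ y^4_{s_i;t_j}\,\lla\eta_{ij},g_1\rra_{\ch}\lla\eta_{ij},g_2\rra_{\ch}.
\end{align*}
Summing over the partition yields five Riemann-type sums $B_1^n,\dots,B_5^n$ (with signs $+,-,-,-,+$). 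Using the factorization $R=R^1R^2$ of Hypothesis~\ref{hyp:Young}, the contractions split: $\lla g_1,g_2\rra_{\ch}=(R^1_{s_is_{i+1}}-R^1_{s_is_i})(R^2_{t_jt_{j+1}}-R^2_{t_jt_j})$, while $\lla\eta_{ij},g_1\rra_{\ch}=R^2_{t_j}(R^1_{s_is_{i+1}}-R^1_{s_is_i})$ and $\lla\eta_{ij},g_2\rra_{\ch}=R^1_{s_i}(R^2_{t_jt_{j+1}}-R^2_{t_jt_j})$. As in Proposition~\ref{f-conv} I would split each increment as $R^k_{s_is_{i+1}}-R^k_{s_is_i}=\tfrac12(R^k_{s_{i+1}}-R^k_{s_i})+\rho^k_{s_is_{i+1}}$; the symmetric parts produce exactly the four correction integrals of \eqref{eq:strato-skoro-mixed-intg} with their coefficients $\tfrac14,\tfrac12,\tfrac12,\tfrac14$, while the $\rho^k$ remainders are disposed of via $|\rho^k_{ab}|\lesssim|a-b|^{\gamma_k}$ and condition (GC), precisely as the terms $A_{22}^n,A_{23}^n,A_{24}^n$ were handled there. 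The mixed limiting integrals $\int_1\int_2 y^3R^1\,d_2R^2\,d_1x$ and its symmetric counterpart are given rigorous meaning by Proposition~\ref{mixed-Young-2D}.

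For the leading sum $B_1^n=\sum_{i,j}y^2_{s_i;t_j}\,\der_1x_{s_is_{i+1};t_j}\,\der_2x_{s_i;t_jt_{j+1}}$, almost sure convergence to $z^2$ is immediate from the Riemann-sum representation of Proposition~\ref{Y-R sum}. The convergence in $L^2(\Omega)$, which is what the closability argument requires, is the main obstacle and demands the same work as the estimate of $D_1^n$ in Proposition~\ref{f-conv}: one writes the difference between $B_1^n$ and $z^2$ through \eqref{eq:def-z1-z2-(Id-Lambda)} and the Riemann-sum identity of Proposition~\ref{proposition:integ-2d}, performs partial summations on the $\laa_i\der_i$ contributions, and controls the resulting boundary and $\laa$ terms by the H\"older norms of $x$ and of the increments of $y^2$. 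Here Lemma~\ref{f-control} bounds the increments of $y^2$, Lemma~\ref{lemma:G-R} together with the Gaussianity of $x$ gives $\E[\|\der x\|_{\al_1,\al_2}^4]<\infty$, and condition (GC) via \eqref{cotamoments} ensures finiteness of the moments of the suprema of the derivatives of $\vp$. This yields a bound $\E[|B_1^n-z^2|^2]\lesssim|\pi_1|^{4\al_1-2}+|\pi_2|^{4\al_2-2}\to0$; the extra difficulty relative to Proposition~\ref{f-conv} is that the mixed differential $d_1x\,d_2x$ forces one to track overlapping one-dimensional remainders in both directions, which are exactly the objects controlled in Proposition~\ref{mixed-Young-2D}. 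The correction sums $B_2^n,\dots,B_5^n$ are treated similarly, being Riemann sums of the mixed Young integrals above.

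Once all five sums are shown to converge almost surely and in $L^2(\Omega)$ to their announced limits, and Lemma~\ref{conver-mixed} guarantees $a^{\pi_n}\to N(y)$ in $L^2(\Omega,\ch^{\otimes2})$, the closability of the iterated divergence $\der^{\diamond,2}$ (in the spirit of Lemma~\ref{clos}) applies and gives simultaneously $N(y)\in\dom(\der^{\diamond,2})$ and the identity \eqref{eq:strato-skoro-mixed-intg} for $z^{2,\di}=\der^{\diamond,2}(N(y))$. Finally, relation \eqref{eq:riemann-wick-z2} follows directly, since by Proposition~\ref{prop:wick-mult-intg} each Wick-Riemann summand equals $\der^{\diamond,2}(y^2_{s_i;t_j}g_{ij})$, whose convergence has just been established.
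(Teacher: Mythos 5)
Your proposal is correct and follows essentially the same route as the paper's proof: discretize $N(y)$ via the kernels $a^{\pi_n}$ of Lemma~\ref{conver-mixed}, expand the Wick products $y^2_{s_i;t_j}\diamond\der_1x\diamond\der_2x$ into an ordinary product plus contraction terms, identify the limits of the resulting Riemann sums exactly as in Proposition~\ref{f-conv} (symmetrizing $R^k_{s_is_{i+1}}-R^k_{s_is_i}$ and discarding the $\rho^k$ remainders), and conclude by closability of the divergence together with Lemma~\ref{conver-mixed}. The only cosmetic difference is that you derive the five-term Wick expansion by iterating \eqref{eq:correc-products-1} with the Leibniz rule, whereas the paper quotes Theorem 4.10 of \cite{HY} and keeps the product $\der_2x\diamond\der_1x$ grouped inside its term $B_1^n$; the two decompositions coincide term by term, and your contraction identities $\lla g_1,g_2\rra_{\ch}$, $\lla\eta_{ij},g_k\rra_{\ch}$ match the paper's.
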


\begin{proof}
Like for Proposition \ref{f-conv}, our strategy is as follows: consider a sequence $\pi_{n}=(\pi_{n}^{1},\pi_{n}^{2})$ whose mesh go to 0 as $n\to\infty$ and set $a^{n}\equiv a^{\pi_{n}}$ defined by \eqref{eq:def-a-pi1-pi2-mixed}. We have seen at~Lemma \ref{conver-mixed} that $\lim_{n\to\infty}a^{n}=N(y)$ in $L^{2}(\Omega,\mathcal \ch^{\otimes2})$. We shall now study the convergence of $\delta^{\diamond}(a^{n})$ by means of Wick-Stratonovich corrections. Then we will conclude by invoking Proposition~\ref{clos}.

\smallskip

\noindent
\textit{Step 1: Wick-Stratonovich corrections.}
According to relation \eqref{eq:wick-product-mult-intg} and Proposition \ref{prop:wick-mult-intg} for $k=2$ we obtain
\begin{eqnarray}\label{eq:sko-a-n-wick-sum}
\der^{\diamond,2}(a^{n})
&=&\sum_{\pi_{n}}\der^{\diamond,2}(y^{2}_{s_i;t_j}
\1_{[0,s_i]\times[t_j,t_{j+1}]}\otimes\1_{[s_i,s_{i+1}]\times[0,t_j]}) \notag\\
&=&\sum_{\pi_{n}}y^{2}_{s_i;t_j}\diamond\der_2x_{s_i;t_jt_{j+1}}\diamond\der_1x_{s_is_{i+1};t_j}.
\end{eqnarray}
We now use Theorem 4.10 in \cite{HY} in order to get that $\der^{\diamond,2}(a^{n})$ can be decomposed as:
\begin{align}\label{eq:dcp-delta-a-n}
&
\sum_{\pi_{n}}y^{2}_{s_i;t_j}(\der_2x_{s_i;t_jt_{j+1}}\diamond\der_1x_{s_is_{i+1};t_j})
-\sum_{\pi_{n}}y^{3}_{s_i;t_j}R^1_{s_i}(R^2_{t_jt_{j+1}}-R^2_{t_jt_j})\der_1x_{s_is_{i+1};t_j}
\notag\\
&-\sum_{\pi_{n}}y^{3}_{s_i;t_j}R^2_{t_j}(R^1_{s_is_{i+1}}-R^1_{s_i,s_i})\der_2x_{s_i;t_jt_{j+1}} \\
&+\sum_{\pi_{n}}y^{4}_{s_i;t_j}R^1_{s_i}R^2_{t_j}(R^1_{s_is_{i+1}}-R^1_{s_is_i})(R^2_{t_jt_{j+1}}-R^2_{t_jt_j}) \equiv B_{1}^{n}-B_{2}^{n}-B_{3}^{n}+B_{4}^{n}. \notag
\end{align}
Like in the proof of Proposition \ref{f-conv}, we treat those 4 terms separately. 

\smallskip

\noindent
\textit{Step 2: Estimation of $B_{1}^{n},\ldots,B_{4}^{n}$.}
The term $B_{1}^{n}$ can be decomposed as
$$
B_{1}^{n}=\sum_{\pi_{n}}y^{2}_{s_i;t_j}\der_1x_{s_is_{i+1};t_j}\der_2x_{s_i;t_jt_{j+1}}-\sum_{\pi_{n}}y^{2}_{s_i;t_j}(R^1_{s_is_{i+1}}-R^1_{s_is_i})(R^2_{t_jt_{j+1}}-R^2_{t_jt_j}).
$$
Moreover, the second term in the r.h.s is the same as $A_{2}^{n}$ in the proof of Proposition \ref{f-conv}, while the convergence for $\sum_{\pi_{n}}y^{2}_{s_i;t_j}\der_1x_{s_is_{i+1};t_j}\der_2x_{s_i;t_jt_{j+1}}$ follows exactly along the same lines as $A_{1}^{n}$ in the same proof. We thus leave to the patient reader the task of showing that
\begin{equation}\label{eq:lim-B-1}
\lim_{n\to\infty} B_{1}^{n} = 
\int_{s_1}^{s_2}\int_{t_1}^{t_2}y^{2}_{s;t}
\, d_{1}x_{s;t} d_{2}x_{s;t}
-\frac14\int_{s_1}^{s_2}\int_{t_1}^{t_2}y^{2}_{s;t}
\, d_{1}R^1_sd_{2}R^2_t,
\end{equation}
and we concentrate now on the other terms in \eqref{eq:dcp-delta-a-n}.

\smallskip

The term $B_{2}^{n}=\sum_{\pi_{n}}y^{3}_{s_i;t_j}R^1_{s_i}(R^2_{t_jt_{j+1}}-R^2_{t_jt_j})\der_1x_{s_is_{i+1};t_j}$ can be decomposed as $B_{2}^{n}=B_{21}^{n}+B_{22}^{n}$, with
\begin{equation*}
B_{21}^{n}=
\frac12\sum_{\pi_{n}}y^{3}_{s_i;t_j}R^1_{s_i}(R^2_{t_{j+1}}-R^2_{t_j})\der_1x_{s_is_{i+1};t_j},
\quad
B_{22}^{n}=
\sum_{\pi_{n}}y^{3}_{s_i;t_j}R^1_{s_i}\rho^2_{t_jt_{j+1}}\der_1x_{s_is_{i+1};t_j},
\end{equation*}
where we recall that we have set $\rho^{k}_{t_jt_{j+1}}=\frac12(2R^{k}_{t_jt_{j+1}}-R^{k}_{t_jt_{j}}-R^{k}_{t_{j+1}t_{j+1}})$ for $k=1,2$.

\smallskip

It is now easily seen that the almost sure and $L^{2}$ convergence of $B_{2}^{n}$ are obtained with the same kind of considerations as for $A_{2}^{n}$ in the proof of Proposition \ref{f-conv}. We get that
\begin{equation*}
\lim_{n\to\infty} B_{2}^{n}
=\frac12 \int_{0}^{s}\int_{0}^{t}y^{3}_{u;v}R^{1}_u \, d_{2}R^{2}_v \, d_{1}x_{u;v},
\end{equation*}
and $B_{3}^{n}, B_{4}^{n}$ are also handled in the same way.

\smallskip

\noindent
\textit{Step 3: Conclusion.}
Thanks to Step 1 and Step 2, we have obtained that $\delta^{\diamond}(a^{n})$ converges to the right hand side of relation \eqref{eq:strato-skoro-mixed-intg} as $n\to\infty$, in both almost sure and $L^{2}$ senses. As mentioned before, this limiting behavior plus the convergence of $a^{n}$ to $N(y)$ established at Lemma \ref{conver-mixed} yield relation \eqref{eq:strato-skoro-mixed-intg} by a direct application of Proposition \ref{clos}. Furthermore, relation \eqref{eq:riemann-wick-z2} is also a direct consequence of relation \eqref{eq:sko-a-n-wick-sum}.

\end{proof}

\smallskip

Notice that our formula \eqref{eq:strato-skoro-mixed-intg} involves some mixed integrals of the form $\int_{0}^{s}\int_{0}^{t}y^{3}_{u;v}R^2_v d_{1}R^1_u$ $d_{2}x_{u;v}$, which are defined as Young type integrals. The following proposition, whose proof is similar to Propositions \ref{f-conv} and \ref{proposition:contrac 2} and is left to the reader for sake of conciseness, gives a meaning to the analogue integrals in the Skorohod setting.

\begin{proposition}\label{proposition:contrac 3}
Let $f\in C^4(\mathbb R)$ be a function satisfying  condition (GC). Then for every fixed $u\in[0,s]$ we have that 
$v\mapsto y^{3}_{u;v}R^2_{v}\in Dom(\der^{\diamond,u})$ where $\der^{\diamond,u}$ is the divergence operator associated to the process $(x_{u;v})_{v\in[0,t]}$. We can thus define $ \int_{0}^{s}\int_{0}^{t}y^{3}_{u;v}R^{2}_{v}d_{1}R^1_u d_{2}^{\di}x_{u;v}$ by :
$$
\int_{0}^{s}\int_{0}^{t}y^{3}_{u;v}R^{2}_{v} \, d_{1}R^1_u \, d^\diamond_{2}x_{u;v}:=\lim_{|\pi|\to0}\sum_{\pi_{n}}\der^{\diamond,s_i}(y^{3}_{s_i;t_j}\1_{[t_j,t_{j+1}]})R^2_{t_j}(R^1_{s_{i+1}}-R^1_{s_i})
$$
where the convergence holds in both $L^2(\Omega)$ and almost sure senses. In addition, we have the following identity:
\begin{equation*}
\int_{0}^{s}\int_{0}^{t}y^{3}_{u;v}R^{2}_{v}\, d_{1}R^1_u\, d^\diamond_{2}x_{u;v} 
=\int_{0}^{s}\int_{0}^{t}y^{3}_{u;v}R^{2}_{v}\, d_{1}R^1_u\, d_{2}x_{u;v}
-\frac12 \int_{0}^{s}\int_{0}^{t}y^{4}_{u;v}R^1_uR^2_v\, d_{1}R^1_u\, d_{2}R^2_v.
\end{equation*}
Finally, the integral $\int_{0}^{s}\int_{0}^{s}y^{3}_{u;v}R^1_{u}d_{2}R^2_vd^\diamond_{1}x_{u;v}$  is defined similarly.
\end{proposition}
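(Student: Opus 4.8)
The plan is to follow the regularization-and-correction strategy already used for Propositions \ref{f-conv} and \ref{proposition:contrac 2}: one approximates the integrand by Riemann sums, passes from Wick to ordinary products, identifies the limit as a mixed Young integral plus an explicit correction term, and concludes by the closability Lemma \ref{clos}. The factorization $R=R^1R^2$ of Hypothesis \ref{hyp:Young} is crucial here, since it guarantees that for fixed $u$ the process $(x_{u;v})_{v\in[0,t]}$ is Gaussian with covariance $R^1_u\,R^2_{vv'}$; thus $\der^{\diamond,u}$ is a bona fide one-dimensional divergence, and the membership $v\mapsto y^3_{u;v}R^2_v\in\dom(\der^{\diamond,u})$ follows from the one-dimensional Malliavin theory together with the uniform moment bound \eqref{cotamoments} provided by condition (GC).

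First I would fix a sequence of partitions $\pi_n=(\pi_n^1,\pi_n^2)$ with mesh going to $0$ and examine the Riemann sum $\sum_{\pi_n}\der^{\diamond,s_i}\lp y^3_{s_i;t_j}\1_{[t_j,t_{j+1}]}\rp R^2_{t_j}\lp R^1_{s_{i+1}}-R^1_{s_i}\rp$. Proposition \ref{prop:wick-mult-intg} for $k=1$ gives $\der^{\diamond,s_i}\lp y^3_{s_i;t_j}\1_{[t_j,t_{j+1}]}\rp=y^3_{s_i;t_j}\di\der_2 x_{s_i;t_jt_{j+1}}$, and the Wick-to-ordinary correction \eqref{eq:correc-products-1}, together with the elementary computation $\lla \1_{[0,s_i]\times[0,t_j]},\1_{[0,s_i]\times[t_j,t_{j+1}]}\rra_{\ch}=R^1_{s_i}\lp R^2_{t_jt_{j+1}}-R^2_{t_j}\rp$, yields
\begin{equation*}
\der^{\diamond,s_i}\lp y^3_{s_i;t_j}\1_{[t_j,t_{j+1}]}\rp
= y^3_{s_i;t_j}\,\der_2 x_{s_i;t_jt_{j+1}}
- y^4_{s_i;t_j}\,R^1_{s_i}\lp R^2_{t_jt_{j+1}}-R^2_{t_j}\rp .
\end{equation*}
This splits the Riemann sum into a Stratonovich part and a correction, in complete analogy with the decomposition $A_1^n-A_2^n$ in the proof of Proposition \ref{f-conv}.

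The Stratonovich part $\sum_{\pi_n}y^3_{s_i;t_j}\,\der_2 x_{s_i;t_jt_{j+1}}\,R^2_{t_j}\lp R^1_{s_{i+1}}-R^1_{s_i}\rp$ converges, in both the almost sure and $L^2(\Omega)$ senses, to the mixed Young integral $\int_{0}^{s}\int_{0}^{t}y^{3}_{u;v}R^2_{v}\,d_{1}R^1_u\,d_{2}x_{u;v}$ of Proposition \ref{mixed-Young-2D}; the $L^2$ convergence is handled verbatim as for $A_1^n$, by bounding the four sewing-type remainders through the Garsia estimate of Lemma \ref{lemma:G-R} and the composition bound of Lemma \ref{f-control}, the requisite integrability being supplied by condition (GC). For the correction one symmetrizes via $R^2_{t_jt_{j+1}}-R^2_{t_j}=\frac12\lp R^2_{t_{j+1}}-R^2_{t_j}\rp+\rho^2_{t_jt_{j+1}}$ with $|\rho^2_{ab}|\lesssim|a-b|^{\ga_2}$; the half-term converges to $\frac12\int_{0}^{s}\int_{0}^{t}y^{4}_{u;v}R^1_uR^2_v\,d_{1}R^1_u\,d_{2}R^2_v$ while the $\rho^2$-contribution is a vanishing remainder, exactly as the terms $A_{22}^n,\dots,A_{24}^n$ of Proposition \ref{f-conv}. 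Collecting the limits proves both the domain membership and the stated identity through Lemma \ref{clos}, and the companion integral $\int_{0}^{s}\int_{0}^{s}y^{3}_{u;v}R^1_{u}\,d_{2}R^2_v\,d^\diamond_{1}x_{u;v}$ is obtained identically upon exchanging the roles of directions $1$ and $2$. The main obstacle is precisely the $L^2(\Omega)$ convergence of the Stratonovich Riemann sum to the mixed Young integral: as in Step 1 of Proposition \ref{f-conv}, this is where the delicate sewing-remainder bounds and the Gaussian integrability from (GC) must be combined, with the extra wrinkle that the iterated structure (Skorohod in direction $2$, Young against $dR^1$ in direction $1$) has to be shown compatible with the symmetric definition of the mixed integral in Proposition \ref{mixed-Young-2D}.
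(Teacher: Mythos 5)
Your proposal is correct and follows precisely the route the paper intends: the paper omits this proof, stating only that it is ``similar to Propositions~\ref{f-conv} and~\ref{proposition:contrac 2}'', and your argument is exactly that adaptation --- the Wick-to-ordinary correction on the Riemann sums via \eqref{eq:correc-products-1} with $\lla \1_{[0,s_i]\times[0,t_j]},\1_{[0,s_i]\times[t_j,t_{j+1}]}\rra_{\ch}=R^1_{s_i}(R^2_{t_jt_{j+1}}-R^2_{t_j})$, identification of the Stratonovich part as the mixed Young integral through the Riemann-sum representation of Proposition~\ref{mixed-Young-2D}, the symmetrization $R^2_{t_jt_{j+1}}-R^2_{t_j}=\frac12(R^2_{t_{j+1}}-R^2_{t_j})+\rho^2_{t_jt_{j+1}}$ with the $\rho^2$-term vanishing as for $A_{22}^{n},\dots,A_{24}^{n}$, and closability for the domain membership. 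The resulting correction $-\frac12\int\int y^{4}_{u;v}R^1_uR^2_v\,d_1R^1_u\,d_2R^2_v$ matches the stated identity, so nothing essential is missing.
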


\begin{remark}
We have defined all the integrals we needed in order to prove our Skorohod change of variable formula \eqref{eq:skor-formula-smooth}. Indeed, the proof of formula \eqref{eq:skor-formula-smooth} is now easily deduced by injecting the identities of Propositions~\ref{f-conv},~\ref{proposition:contrac 2} and ~\ref{proposition:contrac 3} in the Stratonovich type formula~\eqref{eq:strato-intro-notation-d12}.
\end{remark}

\section{Skorohod's calculus in the rough case}
\label{sec:skorohod-rough}
Our goal in this section is to extend the formulae given in Propositions~\ref{f-conv} and~\ref{proposition:contrac 2} to rougher situations, namely for Gaussian processes in the plane with H\"older regularities smaller than $1/2$. This is however a harder task than in the Young case, and this is why we introduce 2 simplifications in our considerations:

\smallskip

\noindent
(1) Instead of dealing with a general centered Gaussian process whose covariance admits the factorization property of Hypothesis \ref{hyp:Young}, we handle here the case of a fractional Brownian sheet $(x_{s;t})_{(s,t)\in[0,1]^{2}}$ with Hurst parameters $\gamma_1,\gamma_2\in(1/3,1/2]$.  

\smallskip

\noindent
(2) The definition of our Skorohod integrals with respect to $x$ is obtained in the following way:  we first regularize $x$ as a smooth process $x^{n}$. For this process we can still use the formulae of Propositions~\ref{f-conv} and~\ref{proposition:contrac 2} like in the Young case. We shall then perform a limiting procedure on these formulae (this is where the specification of a concrete approximation is important), which will give our Stratonovich-Skorohod corrections. Notice however that the interpretation in terms of Riemann-Wick sums will be lost with this strategy. 

\smallskip

As in the previous section, we start our considerations by specifying the Malliavin framework in which we are working.

\subsection{Further Malliavin calculus tools}
Recall that the covariance function of our fractional Brownian sheet $x$ is given by \eqref{eq:def-cov-fBs}. We can thus consider a Hilbert space  $\mathcal H^{x}$ related to $x$ exactly as in Section \ref{sec:malliavin-framework}, where we now stress the dependence in $x$ of $\mathcal H^{x}$ in order to differentiate it from the Hilbert space related to white noise. In particular we denote by $I_1^{x}$ the isometry between $\mathcal H^{x}$ and the first chaos generated by $x$.

\smallskip

However, the Malliavin structure related to the harmonizable representation of $x$ will also play a prominent role in the sequel. Namely, it is well known (see e.g. \cite{ST94}) that for $s,t\in[0,1]$, $x$ can be represented as  
\begin{equation}\label{eq:harmonizable-fBs}
x_{s;t}=c_{\gamma_1,\gamma_2} \, \hat W(Q_{s;t})
=c_{\gamma_1,\gamma_2} \int_{\mathbb R^2}Q_{s;t}(\xi,\eta) \, \hat W(d\xi,d\eta),
\end{equation}
where $c_{\gamma_1,\gamma_2}$ is a normalization constant whose exact value is  irrelevant for our computations, $W$ is the Fourier transform of the white noise on $\R^{2}$,  and $Q_{s;t}$ is a kernel defined by
\begin{equation}\label{eq:def-Q}
Q_{s;t}(\xi,\eta)=\frac{e^{\imath s\xi}-1}{|\xi|^{\gamma_1+\frac{1}{2}}} \,
\frac{e^{\imath t\eta}-1}{|\eta|^{\gamma_2+\frac{1}{2}}}.
\end{equation}
This induces us to consider the canonical Hilbert space related to $\hat{W}$, that is $\mathcal H^{\hat W}=L^{2}(\mathbb R^{2})$. The relations between Malliavin calculus with respect to $\hat{W}$ and $x$ are then summarized in the next lemma:

\begin{lemma}\label{lem:derivatives-Hx-HW}
Denote by $\D^{x,k,p}$ (resp. $\D^{\hat{W},k,p}$) the Sobolev spaces related to $x$ (resp. $\hat{W}$), and recall the notation $\mathbb L^{1,2}=\mathbb D^{\hat{W},1,2}(L^2(\mathbb R))$ borrowed from \cite{Nu06}. For $\phi:[0,1]^{2}\to\R$, set 
\begin{equation}\label{eq:def-K}
K\phi(\xi,\eta)=\int_{[0,1]^{2}}\phi_{s;t} \, \partial_{s}\partial_{t}Q_{s;t}(\xi,\eta) \, dsdt,
\end{equation}
where we recall that $Q$ is defined by \eqref{eq:def-Q}. Then the following holds true:

\smallskip

\noindent
\emph{(i)} We can represent the space $\mathcal H^{x}$ as the closure of the set of step functions under the norm $\|\phi\|_{\mathcal H^x}=\|K\phi\|_{L^2(\mathbb R^{2})}$. 

\smallskip

\noindent
\emph{(ii)} We have $\mathbb D^{x,1,2}(\mathcal H^{x})=K^{-1}(\mathbb L^{1,2})$. In addition, for any smooth function $F$ and any $\mathcal H^{x}$-valued square integrable random variable $u$ the following identity holds:
$$
\langle u,D^{x}F\rangle_{\mathcal H^{x}}=\langle Ku,D^{\hat W}F\rangle_{L^2(\mathbb R^{2})}.
$$

\smallskip

\noindent
\emph{(iii)} As far as divergence operators are concerned, the relation is 
\begin{equation*}
\dom (\der^{x,\diamond})=K^{-1} \dom(\der^{\hat W,\diamond}),
\quad\text{and}\quad 
\der^{x,\diamond}(u)=\der^{\hat W,\diamond}(Ku).
\end{equation*}
\end{lemma}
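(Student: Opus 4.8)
The plan is to transport the entire Malliavin apparatus from $x$ to $\hat W$ through the single algebraic identity $K\1_{[0,s]\times[0,t]}=Q_{s;t}$, after which the three claims follow by density, by chaos transfer, and by duality respectively. First I would record this identity: since the kernel in \eqref{eq:def-Q} satisfies $Q_{0;t}=Q_{s;0}=0$, the fundamental theorem of calculus applied to the definition \eqref{eq:def-K} gives
\[
K\1_{[0,s]\times[0,t]}(\xi,\eta)=\int_0^s\int_0^t \partial_{s'}\partial_{t'}Q_{s';t'}(\xi,\eta)\,ds'dt' = Q_{s;t}(\xi,\eta).
\]
Combined with the harmonizable representation \eqref{eq:harmonizable-fBs}, which reads $x_{s;t}=c_{\gamma_1,\gamma_2}\hat W(Q_{s;t})$, this identifies $K$ as the intertwiner sending the defining step functions of $\mathcal H^x$ to their spectral kernels in $L^2(\mathbb R^2)=\mathcal H^{\hat W}$.

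For claim (i) I would then compute, for two indicators $\phi_i=\1_{[0,s_i]\times[0,t_i]}$, the inner product $\langle K\phi_1,K\phi_2\rangle_{L^2(\mathbb R^2)}=\langle Q_{s_1;t_1},Q_{s_2;t_2}\rangle_{L^2(\mathbb R^2)}$ and match it against $\langle\phi_1,\phi_2\rangle_{\mathcal H^x}=R_{s_1s_2;t_1t_2}$. The isometry property of the Wiener integral with respect to $\hat W$ in \eqref{eq:harmonizable-fBs}, together with the covariance formula \eqref{eq:def-cov-fBs} and the normalization absorbed into $c_{\gamma_1,\gamma_2}$, yields $\langle\phi_1,\phi_2\rangle_{\mathcal H^x}=\langle K\phi_1,K\phi_2\rangle_{L^2(\mathbb R^2)}$. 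By bilinearity this extends to all step functions, so $K$ is an isometry from the step functions (equipped with $\|\cdot\|_{\mathcal H^x}$) into $L^2(\mathbb R^2)$, and taking closures gives claim (i).

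Claim (ii) I would obtain by comparing the two gradients on smooth cylindrical functionals. For $F=f(x_{s_1;t_1},\dots,x_{s_n;t_n})\in\mathcal S$, the definition of $D^x$ gives $D^xF=\sum_i \partial_i f\,\1_{[0,s_i]\times[0,t_i]}$, while viewing the same $F$ as a functional of $\hat W$ through \eqref{eq:harmonizable-fBs} gives $D^{\hat W}F=c_{\gamma_1,\gamma_2}\sum_i\partial_i f\,Q_{s_i;t_i}$. The identity $K\1_{[0,s_i]\times[0,t_i]}=Q_{s_i;t_i}$ then shows $K(D^xF)=D^{\hat W}F$, and inserting the isometry of (i) yields $\langle u,D^xF\rangle_{\mathcal H^x}=\langle Ku,D^{\hat W}F\rangle_{L^2(\mathbb R^2)}$, first for $u$ a step function and then, by closure, for general $u\in\mathcal H^x$. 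The commutation $KD^x=D^{\hat W}K$ together with the norm equality of (i) shows that the $\mathbb D^{x,1,2}(\mathcal H^x)$ and $\mathbb L^{1,2}$ norms correspond under $K$, giving $\mathbb D^{x,1,2}(\mathcal H^x)=K^{-1}(\mathbb L^{1,2})$.

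Finally, claim (iii) is the adjoint version of (ii). Writing the defining duality $\E[\der^{x,\diamond}(u)F]=\E[\langle u,D^xF\rangle_{\mathcal H^x}]$ for all $F\in\mathbb D^{x,1,2}$ and substituting the identity of claim (ii), I get $\E[\langle u,D^xF\rangle_{\mathcal H^x}]=\E[\langle Ku,D^{\hat W}F\rangle_{L^2(\mathbb R^2)}]=\E[\der^{\hat W,\diamond}(Ku)F]$; since the $\sigma$-fields generated by $x$ and by $\hat W$ coincide, $F$ ranges over a dense set and the defining relation of the divergence forces $\der^{x,\diamond}(u)=\der^{\hat W,\diamond}(Ku)$, the domains matching by construction. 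I expect the main obstacle to be the rigorous justification of the spectral isometry $\langle\phi_1,\phi_2\rangle_{\mathcal H^x}=\langle K\phi_1,K\phi_2\rangle_{L^2(\mathbb R^2)}$: one must handle the Hermitian and reality structure of the complex white noise $\hat W$ in \eqref{eq:harmonizable-fBs}, check that $K$ is well defined and injective on the relevant spaces, and verify that the two Gaussian families generate the same $\sigma$-field so that chaos decompositions and cylindrical functionals transfer faithfully.
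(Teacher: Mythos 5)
Your treatment of items (i) and (ii) is correct and is essentially the paper's own argument: everything hinges on the identity $K\1_{[0,s]\times[0,t]}=Q_{s;t}$; item (i) follows by matching the covariance $R$ against the $L^{2}(\R^{2})$ pairing of the kernels (the paper phrases this as $I_{1}^{x}(\phi)=\hat W(K\phi)$ for step functions $\phi$, which is the same computation in polarized form), and item (ii) follows by comparing $D^{x}$ and $D^{\hat W}$ on cylindrical functionals and invoking the isometry of (i).

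The problem is in item (iii). Your argument rests on the assertion that the $\sigma$-fields generated by $x$ and by $\hat W$ coincide, and this assertion is false. Indeed $\sigma(x)$ is generated by $\{\hat W(Q_{s;t}):(s,t)\in[0,1]^{2}\}$, and the closed linear span $V$ of the kernels $Q_{s;t}$ is a \emph{proper} subspace of $L^{2}(\R^{2})$: take a real, even Schwartz function $\psi$ whose Fourier transform vanishes on $[-1,1]$; then $g(\xi,\eta)=|\xi|^{\gamma_{1}+1/2}|\eta|^{\gamma_{2}+1/2}\psi(\xi)\psi(\eta)$ is a nonzero, Hermitian-symmetric element of $L^{2}(\R^{2})$ with $\langle g,Q_{s;t}\rangle_{L^{2}(\R^{2})}=\bigl(\hat\psi(s)-\hat\psi(0)\bigr)\bigl(\hat\psi(t)-\hat\psi(0)\bigr)=0$ for all $(s,t)\in[0,1]^{2}$, so that $\hat W(g)$ is a nondegenerate Gaussian variable independent of $\sigma(x)$. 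Consequently smooth functionals of $x$ are \emph{not} dense in $\D^{\hat W,1,2}$, and your duality computation, which tests only against functionals $F$ of $x$, does not force $\der^{x,\diamond}(u)=\der^{\hat W,\diamond}(Ku)$: as written it identifies $\der^{x,\diamond}(u)$ only with the conditional expectation $\E[\der^{\hat W,\diamond}(Ku)\,|\,\sigma(x)]$, and it yields only the inclusion $K^{-1}\dom(\der^{\hat W,\diamond})\subseteq\dom(\der^{x,\diamond})$.

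Two further observations close the gap. First, $\der^{\hat W,\diamond}(Ku)$ is automatically $\sigma(x)$-measurable: $Ku$ is $\sigma(x)$-measurable with values in $V=K\ch^{x}$, so its Wiener chaos expansion with respect to $\hat W$ has kernels lying in tensor powers of $V$; the same then holds for its divergence, which therefore belongs to the chaos of the Gaussian subspace $\{\hat W(h):h\in V\}$, i.e.\ is $\sigma(x)$-measurable. This upgrades the conditional expectation to the claimed equality. Second, for the reverse inclusion $\dom(\der^{x,\diamond})\subseteq K^{-1}\dom(\der^{\hat W,\diamond})$ one must bound $\E[\langle Ku,D^{\hat W}G\rangle_{L^{2}(\R^{2})}]$ for \emph{arbitrary} smooth functionals $G$ of $\hat W$, not only for functionals of $x$; this follows from $\langle Ku,D^{\hat W}G\rangle=\langle Ku,P_{V}D^{\hat W}G\rangle$ (where $P_{V}$ is the orthogonal projection onto $V$), together with the commutation $\E[P_{V}D^{\hat W}G\,|\,\sigma(x)]=D^{\hat W}\E[G\,|\,\sigma(x)]$ (checked on chaos expansions) and the contractivity of conditional expectation on $\D^{\hat W,1,2}$, which reduce the required bound to the test functional $\E[G\,|\,\sigma(x)]$ of $x$. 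With these two points your duality scheme (which is also the paper's, stated there without detail) becomes a complete proof of (iii).
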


\begin{proof}
Let $\phi=\sum_{i,j}\phi_{i,j}\1_{[s_i,s_{i+1}]\times[t_j,t_{j+1}]}$ be a step function. We have that:
\begin{equation}\label{eq:comp1}
\begin{split}
I_1^{x}(\phi)&=\sum_{i,j}\phi_{i,j}\der x_{s_{i}s_{i+1};t_{j}t_{j+1}}
=\sum_{i,j}\phi_{i,j}\hat W(\der Q_{s_is_{i+1}t_{j}t_{j+1}})
\\&=\hat W\left(\sum_{i,j}\phi_{i,j}\der Q_{s_{i}s_{i+1};t_{j}t_{j+1}}\right)
=\hat W(K\phi),
\end{split}
\end{equation}
which easily yields our first claim (i). 

\smallskip

Let now $F$ be a smooth functional of $x$ of the form $F=f(x_{s_1;t_1},\ldots,x_{s_n;t_n})$. Then 
\begin{equation}\label{eq:comp2}
\begin{split}
\mathbb E\left[\langle u,D^{x}F\rangle_{\mathcal H^{x}}\right]
&=\mathbb E\Big[\sum_{l\in\{1,\ldots,n\}}\partial_l f(x_{s_1t_1},\ldots,x_{s_nt_n})\langle u,\1_{[0,s_l]\times[0,t_l]}\rangle_{\mathcal H^{x}} \Big]
\\&=\mathbb E\Big[
\sum_{l\in\{1,\ldots,n\}}\partial_l f(x_{s_1t_1},\ldots,x_{s_nt_n})\langle Ku,K\1_{[0,s_l]\times[0,t_l]}\rangle_{L^{2}(\mathbb R)}\Big],
\end{split}
\end{equation}
and since $K\1_{[0,s_l]\times[0,t_l]}=Q_{s_l,t_l}$ we end up with
\begin{eqnarray*}
\mathbb E\left[\langle u,D^{x}F\rangle_{\mathcal H^{x}}\right]&=&
\mathbb E\Big[\langle Ku,\sum_{l\in\{1,\ldots,n\}}\partial_l f(\hat W(Q_{s_1t_1}),\ldots,\hat W(Q_{s_nt_n}))Q_{s_l,t_l}\rangle_{L^{2}
(\mathbb R)}\Big]
\\&=&\mathbb E\left[\langle Ku,D^{\hat W}F\rangle_{L^2(\mathbb R)}\right],
\end{eqnarray*}
which gives our assertion (ii) by density of smooth functionals. Relation (iii) is easily derived from (ii) by duality.

\end{proof}

\smallskip

Notice that the preceding result can be extended to second order derivatives thanks to a simple tensorization trick. We label here the result for further use:
\begin{lemma}\label{lem:second-derivatives-Hx-HW}
Under the conditions of Lemma \ref{lem:derivatives-Hx-HW}, set 
\begin{equation}\label{eq:def-K2}
\lc K^{\otimes2}\phi\rc \!(\xi_{1}\xi_{2};\eta_{1}\eta_{2})=
\int_{[0,1]^{4}} \phi_{s_{1}s_{2};t_{1}t_{2}} \,
\partial_{st}Q_{s_{1};t_{1}}(\xi_{1},\eta_{1}) \partial_{st}Q_{s_{2};t_{2}}(\xi_{2},\eta_{2}) \,
ds_{1}ds_{2}dt_{1}dt_{2}.
\end{equation}
Then for any smooth functional $F$ and any $(\mathcal H^{x})^{\otimes 2}$-valued square integrable random variable $u$ we have:
$$
\langle u,D^{2,x}F\rangle_{\mathcal H^{x}}=\langle K^{\otimes 2}u,D^{2,\hat W}F\rangle_{L^2(\mathbb R^{4})} .
$$
\end{lemma}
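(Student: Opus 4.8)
The plan is to mimic the proof of part (ii) of Lemma \ref{lem:derivatives-Hx-HW}, upgrading it to second order derivatives by exploiting the multiplicative structure of the kernel $K^{\otimes2}$. The first observation I would record is that $K^{\otimes2}$ genuinely tensorizes the operator $K$: for an elementary tensor $\phi_{s_1s_2;t_1t_2}=\phi^1_{s_1;t_1}\phi^2_{s_2;t_2}$, Fubini's theorem immediately gives $K^{\otimes2}(\phi^1\otimes\phi^2)=(K\phi^1)\otimes(K\phi^2)$, since the defining integral \eqref{eq:def-K2} factorizes into two copies of \eqref{eq:def-K}. Combining this with the identity $K\1_{[0,s_l]\times[0,t_l]}=Q_{s_l;t_l}$ already used in \eqref{eq:comp2}, one obtains in particular
\[
K^{\otimes2}\lp\1_{[0,s_k]\times[0,t_k]}\otimes\1_{[0,s_l]\times[0,t_l]}\rp=Q_{s_k;t_k}\otimes Q_{s_l;t_l}.
\]

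Next I would reduce to smooth functionals by density. Take $F=f(x_{s_1;t_1},\ldots,x_{s_n;t_n})$ with $f\in C^{\infty}_b(\R^n)$. The second Malliavin derivative with respect to $x$ reads
\[
D^{2,x}F=\sum_{k,l=1}^{n}\partial_k\partial_l f(x_{s_1;t_1},\ldots,x_{s_n;t_n})\,\1_{[0,s_k]\times[0,t_k]}\otimes\1_{[0,s_l]\times[0,t_l]},
\]
so that for any $u\in(\ch^x)^{\otimes2}$,
\[
\lla u,D^{2,x}F\rra_{\ch^x}=\sum_{k,l=1}^{n}\partial_k\partial_l f(x_{s_1;t_1},\ldots)\,\lla u,\1_{[0,s_k]\times[0,t_k]}\otimes\1_{[0,s_l]\times[0,t_l]}\rra_{(\ch^x)^{\otimes2}}.
\]
The crux is then the tensorized isometry: since part (i) of Lemma \ref{lem:derivatives-Hx-HW} identifies $\lla\phi,\psi\rra_{\ch^x}=\lla K\phi,K\psi\rra_{L^2(\R^2)}$, this relation lifts to the tensor product as $\lla u,v\rra_{(\ch^x)^{\otimes2}}=\lla K^{\otimes2}u,K^{\otimes2}v\rra_{L^2(\R^4)}$. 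Applying this with $v=\1_{[0,s_k]\times[0,t_k]}\otimes\1_{[0,s_l]\times[0,t_l]}$ and invoking the computation of the first paragraph turns each term into $\lla K^{\otimes2}u,Q_{s_k;t_k}\otimes Q_{s_l;t_l}\rra_{L^2(\R^4)}$.

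Finally I would match this against the $\hat W$ side. Writing $F=f(\hat W(Q_{s_1;t_1}),\ldots,\hat W(Q_{s_n;t_n}))$ through the harmonizable representation \eqref{eq:harmonizable-fBs}, the second Malliavin derivative with respect to $\hat W$ becomes $D^{2,\hat W}F=\sum_{k,l}\partial_k\partial_l f(\cdots)\,Q_{s_k;t_k}\otimes Q_{s_l;t_l}$, whence $\lla K^{\otimes2}u,D^{2,\hat W}F\rra_{L^2(\R^4)}$ coincides term by term with the expression derived above. This establishes the identity for smooth $F$, and the general case follows from the density of smooth functionals in $\D^{x,2,2}$ together with the continuity of both sides in $F$ for fixed $u$. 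The only point requiring genuine care — and the main obstacle — is the justification that the isometry $\phi\mapsto K\phi$ tensorizes to $(\ch^x)^{\otimes2}$ in a manner compatible with the explicit kernel \eqref{eq:def-K2}; this is precisely what the factorization computation of the first paragraph delivers, and one should verify it first on elementary tensors and then extend by bilinearity and continuity to all of $(\ch^x)^{\otimes2}$.
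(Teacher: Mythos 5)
Your proof is correct and takes exactly the approach the paper intends: the paper in fact prints no proof of this lemma, describing it only as ``a simple tensorization trick'' extending Lemma \ref{lem:derivatives-Hx-HW}, and your argument --- the factorization $K^{\otimes2}(\phi^1\otimes\phi^2)=(K\phi^1)\otimes(K\phi^2)$ on elementary tensors, the lifted isometry $\langle u,v\rangle_{(\mathcal H^{x})^{\otimes2}}=\langle K^{\otimes2}u,K^{\otimes2}v\rangle_{L^2(\mathbb R^{4})}$, and the term-by-term identification of $D^{2,x}F$ with $D^{2,\hat W}F$ via $K^{\otimes2}\lp \1_{[0,s_k]\times[0,t_k]}\otimes\1_{[0,s_l]\times[0,t_l]}\rp=Q_{s_k;t_k}\otimes Q_{s_l;t_l}$ --- is precisely that trick carried out in detail, mirroring the paper's proof of part (ii) of Lemma \ref{lem:derivatives-Hx-HW}. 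No gaps.
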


\subsection{Embedding results}
Similarly to \cite{BJ}, we now give an embedding result for the space $\mathcal H^x$ which proves to be useful for further computations.

\begin{lemma}
\label{lemma: Sobolev}
Let $\gamma_1,\gamma_2\in(0,\frac{1}{2}]$. Then the following inequality is satisfied:
\begin{align}\label{eq:sob embb}
&\|u\|^2_{\mathcal H}\lesssim
\int_{\mathbb R^2}\left(\int_{\mathbb R^2}\frac{|\der \tilde u_{s_1s_2; t_1t_2}|^2}{|s_2-s_1|^{2-2\gamma_1}|t_2-t_1|^{2-2\gamma_2}}ds_1dt_1\right)ds_2dt_2 \notag
\\&\hspace{0.5cm}+\int_{\mathbb R^2}\frac{1}{|s_2-s_1|^{2-2\gamma_1}}\left(\int_{0}^{1}\left|\tilde{u}_{s_2;t}- \tilde{u}_{s_1;t}\right|^2dt\right)ds_1ds_2
\\&\hspace{0.5cm}+\int_{\mathbb R^2}\frac{1}{|t_2-t_1|^{2-2\gamma_2}}\left(\int_{0}^{1}\left|\tilde{u}_{s;t_2}-\tilde{u}_{s;t_1}\right|^2ds\right)dt_1dt_2+\int_{[0,1]^{2}}|\tilde{u}_{s;t}|^2dsdt, \notag
\end{align}
where we have set $\tilde u_{s;t}=u_{s;t}\1_{[0,1]^{2}}(s,t)$.
\end{lemma}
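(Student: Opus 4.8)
The plan is to turn the abstract norm $\|u\|_{\mathcal H}$ into an explicit Fourier-multiplier norm and then recognize the right-hand side of \eqref{eq:sob embb} as the associated Gagliardo type seminorms.

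First I would invoke Lemma \ref{lem:derivatives-Hx-HW}~(i), which gives $\|u\|_{\mathcal H}^{2}=\|Ku\|_{L^{2}(\R^{2})}^{2}$ with $K$ as in \eqref{eq:def-K}. Using the explicit kernel \eqref{eq:def-Q} one computes
\[
\partial_{s}\partial_{t}Q_{s;t}(\xi,\eta)=-\frac{\xi\eta}{|\xi|^{\ga_1+1/2}|\eta|^{\ga_2+1/2}}\,e^{\imath(s\xi+t\eta)},
\]
so that, writing $\tilde u=u\1_{[0,1]^{2}}$ and $\hat{\tilde u}(\xi,\eta)=\int_{\R^{2}}\tilde u_{s;t}\,e^{\imath(s\xi+t\eta)}\,dsdt$ for its Fourier transform, we obtain $Ku(\xi,\eta)=-\frac{\xi\eta}{|\xi|^{\ga_1+1/2}|\eta|^{\ga_2+1/2}}\hat{\tilde u}(\xi,\eta)$, and hence
\[
\|u\|_{\mathcal H}^{2}=\int_{\R^{2}}|\xi|^{1-2\ga_1}|\eta|^{1-2\ga_2}\,|\hat{\tilde u}(\xi,\eta)|^{2}\,d\xi\, d\eta .
\]
Setting $\sigma_j=\tfrac12-\ga_j\in[0,\tfrac12)$ this is the homogeneous anisotropic Sobolev norm with weight $|\xi|^{2\sigma_1}|\eta|^{2\sigma_2}$, and the exponents already match \eqref{eq:sob embb} since $2-2\ga_j=1+2\sigma_j$.

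Next I would establish the elementary scalar bound: for $\sigma\in(0,\tfrac12)$ the change of variable $h\mapsto h/\xi$ gives a finite constant $c_\sigma$ with $\int_{\R}|e^{\imath h\xi}-1|^{2}|h|^{-1-2\sigma}\,dh=c_\sigma|\xi|^{2\sigma}$, whence
\[
|\xi|^{2\sigma}\le 1+c_\sigma^{-1}\int_{\R}\frac{|e^{\imath h\xi}-1|^{2}}{|h|^{1+2\sigma}}\,dh ,
\]
while for $\sigma=0$ one simply keeps $|\xi|^{2\sigma}=1$. Multiplying the bounds for $|\xi|^{2\sigma_1}$ and $|\eta|^{2\sigma_2}$ (legitimate since both factors are nonnegative), integrating against $|\hat{\tilde u}|^{2}\,d\xi\, d\eta$ and expanding the product yields four terms. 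In each of them the factors $|e^{\imath h_1\xi}-1|^{2}$ and $|e^{\imath h_2\eta}-1|^{2}$ are turned, via Plancherel, into $L^{2}(\R^{2})$ norms of increments of $\tilde u$: the pure $1$ term produces $\|\tilde u\|_{L^{2}}^{2}$, the two mixed terms produce the single increments $\tilde u_{s+h_1;t}-\tilde u_{s;t}$ and $\tilde u_{s;t+h_2}-\tilde u_{s;t}$, and the double term produces the rectangular increment $\der\tilde u$. After the change of variables $s_2=s+h_1$, $t_2=t+h_2$, and using that $\tilde u$ vanishes outside $[0,1]^{2}$ to restrict the undifferenced variable to $[0,1]$, these four contributions are exactly the four summands on the right-hand side of \eqref{eq:sob embb} (the weight $|h|^{1+2\sigma_j}$ becoming $|s_2-s_1|^{2-2\ga_1}$ or $|t_2-t_1|^{2-2\ga_2}$), up to multiplicative constants absorbed into $\lesssim$.

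The one point requiring care is the endpoint $\ga_j=\tfrac12$, i.e. $\sigma_j=0$, where the Gagliardo representation degenerates; there I would keep the factor $|\xi|^{2\sigma_j}=1$ and apply Plancherel directly, so that only the relevant subset of the four terms is used. Since all four terms in \eqref{eq:sob embb} are nonnegative, bounding $\|u\|_{\mathcal H}^{2}$ by a subset still yields the stated inequality, and the whole computation may be performed first on step functions (for which Fubini and Plancherel are transparent) and then extended to $\mathcal H$ by density. The main, though essentially routine, obstacle is the bookkeeping of the Plancherel identities and the change of variables, so that each product term lands on the correct seminorm; the genuine analytic content is confined to the scalar weight identity above.
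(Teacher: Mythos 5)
Your proof is correct and follows essentially the same route as the paper's: both reduce $\|u\|^2_{\mathcal H}$ to the Fourier expression $\int_{\R^2}|\xi|^{1-2\ga_1}|\eta|^{1-2\ga_2}|\hat{\tilde u}(\xi,\eta)|^2\,d\xi d\eta$ and then apply the Gagliardo-type characterization of fractional Sobolev norms in each direction, tensorized over the two variables. The only difference is presentational: you prove the needed one-dimensional ingredient inline via the scalar identity $\int_{\R}|e^{\imath h\xi}-1|^2|h|^{-1-2\sigma}\,dh=c_\sigma|\xi|^{2\sigma}$ and treat the endpoint $\ga_j=\tfrac12$ within the same framework, whereas the paper cites the equivalence of the $W^{1/2-\ga_j,2}$ norms and disposes of the endpoint case with a remark that it reduces to $L^2$ norms.
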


\begin{proof}
In this proof we only consider the case $\gamma_1,\gamma_2<1/2$. Indeed, if $\gamma_1=1/2$ or $\gamma_{2}=1/2$ then our process $x$ is simply a Brownian motion in the first or in the second direction, and this situation is handled by $L^{2}$ norms.

\smallskip

For $\gamma_1,\gamma_2<1/2$, definitions \eqref{eq:def-Q} and \eqref{eq:def-K} entail:
\begin{equation*}
\|u\|^2_{\mathcal  H}=\int_{\mathbb R^2}|\xi|^{1-2\gamma_1}|\eta|^{1-2\gamma_2}\left|\int_{[0,1]^{2}}u_{s;t}e^{\imath s\xi+\imath t\eta}dsdt\right|^2d\xi d\eta,
\end{equation*}
from which one deduces that $\ch$ is isometric to $H^{1/2-\gamma_1}\otimes H^{1/2-\gamma_1}$, where $H^{1/2-\gamma_1}$ stands for the Sobolev space $W^{1/2-\gamma_1,2}$.
Now we use the fact that $1/2-\gamma_1\in(0,1/2)$, and recall that the norm defined by 
$$
\mathcal N_{1/2-\gamma_1}^2(\phi)=\int_{\mathbb R^2}\frac{|\phi_{s_1}-\phi_{s_2}|^2}{|s_2-s_1|^{2-2\gamma_1}}ds_1ds_2+\int_{\mathbb R}|\phi_s|^2ds
$$
is equivalent to the usual norm in $H^{1/2-\gamma_1}$. This yields \eqref{eq:sob embb} by tensorization.

\end{proof}

Let us now introduce two more semi-norms:
\begin{equation*}
\|f\|_{\alpha,1}=\sup_{(s_1,s_2,t)\in[0,1]^2\times[0,1]}\frac{\left|\der_1f_{s_1s_2;t}\right|}{|s_2-s_1|^{\alpha}},
\quad\text{and}\quad
\|f\|_{\beta,2}=\sup_{(s,t_1,t_2)\in[0,1]^{2}}\frac{\left|\der_2f_{s;t_1t_2}\right|}{|t_2-t_1|^{\beta}}.
\end{equation*}
With these notations in hand, the following embedding result is easily deduced from Lem\-ma~\ref{lemma: Sobolev}.

\begin{corollary}\label{prop:embbeding-cam}
Let $\gamma_1,\gamma_2\in(0,1/2)$ and $u\in\cp_{1,1}^{\alpha_1,\alpha_2}$ such that $0<\frac{1}{2}-\alpha_i<\gamma_i$. Then we have the following embedding: 
\begin{equation}\label{eq:embed-H-holder}
\|u\|_{\mathcal H}\lesssim \cn_{\alpha_1,\alpha_2}(u),
\quad\text{where}\quad
\cn_{\alpha,\beta}(f)=\|f\|_{\alpha,\beta}+\|f\|_{\alpha,1}+\|f\|_{\beta,2}+\|f\|_{\infty}.
\end{equation}
\end{corollary}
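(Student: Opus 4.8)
The plan is to start from the upper bound for $\|u\|_{\mathcal H}^2$ furnished by Lemma~\ref{lemma: Sobolev} and to estimate each of its four terms separately by the semi-norms entering $\cn_{\alpha_1,\alpha_2}$. The only genuine subtlety comes from the truncation $\tilde u_{s;t}=u_{s;t}\1_{[0,1]^{2}}(s,t)$, which forces integration over all of $\mathbb R^{2}$: I would therefore systematically split each domain according to whether the space variables lie inside or outside $[0,1]$. On the interior the relevant increment of $\tilde u$ coincides with that of $u$, whereas near the boundary it degenerates into a lower-order increment that must be controlled by the extra semi-norms $\|\cdot\|_{\alpha_1,1}$, $\|\cdot\|_{\alpha_2,2}$ and $\|\cdot\|_{\infty}$ — which is precisely why these appear in $\cn_{\alpha_1,\alpha_2}$.

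Consider first the leading term $T_{1}$ involving $\der\tilde u$, where I would decompose the region of the four variables $s_1,s_2,t_1,t_2$ into four configurations. When $s_1,s_2,t_1,t_2\in[0,1]$ one has $\der\tilde u=\der u$, and the bound $|\der u_{s_1s_2;t_1t_2}|\le\|u\|_{\alpha_1,\alpha_2}|s_2-s_1|^{\alpha_1}|t_2-t_1|^{\alpha_2}$ reduces the contribution to the integral of $|s_2-s_1|^{2\alpha_1-2+2\gamma_1}|t_2-t_1|^{2\alpha_2-2+2\gamma_2}$ over $[0,1]^{4}$. When exactly one of $s_1,s_2$ leaves $[0,1]$ (with $t_1,t_2\in[0,1]$), two of the four corners of $\tilde u$ vanish and $\der\tilde u$ collapses to $\pm\der_2 u_{s_1;t_1t_2}$, estimated by $\|u\|_{\alpha_2,2}|t_2-t_1|^{\alpha_2}$; the symmetric configuration produces $\|u\|_{\alpha_1,1}$, while if both directions straddle the boundary only one corner survives and $|\der\tilde u|\le\|u\|_{\infty}$. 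In each case the resulting kernel is integrable: near the diagonal this is exactly the hypothesis $\tfrac12-\alpha_i<\gamma_i$, equivalent to $2\alpha_i-2+2\gamma_i>-1$, while the boundary weights $|s_2-s_1|^{-(2-2\gamma_1)}$ (restricted to one variable inside and one outside $[0,1]$) are finite, being integrable both near the corner $s_1=s_2=1$, since $\gamma_1>0$, and at infinity, since $2-2\gamma_1>1$.

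The terms $T_{2}$ and $T_{3}$ are handled by the same dichotomy but more easily, since they involve only a single directional difference $\tilde u_{s_2;t}-\tilde u_{s_1;t}$ (resp.\ $\tilde u_{s;t_2}-\tilde u_{s;t_1}$): on $[0,1]^2$ this is bounded by $\|u\|_{\alpha_1,1}|s_2-s_1|^{\alpha_1}$ (resp.\ $\|u\|_{\alpha_2,2}|t_2-t_1|^{\alpha_2}$) and on the straddling region by $\|u\|_{\infty}$, the very same integrability discussion applying verbatim. Finally $T_{4}=\int_{[0,1]^2}|\tilde u_{s;t}|^2\,ds\,dt\le\|u\|_{\infty}^2$. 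Summing these estimates yields $\|u\|_{\mathcal H}^2\lesssim\cn_{\alpha_1,\alpha_2}(u)^2$, which is the claim. The main obstacle is the bookkeeping around the truncation: one must check that every boundary configuration degenerates the full rectangular increment into an increment whose order matches an available semi-norm, and that the accompanying singular kernel remains integrable — the latter being exactly where the constraint $\tfrac12-\alpha_i<\gamma_i$ is consumed.
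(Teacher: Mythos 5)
Your proof is correct and follows exactly the route the paper intends: Corollary~\ref{prop:embbeding-cam} is stated there as being ``easily deduced'' from Lemma~\ref{lemma: Sobolev}, and your argument is precisely that deduction, with the interior/boundary splitting of the truncation $\tilde u$ and the two integrability checks (the diagonal one consuming $\tfrac12-\alpha_i<\gamma_i$, the straddling one consuming $0<\gamma_i<\tfrac12$) carried out in full.
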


\subsection{Strategy and preliminary results}
The strategy we shall develop in order to extend Proposition~\ref{f-conv} (and also Proposition~\ref{proposition:contrac 2}) to the rough case is based on a regularization of $x$. Specifically, for a strictly positive integer $n$, set 
\begin{equation}\label{eq:reg-eq1}
x^{n}_{s;t}
= c_{\gamma_1,\gamma_2}
\int_{|\xi|,|\eta|\leq n} Q_{s;t}(\xi,\eta) \, \hat W(d\xi,d\eta),
\end{equation}
where we recall that $x$ and $Q$ are respectively defined by \eqref{eq:harmonizable-fBs} and \eqref{eq:def-Q}. For fixed $n$, it is readily checked that $x^n$ is a regular Gaussian process. Its covariance function is given by $R^{n}_{s_1s_2;t_1t_2}=R^{1,n}_{s_1s_2}R^{2,n}_{t_1t_2}$, where 
\begin{equation}\label{eq:reg-cov1}
R^{i,n}_{ab}=c_{\gamma_i}\int_{|\xi|\leq n}\frac{(e^{\imath a\xi}-1)(e^{-ib\xi}-1)}{|\xi|^{2\gamma_i+1}}d\xi,
\quad\text{for}\quad
i=1,2,
\end{equation}
and hence $R^n$ is a regular function which satisfies Hypothesis ~\ref{hyp:Young}. One can thus apply Proposition~\ref{f-conv} and obtains the following Skorohod-Stratonovich comparison:
\begin{equation}\label{eq:correc-sko-strat-regularized}
z^{1,n,\di}_{s_{1}s_{2};t_{1}t_{2}}
\equiv \der^{x^{n},\diamond}(y_{\cdot}^{n,1}\1_{\Delta})
=\int_{\Delta}y^{n,1}_{s;t}d_{12}x^{n}_{s;t}
-\frac14\int_{\Delta}y^{n,2}_{s;t}d_{1}R^{1,n}_{s}d_{2}R^{2,n}_{t},
\end{equation}
for $\vp\in C^6(\mathbb R)$ satisfying condition (GC), and where we set again $\Delta=[s_{1},s_{2}]\times[t_{1}t_{2}]$. Our goal is now to take limits in equation~ \eqref{eq:correc-sko-strat-regularized}. 

\smallskip

A first observation in this direction is that equation \eqref{eq:correc-sko-strat-regularized} involves Skorohod integrals with respect to $x^{n}$. The fact that a different integral has to be defined for each $n$ is somehow clumsy, and this is why we have decided to express all integrals with respect to $\hat{W}$ in the remainder of our computations. Namely, the same computations as for equations ~\eqref{eq:comp1} and~\eqref{eq:comp2} entail that $\der^{x^n,\diamond}(y^n)=\der^{\hat W,\diamond}(K^ny^n)$, where $K^n$ is the operator defined by  
\begin{equation}\label{eq:def-Kn}
K^n\phi(\xi,\eta)
= \1_{(|\xi|,|\eta|\leq n)} \int_{[0,1]^{2}}\phi_{s;t} \, \partial_{s}\partial_{t}Q_{s;t}(\xi,\eta)dsdt.
\end{equation}
With this representation in hand, our limiting procedure can be decomposed as follows:

\smallskip

\noindent
$\bullet$ Take $L^{2}$ limits in the right hand side of equation \eqref{eq:correc-sko-strat-regularized} by means of rough paths techniques.

\smallskip

\noindent
$\bullet$ Show that $K^{n}y^n$ converges in $L^2(\Omega,L^2(\mathbb R))$ to $Ky$.

\smallskip

\noindent
Thanks to the closability of $\der^{\hat W,\diamond}$, this will show the convergence of $\der^{x^{n},\diamond}(y^n\1_{[0,1]^{2}})$ to $\der^{x,\diamond}(y\1_{[0,1]^{2}})$ and our Skorohod-Stratonovich correction formula will be obtained in this way.

\smallskip

We now state and prove 3 useful lemmas for our future computations. The first one deals with convergence of covariance functions:
\begin{lemma}\label{lemma:con-cov}
For $i=1,2$, set $R^{i}_u=u^{2\gamma_i}$. Then for all $\ep>0$ we have
$$
\lim_{n\to+\infty}\|R^{i,n}-R^{i}\|_{2\gamma_i-\epsilon}=0,
$$
where $R^{i,n}$ is defined by \eqref{eq:reg-cov1}.
\end{lemma}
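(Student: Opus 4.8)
The plan is to reduce everything to an explicit spectral computation on the diagonal of the covariances and then estimate a single high-frequency tail integral. First I would record the diagonal values. Writing $R^{i,n}_a=R^{i,n}_{aa}$ and $R^i_a=a^{2\gamma_i}$, formula \eqref{eq:reg-cov1} gives
\[
R^{i,n}_a = 2c_{\gamma_i}\int_{|\xi|\le n}\frac{1-\cos(a\xi)}{|\xi|^{2\gamma_i+1}}\,d\xi,
\]
while the normalization constant $c_{\gamma_i}$ is chosen precisely so that the full-line integral reproduces $a^{2\gamma_i}$, i.e. $R^i_a = 2c_{\gamma_i}\int_{\R}\frac{1-\cos(a\xi)}{|\xi|^{2\gamma_i+1}}\,d\xi$. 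Subtracting, the difference $g^n_a\equiv R^{i,n}_a-R^i_a$ is exactly the missing high-frequency tail,
\[
g^n_a = -2c_{\gamma_i}\int_{|\xi|>n}\frac{1-\cos(a\xi)}{|\xi|^{2\gamma_i+1}}\,d\xi.
\]

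Next I would compute the increment and bound it uniformly. Setting $\mu=2\gamma_i-\epsilon$ and using $\der g^n_{ab}=g^n_b-g^n_a$, one gets
\[
\der g^n_{ab} = -2c_{\gamma_i}\int_{|\xi|>n}\frac{\cos(a\xi)-\cos(b\xi)}{|\xi|^{2\gamma_i+1}}\,d\xi.
\]
The key elementary estimate is $|\cos(a\xi)-\cos(b\xi)|\le 2\bigl|\sin(\tfrac{(b-a)\xi}{2})\bigr|\le 2^{1-\mu}|b-a|^{\mu}|\xi|^{\mu}$, where the last step uses $|\sin\theta|\le|\theta|^{\mu}$, valid for any $\mu\in(0,1]$. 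Since $\gamma_i\le 1/2$ we have $\mu<1$, and for $\epsilon$ small $\mu>0$, so $\mu\in(0,1)$ and the inequality applies. Plugging it in yields
\[
\frac{|\der g^n_{ab}|}{|b-a|^{\mu}} \lesssim_{\gamma_i} \int_{|\xi|>n}|\xi|^{\mu-2\gamma_i-1}\,d\xi = \int_{|\xi|>n}|\xi|^{-1-\epsilon}\,d\xi = \frac{2}{\epsilon}\,n^{-\epsilon}.
\]

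Because this bound is uniform in $a,b\in[0,1]$, taking the supremum gives $\|R^{i,n}-R^i\|_{2\gamma_i-\epsilon}=\|g^n\|_{\mu}\lesssim n^{-\epsilon}\to 0$, which is the claim; and since on the bounded interval $[0,1]$ the H\"older seminorm is nonincreasing in the exponent, establishing it for small $\epsilon$ (so that $\mu<1$) covers every $\epsilon>0$. I do not expect a genuine obstacle here: the two points requiring care are the correct identification of $c_{\gamma_i}$ as the constant making the full spectral integral equal $a^{2\gamma_i}$ (so that the difference reduces to a pure tail), and the verification that the integrability exponent is $\mu-2\gamma_i-1=-1-\epsilon<-1$, which holds for every $\epsilon>0$ and is exactly what forces the tail to vanish as $n\to\infty$. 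The argument applies verbatim to both $i=1$ and $i=2$.
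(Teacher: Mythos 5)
Your proof is correct and follows essentially the same route as the paper's: both use the normalization of $c_{\gamma_i}$ to reduce the difference of (diagonal) covariances to the high-frequency tail $\int_{|\xi|>n}\frac{\cos(a\xi)-\cos(b\xi)}{|\xi|^{2\gamma_i+1}}\,d\xi$, then bound $|\cos(a\xi)-\cos(b\xi)|\lesssim_{\gamma_i,\epsilon}|a-b|^{2\gamma_i-\epsilon}|\xi|^{2\gamma_i-\epsilon}$ so that the tail is controlled by $\int_{|\xi|\ge n}|\xi|^{-1-\epsilon}\,d\xi\to 0$. Your explicit sine-product estimate and the remark on monotonicity of the H\"older seminorm in the exponent merely spell out what the paper calls ``an elementary computation.''
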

\begin{proof}
We recall that $c_{\gamma_i}\int_\mathbb R\frac{|e^{\imath a\xi}-1|^2}{|\xi|^{2\gamma_i+1}}d\xi=a^{2\gamma_i}$. Then an elementary computation shows that
$$
|\der_i(R^{i,n}-R^{i})_{ab}|=\left|c_{\gamma_i}\int_{|\xi|\geq n}\frac{cos(a\xi)-cos(b\xi)}{|\xi|^{2\gamma_i+1}}d\xi\right|\lesssim_{\gamma_i,\epsilon}|a-b|^{2\gamma_i-\epsilon}\int_{|x|\geq n}|\xi|^{-1-\epsilon}d\xi,
$$
which gives
$
\|R^{n,i}-R^{u}\|_{2\gamma_i-\epsilon}\lesssim_{\gamma_i,\epsilon}\int_{|\xi|\geq n}|\xi|^{-1-\epsilon}d\xi,
$ 
and this finishes the proof.

\end{proof}

Our second preliminary result ensures that $x^{n}$ is an accurate approximation of $x$:

\begin{proposition}\label{proposition:conve-reg}
Let $p>1$ and $0<\ep<\min(\ga_1,\ga_2)$. Then we have the following convergence:
$$
\lim_{n\to\infty}\E\left[\|x^n-x\|^p_{\gamma_1-\epsilon,\gamma_2-\epsilon}\right] = 0.
$$
In addition, there exists  $\lambda>0$  such that 
\begin{equation}\label{eq:exp-moments-xn}
\sup_{n\in\mathbb N}\E\left[e^{\lambda\sup_{(s,t)\in[0,1]^{2}}|x^n_{s;t}|^2}\right]<+\infty.
\end{equation}
\end{proposition}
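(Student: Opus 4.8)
The plan is to prove the two assertions separately, both resting on the harmonizable representation \eqref{eq:harmonizable-fBs}--\eqref{eq:def-Q} and the regularization \eqref{eq:reg-eq1}, combined with the Garsia--Rodemich--Rumsey type inequality of Lemma~\ref{lemma:G-R}. For the $L^{p}$-convergence of the H\"older norm I would reduce the problem to a single quantitative second moment estimate on rectangular increments of $x-x^{n}$, exploiting Gaussianity to pass from the second moment to all moments. For the exponential moment bound I would invoke a Borell--Sudakov--Tsirelson concentration argument, with the maximal variance and the expected supremum of $x^{n}$ controlled uniformly in $n$.

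\emph{Step 1: Moment estimate for increments.} Since $x-x^{n}$ is obtained from \eqref{eq:reg-eq1} by a frequency cut-off, it is a centered Gaussian field whose rectangular increment reads $\der(x-x^{n})_{u_{1}u_{2};v_{1}v_{2}}=c_{\ga_{1},\ga_{2}}\int_{D_{n}}\der Q_{u_{1}u_{2};v_{1}v_{2}}(\xi,\eta)\,\hat W(d\xi,d\eta)$, with $D_{n}=\{|\xi|>n\}\cup\{|\eta|>n\}$. Gaussianity gives $\E[|\der(x-x^{n})_{u_{1}u_{2};v_{1}v_{2}}|^{p}]\lesssim_{p}(\E[|\der(x-x^{n})_{u_{1}u_{2};v_{1}v_{2}}|^{2}])^{p/2}$, and by the isometry the second moment equals $c\int_{D_{n}}\frac{|e^{\imath u_{2}\xi}-e^{\imath u_{1}\xi}|^{2}}{|\xi|^{2\ga_{1}+1}}\frac{|e^{\imath v_{2}\eta}-e^{\imath v_{1}\eta}|^{2}}{|\eta|^{2\ga_{2}+1}}d\xi d\eta$. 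Fixing an auxiliary $\epsilon'\in(0,\epsilon)$, using the interpolation bound $|e^{\imath a\xi}-1|\lesssim|a\xi|^{\ga_{j}-\epsilon'}$ together with $\int_{|\xi|>n}|\xi|^{-2\epsilon'-1}d\xi\lesssim n^{-2\epsilon'}$, and splitting $D_{n}$ into its two half-tails, I would reach
\[
\E\big[|\der(x-x^{n})_{u_{1}u_{2};v_{1}v_{2}}|^{2}\big]\lesssim n^{-2\epsilon'}\,|u_{2}-u_{1}|^{2(\ga_{1}-\epsilon')}\,|v_{2}-v_{1}|^{2(\ga_{2}-\epsilon')},
\]
where on each half-tail the untruncated direction is estimated by the exact fBs scaling and then absorbed via $|v_{2}-v_{1}|^{2\ga_{2}}\le|v_{2}-v_{1}|^{2(\ga_{2}-\epsilon')}$ on $[0,1]$.

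\emph{Step 2: From increments to the H\"older norm.} Applying Lemma~\ref{lemma:G-R} to $y=x-x^{n}$ with $\theta_{j}=\ga_{j}-\epsilon$, taking expectations and using Fubini, the numerator is replaced by the moment bound of Step 1; after cancelling the common powers one gets
\[
\E\big[\|x^{n}-x\|^{p}_{\ga_{1}-\epsilon,\ga_{2}-\epsilon}\big]\lesssim n^{-p\epsilon'}\int_{[0,1]^{4}}|u_{2}-u_{1}|^{(\epsilon-\epsilon')p-2}\,|v_{2}-v_{1}|^{(\epsilon-\epsilon')p-2}\,du_{1}du_{2}dv_{1}dv_{2}.
\]
Choosing $p$ large enough that $(\epsilon-\epsilon')p>1$ makes the integral finite, whence the left-hand side is $\lesssim n^{-p\epsilon'}\to0$. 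The statement for every $p>1$ then follows from the large-$p$ case by Jensen's inequality on the probability space $(\Omega,\PP)$.

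\emph{Step 3: Uniform exponential moment.} The variable $M_{n}:=\sup_{(s,t)\in[0,1]^{2}}|x^{n}_{s;t}|$ is the supremum of a centered Gaussian field. Its maximal variance satisfies $\sup_{(s,t)}\E[|x^{n}_{s;t}|^{2}]\le\sup_{(s,t)}\E[|x_{s;t}|^{2}]=\sup_{(s,t)}s^{2\ga_{1}}t^{2\ga_{2}}\le1$ uniformly in $n$, since truncating frequencies only lowers the variance. Moreover $Q_{0;t}=Q_{s;0}=0$ forces $x^{n}_{s;t}=\der x^{n}_{0s;0t}$, so $M_{n}\le\|x^{n}\|_{\ga_{1}-\epsilon,\ga_{2}-\epsilon}$ and hence $\sup_{n}\E[M_{n}]<\infty$ by Step 2. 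The Borell--Sudakov--Tsirelson inequality then yields Gaussian concentration of $M_{n}$ around $\E[M_{n}]$ with variance parameter at most $1$; integrating the resulting tail bound shows that for $\lambda>0$ small enough $\sup_{n}\E[e^{\lambda M_{n}^{2}}]<\infty$, which is \eqref{eq:exp-moments-xn}. The \textbf{main obstacle} is Step 1: one must extract the decay rate $n^{-2\epsilon'}$ \emph{simultaneously} with the correct H\"older scaling $|u_{2}-u_{1}|^{2(\ga_{1}-\epsilon')}$ by trading regularity for convergence through the interpolation exponent $\ga_{j}-\epsilon'$, and then budget the gap $\epsilon-\epsilon'$ against $p$ so that the Garsia integral converges; the exponential bound is comparatively routine once the variance and $\E[M_{n}]$ are uniformly controlled.
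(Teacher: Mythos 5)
Your proof is correct. For the $L^p$-convergence your route is essentially the paper's: the same second-moment estimate on $\der(x-x^n)$ via the harmonizable representation, trading H\"older scaling against frequency decay, followed by Gaussian equivalence of moments and the Garsia-type Lemma~\ref{lemma:G-R}, with $p$ taken large and the general case recovered from it. Two of your refinements actually improve on the written proof: you treat the residual frequency domain correctly as the union $\{|\xi|>n\}\cup\{|\eta|>n\}$ and split it into half-tails (the paper integrates over $\{|\xi|,|\eta|\ge n\}$, i.e.\ the intersection, a slight abuse since the complement of the truncation box is the union), and you make explicit the reduction of $p>1$ to large $p$ via Jensen, which the paper leaves implicit. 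For the uniform exponential moment \eqref{eq:exp-moments-xn}, however, you take a genuinely different route: the paper expands $e^{\lambda\sup|x^n|^2}$ into a power series, bounds each moment $\E[\|x^n\|_{\epsilon,\epsilon}^{2l}]\lesssim M^l\,\E[\mathcal{N}^{2l}]$ through the Garsia inequality (asserting that the Garsia constant $C_{2l,\epsilon,\epsilon}$ grows like $M^{l}$), and sums the series for $\lambda$ small; you instead invoke Borell--Sudakov--Tsirelson concentration for $M_n=\sup_{(s,t)}|x^n_{s;t}|$, using that frequency truncation decreases the pointwise variance (so $\sup_{(s,t)}\E[|x^n_{s;t}|^2]\le\sup_{(s,t)}s^{2\gamma_1}t^{2\gamma_2}\le1$ uniformly in $n$) and that $M_n\le\|x^n\|_{\gamma_1-\epsilon,\gamma_2-\epsilon}$ because $x^n$ vanishes on the axes. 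Your approach buys a cleaner argument: it bypasses the growth estimate on the Garsia constants, which the paper only asserts without proof, at the price of invoking Borell--TIS, which the paper's more elementary, self-contained series argument avoids. One small point to complete your Step 3: the bound $\sup_n\E[M_n]<\infty$ requires, besides Step 2, the finiteness of $\E[\|x\|_{\gamma_1-\epsilon,\gamma_2-\epsilon}]$ for the limit process itself (then conclude by the triangle inequality); this follows from the same Garsia estimate applied to $x$ and deserves one explicit line.
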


\begin{proof}
The definitions \eqref{eq:harmonizable-fBs} of $x$, \eqref{eq:reg-eq1} of $x^{n}$ plus formula \eqref{eq:def-Q} for $Q$ allow to write, for all $n\ge 1$:
\begin{equation}
\begin{split}
\E[|\der(x^{n}-x)_{s_1s_2; t_1t_2}|^2]&
=\int_{|x|,|y|\geq n}\frac{|e^{\imath s_2x}-e^{\imath s_1x}|^2|e^{\imath t_2y}-e^{\imath t_1y}|^2}{|x|^{2\gamma_1+1}|y|^{\gamma_2+1}}dxdy
\\&\lesssim_{\gamma_1,\gamma_2}|s_1-s_1|^{2(\gamma_1-\epsilon/2)}|t_2-t_1|^{2(\gamma_2-\epsilon/2)}
I_{n}
\end{split}
\end{equation}
where we have set $I_{n}=\int_{|x|,|y|\geq n} \frac{dx\,dy}{|x|^{1+\epsilon}|y|^{1+\epsilon}}$ (this quantity is obviously finite). Hence by Gaussian hypercontractivity and Lemma~\ref{lemma:G-R} we obtain 
\begin{equation*}
\begin{split}
\E[[\|x^n-x\|^p_{\gamma_1-\epsilon,\gamma_2-\epsilon}]
&\lesssim_{p,\gamma_1,\gamma_2,\epsilon}
\int_{[0,1]^{4}}\frac{\E[|\der x_{s_1s_2; t_1t_2}|^2]^{p/2}}{|s_2-s_1|^{(\gamma_1-\epsilon)p+2}|t_2-t_1|^{(\gamma_2-\epsilon)p+2}}dt_2ds_2dt_1ds_1
\\&\lesssim (I_n)^{p/2}\int_{[0,1]^{4}} |s_2-s_1|^{p\epsilon/2-2}|t_2-t_1|^{p\epsilon/2-1}
dt_2ds_2dt_1ds_1\lesssim (I_n)^{p/2}.
\end{split}
\end{equation*}
Since $\lim_{n\to\infty}I_{n}=0$, we thus get $\lim_{n\to\infty}\E[[\|x^n-x\|^p_{\gamma_1-\epsilon,\gamma_2-\epsilon}]=0$, which is our first claim. 

\smallskip

We now focus on the exponential integrability of $\sup x^{n}$. Notice that for a fixed $n$ one can easily get those exponential estimates thanks to Fernique's lemma. However, we claim some uniformity in $n$ here, and we thus come back to uniform estimates of moments in order to prove \eqref{eq:exp-moments-xn}. Let then $r=\max(\lfloor\frac{1}{\gamma_1-\epsilon}\rfloor,\lfloor\frac{1}{\gamma_2-\epsilon}\rfloor)+1$ and remark that $\|x^n\|_{\infty}\leq \|x^n\|_{\epsilon,\epsilon}$. We thus use a decomposition of the form
$\E[e^{\lambda\sup_{(s,t)\in[0,1]^{2}}|x^n_{s;t}|^2}]=I^{1,n}(\lambda)+I^{2,n}(\lambda)$, where
\begin{equation*}
I^{1,n}(\lambda)=\sum_{l=0}^{r-1}\frac{\lambda^l}{l!}\E[\|x^n\|_{\infty}^{2l}], 
\quad\text{and}\quad
I^{2,n}(\lambda)=\sum_{l=r}^{+\infty}\frac{\lambda^l}{l!}\E[\|x^n\|^{2l}_{\infty}].
\end{equation*}
We now bound those 2 terms separately: one the one hand, it is readily checked that
$$
I^{1,n}(\lambda)
\leq \max_{i=0,\ldots,r}
\sup_{n\in\mathbb N}\E[\|x^n\|_{\epsilon,\epsilon}^{2i}]<+\infty,
$$
for $\epsilon<\min(\gamma_1,\gamma_2)$. On the other hand,  the bound on $I^{2,n}(\lambda)$ is obtained invoking Lemma~\ref{lemma:G-R} again. Indeed, starting from  expression \eqref{eq:GRR-ineq} and introducing a standard Gaussian random variable $\cn$, it is easily seen that
$$
\E[\|x^n\|_{\epsilon,\epsilon}^{2l}]\leq C_{2l,\epsilon,\epsilon}\E[\mathcal N^{2l}]\int_{[0,1]^2\times[0,1]^2}\frac{\E[|\der x^n_{s_1s_2t_1t_1}|^2]^l}{|s_2-s_1|^{2l\epsilon+2}|t_2-t_1|^{2l\epsilon+2}}ds_1ds_2dt_1dt_2
$$ 
with $\mathcal N$ is a Gaussian random variable $\mathcal N(0,1)$. Now we have  
\begin{multline*}
\sup_{n\in\mathbb N}\E\Big[|\der x^{n}_{s_1s_2; t_1t_2}|^2\Big]
\leq\int_{\mathbb R^2}\frac{|e^{\imath s_2x}-e^{\imath s_1x}|^2|e^{\imath t_2y}-e^{\imath t_1y}|^2}{|x|^{2\gamma_1+1}|y|^{\gamma_2+1}}dxdy
\\
\leq|s_2-s_1|^{2\gamma_1}|t_2-t_1|^{2\gamma_2}\int_{\mathbb R^2}\frac{|e^{\imath x}-1|^2|e^{\imath y}-1|^2}{|x|^{2\gamma_1+1}|y|^{\gamma_2+1}}dxdy
\lesssim |s_2-s_1|^{2\gamma_1}|t_2-t_1|^{2\gamma_2}.
\end{multline*}
Furthermore, it can be shown that $C_{2l,\epsilon,\epsilon}$ is of the form $M^{l}$ for a given $M>1$. Thus
$$
\sup_{n\in\mathbb N}\E[\|x^n\|_{\epsilon,\epsilon}^{2l}]\lesssim
 M^{l} \, \E\left[\mathcal N^{2l}\right],
$$
from which the relation $I^{2,n}(\lambda)<\infty$ is easily obtained. This finishes the proof of \eqref{eq:exp-moments-xn}.

\end{proof}

With classical considerations concerning compositions of H\"older functions with non linearities, we finally get the following result which is labelled for further use. Its proof is omitted for sake of conciseness.
\begin{lemma}\label{lemma:h-p-r}
Let $\rho_{1},\rho_{2}\in(0,1)$,  $x^1,x^2$ two increments lying in $\cp_{1,1}^{\rho_1,\rho_2}$, and $f\in C^{3}(\mathbb R)$ satisfying  condition (GC).  Then we have: 
\begin{equation}\label{eq:bnd-f(x1)-f(x2)}
\cn_{\rho_1,\rho_2}(f(x^1)-f(x^2))\lesssim 
c_{x^{1},x^{2}} \, \cn_{\rho_1,\rho_2}(x^1-x^2) \, 
\lc 1+\cn_{\rho_1,\rho_2}(x^1)+\cn_{\rho_1,\rho_2}(x^2) \rc^2
\end{equation}
where we recall that $\cn_{\al_{1},\al_{2}}(f)$ has been defined at equation \eqref{eq:embed-H-holder}. In the relation above we have also set
 $c_{x^{1},x^{2}}=\exp({\theta(\sup_{(s,t)\in[0,1]^{2}}|x^1_{s;t}|^2+\sup_{(s,t)\in[0,1]^{2}}|x^2_{s;t}|^2)})$, where $\theta$ is the constant featuring in condition (GC).
\end{lemma}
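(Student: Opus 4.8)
The plan is to bound separately each of the four constituents of the norm $\cn_{\rho_1,\rho_2}$ defined in \eqref{eq:embed-H-holder}: the supremum norm, the two single-direction Hölder seminorms $\|\cdot\|_{\rho_1,1}$ and $\|\cdot\|_{\rho_2,2}$, and the rectangular Hölder norm $\|\cdot\|_{\rho_1,\rho_2}$. In every case the mechanism is the same: expand $f$ by a first- or second-order Taylor formula with integral remainder, so that increments of $f(x^i)$ become increments of $x^i$ multiplied by derivatives $f^{(l)}$ evaluated at intermediate points, and then pass to the difference of the $x^1$ and $x^2$ contributions by a repeated ``difference of products'' (telescoping) argument. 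Every factor $f^{(l)}(\xi)$ produced in this way is evaluated at a convex combination $\xi$ of nodal values of $x^1$ or $x^2$, so that $|\xi|^2\le \sup_{(s,t)}|x^1_{s;t}|^2+\sup_{(s,t)}|x^2_{s;t}|^2$; condition (GC) then bounds $|f^{(l)}(\xi)|$ by $c\,e^{\lambda|\xi|^2}\le c\,c_{x^1,x^2}$, which uniformly generates the exponential prefactor appearing in \eqref{eq:bnd-f(x1)-f(x2)}.

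For the supremum norm the mean value theorem gives $|f(x^1_{s;t})-f(x^2_{s;t})|\le \sup_\xi|f'(\xi)|\,|x^1_{s;t}-x^2_{s;t}|\lesssim c_{x^1,x^2}\,\|x^1-x^2\|_\infty$, already of the required form. For the direction-$1$ seminorm I write, with the integral form of a single increment,
\[
\der_1 f(x^i)_{s_1s_2;t}=\iou f'\lp x^i_{s_1;t}+r\,\der_1x^i_{s_1s_2;t}\rp dr\cdot \der_1 x^i_{s_1s_2;t},
\]
subtract the $i=2$ identity from the $i=1$ one, and telescope into a term carrying $\der_1(x^1-x^2)_{s_1s_2;t}$ (bounded by $\cn(x^1-x^2)\,|s_2-s_1|^{\rho_1}$) and a term carrying $f'(\cdots^1)-f'(\cdots^2)$ (bounded through $\sup|f''|$ and $\|x^1-x^2\|_\infty$) times $\der_1 x^2_{s_1s_2;t}$. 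Both are linear in $\cn(x^1-x^2)$, carry the factor $|s_2-s_1|^{\rho_1}$, and are of degree at most one in $\cn(x^1)+\cn(x^2)$. The seminorm $\|\cdot\|_{\rho_2,2}$ is treated symmetrically.

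The rectangular norm $\|\cdot\|_{\rho_1,\rho_2}$, that is, the bound on $\der f(x^1)-\der f(x^2)$ over rectangles, is the crux. First I record the representation of the rectangular increment of a single composition: setting $\xi_r=x_{s_1;t_2}+r\,\der_1x_{s_1s_2;t_2}$ and $\eta_r=x_{s_1;t_1}+r\,\der_1x_{s_1s_2;t_1}$, and using $\der_1x_{s_1s_2;t_2}-\der_1x_{s_1s_2;t_1}=\der x_{s_1s_2;t_1t_2}$, one obtains
\[
\der f(x)_{s_1s_2;t_1t_2}=\iou f'(\xi_r)\,dr\cdot \der x_{s_1s_2;t_1t_2}+\iou\lp f'(\xi_r)-f'(\eta_r)\rp dr\cdot \der_1 x_{s_1s_2;t_1}.
\]
In the second (cross) term the missing $|t_2-t_1|^{\rho_2}$ regularity comes from $|f'(\xi_r)-f'(\eta_r)|\le \sup|f''|\,|\xi_r-\eta_r|$, since $\xi_r-\eta_r=\der_2x_{s_1;t_1t_2}+r\,\der x_{s_1s_2;t_1t_2}$; this reproduces the quadratic bound of Lemma \ref{f-control}. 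Passing to the difference of the $x^1$ and $x^2$ versions, the linear term is handled as above. The cross term is the delicate step: after one telescoping it produces the factor $[f'(\xi^1_r)-f'(\eta^1_r)]-[f'(\xi^2_r)-f'(\eta^2_r)]$, which I expand once more via $f'(\xi^i)-f'(\eta^i)=\int_0^1 f''(\eta^i+q(\xi^i-\eta^i))\,dq\,(\xi^i-\eta^i)$ and telescope a second time. The decisive cancellation is that $(\xi^1_r-\eta^1_r)-(\xi^2_r-\eta^2_r)=\der_2(x^1-x^2)_{s_1;t_1t_2}+r\,\der(x^1-x^2)_{s_1s_2;t_1t_2}$ carries both a factor $\cn(x^1-x^2)$ and the regularity $|t_2-t_1|^{\rho_2}$, while the residual difference of $f''$-arguments is controlled using $\sup|f'''|$ — this is exactly where the hypothesis $f\in C^3$ enters. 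Collecting the finitely many resulting subterms, each is linear in $\cn(x^1-x^2)$, carries the full regularity $|s_2-s_1|^{\rho_1}|t_2-t_1|^{\rho_2}$, and is of degree at most two in $\cn(x^1)+\cn(x^2)$ (the worst one, of the shape $[\cdots]\cdot\der_1 x^1$, attains degree two), so that after invoking (GC) for the prefactor one reaches \eqref{eq:bnd-f(x1)-f(x2)}. The main obstacle is not conceptual but bookkeeping: organizing this nested telescoping so that $x^1-x^2$ appears exactly to first power in every term while the correct mixed Hölder regularity is preserved throughout.
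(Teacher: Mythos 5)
Your proof is correct: each of the four constituents of $\cn_{\rho_1,\rho_2}$ is handled by the integral-remainder Taylor representation plus telescoping, condition (GC) produces the uniform prefactor $c_{x^1,x^2}$ because every derivative $f^{(l)}$ is evaluated at convex combinations of values of $x^1$ and $x^2$, and the nested telescoping of the cross term in the rectangular increment (the only place where $f'''$, hence $f\in C^{3}$, is needed) yields exactly the quadratic degree in $\cn_{\rho_1,\rho_2}(x^1)+\cn_{\rho_1,\rho_2}(x^2)$ while keeping $x^1-x^2$ to first power with the full mixed H\"older weight. The paper itself gives no proof of this lemma (it is ``omitted for sake of conciseness,'' with an appeal to classical considerations on composing planar H\"older functions with nonlinearities), and your argument is precisely the standard one being invoked; it is also consistent in style with the integral representation \eqref{eq:repres-h-taylor} that the paper uses in the proof of Proposition \ref{prop:conv-trace-terms-3}.
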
 

\subsection{It\^o-Skorohod type formula}
We now turn to the limiting procedure in equation~\eqref{eq:correc-sko-strat-regularized}, beginning with the term involving covariances only:

\begin{proposition}\label{con-trace-term1}
Let $f\in C^{6}(\mathbb R)$ be a function satisfying condition (GC) with a small parameter $\lambda>0$, and $x^{n}$ be the regularized version of $x$ defined by \eqref{eq:reg-eq1}. Then the following convergence:
\begin{equation}\label{eq:lim-xn-x-covariance}
\lim_{n\to+\infty}\int_{[0,1]^{2}}y^n_{s;t} \, d_{1}R^{1,n}_s d_{2}R^{2,n}_t=\gamma_1\gamma_2\int_{[0,1]^{2}}y_{s;t}s^{2\gamma_1-1}t^{2\gamma_2-1}dsdt
\end{equation}
holds in $L^2(\Omega)$.
\end{proposition}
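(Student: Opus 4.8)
The plan is to recognize the left-hand side of \eqref{eq:lim-xn-x-covariance} as a planar Young integral, to which the machinery of Section \ref{sec:planar-young} applies, and then to pass to the limit componentwise. For each fixed $n$ I set $w^{n}_{s;t}=R^{1,n}_{s}R^{2,n}_{t}$, so that its rectangular increments factorize as $\der w^{n}_{s_1s_2;t_1t_2}=\der_1 R^{1,n}_{s_1s_2}\,\der_2 R^{2,n}_{t_1t_2}$ and $\int_{1}\int_{2}y^{n}\,d_{1}R^{1,n}d_{2}R^{2,n}=\int_{1}\int_{2}y^{n}\,d_{12}w^{n}$. By Lemma~\ref{lemma:con-cov}, $R^{i,n}$ converges to $R^{i}_u=u^{2\ga_i}$ in $\|\cdot\|_{2\ga_i-\ep}$, so $w^{n}\in\cp_{1,1}^{2\ga_1-\ep,2\ga_2-\ep}$ with norms bounded uniformly in $n$, while $y^{n}=\vp(x^{n})\in\cp_{1,1}^{\ga_1-\ep,\ga_2-\ep}$. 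The crucial point is that $\ga_i>1/3$ guarantees, for $\ep$ small, that $(\ga_i-\ep)+(2\ga_i-\ep)=3\ga_i-2\ep>1$ in each direction; hence the integral is a genuine Young integral in the sense of Proposition~\ref{mixed-Young-2D} (applied with $g=R^{1,n}$ constant in $t$ and $h=R^{2,n}$ constant in $s$, whose degenerate regularity conditions are all met), and it obeys a continuity estimate of the form $|\int_{1}\int_{2}f\,d_1 g\,d_2 h|\lesssim \cn_{\ga_1-\ep,\ga_2-\ep}(f)\,\|R^{1,n}\|_{2\ga_1-\ep}\|R^{2,n}\|_{2\ga_2-\ep}$ on $[0,1]^2$.

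Next I would exploit linearity of the Young integral in the integrand and in each integrator to write the telescoping decomposition
\begin{align*}
\int_{1}\int_{2}y^{n}\,d_{1}R^{1,n}d_{2}R^{2,n}-\int_{1}\int_{2}y\,d_{1}R^{1}d_{2}R^{2}
&=\int_{1}\int_{2}(y^{n}-y)\,d_{1}R^{1,n}d_{2}R^{2,n}\\
&\quad+\int_{1}\int_{2}y\,d_{1}(R^{1,n}-R^{1})d_{2}R^{2,n}\\
&\quad+\int_{1}\int_{2}y\,d_{1}R^{1}d_{2}(R^{2,n}-R^{2}).
\end{align*}
Applying the continuity estimate to each summand bounds the difference by a sum of products of Hölder norms, namely $\cn_{\ga_1-\ep,\ga_2-\ep}(y^{n}-y)\,\|R^{1,n}\|\,\|R^{2,n}\|$, $\cn(y)\,\|R^{1,n}-R^{1}\|\,\|R^{2,n}\|$ and $\cn(y)\,\|R^{1}\|\,\|R^{2,n}-R^{2}\|$. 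The two deterministic differences vanish as $n\to\infty$ by Lemma~\ref{lemma:con-cov}, while $\|R^{i,n}\|$ and $\|R^{i}\|$ stay bounded, so the last two terms tend to $0$ in $L^2(\Omega)$ once we know $\cn(y)\in L^p(\Omega)$ for every $p$, which follows from the growth condition (GC) via \eqref{cotamoments} and Corollary~\ref{prop:embbeding-cam}.

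It remains to show $\cn_{\ga_1-\ep,\ga_2-\ep}(y^{n}-y)\to0$ in $L^2(\Omega)$, which I regard as the main obstacle. Here I would invoke Lemma~\ref{lemma:h-p-r} to get the pointwise bound $\cn_{\ga_1-\ep,\ga_2-\ep}(y^{n}-y)\lesssim c_{x^{n},x}\,\cn_{\ga_1-\ep,\ga_2-\ep}(x^{n}-x)\,[1+\cn(x^{n})+\cn(x)]^{2}$, and then control the right-hand side in $L^2(\Omega)$ by Hölder's inequality in $\omega$. The exponential constant $c_{x^{n},x}=\exp(\theta(\sup|x^{n}|^{2}+\sup|x|^{2}))$ has moments bounded uniformly in $n$ thanks to \eqref{eq:exp-moments-xn} (valid for $\theta$ small enough, as provided by (GC)); the factors $\cn(x^{n}),\cn(x)$ have uniformly bounded moments by Proposition~\ref{proposition:conve-reg} together with Corollary~\ref{prop:embbeding-cam} and Lemma~\ref{lemma:G-R}; and $\E[\cn_{\ga_1-\ep,\ga_2-\ep}(x^{n}-x)^{p}]\to0$ for all $p$ by Proposition~\ref{proposition:conve-reg}. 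Splitting the product across these three groups of factors with a triple Hölder inequality then yields $\E[\cn_{\ga_1-\ep,\ga_2-\ep}(y^{n}-y)^{2}]\to0$, which is precisely the delicate point where the $L^2(\Omega)$ (as opposed to merely almost sure) convergence hinges on the uniform exponential moment bound absorbing the (GC) growth of $\vp$.

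Finally I would identify the limit $\int_{1}\int_{2}y\,d_{1}R^{1}d_{2}R^{2}$ with the right-hand side of \eqref{eq:lim-xn-x-covariance}: since $R^{i}_u=u^{2\ga_i}$ is absolutely continuous with Lebesgue--Stieltjes density, the Young integral against $d_1R^1 d_2R^2$ collapses to the stated Lebesgue integral over $[0,1]^2$, completing the proof.
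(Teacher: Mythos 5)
Your proposal is correct and follows essentially the same route as the paper: both interpret the left-hand side via Proposition \ref{mixed-Young-2D} as a sewing-map (Young) integral, and both reduce the convergence to the two facts $\|\der_i R^{i,n}-\der_i R^{i}\|_{2\gamma_i-\epsilon}\to 0$ (Lemma \ref{lemma:con-cov}) and $\E[\cn^{2}_{\gamma_1-\epsilon,\gamma_2-\epsilon}(y^{n}-y)]\to 0$, the latter proved exactly as you do, by Lemma \ref{lemma:h-p-r}, H\"older's inequality, Proposition \ref{proposition:conve-reg} and the uniform exponential moment bound \eqref{eq:exp-moments-xn}. Your explicit telescoping of the difference is simply an unpacking of the paper's one-line appeal to continuity of the sewing map, so the two arguments coincide in substance.
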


\begin{proof}
The integrals involved in \eqref{eq:lim-xn-x-covariance} are all of Young type. Owing to Proposition \ref{mixed-Young-2D}, we thus have:
$$
\int_{[0,1]^{2}}y^n_{s;t}d_{1}R^{1,n}_sd_{2}R^{2,n}_t=
\lc (\id-\Lambda_1\der_1)(\id-\Lambda_2\der_2) \rc (y^n\der_1R^{1,n}\der_2R^{2,n}).
$$
By continuity of the sewing map, the desired convergence will thus stem from the relations $\lim_{n\to 0} A^{1,n}=0$ and $\lim_{n\to 0} A^{2,n}=0$, where for $\epsilon>0$ we set:
\begin{equation*}
A^{1,n}:=\sum_{i=1}^{2} \|\der_{i}R^{i,n}-\der_{i} R^{i}\|_{2\gamma_i-\epsilon},
\quad\text{and}\quad
A^{2,n}:=\cn_{\gamma_1-\epsilon,\gamma_2-\epsilon}(y^n-y).
\end{equation*}

Now the relation $\lim_{n\to 0} A^{1,n}=0$ is obviously a direct consequence of Lemma ~\ref{lemma:con-cov}. As far as $A^{2,n}$ is concerned, we start from relation \eqref{eq:bnd-f(x1)-f(x2)} and apply H\"older's inequality. This yields
\begin{multline*}
\E[(\cn_{\gamma_1-\epsilon,\gamma_2-\epsilon}(y^n-y))^2] \\
\lesssim
\E^{1/4}[(c_{x^{1},x^{2}})^8] \,
\E^{1/2}[\|x^n-x\|^4_{\gamma_1-\epsilon,\gamma_2-\epsilon}] \,
\E^{1/4}[(1+N_{\rho_1,\rho_2}(x^n)+N_{\rho_1,\rho_2}(x))^{16}].
 \end{multline*} 
 Then according to Proposition~\ref{proposition:conve-reg} we see that the r.h.s of this last equation vanishes when $n$ goes to infinity, which proves our claim.
 
\end{proof}

We now compute the correction terms in $z^{1}$, that is the equivalent of Proposition~\ref{f-conv}.
\begin{proposition}\label{Th:contrac-rough1}
Let $x$ be a fBs with Hurst parameters $\ga_{1},\ga_{2}>1/3$.
Consider a function $\vp\in C^{8}(\mathbb R)$ satisfying condition (GC) and a rectangle $\Delta=[s_{1},s_{2}]\times[t_{1},t_{2}]$. Then we have that 
$y^{1}_{\cdot}\1_{\Delta}\in Dom(\der^{\diamond})$, and if we define the increment 
$z^{1,\di}\equiv\der^{\diamond}(y^{1}_{\cdot}\1_{\Delta})$ the following relation holds true: 
\begin{equation}\label{eq:correc-sko-strat-f-d12x}
z^{1,\di}_{s_{1}s_{2};t_{1}t_{2}}
=z^{1}_{s_{1}s_{2};t_{1}t_{2}}
-\gamma_1\gamma_2\int_{\Delta} y^{2}_{s;t}s^{2\gamma_1-1}t^{2\gamma_2-1}dsdt,
\end{equation}
where $z^{1}$ is the rough integral given by Theorem \ref{thm:rough-strato-intro}.
\end{proposition}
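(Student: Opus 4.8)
The plan is to transfer the Young-case correction formula of Proposition \ref{f-conv} from the regularized sheet $x^{n}$ to the genuine fractional Brownian sheet $x$ by a limiting argument, exploiting the closability of the divergence operator. First I would observe that for each fixed $n$ the covariance $R^{n}$ is smooth and factorizes as $R^{n}_{s_1s_2;t_1t_2}=R^{1,n}_{s_1s_2}R^{2,n}_{t_1t_2}$, so that Hypothesis \ref{hyp:Young} holds for $x^{n}$ and Proposition \ref{f-conv} applies. This is precisely relation \eqref{eq:correc-sko-strat-regularized}, namely
\begin{equation*}
\der^{x^{n},\diamond}(y^{n,1}_{\cdot}\1_{\Delta})
=\int_{\Delta}y^{n,1}_{s;t}\,d_{12}x^{n}_{s;t}
-\frac14\int_{\Delta}y^{n,2}_{s;t}\,d_{1}R^{1,n}_{s}\,d_{2}R^{2,n}_{t}.
\end{equation*}
Using Lemma \ref{lem:derivatives-Hx-HW}(iii) I would rewrite the left-hand side as $\der^{\hat W,\diamond}(K^{n}y^{n,1}\1_{\Delta})$, so that all divergences are now taken with respect to the fixed noise $\hat W$. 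The whole proof then reduces to identifying the $L^{2}(\Omega)$ limits of both sides and invoking the closability Lemma \ref{clos} for $\der^{\hat W,\diamond}$.

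For the right-hand side, the covariance term is handled by Proposition \ref{con-trace-term1} applied to $\vp^{(2)}$ in place of $\vp$; combined with the prefactor $-\tfrac14$ it converges in $L^{2}(\Omega)$ to the covariance correction appearing in \eqref{eq:correc-sko-strat-f-d12x}. The Stratonovich term $\int_{\Delta}y^{n,1}\,d_{12}x^{n}=z^{1,n}$ is the delicate one: by Proposition \ref{proposition:conve-reg} the regularizations $x^{n}$ converge to $x$ in $\cp_{1,1}^{\gamma_1-\ep,\gamma_2-\ep}$ in every $L^{p}(\Omega)$, while the associated rough sheets $\X^{n}$ converge to $\X$ in the norm $\cn$; invoking the continuity of the rough integral as a function of $\X$ (item (ii) of Theorem \ref{thm:rough-strato-intro}) yields $z^{1,n}\to z^{1}$. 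To promote this into an $L^{2}(\Omega)$ convergence I would use the uniform exponential moment bound \eqref{eq:exp-moments-xn} together with the growth condition (GC), which provide uniform control of all moments of the terms entering the decomposition \eqref{eq:strato-der-z1-rough}, hence uniform integrability.

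It remains to show that the integrands converge, i.e. that $K^{n}y^{n,1}\1_{\Delta}\to Ky^{1}\1_{\Delta}$ in $L^{2}(\Omega,L^{2}(\R^{2}))$. Since $K^{n}\phi=\1_{\{|\xi|,|\eta|\le n\}}K\phi$, I would split
\begin{equation*}
K^{n}y^{n,1}\1_{\Delta}-Ky^{1}\1_{\Delta}
=\1_{\{|\xi|,|\eta|\le n\}}K\big((y^{n,1}-y^{1})\1_{\Delta}\big)
-\big(1-\1_{\{|\xi|,|\eta|\le n\}}\big)Ky^{1}\1_{\Delta}.
\end{equation*}
The first term is bounded in $L^{2}(\R^{2})$ by $\|(y^{n,1}-y^{1})\1_{\Delta}\|_{\mathcal H^{x}}\lesssim\cn_{\gamma_1-\ep,\gamma_2-\ep}(y^{n,1}-y^{1})$, thanks to Lemma \ref{lem:derivatives-Hx-HW}(i) and Corollary \ref{prop:embbeding-cam}, and this tends to $0$ in $L^{2}(\Omega)$ by Lemma \ref{lemma:h-p-r} and Proposition \ref{proposition:conve-reg}, exactly as for the term $A^{2,n}$ in the proof of Proposition \ref{con-trace-term1}. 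The second term tends to $0$ in $L^{2}(\Omega)$ by dominated convergence, using that $\|Ky^{1}\1_{\Delta}\|_{L^{2}(\R^{2})}=\|y^{1}\1_{\Delta}\|_{\mathcal H^{x}}$ has finite $L^{2}(\Omega)$ norm under (GC). Feeding the two limits into Lemma \ref{clos} then gives $Ky^{1}\1_{\Delta}\in\dom(\der^{\hat W,\diamond})$ with $\der^{\hat W,\diamond}(Ky^{1}\1_{\Delta})$ equal to the desired right-hand side; translating back through Lemma \ref{lem:derivatives-Hx-HW}(iii) yields $y^{1}\1_{\Delta}\in\dom(\der^{x,\diamond})$ and formula \eqref{eq:correc-sko-strat-f-d12x}.

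The main obstacle is the $L^{2}(\Omega)$ convergence of the rough Stratonovich increment $z^{1,n}$ to $z^{1}$: the continuity statement in Theorem \ref{thm:rough-strato-intro} only delivers convergence along the explicit approximation (almost surely or in probability), and upgrading it to an $L^{2}$ statement, so that it can be combined with the integrand convergence inside the closability Lemma \ref{clos}, requires the uniform-in-$n$ moment estimates furnished by \eqref{eq:exp-moments-xn} and (GC) applied to each of the many terms of the expansion \eqref{eq:strato-der-z1-rough}.
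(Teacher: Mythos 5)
Your proposal follows essentially the same route as the paper's proof: apply the Young-case correction (Proposition \ref{f-conv}) to the regularization $x^{n}$, pass the divergence to $\hat W$ through $K^{n}$, identify the $L^{2}(\Omega)$ limit of the right-hand side via Proposition \ref{con-trace-term1} and the continuity of the rough integral, prove $K^{n}y^{n,1}\to Ky^{1}$ in $L^{2}(\Omega,L^{2}(\R^{2}))$ by splitting into the $\mathcal H$-norm difference (controlled by Corollary \ref{prop:embbeding-cam}, Lemma \ref{lemma:h-p-r} and Proposition \ref{proposition:conve-reg}) and a high-frequency tail, and conclude by the closability Lemma \ref{clos}. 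Your added care in upgrading the convergence $z^{1,n}\to z^{1}$ to $L^{2}(\Omega)$ via the uniform exponential moment bound \eqref{eq:exp-moments-xn} and condition (GC) is a point the paper glosses over, but it does not change the argument.
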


\begin{proof}
Let us start from the corrections for the regularized process $x^{n}$, for which we can appeal to Proposition~\ref{f-conv}. We obtain relation \eqref{eq:correc-sko-strat-regularized}, written here again for convenience:
\begin{equation}\label{eq:correc-sko-strat-regularized-2}
z^{1,n,\di}_{s_{1}s_{2};t_{1}t_{2}}
\equiv \der^{x^{n},\diamond}(y_{\cdot}^{n,1}\1_{\Delta})
=\int_{\Delta}y^{n,1}_{s;t}d_{12}x^{n}_{s;t}
-\frac14\int_{\Delta}y^{n,2}_{s;t}d_{1}R^{1,n}_{s}d_{2}R^{2,n}_{t}.
\end{equation}
Now putting together Proposition~\ref{con-trace-term1}  and the continuity of the rough path integral, we get convergence of the r.hs of  \eqref{eq:correc-sko-strat-regularized-2} in $L^2(\Omega)$. Thus one can write, in the a.s and $L^{2}(\Omega)$ sense:
$$
\lim_{n\to+\infty}\der^{x^{n},\diamond}(y^n)
=\int_{\Delta} y^{1}_{s;t}d_{12}x_{s;t}
-\gamma_1\gamma_2\int_{\Delta}y^{2}_{s;t}s^{\gamma_1-1}t^{\gamma_2-1} \, dtds,
$$
where the integral with respect to $x$ is interpreted in the sense of Theorem \ref{thm:rough-strato-intro}.

\smallskip

Let us further analyze the convergence of $\der^{x^{n},\diamond}(y^n)$: recall that this quantity can be written as $\der^{\hat W,\diamond}(K^ny^n)$, where $K^n$ is defined by \eqref{eq:def-Kn} or specifically as
\begin{equation}\label{eq:Kn-for-fBm}
(K^{n}y^n)(\xi,\eta)=
\frac{\imath \xi \, \imath \eta}{|\xi|^{\gamma_1+1/2}|\eta|^{\gamma_2+1/2}}\left(\int_{\Delta}y^n_{uv}e^{\imath \xi u+\imath \eta v}dudv\right)\1_{(|\xi|,|\eta|\leq n)}
\end{equation}
Hence, owing to closability of the operator $\der^{\hat W,\diamond}$, the proof of \eqref{eq:correc-sko-strat-f-d12x} is reduced to show that $K^{n}y^{n,1}$ converges in $L^{2}(\Omega; L^{2}(\Delta))$ to $Ky^{1}$. Now expression \eqref{eq:Kn-for-fBm} easily entails that
\begin{equation*}
\begin{split}
\left|\left|K^{n}y^{n,1}-Ky\right|\right|_{L^2(\mathbb R)}
&\leq \|y^{n,1}-y^{1}\|_{\mathcal H}
\\&+\int_{|\xi|,|\eta|\geq n} |\xi|^{1-2\gamma_1}|\eta|^{1-2\gamma_2}\left|\int_{\Delta}y^{1}_{u;v}e^{\imath \xi u+\imath \eta v}dudv\right|^2d\xi d\eta,
\end{split}
\end{equation*}
and we shall bound the 2 terms on the r.h.s of this inequality.

\smallskip

Indeed, on the one hand we  consider $\gamma_1,\gamma_2>1/4$ and  $\epsilon>0$ small enough. This gives 
\begin{multline*}
\E\left[ \int_{\|(\xi,\eta)\|_\infty\geq n}|\xi|^{1-2\gamma_1}|\eta|^{1-2\gamma_2}\left|\int_{\Delta}y_{u;v}e^{\imath \xi u+\imath \eta v}dudv\right|^2d\xi d\eta\right] \\
\lesssim n^{-\epsilon} \, 
\E\left[(\cn_{\gamma_1-\epsilon,\gamma_2-\epsilon}(y))^2\right]
\stackrel{^{n\to+\infty}}{\longrightarrow}0.
\end{multline*}
On the other hand,  Corollary ~\ref{prop:embbeding-cam}  asserts that
$\|y-y^n\|_{\mathcal H}\lesssim 
\cn_{\gamma_1-\epsilon,\gamma_2-\epsilon}(y^n-y)$, and the r.h.s of this relation vanishes as $n\to\infty$ thanks to Proposition~\ref{proposition:conve-reg}. This concludes our proof.  

 \end{proof}

In order to complete our comparison between It\^o and Stratonovich formulae, we still have to compare the Skorohod type increment $z^{2,\di}$ and the rough integral $z^{2}$. As a previous step, let us give an intermediate result concerning some mixed integrals in $R,x$: 
\begin{proposition}\label{prop:conv-trace-terms-3}
Let $\vp\in C^6(\mathbb R)$ and recall that for the fractional Brownian sheet $x$ we have $R^i_u=u^{2\gamma_i}$ for $i=1,2$. Then the integral
\begin{equation}\label{eq:trace-term1}
\int y^{1}R^1 \, d_{1}x \, d_{2}R^2  \\
=\lc(\id-\Lambda_1\der_1)(\id-\Lambda_2\der_2)\rc\!
\lp y^{1}R^1\der_1x\der_2R^2+1/2y^{2}R^1(\der_1x)^2\der_2R^2 \rp
\end{equation}
is well defined a.s, in the sense of Proposition ~\ref{proposition:integ-2d}. Moreover the following convergence takes place in $L^2(\Omega)$:
\begin{equation}\label{eq:cvgce-regul-mixed-R-x}
\lim_{n\to+\infty}\int_{\Delta}y^{1,n}_{uv}R^{1,n}_ud_{2}x^n_{uv}d_{2}R^{2,n}_v
=\int_{\Delta}y^{1}_{u;v}R^1_ud_{2}x_{u;v}d_{2}R^2_v.
\end{equation}
Finally, the same kind of result is still verified when one interchanges directions 1 and 2 in relation \eqref{eq:trace-term1}.
\end{proposition}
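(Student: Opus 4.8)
The plan is to establish the two assertions of the statement separately: first that the right-hand side of \eqref{eq:trace-term1} defines an increment almost surely through the sewing map of Proposition \ref{proposition:integ-2d}, and second that this increment is the $L^{2}(\Omega)$ limit of its regularized counterparts. The guiding observation is that the driving noise $x$ enters \eqref{eq:trace-term1} only through the direction-1 differential $d_{1}x$, while direction 2 is integrated against the $C^{1}$ function $R^{2}$. Consequently all the roughness is concentrated in direction 1, where we face a one-dimensional rough integral of $y^{1}R^{1}$ against $dx$; this is precisely the role of the correction $\frac12 y^{2}R^{1}(\der_1 x)^2$, which plays the part of the symmetric area term $\int_1 d_1 x\, d_1 x=\frac12(\der_1 x)^2$ of the geometric rough path above $x$ in the first direction.

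To check well-definedness I would write $g=B\,\der_2 R^2$ with $B=y^{1}R^{1}\der_1 x+\frac12 y^{2}R^{1}(\der_1 x)^2$, and verify the three hypotheses of Proposition \ref{proposition:integ-2d}. In direction 1, using the product rule \eqref{eq:difrul-C1-C2} together with $\der_1\der_1 x=0$ and the elementary identity $\der_1\big((\der_1 x)^2\big)=2\,\der_1 x\,\der_1 x$, one finds
\begin{equation*}
\der_1 B = -\der_1(y^{1}R^{1})\,\der_1 x - \tfrac12\der_1(y^{2}R^{1})\,(\der_1 x)^2 + y^{2}R^{1}\,\der_1 x\,\der_1 x .
\end{equation*}
A first-order Taylor expansion gives $\der_1 y^{1}=y^{2}\,\der_1 x+r^{y^{1}}$ with $r^{y^{1}}\in\cp_{2,1}^{2\ga_1,*}$, so that the leading contribution $-y^{2}R^{1}\,\der_1 x\,\der_1 x$ coming from the first term cancels exactly the term $y^{2}R^{1}\,\der_1 x\,\der_1 x$ produced by the correction. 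The surviving increments all carry regularity at least $3\ga_1$ in direction 1 (the remainder $r^{y^{1}}$ contributing $2\ga_1+\ga_1$, and the $\der_1 R^1$ pieces contributing $1+\ga_1$), whence $\der_1 g\in\cp_{3,2}^{3\ga_1,*}$ with $3\ga_1>1$. In direction 2 the situation is simpler: since $\der_2\der_2 R^2=0$ and $R^2\in C^1$, a further application of \eqref{eq:difrul-C1-C2} yields $\der_2 g=-\der_2 B\,\der_2 R^2\in\cp_{2,3}^{*,\ga_2+1}$, the smooth factor $\der_2 R^2$ contributing a full unit of regularity. Combining the two computations gives $\der g\in\cp_{3,3}^{3\ga_1,\ga_2+1}$, so that all three conditions of Proposition \ref{proposition:integ-2d} hold almost surely (recall that $x\in\cp_{1,1}^{\ga_1-\epsilon,\ga_2-\epsilon}$ a.s. with $\ga_1-\epsilon>1/3$), and the integral is well defined.

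For the convergence I would express both sides of \eqref{eq:cvgce-regul-mixed-R-x} through the sewing map applied respectively to $g^{n}=y^{1,n}R^{1,n}\der_1 x^{n}\der_2 R^{2,n}+\frac12 y^{2,n}R^{1,n}(\der_1 x^{n})^2\der_2 R^{2,n}$ and to $g$. Since $x^{n}$ is smooth, the left-hand side of \eqref{eq:cvgce-regul-mixed-R-x} is an ordinary Riemann–Stieltjes integral, which by Proposition \ref{proposition:integ-2d} coincides with the sewing expression built from $g^{n}$ (the correction term contributes a Riemann sum of order $|\pi_1|$ that vanishes in the limit). The claim then reduces, by continuity of $\Lambda_1,\Lambda_2,\Lambda$ (Proposition \ref{proposition:planar Lambda}), to the convergence $g^{n}\to g$ in the relevant H\"older norms, which in turn follows block by block: $x^{n}\to x$, and hence $\der_1 x^{n}\to\der_1 x$ and $(\der_1 x^{n})^2\to(\der_1 x)^2$, in $\cp^{\ga_1-\epsilon,\ga_2-\epsilon}$ by Proposition \ref{proposition:conve-reg}; $R^{i,n}\to R^{i}$ by Lemma \ref{lemma:con-cov}; and $y^{j,n}\to y^{j}$ via the composition estimate of Lemma \ref{lemma:h-p-r}. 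To upgrade the resulting almost sure convergence to $L^{2}(\Omega)$, one telescopes the differences of products, applies Cauchy–Schwarz, and controls each factor through the uniform moment bounds \eqref{eq:exp-moments-xn} and condition (GC), exactly as in the proofs of Propositions \ref{f-conv} and \ref{con-trace-term1}.

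The interchange of directions 1 and 2 in \eqref{eq:trace-term1} is handled verbatim after swapping the roles of $(\ga_1,R^1)$ and $(\ga_2,R^2)$, the correction term becoming $\frac12 y^{2}R^{2}(\der_2 x)^2$. I expect the main obstacle to be the direction-1 well-definedness, namely the algebraic verification that the $2\ga_1$-order contributions to $\der_1 g$ cancel once the area correction is included and that the surviving remainders genuinely land in $\cp_{3,2}^{3\ga_1,*}$; this is where the geometric nature of the rough path (through $\int_1 d_1 x\,d_1 x=\frac12(\der_1 x)^2$) and the smoothness of $\vp$ are really used. By comparison, the convergence step is routine, following the template of Section \ref{sec:young-vs-malliavin}.
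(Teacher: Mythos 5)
Your overall two-step strategy (well-definedness via Proposition \ref{proposition:integ-2d}, then convergence via continuity of the sewing maps) is the paper's, and your direction-1 computation of $\doo B$ with the Taylor cancellation is essentially the paper's computation of $\doo A=z\,\der_2R^2$. But there is a genuine gap at the third hypothesis of Proposition \ref{proposition:integ-2d}: you never establish the joint condition $\der g\in\cp_{3,3}^{\ga'_1,\ga'_2}$ with \emph{both} exponents exceeding $1$. ``Combining the two computations'' is not an argument here, because $\der g=\dt(\doo g)$, and your direction-1 analysis says nothing about how $\dt$ acts on $\doo g$. Concretely, $\doo g$ contains the remainder term $r^{y^{1}}R^{1}\der_1x\,\der_2R^{2}$ with $r^{y^{1}}=\doo y^{1}-y^{2}\doo x$, so in $\der g$ one must bound $\dt r^{y^{1}}$ and show it is small \emph{jointly}, i.e. $|\dt r^{y^{1}}_{s_1s_2;t_1t_2}|\lesssim|s_2-s_1|^{2\ga_1-\ep}|t_2-t_1|^{\ga_2-\ep}$. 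This does not follow from $r^{y^{1}}\in\cp_{2,1}^{2\ga_1,*}$: a direction-2 increment can destroy the direction-1 smallness. The paper's proof spends most of its length on exactly this point: it computes $\der A=-(A_1+A_2)\der_2R^{2}$, and for the dangerous piece $A_1=\der_2h\,R^{1}\der_1x$ it invokes the integral representation \eqref{eq:repres-h-taylor} of the Taylor remainder $h$, so that $\dt h$ can be differentiated explicitly and seen to carry simultaneously a factor $(\doo x)^{2}$ and a direction-2 increment of order $|t_2-t_1|^{\ga_2-\ep}$ (this is also where derivatives of $\vp$ beyond the second are genuinely used). The same joint estimate resurfaces in your convergence step: continuity of $\laa_1,\laa_2,\laa$ requires convergence of $\der(A^{n}-A)$ in $\cp_{3,3}^{3\ga_1-\ep,3\ga_2-\ep}$, which the paper reduces to relation \eqref{eq:conv-2-taylor-xn-x}, i.e. convergence of $\dt(h^{n}-h)$ in the mixed H\"older norm; your ``block by block'' convergence of the individual factors does not deliver this.

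A secondary, repairable error: you claim twice that $R^{2}\in C^{1}$ so that $\der_2R^{2}$ contributes ``a full unit of regularity'' (and likewise that the $\der_1R^{1}$ pieces contribute $1+\ga_1$). In the rough regime this is false: $R^{i}_u=u^{2\ga_i}$ with $\ga_i\le 1/2$ has derivative $2\ga_i u^{2\ga_i-1}$ blowing up at $u=0$, so $\der_iR^{i}$ is only $2\ga_i$-H\"older on $[0,1]$. Your conclusions survive because $2\ga_i>2/3$, so the correct exponents ($3\ga_2-\ep$ instead of $\ga_2+1$, $3\ga_1$ and $4\ga_1$ instead of $1+\ga_1$) still exceed $1$, but the exponents as you state them are wrong, and an argument resting on $C^{1}$ regularity of $R^{i}$ would not extend to the degenerate behaviour at the origin.
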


\begin{proof}
Let us first check that the integral in \eqref{eq:trace-term1} is well-defined in the sense of Proposition~\ref{mixed-Young-2D}. To this aim, set $A=y^{1}R^1 \der_1x \,\der_2R^2+1/2y^{2}R^1(\der_1x)^2\der_2R^2$. Then a simple application of Proposition \ref{prop:difrul} yields $\der_1A= z\, \der_2R^2$, with
\begin{equation*}
z
=\left(y^{2}\der_1x-\der_1y^{1}\right) R^1\der_1x-1/2\der_1(y^{2}R^1)(\der_1x)^2-y^{1}\der_1R^1\der_1x.
\end{equation*}
It is thus easily seen that $\der_1A\in \cp_{3,2}^{3\gamma_1-\epsilon,2\gamma_2}$ for an arbitrary small $\epsilon$, thanks to the fact that  $x\in\cp_{1,1}^{\gamma_1-\epsilon,\gamma_2-\epsilon}$ and $\der_1y^{1}-y^{2}\der_1x\in\cp_{2,1}^{2\gamma_1-\epsilon,\gamma_2}$ almost surely. Notice that with the same kind of considerations we also  have that $\der_2A\in\cp_{2,3}^{\gamma_1,3\gamma_2-\epsilon}$.

\smallskip

Let us now compute $\der A$: we have 
\begin{equation*}
\der A=-\der_2z \, \der_2R^2 = -\lp A_1+A_2 \rp \der_2R^2 ,
\end{equation*}
where
\begin{equation*}
A_{1} = g \, R^{1} \der_{1} x,
\quad\text{with}\quad
g= \der y^{1}-\der_2 y^{2}\der_1x-y^{2}\der x,
\end{equation*}
and where setting $(\der_1x\circ_1\der x)_{s_1s_2; t_1t_2}=\der_1x_{s_1s_2t_1}\der x_{s_1s_2; t_1t_2}$ similarly to Definition \ref{def:g-circ-h}, we have
\begin{multline*}
A_{2}=
\der_2y^{1}\der_1R^1\der_1x
-\{\der_1y^{1}-y^{2}\der_1x\}R^1\der x-1/2\der y^{2}(\der_1x)^2
\\
-1/2\der_1y^{2}\{\der x\circ_1\der_1x+\der_1x\circ_1\der x\}.
\end{multline*}
The reader can now easily check that $A_{2}\in\cp_{2,2}^{3\gamma_1-\epsilon,\gamma_2-\epsilon}$. In order to check the regularity of $A_1$, observe that $g$ is of the form $g=\der_{2}h$, with
\begin{eqnarray}\label{eq:repres-h-taylor}
h_{s_1s_2;t}&:=&
(\der_1y^{1}_{s_1s_2;t}-y^{2}_{s_1;t})\der_1x_{s_2s_2;t}  \notag\\
&=&\left(\int_0^1d\theta \, \theta\int_0^1d\theta'y^2(x_{s_1;t}+\theta\theta'\der_1x_{s_1s_2;t})\right)(\der_1x_{s_1s_2;t})^2.
\end{eqnarray}
Computing $\der_2 h$ with formula \eqref{eq:repres-h-taylor}, one obtains that  $A^1=\der_2hR^1\der_1x\in\cp_{2,2}^{3\gamma_1-\epsilon,\gamma_2-\epsilon}$.

\smallskip

Let us summarize our last considerations: we have seen that both $A_{1}$ and $A_{2}$ lye into $\cp_{2,2}^{3\gamma_1-\epsilon,\gamma_2-\epsilon}$, and recalling that  $\der A= -( A_1+A_2 ) \der_2R^2$, we obtain  $\der A\in \cp_{3,3}^{3\gamma_1-\epsilon,3\gamma_2-\epsilon}$. We have also checked that $\der_1A\in \cp_{3,2}^{3\gamma_1-\epsilon,2\gamma_2}$ and $\der_2A\in\cp_{2,3}^{2\gamma_1,3\gamma_2-\epsilon}$. Gathering all this information, we have checked the assumptions of Proposition~\ref{proposition:integ-2d} for the increment $A$, which justifies expression \eqref{eq:trace-term1}.

\smallskip

Now we focus on the convergence formula \eqref{eq:cvgce-regul-mixed-R-x}. We start by observing that for all $n\ge 1$ the following representation holds true: 
$$
\int_{\Delta}y^{n,1}_{uv}R^{1,n}_ud_{2}x^n_{uv}d_{2}R^{2,n}_v=
\lc (\id-\Lambda_1\der_1)(\id-\Lambda_2\der_2)\rc\!A^n
$$
with $A^n=y^{n,1}R^{1,n}\der_2R^{2,n}+1/2y^{n,1}R^{1,n}(\der_1x^n)^2\der_2R^{2,n}$. Hence, owing to the continuity of the planar  sewing-maps $(\Lambda_i)_{i=1,2}$ and $\Lambda$, our claim  \eqref{eq:cvgce-regul-mixed-R-x} is reduced to prove that  the sequences $\|A^n-A\|_{\gamma_1-\epsilon,2\gamma_2-\epsilon},\|\der_1(A^n-A)\|_{3\gamma_1-\epsilon,2\gamma_2-\epsilon},\|\der_2(A^n-A)\|_{\gamma_1-\epsilon,3\gamma_2-\epsilon}$ and $\|\der(A^n-A)\|_{3\gamma_1-\epsilon,3\gamma_2-\epsilon}$ converge in $L^2(\Omega)$ and almost surely to $0$. Furthermore, it is readily checked that those convergences all stem from the relations
\begin{equation}\label{eq:conv-1-Rn-R}
\lim_{n\to 0}
\|R^{i,n}-R"\|_{2\gamma_i-\epsilon}
+\sum_{i=0}^{5}\cn_{\gamma_1-\epsilon,\gamma_2-\epsilon}(y^{n,i}-y^i)
+\|(\der_1x^n)^2-(\der_1x)^2\|_{2\gamma_1-\epsilon,1}
=0
\end{equation}
 and 
\begin{equation}\label{eq:conv-2-taylor-xn-x}
\lim_{n\to 0}
\|\der_2\lp h-h^{n} \rp\|_{2\gamma_1-\epsilon,\gamma_2-\epsilon}=0,
\quad\text{with}\quad
h^{n}=\der_1y^{n,1}-y^{n,2}\der_1x^{n},
\end{equation}
where the limits take place in some $L^{p}(\Omega)$ with a sufficiently large $p$, and where we recall that $h$ is defined by \eqref{eq:repres-h-taylor}.
We now turn to the proof of those two relations.

\smallskip

To begin with, note that the convergence~\eqref{eq:conv-1-Rn-R} easily stems from Lemma~\ref{lemma:con-cov} for the terms $R$, Proposition \ref{proposition:conve-reg} for the terms $x$ and Lemma \ref{lemma:h-p-r} for the terms $y$. In order to prove \eqref{eq:conv-2-taylor-xn-x}, we invoke again the integral representation \eqref{eq:repres-h-taylor} for both $h^{n}$ and $h$. Then some elementary considerations (omitted here for sake of conciseness) allow to reduce the problem to the following relation:
\begin{equation*}
L^p(\Omega)-\lim_{n\to\infty}\lp
\E[\cn_{\gamma_1-\epsilon,\gamma_1-\epsilon}^{p}(x^n-x)]
+
\sum_{i=0}^{3}\E[\cn_{\gamma_1-\epsilon,\gamma_2-\epsilon}^{p}(y^{n,i}-y^i)]
\rp
=0.
\end{equation*}
This last relation is a direct consequence of Proposition \ref{proposition:conve-reg} and composition with non linearities, whenever $f$ satisfies the growth condition (GC) with a small parameter $\lambda>0$. The proof is now finished.
  
\end{proof}

We can now state our result concerning the It\^o-Stratonovich correction for the mixed stochastic integral $\int y d_{1}xd_{2}x$:
\begin{theorem}\label{theorem:conv-1}
Let $x$ be a fBs with Hurst parameters $\ga_{1},\ga_{2}>1/3$.
Consider a function $\vp\in C^{8}(\mathbb R)$ satisfying condition (GC) and a rectangle $\Delta=[s_{1},s_{2}]\times[t_{1},t_{2}]$. Then we have that $N(y^{2})\in Dom(\der^{\diamond,2})$, and if we define the Skorohod integral $z^{2,\di}$ as $\der^{\diamond,2}(N(y^{2}))$,  the following particular case of relation~\eqref{eq:strato-skoro-mixed-intg} holds: 
\begin{multline}\label{eq:correc-ito-strato-mixed-fBs}
z^{2,\di}_{s_{1}s_{2};t_{1}t_{2}}
=z^{2}_{s_{1}s_{2};t_{1}t_{2}}
-\gamma_1\gamma_2\int_{\Delta}y^{2}_{u;v}u^{2\gamma_1-1}v^{2\gamma_2-1}dudv 
-\gamma_2\int_{\Delta}y^{3}_{u;v}u^{2\gamma_1}v^{2\gamma_2-1}dvd_{1}x_{u;v} \\
-\gamma_1\int_{\Delta}y^{3}_{u;v}u^{2\gamma_1-1}v^{2\gamma_2}dud_{2}x_{u;v} 
+\gamma_1\gamma_2\int_{\Delta}y^{4}_{u;v}u^{4\gamma_1-1}v^{4\gamma_2-1}dudv.
\end{multline}
\end{theorem}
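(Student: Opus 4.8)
The plan is to follow the regularization scheme already used for the first order correction in Proposition~\ref{Th:contrac-rough1}, transposed to the second order Skorohod integral $\der^{\di,2}$. First I would fix the frequency cutoff regularization $x^{n}$ defined by \eqref{eq:reg-eq1}; since its covariance $R^{n}=R^{1,n}R^{2,n}$ satisfies Hypothesis~\ref{hyp:Young}, Proposition~\ref{proposition:contrac 2} applies verbatim to $x^{n}$ and produces the regularized comparison
\begin{multline*}
z^{2,n,\di}_{s_{1}s_{2};t_{1}t_{2}}
= z^{2,n}_{s_{1}s_{2};t_{1}t_{2}}
- \tfrac14\int_{\Delta}y^{n,2}d_{1}R^{1,n}d_{2}R^{2,n}
- \tfrac12\int_{\Delta}y^{n,3}R^{1,n}d_{2}R^{2,n}d_{1}x^{n} \\
- \tfrac12\int_{\Delta}y^{n,3}R^{2,n}d_{1}R^{1,n}d_{2}x^{n}
+ \tfrac14\int_{\Delta}y^{n,4}R^{1,n}R^{2,n}d_{1}R^{1,n}d_{2}R^{2,n},
\end{multline*}
where $z^{2,n,\di}=\der^{x^{n},\di,2}(N(y^{n,2}))$. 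Rewriting every divergence with respect to $\hat W$ as in Lemma~\ref{lem:second-derivatives-Hx-HW}, the left hand side equals $\der^{\hat W,\di,2}(K^{n,\otimes 2}N(y^{n,2}))$, and the whole argument reduces to passing to the limit $n\to\infty$ in this identity.

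For the right hand side I would take $L^{2}(\Omega)$ limits term by term. The Stratonovich term $z^{2,n}$ converges to the rough integral $z^{2}$ by continuity of the map $\X\mapsto z^{2}$ from Theorem~\ref{thm:rough-strato-intro} together with the convergence of the rough sheet $\X^{n}\to\X$; this is where $\vp\in C^{8}$ is needed. The pure covariance term $-\tfrac14\int y^{n,2}d_{1}R^{1,n}d_{2}R^{2,n}$ converges by Proposition~\ref{con-trace-term1} applied to $\vp^{(2)}$, yielding the weight $u^{2\ga_{1}-1}v^{2\ga_{2}-1}$; the two mixed terms $\int y^{n,3}R^{i,n}d_{j}R^{j,n}d_{i}x^{n}$ converge by Proposition~\ref{prop:conv-trace-terms-3} and its version with directions $1,2$ interchanged, applied to $\vp^{(3)}$, giving the $\ga_{2}$ and $\ga_{1}$ terms; and the final term, carrying the extra factor $R^{1,n}R^{2,n}\to u^{2\ga_{1}}v^{2\ga_{2}}$, is treated exactly as in Proposition~\ref{con-trace-term1} and contributes the weight $u^{4\ga_{1}-1}v^{4\ga_{2}-1}$. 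Collecting the four limits reconstitutes precisely the right hand side of \eqref{eq:correc-ito-strato-mixed-fBs}.

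It then remains to transfer this convergence to the left hand side through the closability of the second order divergence $\der^{\hat W,\di,2}$, that is the tensorized form of Lemma~\ref{clos}. The previous step already shows that $\der^{\hat W,\di,2}(K^{n,\otimes 2}N(y^{n,2}))$ converges in $L^{2}(\Omega)$, so it suffices to prove that the kernels converge, namely $K^{n,\otimes 2}N(y^{n,2})\to K^{\otimes 2}N(y^{2})$ in $L^{2}(\Omega;L^{2}(\R^{4}))$. Closability then gives $K^{\otimes 2}N(y^{2})\in\dom(\der^{\hat W,\di,2})$, and Lemma~\ref{lem:second-derivatives-Hx-HW} identifies the limit as $\der^{x,\di,2}(N(y^{2}))=z^{2,\di}$, which in particular shows $N(y^{2})\in\dom(\der^{\di,2})$. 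The kernel convergence is the tensor product analogue of the computation in Proposition~\ref{Th:contrac-rough1}: one splits $K^{n,\otimes 2}N(y^{n,2})-K^{\otimes 2}N(y^{2})$ into a low frequency piece controlled by $\|N(y^{n,2})-N(y^{2})\|_{(\ch^{x})^{\otimes 2}}$ and a high frequency tail, bounds the former by a tensorized version of the embedding Corollary~\ref{prop:embbeding-cam} together with the convergence $y^{n,2}\to y^{2}$ furnished by Lemma~\ref{lemma:h-p-r} and Proposition~\ref{proposition:conve-reg}, and bounds the tail by a cutoff estimate that decays because $\ga_{1},\ga_{2}>1/3>1/4$.

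I expect the main obstacle to be exactly this last kernel convergence in $L^{2}(\Omega;(\ch^{x})^{\otimes 2})$. The difficulty is twofold: one must control the $(\ch^{x})^{\otimes 2}$ norm of the mixed object $N(y^{2})$, whose overlapping indicator structure $N(y)_{u'u;vv'}=y\,\1_{[0,s]\times[0,v]}(u,v')\,\1_{[0,u]\times[0,t]}(u',v)$ couples the two tensor slots and forces a careful tensorization of the Sobolev embedding of Lemma~\ref{lemma: Sobolev}; and the high frequency tail has to be estimated in the fourth order spectral norm, where integrability of the weights $|\xi|^{1-2\ga_{1}}|\eta|^{1-2\ga_{2}}$ against the Fourier transform of $y^{2}$ relies on $\ga_{i}>1/4$ and, to upgrade almost sure convergence to $L^{2}(\Omega)$ convergence by dominated convergence, on the uniform exponential moment bound \eqref{eq:exp-moments-xn}.
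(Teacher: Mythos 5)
Your proposal is correct and follows essentially the same route as the paper's proof: apply Proposition~\ref{proposition:contrac 2} to the regularized sheet $x^{n}$, pass to the limit term by term on the right hand side via Propositions~\ref{con-trace-term1}, \ref{prop:conv-trace-terms-3} and the continuity of the rough integral $z^{2}$, then reduce the left hand side to the kernel convergence $K^{n,\otimes 2}N(y^{n,2})\to K^{\otimes 2}N(y^{2})$ in $L^{2}(\Omega;L^{2}(\R^{4}))$ and conclude by closability of $\der^{\hat W,\di,2}$. You also correctly anticipate the paper's main technical step, namely the frequency splitting of the kernel difference (the paper's terms $I_{1}^{n},I_{2}^{n},I_{3}^{n}$) estimated through the tensorized embedding of Corollary~\ref{prop:embbeding-cam} together with Lemma~\ref{lemma:h-p-r} and Proposition~\ref{proposition:conve-reg}.
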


\begin{proof}
We follow the same strategy as for Theorem~\ref{Th:contrac-rough1}: apply first Proposition~\ref{proposition:contrac 2} for the regularized process $x^n$, which yields: 
 \begin{align}\label{eq:reg-contrac-2}
&\int_{\Delta}y^{n,2}_{u;v}d^\diamond_{1}x^n_{uv}d^\diamond_{2}x^n_{uv}
=\int_{\Delta}y^{n,2}_{u;v}d_{1}x^n_{u;v}d_{2}x^n_{uv}
-1/4\int_{\Delta}y^{n,2}_{u;v}d_{1}R^{1,n}_ud_{2}R^{2,n}_v\notag\\
&\hspace{2.5cm}-1/2\int_{\Delta}y^{n,3}_{u;v}R^{1,n}_ud_{2}R^{2,n}_vd_{1}x_{u;v}
-1/2\int_{\Delta}y^{n,3}_{u;v}R^{2,n}_vd_{1}R^{1,n}_ud_{2}x^n_{uv} \notag\\
&\hspace{2.5cm}+
1/4\int_{\Delta}y^{n,4}_{u;v}R^{1,n}_uR^{2,n}_vd_{1}R^{1,n}_ud_{2}R^{n,2}_v.
\end{align} 
Now our preliminary results allow to take limits in relation \eqref{eq:reg-contrac-2}. Indeed, owing to Propositions~\ref{con-trace-term1} and~\ref{prop:conv-trace-terms-3} plus the continuity of the rough increment $z^{2}$ given at Theorem \ref{thm:rough-strato-intro}, we obtain the convergence in $L^2(\Omega)$ for the four first terms in the r.h.s of  equation~\eqref{eq:reg-contrac-2}. Moreover the last term also converges in $L^2(\Omega)$, thanks to the same arguments as in the proof of the Proposition~\ref{con-trace-term1}. We thus get the convergence of the r.h.s of equation~\eqref{eq:reg-contrac-2} to the r.h.s  of equation~\eqref{eq:correc-ito-strato-mixed-fBs}, and also the fact that $z^{2,\di}$ converges in $L^2(\Omega)$. Like in the proof of Theorem \ref{Th:contrac-rough1}, the proof of~\eqref{eq:correc-ito-strato-mixed-fBs} is thus reduced to show the $L^{2}$ convergence of the integrand defining $z^{2,\di}$.

\smallskip

However, mimicking again the proof of Theorem \ref{Th:contrac-rough1}, it is easily seen that
$$
\int_{0}^{1}\int_{0}^{1}y^{n,2}_{u;v}d^\diamond_{1}x^n_{uv}d^\diamond_{2}x^n_{uv}
:=\der^{x^n,\diamond,2}\lp N(y^{n,2}) \rp
=\der^{\hat W,\diamond,2}\lp K^{n,\otimes2}N(y^{n,2}) \rp,
$$  
where we recall that  $K^{\otimes2}$ is defined by \eqref{eq:def-K2} and
$$
\lc K^{n,\otimes2}\phi\rc \!(x_{1}x_{2};y_{1}y_{2})=
\1_{\{|x_{1}|, |x_{2}|, |y_{1}|, |y_{2}|\leq n\}} \, \lc K^{\otimes2}\phi\rc \!(x_{1}x_{2};y_{1}y_{2}).
$$
It thus remains to show that $K^{n,\otimes2}(N(y^{n,2}))$ converges to $K^{\otimes2}(N(y^{2})$ in $L^2(\Omega,L^2(\mathbb R^4))$. Towards this aim, we introduce the further notation $u_{s;t}(\xi,\eta)=y^{2}_{s;t}(e^{\imath s\xi}-1)(e^{\imath t\eta}-1)$,  $u^n_{s;t}(\xi,\eta)=y^{n,2}_{s;t}(e^{\imath s\xi}-1)(e^{\imath t\eta}-1)$ and 
\begin{eqnarray*}
\hat{u}(\xi_1,\xi_2;\eta_1,\eta_2)&=&\int_{\Delta}u_{s;t}(\xi_1,\eta_1)e^{\imath s\xi_2+\imath t\eta_2}dsdt
\\
\hat{u}^{n}(\xi_1,\xi_2;\eta_1,\eta_2)&=&\int_{\Delta}u^{n}_{s;t}(\xi_1,\eta_1)e^{\imath s\xi_2+\imath t\eta_2}dsdt.
\end{eqnarray*}
Then note that 
$$
\|(K^n)^{\otimes2}(N(y^n))-K^{\otimes2}(N(y)\|^2_{L^2(\mathbb R^4)}\leq I_1^n+I_2^n+I_3^n,
$$ 
where 
\begin{eqnarray*}
I_1^n&=&\int_{|\xi_1|,|\eta_1|\geq n}
\left(\int_{\mathbb R^2}|\xi_2|^{1-2\gamma_1}|\eta_2|^{1-2\gamma_2}
\left|\hat{u}(\xi_1,\xi_2;\eta_1,\eta_2)\right|^2d\xi_2d\eta_2\right)
\frac{d\xi_1d\eta_1}{|\xi_1|^{2\gamma_1+1}|\eta_1|^{2\gamma_2+1}} \\
I_2^n&=&\int_{\mathbb R^2}
\left(\int_{|\xi_2|,|\eta_2|\geq n}|\xi_2|^{1-2\gamma_1}|\eta_2|^{1-2\gamma_2}
\left|\hat{u}(\xi_1,\xi_2;\eta_1,\eta_2)\right|^2d\xi_2d\eta_2\right)
\frac{d\xi_1 d\eta_1}{|\xi_1|^{2\gamma_1+1}|\eta_1|^{2\gamma_2+1}}
\end{eqnarray*}
and
\begin{multline*}
I_3^n 
=\int_{\mathbb R^2}
\left(\int_{\mathbb R^2}|\xi_2|^{1-2\gamma_1}|\eta_2|^{1-2\gamma_2}
\left|\hat{u}(\xi_1,\xi_2;\eta_1,\eta_2)-\hat{u}^{n}(\xi_1,\xi_2;\eta_1,\eta_2)\right|^2d\xi_2d\eta_2\right) \\
\times\frac{d\xi_1d\eta_1}{|\xi_1|^{2\gamma_1+1}|\eta_1|^{2\gamma_2+1}}.
\end{multline*}
In order to bound those 3 terms, observe that
$$
\cn_{\gamma_1-\epsilon,\gamma_2-\epsilon}(u(\xi,\eta))\lesssim 
\cn_{\gamma_1-\epsilon,\gamma_2-\epsilon}(y)(1+|\xi|^{\gamma_1-\epsilon}+|\eta|^{\gamma_2-\epsilon}+|\xi|^{\gamma_1-\epsilon}|\eta|^{\gamma_2-\epsilon})
$$ 
and thus Corollary~\ref{prop:embbeding-cam} entails that 
$$
\E[I_1^n]\lesssim \E[\cn^{2}_{\gamma_1-\epsilon,\gamma_2-\epsilon}(y^{2})^2]
\int_{|\xi|,|\eta|\geq n}\frac{1}{|\xi|^{\epsilon+1}|\eta|^{\epsilon+1}}d\xi d\eta
\stackrel{n\to+\infty}{\longrightarrow}0,
$$
and 
$$
\E[I_2^n]\lesssim n^{-\epsilon}\E[\cn^{2}_{\gamma_1-\epsilon,\gamma_2-\epsilon}(y^{2})]
\stackrel{n\to+\infty}{\longrightarrow}0.
$$
As far as $I_3^n$ is concerned, we remark that 
$$
\cn_{\gamma_1-\epsilon,\gamma_2-\epsilon}(u(\xi,\eta)-u^n(\xi,\eta))
\lesssim 
\cn_{\gamma_1-\epsilon,\gamma_2-\epsilon}(y^{2}-y^{n,2})(1+|\xi|^{\gamma_1-\epsilon}+|\eta|^{\gamma_2-\epsilon}+|\xi|^{\gamma_1-\epsilon}|\eta|^{\gamma_2-\epsilon})
$$
and then we can conclude along the same lines as in Theorem~\ref{theorem:conv-1} that $\E[I_3^n]$ vanishes as $n$ goes to infinity. This finishes the proof.

\end{proof}

The last step in order to go from Theorem \ref{theorem:conv-1} to Theorem~\ref{thm:fBs-skoro-intro} is to convert  Stratonovich into Skorohod type integrals in the right hand side of relation~\eqref{eq:correc-ito-strato-mixed-fBs}. To this aim, let us first recall  the following one-parameter result:  
\begin{proposition}
Let $B$ a fractional brownian motion with hurst parameters $1/2\geq\gamma>1/3$ then we have that $u\mapsto\varphi(B_u)u^{2\gamma}\in Dom(\der^{\diamond,B})$ and we have that 
$$
\int_{[0,1]}\varphi(B_{u})u^{2\gamma}d^{\diamond}B_u=\int_{[0,1]}\varphi(B_u)u^{2\gamma}dB_u-\gamma\int_{[0,1]}\varphi'(B_u)u^{4\gamma-1}du
$$
\end{proposition}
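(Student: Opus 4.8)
The plan is to establish this one-dimensional identity by exactly the regularization scheme used in the proof of Theorem~\ref{Th:contrac-rough1}, now carried out on the line rather than in the plane. Write $R_u=u^{2\ga}$ for the variance of the fBm $B$, and introduce the regularization $B^n$ obtained by a frequency cutoff on $[-n,n]$ in the harmonizable representation of $B$, as in \eqref{eq:reg-eq1}. For each fixed $n$ the process $B^n$ is smooth, with a regular covariance $R^n_{ab}$ given by the one-dimensional analogue of \eqref{eq:reg-cov1}; the one-dimensional version of Lemma~\ref{lemma:con-cov} guarantees that $\|R^n-R\|_{2\ga-\ep}\to 0$ for every $\ep>0$, where we have set $R^n_u\equiv R^n_{uu}$.

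First I would record the correction in the smooth case. For the regular process $B^n$, the one-dimensional analogue of Proposition~\ref{f-conv} applies to the integrand $\vp(B^n_\cdot)\,\cdot^{2\ga}$ and yields
\begin{equation*}
\der^{B^n,\diamond}\!\lp \vp(B^n_\cdot)\,\cdot^{2\ga} \rp
= \iou \vp(B^n_u)\,u^{2\ga}\,dB^n_u
- \frac12 \iou \vp'(B^n_u)\,u^{2\ga}\,d R^n_u .
\end{equation*}
This is proven exactly as Proposition~\ref{f-conv}: one expands the Wick--Riemann sums $\sum_i \vp(B^n_{s_i})s_i^{2\ga}\di(B^n_{s_{i+1}}-B^n_{s_i})$ through the correction \eqref{eq:correc-products-1}, splits $R^n_{s_is_{i+1}}-R^n_{s_is_i}=\tfrac12(R^n_{s_{i+1}}-R^n_{s_i})+\rho^n_{s_is_{i+1}}$ with $\rho^n_{s_is_{i+1}}=\tfrac12(2R^n_{s_is_{i+1}}-R^n_{s_i}-R^n_{s_{i+1}})$, and uses that, since $R^n$ is smooth, the off-diagonal remainders $\rho^n$ are negligible, so that only the $\tfrac12\,dR^n$ part survives.

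It then remains to pass to the limit $n\to\infty$. On the right-hand side, the pathwise integral $\iou \vp(B^n_u)u^{2\ga}dB^n_u$ converges in $L^2(\Omega)$ to its rough counterpart, thanks to the continuity of the one-dimensional rough integral together with the convergence $B^n\to B$ furnished by the analogue of Proposition~\ref{proposition:conve-reg} and the composition estimate of Lemma~\ref{lemma:h-p-r}; the correction term converges, via the analogue of Proposition~\ref{con-trace-term1}, to $\ga\iou \vp'(B_u)u^{4\ga-1}du$ (the exponent $4\ga-1$ arising from the product $u^{2\ga}\cdot\tfrac12\,dR_u$ with $R_u=u^{2\ga}$). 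For the left-hand side I would, as in Theorem~\ref{Th:contrac-rough1}, rewrite $\der^{B^n,\diamond}(\vp(B^n_\cdot)\cdot^{2\ga})=\der^{\hat W,\diamond}\!\lp K^n(\vp(B^n_\cdot)\cdot^{2\ga})\rp$ via the one-dimensional version of Lemma~\ref{lem:derivatives-Hx-HW}, and establish that $K^n(\vp(B^n_\cdot)\cdot^{2\ga})$ converges in $L^2(\Omega,L^2(\R))$ to $K(\vp(B_\cdot)\cdot^{2\ga})$. Closability of $\der^{\hat W,\diamond}$ (Lemma~\ref{clos}) then gives $\vp(B_\cdot)\cdot^{2\ga}\in\dom(\der^{\diamond,B})$ together with the announced identity.

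The main obstacle is this last convergence of $K^n(\vp(B^n_\cdot)\cdot^{2\ga})$ in $L^2(\Omega,L^2(\R))$. As in the proof of Theorem~\ref{Th:contrac-rough1}, it splits into a high-frequency tail and a term controlled by $\|\vp(B^n_\cdot)\cdot^{2\ga}-\vp(B_\cdot)\cdot^{2\ga}\|_{\ch}$; the tail is handled by the one-dimensional embedding of $\ch$ into Hölder spaces (the analogue of Corollary~\ref{prop:embbeding-cam}), which is precisely where the restriction $\ga>1/3$ (in particular $\ga>1/4$) is used to ensure integrability of $|\xi|^{1-2\ga}$ against the relevant kernels, while the second term vanishes by the Hölder convergence $B^n\to B$ and Lemma~\ref{lemma:h-p-r}. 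As already noted for the planar rough case, the Riemann--Wick interpretation of the integral is lost through this regularization procedure.
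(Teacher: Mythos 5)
Your proposal is correct and follows exactly the route the paper intends: its own proof of this proposition is just the one-line instruction to repeat the arguments of Proposition~\ref{Th:contrac-rough1} in the one-parameter setting, and that is precisely what you have written out — frequency-cutoff regularization $B^n$, the smooth-case Wick correction with the $\tfrac12\,dR^n$ splitting (where the off-diagonal $\rho^n$ terms are negligible only because $R^n$ is smooth, which is why regularization is needed when $\gamma\le 1/2$), passage to the limit via the analogues of Lemma~\ref{lemma:con-cov}, Propositions~\ref{proposition:conve-reg} and~\ref{con-trace-term1}, transfer to $\der^{\hat W,\diamond}$ through the kernel $K^n$, and closability. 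Your bookkeeping of the weight $u^{2\gamma}$ and of the constant ($u^{2\gamma}\cdot\tfrac12\,dR_u=\gamma u^{4\gamma-1}du$), as well as the role of $\gamma>1/4$ in the Sobolev embedding step, all match the paper's argument.
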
\label{proposition:ito-strato-1d}

\begin{proof}
Use exactly the same arguments of the Proposition~\eqref{Th:contrac-rough1} for the one parameters setting.
\end{proof}

Now the Corollary below is the key to the conversion of Theorem \ref{theorem:conv-1} into Theorem~\ref{thm:fBs-skoro-intro}:
\begin{corollary}
For $\gamma_i>1/3$ and $\varphi\in C^{6}(\mathbb R)$ then for every $v\in[0,1]$ $u\mapsto y^3_{u;v}u^{2\gamma_1}\in Dom(\der^{\diamond,x_{.;v}})$ and the following formula hold true 
$$
\int_{\Delta}y^3_{u;v}u^{2\gamma_1}v^{2\gamma_2-1}d^{\diamond}_1x_{u;v}dv=\int_{\delta}y^{3}_{u;v}u^{2\gamma_1}v^{2\gamma_2-1}d_1x_{u;v}d_2v-\gamma_1\int_{\Delta}y^4_{u;v}u^{4\gamma_1-1}v^{4\gamma_2-1}dudv
$$
\end{corollary}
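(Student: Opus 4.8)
The plan is to reduce the statement to the one-parameter It\^o--Stratonovich correction of Proposition~\ref{proposition:ito-strato-1d} by freezing the second coordinate, and then to integrate the resulting fibered identity against $v^{2\gamma_2-1}\,dv$. First I would fix $v\in[t_1,t_2]$ and observe that, thanks to the covariance \eqref{eq:def-cov-fBs}, the one-parameter process $u\mapsto x_{u;v}$ equals $v^{\gamma_2}$ times a standard fractional Brownian motion $B$ with Hurst parameter $\gamma_1\in(1/3,1/2]$; in particular $\E[x_{u;v}^{2}]=u^{2\gamma_1}v^{2\gamma_2}$. Setting $B_u:=v^{-\gamma_2}x_{u;v}$ and $\tilde\varphi(b):=\varphi^{(3)}(v^{\gamma_2}b)$, one checks that $\tilde\varphi(B_u)=y^{3}_{u;v}$ and $\tilde\varphi'(B_u)=v^{\gamma_2}y^{4}_{u;v}$, so that $u\mapsto y^{3}_{u;v}u^{2\gamma_1}$ is exactly the integrand $\tilde\varphi(B_u)u^{2\gamma_1}$ to which Proposition~\ref{proposition:ito-strato-1d} applies. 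Since $\varphi\in C^{6}$ satisfies condition (GC) and $v\le 1$, the rescaled nonlinearity $\tilde\varphi$ inherits the growth condition uniformly in $v$, which legitimizes the application of the proposition on every fiber.

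The second ingredient is the scaling behaviour of the divergence under the constant dilation $x_{\cdot;v}=v^{\gamma_2}B$. A direct computation with the definitions of Section~\ref{sec:malliavin-framework} shows that $D^{B}F=v^{\gamma_2}D^{x_{\cdot;v}}F$ on smooth functionals while $\langle\cdot,\cdot\rangle_{\mathcal H^{x_{\cdot;v}}}=v^{2\gamma_2}\langle\cdot,\cdot\rangle_{\mathcal H^{B}}$, whence $\der^{\diamond,x_{\cdot;v}}(h)=v^{\gamma_2}\der^{\diamond,B}(h)$ for every admissible integrand $h$. Applying Proposition~\ref{proposition:ito-strato-1d} to $B$ and $\tilde\varphi$ and translating each term back through these scaling identities, the factors $v^{\pm\gamma_2}$ cancel consistently and produce the fibered correction
\begin{equation*}
\int_{s_1}^{s_2}y^{3}_{u;v}u^{2\gamma_1}\,d^{\diamond}_{1}x_{u;v}
=\int_{s_1}^{s_2}y^{3}_{u;v}u^{2\gamma_1}\,d_{1}x_{u;v}
-\gamma_1 v^{2\gamma_2}\int_{s_1}^{s_2}y^{4}_{u;v}u^{4\gamma_1-1}\,du,
\end{equation*}
valid for every fixed $v$, where the correction term arises as $\tfrac12\int_{s_1}^{s_2}y^{4}_{u;v}u^{2\gamma_1}\,d_u(\E[x_{u;v}^{2}])$ with $d_u(\E[x_{u;v}^{2}])=2\gamma_1 u^{2\gamma_1-1}v^{2\gamma_2}\,du$. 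Multiplying by $v^{2\gamma_2-1}$, integrating over $v\in[t_1,t_2]$ and combining the powers $v^{2\gamma_2-1}\,v^{2\gamma_2}=v^{4\gamma_2-1}$ yields exactly the announced identity, the fiberwise Skorohod integral followed by Lebesgue integration in $v$ being interpreted precisely as in Proposition~\ref{proposition:contrac 3}.

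The main obstacle is not the algebra above but the rigorous construction of the mixed object on the left-hand side. The fiberwise membership $u\mapsto y^{3}_{u;v}u^{2\gamma_1}\in\dom(\der^{\diamond,x_{\cdot;v}})$ is immediate from Proposition~\ref{proposition:ito-strato-1d} together with the scaling identity, but the $v$-integral of these fiberwise divergences must be given a meaning and shown to behave well. Following the template of Proposition~\ref{proposition:contrac 3}, I would define the left-hand side as the almost sure and $L^{2}(\Omega)$ limit of the Riemann sums, over a partition $\pi_2=(t_j)_j$ of $[t_1,t_2]$, of the quantities $\der^{\diamond,x_{\cdot;t_j}}(y^{3}_{\cdot;t_j}u^{2\gamma_1}\1_{[s_1,s_2]})\,t_j^{2\gamma_2-1}(t_{j+1}-t_j)$, controlling the increments through condition (GC), the uniform moment bound \eqref{cotamoments} and the embedding estimates of Corollary~\ref{prop:embbeding-cam}, and finally passing to the limit by means of the closability of the divergence recorded in Lemma~\ref{clos}. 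The uniform-in-$v$ growth control is precisely what guarantees the required convergence and the consistency of the two sides, so that integrating the fibered correction reproduces the stated formula.
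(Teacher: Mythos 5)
Your proposal is correct and follows essentially the same route as the paper's own (very terse) proof: identify the frozen-$v$ process $u\mapsto x_{u;v}$ with $v^{\gamma_2}B_u$ for a fractional Brownian motion $B$ of Hurst parameter $\gamma_1$, apply the one-parameter It\^o--Stratonovich correction of Proposition~\ref{proposition:ito-strato-1d}, and integrate the fibered identity against $v^{2\gamma_2-1}\,dv$. The extra details you supply --- defining $B_u:=v^{-\gamma_2}x_{u;v}$ pathwise rather than only in law, the scaling identities $D^{B}F=v^{\gamma_2}D^{x_{\cdot;v}}F$ and $\der^{\diamond,x_{\cdot;v}}(h)=v^{\gamma_2}\der^{\diamond,B}(h)$, and the Riemann-sum construction of the $v$-integrated divergence in the spirit of Proposition~\ref{proposition:contrac 3} --- are exactly what the paper leaves implicit, and they check out.
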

\begin{proof}
we recall that $x_{u,v}=^{law}v^{\gamma_2}B_u$ with $B$ is a fBm with hurst parameter $\gamma_1$ and then it suffice to use the proposition \eqref{proposition:ito-strato-1d}
\end{proof}

\end{document}